\theoremstyle{plain}
\newtheorem{lemma}{Lemma}[section]
\newtheorem{proposition}[lemma]{Proposition}
\newtheorem{corollary}[lemma]{Corollary}
\newtheorem{theorem}[lemma]{Theorem}
\theoremstyle{definition}
\newtheorem{definition}[lemma]{Definition}
\newtheorem*{remark}{Remark}
\newcommand{\cev}[1]{\accentset{\leftharpoonup}{#1}}
\newcommand{\vecc}[1]{\accentset{\rightharpoonup}{#1}}
\newcommand{\bC}{\mathbb{C}}
\newcommand{\bD}{\mathbb{D}}
\newcommand{\bE}{\mathbb{E}}
\newcommand{\bN}{\mathbb{N}}
\newcommand{\bP}{\mathbb{P}}
\newcommand{\bR}{\mathbb{R}}
\newcommand{\cC}{\mathcal{C}}
\newcommand{\cE}{\mathcal{E}}
\newcommand{\cI}{\mathcal{I}}
\newcommand{\cM}{\mathcal{M}}
\newcommand{\ff}{\mathbf{f}}
\newcommand{\fN}{\mathbf{N}}
\newcommand{\fX}{\mathbf{X}}
\newcommand{\fW}{\mathbf{W}}
\newcommand{\ed}{\mbox{$ \ \stackrel{d}{=}$ }}
\newcommand{\besq}{{\tt BESQ}}
\newcommand{\ekp}{{\tt EKP}}
\newcommand{\pd}{{\tt PD}}
\newcommand{\skewer}{\ensuremath{\normalfont\textsc{skewer}}}
\newcommand{\sskewer}{\ensuremath{\normalfont\textsc{sSkewer}}}
\newcommand{\skewerbar}{\ensuremath{\overline{\normalfont\textsc{skewer}}}}
\newcommand{\clade}{\ensuremath{\normalfont\textsc{clade}}}
\newcommand{\cskewer}{\ensuremath{\normalfont c\textsc{skewer}}}
\newcommand{\fskewer}{\ensuremath{\normalfont f\textsc{skewer}}}
\newcommand{\cskewerbar}{\ensuremath{\overline{\normalfont c\textsc{skewer}}}}
\newcommand{\fskewerbar}{\ensuremath{\overline{\normalfont f\textsc{skewer}}}}
\newcommand{\PRM}{\mathtt{PRM}}
\def\Concat{ \mathop{ \raisebox{-2pt}{\Huge$\star$} } }
\def\concat{\star}
\newcommand{\Leb}{\mathrm{Leb}}
\newcommand{\ind}{\mathbf{1}}
\newcommand{\pdip}{\mathtt{PDIP}}
\begin{document}

\begin{frontmatter}
\title{Up-down ordered Chinese restaurant processes with\\ two-sided immigration, emigration and diffusion limits}
\runtitle{Chinese restaurant processes and their diffusion limits}

\begin{aug}
\author[A]{\fnms{Quan}~\snm{Shi}\ead[label=e1]{quan.shi@amss.ac.cn} 
},
\author[B]{\fnms{Matthias}~\snm{Winkel}\ead[label=e2]{winkel@stats.ox.ac.uk}}

\address[A]{SKLMS, Academy of Mathematics and Systems Science, Chinese Academy of Sciences, China; \printead{e1}}

\address[B]{Department of Statistics, University of Oxford, UK; \printead{e2}}
\end{aug}

\begin{abstract}
We establish scaling limit theorems for the up-down ordered Chinese restaurant processes (oCRPs) of Rogers and Winkel as processes in a space of interval partitions. As previously conjectured, the limits are self-similar diffusions previously constructed directly in the continuum. We extend the oCRP model and the results to a three-parameter family ${\rm oCRP}^{(\alpha)}(\theta_1,\theta_2)$, $\alpha\in(0,1)$, $\theta_1,\theta_2\ge 0$.  We use the scaling limit approach to extend existing stationarity results to the full three-parameter family, identifying an extended family of Poisson--Dirichlet interval partitions. Their ranked sequence of interval lengths has Poisson--Dirichlet distribution with parameters $\alpha\in(0,1)$ and 
$\theta:=\theta_1+\theta_2-\alpha\ge-\alpha$, including for the first time the usual range of $\theta>-\alpha$ rather than being restricted to $\theta\ge 0$. This has applications to Fleming--Viot processes, nested interval partition evolutions and tree-valued Markov processes, notably relying on the extended parameter range.
\end{abstract}

\begin{keyword}[class=MSC]
	\kwd[Primary ]{60J80}
	\kwd[; secondary ]{60C05, 60F17} 
\end{keyword}

\begin{keyword}
	\kwd{Poisson--Dirichlet distribution}
	\kwd{interval partition}
	\kwd{Chinese restaurant process}
	\kwd{integer composition}
	\kwd{self-similar process}
	\kwd{branching with immigration and emigration}
\end{keyword}

\end{frontmatter}

\section{Introduction}
The primary purpose of this paper is to study the weak convergence of a family of properly rescaled continuous-time Markov chains on integer compositions \cite{Gnedin97} and the limiting diffusions. 
Our results should be compared with the scaling limits of natural up-down Markov chains on branching graphs, which have received substantial focus in the literature \cite{BoroOlsh09, Petrov09, Petrov13}. In this language, our models take place on the branching graph of integer compositions and on its boundary, which was represented in \cite{Gnedin97} as a space of interval partitions. This paper establishes a proper scaling limit connection between discrete models \cite{RogeWink20} and their
continuum analogues \cite{Paper1-1,Paper1-2,IPPAT} in the generality of \cite{ShiWinkel-1}. 

Specifically, for $n\in \bN=\{1,2,\ldots\}$, \emph{a composition of $n$} is a sequence of positive integers that sum to $n$. 
Denote by $\cC_n$ the set of all compositions of $n$, with $\cC_0:= \{\emptyset\}$ by convention. 
We consider a directed graph whose vertices are given by the collection of all compositions $\cC= \bigcup_{n\ge 0}\cC_n$. 
For ease of presentation, it is convenient to regard each composition of $n$ as the allocation of $n$ customers to a row of ordered tables in a restaurant. 
Then, for any vertex $\beta\in \cC_n$, there is a directed edge from $\beta$ to another composition $\lambda$, denoted by $\beta\nearrow \lambda$, if and only if 
$\lambda\in \cC_{n+1}$ and $\lambda$ can be obtained from $\beta$ by adding one customer, either to an occupied table or to a new table inserted to the array at a specific position. Note that there may be different choices of such operations that result in the same $\lambda$, and we define the number of ways  to be the \emph{multiplicity} of the edge, denoted by $\kappa(\beta,\lambda)$.  For example, $\kappa((1,1,2,3), (1,1,1, 2,3))=3$.  
If there is an edge $\beta\nearrow \lambda$, then $\beta$ can be obtained from $\lambda$ by removing one customer and we also say $\lambda\searrow\beta$. 
The graph of compositions is in the class of \emph{branching graphs} which are widely studies in algebraic combinatorics, representation theory and probability. See e.g.\ \cite{Petrov13} and reference therein. Other prototypes of branching graphs include the Young graph and the Schur graph of partitions and the Pascal triangle.

We are interested in a family of continuous-time Markov chains $(C(t),t \ge 0)$ on the graph of compositions, parametrised by $\alpha\in [0,1)$ and $\theta\ge 0$. This family
was studied in \cite{RogeWink20}. 
We refer to this process as a Poissonised  ordered up-down  \emph{$(\alpha,\theta)$-Chinese restaurant process}, denoted by $\mathrm{PCRP}^{(\alpha)}(\theta)$, in view of the following interpretation. 
In the language of  the well-known Chinese restaurant processes due to Dubins and Pitman (see e.g.\@ \cite{CSP}, also \cite{James2006,PitmWink09}), this model describes the evolution of the numbers of customers at  a row of tables ordered from left to right. If $C(t)= (n_1, \ldots , n_k)\in \cC_n$, then it means that, at time $t\ge 0$, there are $n$ customers in total, sitting at $k$ different tables, and the $i$-th occupied table enumerated from left to right has $n_i\in \bN$ customers.  
The generator $\mathcal{A}$ is given by 
\[
\mathcal{A} f (\beta) = n\!\!\sum_{\lambda\in\mathcal{C}_{n-1}}\!\! p^{\downarrow} (\beta, \lambda) f(\lambda)+   (n\!+\!\theta)\!\!\sum_{\lambda\in\mathcal{C}_{n+1}}\!\! p^{\uparrow} (\beta,\lambda) f(\lambda)- (2n\!+\!\theta)f(\beta), \, \beta\in \cC_n, n\ge 1, 
\]
and $\mathcal{A} f (\emptyset) = \theta f( (1)) - \theta f(\emptyset)$, 
where $p^{\uparrow}$ and $p^{\downarrow}$ are stochastic kernels on $\mathcal{C}$ specified as follows.  
The up kernel $p^{\uparrow}$ captures the arrival of  a new customer, who either takes a seat at an existing table or starts a new table. Specifically, as illustrated in Figure~\ref{fig:oCRP-alpha}, for each $\beta\in\mathcal{C}_n$, the kernel $(n+\theta)p^\uparrow(\beta,\cdot)$ assigns
\begin{figure}[t]
	\centering
	\includegraphics[width=0.8\linewidth]{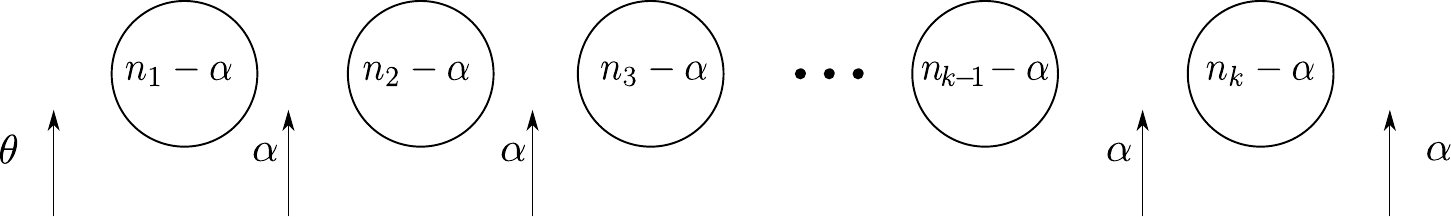}
	\caption{The up transition rates $(n+\theta)p^{\uparrow}((n_1,\ldots,n_k),\cdot)$
	}
	\label{fig:oCRP-alpha}
\end{figure}
\begin{itemize}
	\item rate $m-\alpha$ to the arrival of a new customer to any occupied table with $m\in\bN$ customers;
	\item rate $\theta$ to the arrival of a new customer to a new table at the very left; 
	\item rate $\alpha$ to the arrival at a new table to the right of each occupied table. 
\end{itemize}
The down kernel $p^{\downarrow}$ on $\cC$ corresponds to removing uniformly one customer: $np^\downarrow(\beta,\cdot)$ assigns
\begin{itemize}
	\item rate 1 to the departure of each of the $n$ customers in $\beta\in\mathcal{C}_n$.
\end{itemize} 
%
%
%
For $\beta\in \cC_n$, let $g(\beta):= \sum_{\lambda_0=\emptyset\nearrow \lambda_1\nearrow \ldots\nearrow \lambda_n=\beta} \prod_{i=0}^{n-1}\kappa(\lambda_i, \lambda_{i+1})$, where the sum is over all directed paths from $\emptyset$ to $\beta$. 
The description yields that the down kernel satisfies the following (as can be read from \cite[Pages 308--309]{RivRiz})   
\[
p^{\downarrow}(\lambda,\beta)=\frac{\kappa(\beta,\lambda) g(\beta)}{g(\lambda)}\ind\{\lambda\searrow\beta\}, \qquad n\ge 0, \beta\in \cC_{n}, \lambda\in \cC_{n+1}. 
\]
Moreover, the up kernel is compatible with a two-parameter family of composition structures $(\Pi^{(\alpha,\theta)}_n,n\ge 0)$ with $\alpha\in [0,1)$ and $\theta\ge 0$ studied in \cite[Section~8]{GnedPitm05} and \cite{PitmWink09}, where each $\Pi^{(\alpha,\theta)}_n$ is a probability measure on $\cC_n$, and they satisfy
\[
\Pi^{(\alpha,\theta)}_{n+1}(\beta)  = \sum_{\lambda\colon \lambda\nearrow\beta}\Pi^{(\alpha,\theta)}_{n}(\lambda)p^\uparrow(\lambda,\beta)  \qquad\mbox{for all } n\ge 0,\beta\in\mathcal{C}_{n+1}.
\]
That $(\Pi^{(\alpha,\theta)}_n, n\ge 0)$ is a \emph{composition structure} \cite{Gnedin97} refers to the fact that it satisfies
\begin{equation}\label{eq:cs}
	\Pi^{(\alpha,\theta)}_{n}(\beta)  = \sum_{\lambda\colon \lambda\searrow\beta}\Pi^{(\alpha,\theta)}_{n+1}(\lambda)p^\downarrow(\lambda,\beta), \qquad \mbox{for all }n\ge 0,\beta\in\mathcal{C}_n.
\end{equation}
In the language of branching graphs, a composition structure $(\Pi^{(\alpha,\theta)}_n, n\ge 1)$ is also called a \emph{coherent} system \cite[Definition~8]{Petrov13}. 
Note that, by \cite[Proposition~6]{PitmWink09} only if $\alpha=\theta$ we have in addition that
\[
\Pi^{(\alpha,\theta)}_n(\beta)p^{\uparrow}(\beta,\lambda)={\Pi^{(\alpha,\theta)}_{n+1}(\lambda)}p^{\downarrow}(\lambda,\beta), \qquad\mbox{for all }n\ge 0, \beta\in \cC_{n}, \lambda\in \cC_{n+1}. 
\]
So the up-down Markov chains that we study go beyond the setting of \cite[Definition~9]{Petrov13}.  


Motivated by \cite{Ald-web,Pal13,Paper0,Paper1-1}, Rogers and Winkel \cite{RogeWink20} related the class of Markov chains $\mathrm{PCRP}^{(\alpha)}(\theta)$ to splitting trees \cite{Lambert10} and conjecture the existence of diffusion limits. They support their conjecture by several convergence results of one-dimensional stochastic processes: the L\'evy processes encoding splitting trees and the integer-valued up-down chains capturing the evolutions of customer numbers in the restaurant or at a given table. Inspired by the discrete considerations, \cite{Paper1-1,Paper1-2,IPPAT,ShiWinkel-1} proposed the analogous continuum models, so-called \emph{self-similar interval-partition evolutions} (SSIP-evolutions), in increasing generality. SSIP-evolutions and their "de-Poissonised" processes (see Definition~\ref{defn:dePoi}) have been used to construct the Aldous diffusion \cite{Paper-AldousDiffusion} as a diffusive evolution of the Brownian continuum random tree \cite{AldousCRT1}.   
Although SSIP-evolutions and their de-Poissonisations have been conjectured to be the scaling limits of certain Markov chains on integer compositions, there has so far been no proof in the literature. 

To fill this gap, we prove in our first main result (see Theorem~\ref{thm:crp-ip} for a formal statement in a more general context with more general initial values) that SSIP-evolutions are the scaling limits of $\mathrm{PCRP}^{(\alpha)}(\theta)$,  confirming a conjecture in \cite{RogeWink20}. 
Specifically, let $(C(t),t\ge 0)$ be a $\mathrm{PCRP}^{(\alpha)}(\theta)$ starting from $\emptyset\in\mathcal{C}_1$.
Then  
\[
\left(\frac{1}{n} C(2 n t), t\ge 0 \right)\underset{n\to \infty}{\longrightarrow} (\beta(t),t\ge 0), \qquad \text{in distribution,}
\]
where $(\beta(t),t\ge 0)$ is an SSIP-evolution. 
This convergence is under the Skorokhod topology of c\`adl\`ag functions on a certain metric space $(\mathcal{I}_H, d_H)$, which can be viewed \cite{Gnedin97} as a subspace of the boundary of $\cC$. More precisely, $\cI_{H}$ is the collection of all \emph{interval partitions} $\gamma$ of $[0,M]$ for all $M\ge 0$, where each $\gamma$ is a set of disjoint open intervals in $[0,M]$ such that its partition points $G(\gamma):=[0,M]\setminus\bigcup_{U\in\gamma}U$ has Lebesgue measure zero. 
Furthermore, $d_H$ is the Hausdorff metric applied to the (closed) sets of partition points: 
for every $\gamma,\gamma'\in \cI_{H}$, 
\[
d_{H} (\gamma, \gamma')
:= \inf \bigg\{r\ge 0\colon G(\gamma)\subseteq \bigcup_{x\in G(\gamma')} (x-r,x+r),~
G(\gamma')\subseteq \bigcup_{x\in G(\gamma)} (x-r,x+r) \bigg\}.
\]
Though $(\cI_H, d_H)$ is not complete, the induced topological space is Polish \cite[Theorem~2.3]{Paper1-0}.

Recently, \cite{RivRiz} obtained the scaling limits of a closely related family of discrete-time up-down ordered Chinese restaurant processes, in which at each time exactly one customer arrives and then one customer leaves uniformly, such that the number of customers remains constant. But neither limit theorem implies the other.  
Their method, as for other branching graphs \cite{BoroOlsh09, Petrov09, Petrov13}, is by analysing the generator of Markov processes, which is quite different from our paper.

The discrete approximation established in this work in turn permits us to understand more properties of SSIP-evolutions. In particular, we prove the pseudo-stationarity (Proposition~\ref{prop:ps-theta1theta2-nokill}) for the generalised models introduced in \cite{ShiWinkel-1} and below. 
Also, our convergence theorem should give us access to the generator of the limiting SSIP-evolutions, which is at present only partially known \cite{Paper1-3}.

\subsection{More Contributions of this work}

\subsubsection{Coverage of the full range of parameters with $\theta\in [-\alpha, 0)$}
Keeping track of only the sizes of parts, and not their order, every composition
structure induces a partition structure, that is, a sequence of sampling consistent
partitions of integers. 
The aforementioned  composition 
structures $\Pi^{(\alpha,\theta)}$ correspond to the Ewens--Pitman partition structures. 
Note that the two-parameter model of partition structures extends to $\theta>-\alpha$. 
In previous studies of the two-parameter model of composition structures, only $\theta\ge 0$ is covered, in a setting where there exists a regenerative random order \cite{GnedPitm05,PitmWink09}. 
We fill the gap $\theta\in (-\alpha,0)$, the \emph{emigration} case, by a natural extension to a three-parameter model introduced in Section~\ref{sec:PDIP}. 
The scaling limit result in Theorems~\ref{thm:crp-ip} is in fact established in this general setting. 

\subsubsection{Fleming--Viot superprocesses}
In \cite{FVAT}, we introduced a family of Fleming--Viot superprocesses parametrised by $\alpha\in (0,1)$, $\theta\ge 0$, generalising the labeled infinitely-many-neutral-alleles model introduced by Ethier and Kurtz \cite{EthiKurt93,EthKurtzBook}. 
These Fleming--Viot superprocesses take values on the space $(\cM^a_1,d_{\cM})$ of all purely atomic probability measures on $[0,1]$ endowed with the Prokhorov distance. Our construction in \cite{FVAT} is closely related to interval partition evolutions. 
We can now extend this model to the case $\theta\in [-\alpha,0)$ and identify the desired stationary distribution, the two-parameter Pitman--Yor process \cite{PitmanYor97,IshwJame01}, here exploiting the connection with an $\mathrm{SSIP}$-evolution in the emigration case. This is discussed in more detail in Section~\ref{sec:FV}. 

\subsubsection{Convergence of excursion measures}

We construct a sigma-finite measure of excursions of SSIP-evolutions and prove that SSIP-excursions appear as the scaling limits of excursions on compositions (Theorem~\ref{thm:Theta}).  

\subsubsection{Nested interval partition evolutions}
For $0\le \bar{\alpha}\le \alpha<1$ and $\bar\theta> 0$, consider a random mass partition with Poisson--Dirichlet distribution $\mathtt{PD}^{(\bar\alpha)} (\bar\theta)$ on the space 
$\nabla_{\infty}$ of nonnegative decreasing sequences with sum 1. If we split each fragment independently into $\mathtt{PD}^{(\alpha)} (-\bar\alpha)$ proportions, then the finer mass partition has distribution $\mathtt{PD}^{(\alpha)} (\bar\theta)$ \cite[(5.24)]{CSP}. Informally speaking, the two-parameter family $\mathtt{PD}^{(\alpha)} (\theta)$ is nested.  
The ordered analogue of nested mass partitions, nested interval partitions, appears in the study of coalescents \cite{FouLamSch21} and implicitly in the flow-of-bridges construction of continuous-state branching processes \cite{BerLeG03}.  

Nested Markov chains on partitions have been considered in non-parametric Bayesian statistics \cite{Blei10}. 
For the continuum limit, both on partitions and compositions, this aspect has so far been absent in the literature. 
In Section \ref{sec:nestedPCRP}, we extend Theorem \ref{thm:crp-ip} to the setting of  nested Markov chains on compositions and obtain nested SSIP-evolutions as scaling limits. At each time, the values of the nested limiting diffusions form a family of nested interval partitions. We also deduce the pseudo-stationarity for the nested SSIP-evolutions via discrete approximation. Our approach relies on the study of the emigration case and the SSIP-excursions mentioned above.   

Our nested interval partition evolutions provide important insights into understanding (multifurcating) continuum-tree-valued processes, as we will see below. 


\subsection{Further motivations from the study of random trees}

An initial motivation of this work was from studies of diffusions on a space of continuum trees.
Aldous \cite{Aldous00} introduced a Markov chain on the space of binary trees with $n$ leaves, by removing a leaf uniformly and reattaching it to a random edge. This Markov chain has the uniform distribution as its stationary distribution. 
As $n\!\to\! \infty$, Aldous conjectured that the limit of this Markov chain is a diffusion on continuum trees with stationary distribution given by the \emph{Brownian continuum random tree (CRT)}, i.e.\@ the universal scaling limit of random discrete trees with finite vertex degree variance. 

Among different approaches 
\cite{LohrMytnWint18,Paper-AldousDiffusion} investigating this problem, \cite{LohrMytnWint18} shows the existence of a process by using a martingale problem on a new space of trees, which they call algebraic measure trees. Roughly speaking, this new space retains knowledge of the branch points as mass splits, but does not include the metric structure. 
On the other hand, \cite{Paper-AldousDiffusion} provides a more detailed description of the Aldous diffusion, which grants access to sample path properties of the diffusion. 
\cite{Paper-AldousDiffusion} describes the evolution via a consistent system of spines endowed with lengths and subtree masses, which relies crucially on interval partition evolutions. 
We stress that \cite{Paper-AldousDiffusion} only gives the continuum construction, while the full convergence of the Markov chains on discrete trees is still an open problem. 
Our scaling limit result provides a building block for a complete solution of this problem. This is a new contribution of our work to this domain.

More significantly, the current work provides tools towards understanding dynamics on multifurcating trees with possibly unbounded degrees, while both approaches in \cite{LohrMytnWint18,Paper-AldousDiffusion} mainly focus on binary trees and cannot be obviously extended to the multifurcating case. 
Specifically, our aforementioned nested interval partition evolutions can be related to the spinal decompositions of multifurcating trees developed in \cite{HPW}.  
Sample a leaf uniformly at random and consider its path to the root, called the \emph{spine}. We say that two vertices $x,y$ of the tree are equivalent, if the paths from $x$ and $y$ to the root first meet the spine at the same point. Then equivalence classes are bushes of spinal subtrees rooted at the same spinal branch point. In each equivalence class, by deleting the branch point on the spine, the subtrees form smaller connected components.   
The collection of equivalence classes is called \emph{the coarse spinal decomposition}, and the collection of all subtrees is called \emph{the fine spinal decomposition}. 
Some variants of our aforementioned nested interval partition evolutions can be used to represent the mass evolution of  the nested coarse and fine spinal decompositions in continuum multifurcating tree-valued dynamics. 
The order structure provided by the interval partition evolutions also plays a crucial role. At the coarse level, the equivalence classes of spinal bushes are naturally ordered by the distance of the spinal branch points to the root. At the fine level, a total order of the subtrees in the same equivalence class aligns with the \emph{semi-planar structure} introduced in a study of related Markov chains on discrete trees \cite{Soerensen}. This is used to explore the evolutions of sizes of subtrees in a bush at a branch point. 

In future work, we aim to construct \emph{stable Aldous diffusions}, with stationary distributions given by \emph{stable L\'evy trees} with parameter $\rho\in (1,2)$ \cite{DuLG05,DuquLeGall02}, which are the infinite variance analogues of the Brownian CRT. With our techniques one could further consider more general classes of \emph{continuum fragmentation trees}, including the \emph{alpha-gamma models} \cite{CFW} or a two-parameter Poisson--Dirichlet family \cite{HPW}.
In Section~\ref{sec:trees}, we give an example of a Markov chain related to random trees that converges to our nested SSIP-evolutions.

\subsection{Organisation of the paper} 
In Section \ref{sec:PDIP} we generalise the two-parameter ordered Chinese Restaurant Processes to a three-parameter model and establish 
connections to interval partitions and composition structures. We state the scaling limit results, Theorems \ref{thm:crp-ip} and ~\ref{thm:Theta},  as well as properties of the limiting interval partition evolution, in Section~\ref{sec:mainresult}. 
In Section \ref{sec:crp}, we prove Theorem \ref{thm:crp-ip} in the two-parameter setting, building on \cite{Paper1-1,Paper1-2,IPPAT,RogeWink20}. In Section \ref{sec:pcrp-ssip}, we study the three-parameter setting, both for processes absorbed in $\emptyset$ and for recurrent extensions, which we obtain by constructing excursion measures of interval partition  evolutions. This section concludes with proofs of all results stated in Section~\ref{sec:mainresult}. We finally turn to applications in Section \ref{sec:appl}.

\section{Chinese restaurant processes}\label{sec:PDIP}
Throughout the rest of the paper, we fix $\alpha\in (0,1)$. 
In this section we will first introduce a three-parameter family of composition structures in Section~\ref{sec:oCRP} and discuss their limiting distributions in Section~\ref{sec:ocrp-pdip}. Then we will present the corresponding Markov chains on compositions and their scaling limits in Section~\ref{sec:mainresult}.

\subsection{A three-parameter family of composition structures}\label{sec:oCRP}

\begin{definition}[$\mathrm{oCRP}^{(\alpha)}(\theta_1,\theta_2)$ in discrete-time]\label{defn:oCRP}
	Let $\theta_1, \theta_2\ge 0$.  
	We start with a single customer sitting at a table and new customers arrive one-by-one. 
	When there are already $n\ge 1$ customers, the $(n+1)$-st customer is located by the following \emph{$(\alpha,\theta_1,\theta_2)$-seating rule}:   
	\begin{itemize}
		\item If a table has $m$ customers, then the new customer comes to join this table with probability $(m- \alpha)/(n +\theta)$, where $\theta:= \theta_1 + \theta_2 -\alpha$.  
		\item The new customer may also start a new table:  with probability $\theta_1/(n+\theta)$ (resp.\ $\theta_2/(n+\theta)$), she starts a new table at the very left (resp.\ right); between each pair of two neighbouring occupied tables (if there are two or more occupied tables), the new table is located there with probability $\alpha/(n+\theta)$. 
	\end{itemize}
	At each step $n\ge 1$, the numbers of customers at  the tables (ordered from left to right) form a composition $C(n)\in \cC_n$. We will refer to $(C(n), n\ge 1)$ as an \emph{ordered Chinese restaurant process with parameters $\alpha$, $\theta_1$ and $\theta_2$}, or $\mathrm{oCRP}^{(\alpha)}(\theta_1, \theta_2)$. We also denote the distribution of $C(n)$ by
	$\mathtt{oCRP}^{(\alpha)}_n(\theta_1,\theta_2)$. 
\end{definition}

In the degenerate case $\theta_1=\theta_2=0$, an $\mathrm{oCRP}^{(\alpha)}(0,0)$ is simply a deterministic process $C(n)= (n), n\ge 1$, because we start with a single customer at a single table and can never start a second occupied table in this case (the rates to start new tables at the very left and right vanish and no space between two occupied tables ever exists).  
If we do not distinguish the location of the new tables, but build the partition of $\mathbb{N}$ that has $i$ and $j$ in the same block if the $i$-th and $j$-th customer sit at the same table, then this gives rise to the well-known (unordered) $(\alpha, \theta)$-Chinese restaurant process with $\theta:= \theta_1 + \theta_2 -\alpha$; see e.g.\@ \cite[Chapter~3]{CSP}.

When $\theta_1=\alpha$, this model encompasses the family of composition structures studied in \cite[Section~8]{GnedPitm05} and \cite{PitmWink09}. 
In particular, they show that an $\mathrm{oCRP}^{(\alpha)}(\alpha, \theta_2)$ is a 
\emph{regenerative composition structure} in the following sense.

\begin{definition}[Composition structure~\cite{GnedPitm05,Gnedin97}]\label{defn:structure}
	A Markovian sequence of random compositions $(C(n), n\ge 1)$ with $C(n)\in \cC_n$
	is a \emph{composition structure}, if the following property is satisfied: 
	\begin{itemize}
		\item Weak sampling consistency: for each $n\ge 1$, if we first distribute $n+1$ identical customers into an ordered series of tables according to $C(n+1)$ and then remove one customer uniformly at random (deleting any empty table if necessary), then the resulting composition $\widetilde{C}(n)$ has the same distribution as $C(n)$.
	\end{itemize}
	Moreover, a composition structure is called \emph{regenerative}, if it further satisfies 	
	\begin{itemize}
		\item Regeneration:  
		for every $n \ge m$, conditionally on the left-most block of 
		$C(n)$ having size $m$, the remainder is a composition in $\cC_{n-m}$ with 
		the same distribution as $C(n-m)$.  
	\end{itemize}
	For $n \ge m$, let $r(n,m)$ be the probability that the first block of 
	$C(n)$ has size $m$. Then $(r(n,m),1\le m\le n)$ is called the \emph{decrement matrix} of $(C(n),n\ge 1)$. 
\end{definition}

\begin{lemma}[{\cite[Proposition~6]{PitmWink09}}]\label{lem:rcs}
	For $\theta_2\ge 0$, an $\mathrm{oCRP}^{(\alpha)}(\alpha, \theta_2)$  $(C(n), n\ge 1)$
	starting from $(1)\in \cC$ is a regenerative composition structure
	with decrement matrix 
	\[
	r_{\theta_2}(n,m):= \binom{n}{m} \frac{(n-m)\alpha+ m\theta_2}{n} \frac{\Gamma(m-\alpha)\Gamma(n-m+\theta_2)}{\Gamma(1-\alpha)\Gamma(n+\theta_2)}, \quad 1\le m\le n.
	\]
	For every $(n_1, n_2, \ldots n_k)\in \cC_n$, we have
	\[
	\bP\big(C(n)= (n_1, n_2, \ldots n_k)\big) = \prod_{i=1}^{k} r_{\theta_2}(N_{i:k}, n_i),\quad 	\text{where}~ N_{i:k}:= \sum_{j=i}^{k} n_j. 
	\] 
\end{lemma}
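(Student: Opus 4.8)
The statement to prove is Lemma~\ref{lem:rcs}, which is attributed to \cite[Proposition~6]{PitmWink09}, so the plan is essentially to reconstruct that argument within our notation, and then add the almost-sure convergence tail.

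\textbf{Plan of proof.} The strategy has three parts: (i) identify the decrement matrix $r_{\theta_2}$ and verify the regeneration property; (ii) derive the product formula for $\bP(C(n) = (n_1,\ldots,n_k))$; and (iii) deduce almost-sure convergence of $\frac1n\odotip C(n)$ in $(\cI_H,d_H)$.

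For (i), I would work directly from the $(\alpha,\alpha,\theta_2)$-seating rule of Definition~\ref{defn:oCRP}. The key observation is that, once the first (leftmost) table is created, it can only grow by customers joining it (at rate proportional to its current size minus $\alpha$) or be ``sealed off'' the moment a new table is inserted immediately to its right (which happens at weight $\alpha$ among the gaps) or the moment the leftmost-new-table move occurs (weight $\theta_1=\alpha$); crucially, since $\theta_1=\alpha$, the leftmost table behaves exactly like an internal gap, so the growth of the first block and the arrival time of the first ``separating'' customer to its right is a classical urn computation. Tracking the first block: if the first block currently has size $j$ and the total number of customers is $N$, the next customer either joins the first block (weight $j-\alpha$) or is the first customer of the ``rest'' configuration (weight $\theta_1 + \alpha = 2\alpha$? — no: weight $\theta_1 + (\text{gap to the right of block 1})$). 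Here one must be careful: the event that the first block is finalised at size $m$ when the restaurant has $n$ customers is the event that customers $2,\ldots,m$ all joined block~$1$ (or: $m-1$ of the first $n-1$ arrivals, suitably interleaved, went to block~1) while customer $m+1$ onward never did. The cleanest route is the de Finetti / Pólya-urn representation: block~1's size among the first $n$ customers has the same law as in a two-colour Pólya urn started with weights $(1-\alpha, \theta_1 + \alpha) = (1-\alpha, 2\alpha)$?? — I should instead simply compute the probability that customers $2,\dots, m$ join block 1 and customer $m+1$ does not, summing over the $\binom{n-?}{?}$ orderings; this reproduces $\binom{n}{m}\frac{(n-m)\alpha + m\theta_2}{n}\frac{\Gamma(m-\alpha)\Gamma(n-m+\theta_2)}{\Gamma(1-\alpha)\Gamma(n+\theta_2)}$ after recognising the telescoping product of $(j-\alpha)/(\cdot)$ and $(\cdot)/(\cdot)$ factors as a ratio of Gamma functions. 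Regeneration is then immediate from the seating rule: conditionally on block~1 having size $m$ and on the arrival times, the remaining $n-m$ customers, restricted to the tables strictly to the right of the separating customer, follow exactly the $(\alpha,\alpha,\theta_2)$-seating rule again (the ``new table to the left of the leftmost'' move for the sub-restaurant is supplied with weight $\alpha$ by the gap that was just opened, matching $\theta_1 = \alpha$) — this self-similarity is the heart of why $\theta_1 = \alpha$ is needed.

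For (ii), the product formula $\bP(C(n) = (n_1,\ldots,n_k)) = \prod_{i=1}^k r_{\theta_2}(N_{i:k}, n_i)$ follows by iterating the regeneration property $k$ times, peeling off blocks from the left. For (iii), almost-sure convergence of $\frac1n\odotip C(n)$ under $d_H$: I would invoke the general theory of regenerative composition structures \cite{GnedPitm05}, under which the associated sequence of renormalised compositions converges a.s.\ to an interval partition whose closed set of partition points is the closure of the range of a multiplicative renewal process (equivalently, $1$ minus the range of a subordinator run until it exceeds~$1$); weak sampling consistency plus regeneration give a consistent family, and the partition points $s_i/n = (n_1+\cdots+n_i)/n$ converge a.s.\ because the decrement matrix has the ``Gibbs'' product form that makes the paintbox construction apply. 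Concretely, one checks that $(r_{\theta_2}(n,m))$ is the decrement matrix of the regenerative composition derived from the subordinator with Laplace exponent determined by $\alpha$ and $\theta_2$ (the stable-plus-drift type), and then \cite[Theorem~5.2 or Corollary~6.1]{GnedPitm05} (or the analogous statement quoted in \cite{PitmWink09}) gives the a.s.\ convergence.

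\textbf{Main obstacle.} The genuinely delicate step is verifying the regeneration property cleanly — i.e.\ that after conditioning on the first block's size, the ``rest'' is distributed as a fresh $\mathrm{oCRP}^{(\alpha)}(\alpha,\theta_2)$ of the appropriate smaller size — because one must correctly account for how the left-boundary weight $\theta_1$ interacts with the gap weights, and it is precisely the coincidence $\theta_1 = \alpha$ that makes the induction go through; with general $\theta_1$ this fails. The Gamma-function bookkeeping in (i) is routine once the combinatorial identity is set up, and (iii) is essentially a citation to the structure theory of \cite{GnedPitm05}, so the only real work is the careful self-similarity argument in (i).
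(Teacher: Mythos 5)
The paper does not actually prove this lemma: it is imported verbatim from \cite[Proposition~6]{PitmWink09} (hence the citation in the lemma header), so there is no internal proof to compare against, and your task is really to reconstruct the Pitman--Winkel argument. Judged on its own terms, your sketch has a genuine gap in part (i). Your derivation of $r_{\theta_2}(n,m)$ and of regeneration tracks ``the first (leftmost) table'' as a fixed object that grows and is then ``sealed off''; but in an $\mathrm{oCRP}^{(\alpha)}(\alpha,\theta_2)$ new tables are inserted to the \emph{left} of the leftmost table with weight $\theta_1=\alpha>0$, so the leftmost block of $C(n)$ changes identity over time and is in general not the table of customer $1$, nor any fixed table. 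Consequently the event you propose to compute (``customers $2,\dots,m$ join block $1$ and customer $m+1$ does not'') is not the event $\{$first block of $C(n)$ has size $m\}$, and the urn/telescoping computation as written does not produce $r_{\theta_2}(n,m)$; likewise the dynamic ``self-similarity'' narrative for regeneration is not well-posed, since regeneration is a statement about the fixed-$n$ marginal of $C(n)$, not about a distinguished table in the dynamics. The standard repair is to prove the product formula $\bP(C(n)=(n_1,\dots,n_k))=\prod_i r_{\theta_2}(N_{i:k},n_i)$ directly by induction on $n$, checking that the one-step $(\alpha,\alpha,\theta_2)$-seating recursion preserves this product form (here the coincidence $\theta_1=\alpha$ enters, exactly as you anticipate, because the left boundary weight matches a gap weight); the decrement matrix and the regeneration property are then read off from the product form, and your step (ii) becomes immediate rather than a separate iteration.

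Part (iii) is essentially right as a citation, but be aware that almost-sure (not just distributional) convergence of $\frac1n\odotip C(n)$ needs more than the abstract theory of composition structures in \cite{Gnedin97}: one needs an a.s.\ coupling, e.g.\ the P\'olya-urn law of large numbers giving a.s.\ limiting frequencies of the individual tables together with the identification of the limit as the regenerative $(\alpha,\theta_2)$ interval partition of \cite{GnedPitm05,PitmWink09}, or equivalently the stick-breaking/subordinator-range representation with sampled uniforms. Since the lemma is quoted from the literature, citing \cite{PitmWink09} for this step is acceptable, but your phrase ``the Gibbs product form makes the paintbox construction apply'' papers over the coupling that actually yields a.s.\ convergence.
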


To understand the distribution of $(C(n), n\ge 1)\sim\mathrm{oCRP}^{(\alpha)}(\theta_1,\theta_2)$, 
let us consider the richer structure that captures a decomposition as follows. 
Recall that there is an initial table with one customer at time $1$. 
Let us distinguish this initial table from other tables. At time $n\ge 1$, we record the size of the initial table as $N_0^{(n)}$, the composition of the table sizes to the left of the initial table by $C_-^{(n)}$, and the composition to the right of the initial table by $C_+^{(n)}$. 
Then there is the identity $C(n)= C_-^{(n)} \concat( N_0^{(n)}) \concat C_+^{(n)}$, where $\concat$ denotes the \emph{concatenation}: for $\beta_1= (n_1, n_2, \ldots n_k)\in \cC_{m_1}$  and $\beta'= (n'_1, n'_2, \ldots n'_{\ell})\in \cC_{m_2}$,   
$\beta_1\concat \beta_2:= (n_1, n_2, \ldots n_k,n'_1, n'_2, \ldots n'_{\ell})\in \cC_{m_1+m_2}$. 

Let $(N_-^{(n)}, N_+^{(n)}):= (\|C_-^{(n)}\|, \|C_+^{(n)}\|)$, where we write $\|\beta\|=n$ if $\beta\in\mathcal{C}_n$, $n\geq 0$.
Then $(N_-^{(n)}, N_0^{(n)}, N_+^{(n)})$ can be described as a P\'olya urn model with three colours. 
More precisely, when the current numbers of balls of the three colours are $(n_-, n_0, n_+)$, 
we next add a ball whose colour is chosen according to probabilities proportional to $n_-\!+ \theta_1$, $n_0 -\alpha$ and $n_+\!+ \theta_2$.   
Starting from the initial state $(0,1,0)$, we get $(N_-^{(n)}, N_0^{(n)}, N_+^{(n)})$ after adding $n\!-\!1$ balls.
In other words, the vector $(N_-^{(n)}, N_0^{(n)}\!-\!1, N_+^{(n)})$ has \emph{Dirichlet-multinomial distribution with
	parameters $n\!-\!1$ and $(\theta_1, 1\!-\!\alpha, \theta_2)$}; i.e.\@ for $n_-, n_0, n_+\!\in\! \bN_0$ with $n_0\ne 0$ and $n_-\!+n_0+n_+\! = n$
\begin{eqnarray}
	p_n(n_-,n_0,n_+)&:=& \bP\left( (N_-^{(n)}, N_0^{(n)}, N_+^{(n)}) = (n_-, n_0, n_+)\right) \notag \\
	&=& \frac{\Gamma(1-\alpha + \theta_1 +\theta_2) }{\Gamma (1-\alpha)\Gamma (\theta_1) \Gamma(\theta_2)}
	\frac{(n-1)!\Gamma(n_0-\alpha)\Gamma(n_- + \theta_1)\Gamma(n_+ + \theta_2)}{\Gamma(n-\alpha + \theta_1 +\theta_2)(n_0-1)! n_-! n_+!}. \label{eq:dmn}
\end{eqnarray}
Furthermore, conditionally given $(N_-^{(n)},N_0^{(n)},N_+^{(n)})$, the compositions $C_-^{(n)}$ and $C_+^{(n)}$  are independent with distribution
$\mathtt{oCRP}^{(\alpha)}_{N_-^{(n)}}(\theta_1, \alpha)$ 
and $\mathtt{oCRP}^{(\alpha)}_{N_+^{(n)}}(\alpha,\theta_2)$ respectively, for which there is
an explicit description in Lemma~\ref{lem:rcs}, up to an elementary left-right reversal. 
While the statement of the following proposition is for the 
 $\mathrm{oCRP}^{(\alpha)}(\theta_1,\theta_2)$ as originally defined, we will use this decomposition as an auxiliary process in its proof.

\begin{proposition}\label{prop:down}
	Let $\theta_1,\theta_2\ge 0$ and $(C(n), n\ge 1)$ an $\mathrm{oCRP}^{(\alpha)}(\theta_1,\theta_2)$. 
	Then for every $(n_1, \ldots, n_k)\in \cC$ with $n = n_1+\cdots+n_k$, we have
	\[
	\bP\big(C(n)\! =\!  (n_1, \ldots, n_k)\big)=\sum_{i=1}^k\! \bigg(\! 
	p_n \big(N_{1:i-1}, n_{i}, N_{i+1:k}\big)    \prod_{j=1}^{i-1} r_{\theta_1}\!\big(N_{1:j}, n_j\big)  
	\!\!\prod_{j=i+1}^{k}\!\! r_{\theta_2}\!\big(N_{j:k},n_j\big)\!
	\bigg)\!, 
	\]
	where $N_{i:j} =n_i+\cdots+n_j$, $p_n$ is given by \eqref{eq:dmn} and $r_{\theta_1}$, $r_{\theta_2}$ are as in Lemma \ref{lem:rcs}.  
	Furthermore, $(C(n), n\ge 1)$ is a composition structure in the sense that it is weakly sampling consistent. 
\end{proposition}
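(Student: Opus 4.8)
The plan is to establish the two assertions of Proposition~\ref{prop:down} separately: first the explicit formula for the one-dimensional marginal $\bP(C(n) = (n_1,\ldots,n_k))$, and then weak sampling consistency.

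For the explicit formula, I would exploit the three-colour P\'olya urn decomposition developed in the paragraph preceding the proposition. Write $C(n) = C_1^{(n)} \concat \{(0,N_0^{(n)})\} \concat C_2^{(n)}$, where $(N_1^{(n)}, N_0^{(n)}, N_2^{(n)})$ is the urn triple. Given a target composition $(n_1,\ldots,n_k)$, the initial table (the unique table founded at step $1$) must be one of the $k$ tables; say it is the $i$-th. This partitions the probability into $k$ mutually exclusive events indexed by $i$. On the event that the initial table is the $i$-th, we need $N_1^{(n)} = N_{1:i-1}$, $N_0^{(n)} = n_i$, $N_2^{(n)} = N_{i+1:k}$, which contributes the factor $p_n(N_{1:i-1}, n_i, N_{i+1:k})$ from \eqref{eq:dmn}. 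Conditionally on this, $C_1^{(n)}$ is an $\mathtt{oCRP}^{(\alpha)}_{N_1^{(n)}}(\theta_1,\alpha)$ and $C_2^{(n)}$ an independent $\mathtt{oCRP}^{(\alpha)}_{N_2^{(n)}}(\alpha,\theta_2)$. Applying Lemma~\ref{lem:rcs} to $C_2^{(n)}$ gives the product $\prod_{j=i+1}^k r_{\theta_2}(N_{j:k}, n_j)$ directly, since $N_{j:k} = n_j + \cdots + n_k$ are exactly the successive remaining-total arguments in the regenerative factorisation. For $C_1^{(n)}$, which is an $\mathrm{oCRP}^{(\alpha)}(\theta_1,\alpha)$, one applies Lemma~\ref{lem:rcs} after a left-right reversal: the reversal of an $\mathrm{oCRP}^{(\alpha)}(\theta_1,\alpha)$ is a regenerative $(\alpha,\theta_1)$-composition structure, so reading the blocks of $C_1^{(n)} = (n_1,\ldots,n_{i-1})$ from right to left (i.e.\ peeling off $n_{i-1}$, then $n_{i-2}$, etc.) yields $\prod_{j=1}^{i-1} r_{\theta_1}(N_{1:j}, n_j)$ with $N_{1:j} = n_1 + \cdots + n_j$ being the remaining total after the peeled blocks. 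Summing over $i$ gives the claimed identity. This step is essentially bookkeeping, matching the block-total arguments on both sides; care is needed only with the convention $p_n(\cdot)$ requiring the middle argument nonzero (always satisfied here since $n_i \ge 1$) and with the boundary cases $i=1$ and $i=k$ where one of the products is empty.

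For weak sampling consistency, the cleanest route is to argue directly on the CRP dynamics rather than from the formula. Couple $C(n)$ and $C(n+1)$ on the same probability space via the sequential seating rule, so that $C(n)$ is the configuration of the first $n$ customers and $C(n+1)$ is obtained by seating the $(n+1)$-st. One then wants to show that deleting a uniformly random one of the $n+1$ customers from $C(n+1)$ (and removing a table if it becomes empty) yields a composition with the law of $C(n)$. The key structural fact is that the $(\alpha,\theta_1,\theta_2)$-seating rule is itself a "size-biased-with-corrections" rule whose restriction is consistent: conditionally on $C(n+1)$, the $(n+1)$-st customer's position is not uniform, but the combination of (a) the forward seating probabilities and (b) uniform deletion interlock so that the deleted customer, given $C(n+1)$, is in fact seated at each existing table with the corrections absorbed. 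Concretely, I would verify the one-step identity by a direct computation: fix compositions $\lambda \in \cC_n$ and $\mu \in \cC_{n+1}$ with $\mu$ obtainable from $\lambda$ by one seating step, compute $\bP(C(n+1) = \mu)$ via the forward rule applied to $\bP(C(n) = \lambda)$, and then check that $\sum_{\mu} \bP(C(n+1)=\mu) \cdot \bP(\text{uniform deletion from } \mu \text{ gives } \lambda) = \bP(C(n)=\lambda)$. Because the unordered projection of $\mathrm{oCRP}^{(\alpha)}(\theta_1,\theta_2)$ is the classical $(\alpha,\theta)$-Chinese restaurant process, which is well known to be sampling consistent (deletion consistency of the $(\alpha,\theta)$-CRP), the unordered part of the identity is free; what remains is to check that the ordering is preserved under deletion, which it manifestly is, since deleting a customer never changes the left-to-right order of the surviving tables.

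I expect the main obstacle to be the weak sampling consistency argument, specifically handling the new-table rates cleanly: when the deleted customer was the sole occupant of a table, that table disappears, and one must check that the rate $\alpha$ for creating a table between two neighbours (resp.\ $\theta_1$, $\theta_2$ at the ends) combines correctly with the table-merging bookkeeping. The delicate point is that removing a singleton table from $\mu$ merges what were two "gaps" into one, and the Gnedin--Pitman theory of composition structures (or a direct computation with \eqref{eq:dmn} and the decrement matrices $r_{\theta_1}, r_{\theta_2}$) is needed to see that the weights match. An alternative, possibly shorter, route is to deduce weak sampling consistency from the explicit formula just proved together with the known weak sampling consistency of its two regenerative ingredients $\mathrm{oCRP}^{(\alpha)}(\alpha,\theta_2)$ and (by reversal) $\mathrm{oCRP}^{(\alpha)}(\theta_1,\alpha)$ and the deletion-consistency of the Dirichlet-multinomial triple $(N_1^{(n)}, N_0^{(n)}-1, N_2^{(n)})$; I would present whichever of these two turns out to require less case analysis at the interface between the middle table and the side compositions.
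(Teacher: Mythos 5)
Your derivation of the marginal formula is correct and is exactly the paper's argument: partition according to which of the $k$ tables is the initial one, use the Dirichlet-multinomial law \eqref{eq:dmn} of the urn triple, and apply Lemma~\ref{lem:rcs} to the right part and (after left--right reversal) to the left part. The paper indeed dismisses this as an immediate consequence of the triple decomposition, so that half is fine.

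The weak sampling consistency half has a genuine gap. Your main route rests on the claim that, since the unordered projection is the classical $(\alpha,\theta)$-CRP and is deletion-consistent, ``the unordered part of the identity is free'' and it only remains to note that deletion preserves the left-to-right order of surviving tables. That does not prove the ordered statement: what must be shown is that the conditional law of the \emph{ordering} given the size configuration is the same for $C(n)$ as for the uniformly-thinned $C(n+1)$, and this conditional law depends nontrivially on $(\theta_1,\theta_2)$; preservation of the relative order of surviving tables says nothing about it. The crux is precisely the case you flag but do not resolve: when the deleted customer was the sole occupant of a table (in particular of the distinguished initial table), the triple decomposition $C_1\concat\{(0,N_0)\}\concat C_2$ loses its marked block. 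The paper's proof handles this by re-designating the marked table as the nearest table to the left or right with probability proportional to $\|C_1^{(n+1)}\|$ and $\|C_2^{(n+1)}\|$ respectively, then (i) verifying a five-term identity for the resulting totals that sums to $p_n(n_1,n_0,n_2)$, and (ii) checking the conditional laws of the side compositions case by case, using \emph{both} the weak sampling consistency \emph{and} the regeneration property of the two-parameter $\mathrm{oCRP}^{(\alpha)}(\theta_1,\alpha)$ and $\mathrm{oCRP}^{(\alpha)}(\alpha,\theta_2)$ from Lemma~\ref{lem:rcs} (regeneration is what makes the re-marked neighbouring block work). Your ``alternative route B'' is essentially this argument, but you leave it as an option rather than carrying it out, and without the re-marking device and the use of regeneration the interface case remains unproved; as it stands the proposal does not establish sampling consistency.
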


\begin{proof}
	The claimed expression for $\bP(C(n)=(n_1,\ldots,n_k))$ is an immediate consequence of the decomposition explained above. 
          Let us use shorthand notation $q_n(n_1,\ldots,n_k)$ for this probability function, and in this expression, also abbreviate the 
          $\mathtt{oCRP}^{(\alpha)}_{N_{1:i-1}}(\theta_1, \alpha)$ and $\mathtt{oCRP}^{(\alpha)}_{N_{i+1:k}}(\alpha,\theta_2)$ probabilities captured in the
          two products by $r_-(n_1,\ldots,n_{i-1})$ and $r_+(n_{i+1},\ldots,n_k)$, respectively, so that
          \begin{equation}\label{eqn:distn}
          q_n(n_1,\ldots,n_k)=\sum_{i=1}^k  
	p_n \big(N_{1:i-1}, n_{i}, N_{i+1:k}\big)    r_-(n_1,\ldots,n_{i-1})r_+(n_{i+1},\ldots,n_k).
	\end{equation}
         To prove the weak sampling consistency, we have to verify
          \begin{equation}\label{eqn:qn}
             \begin{split}
             q_n(n_1,\ldots,n_k)=&\sum_{\ell=1}^kq_{n+1}(n_1,\ldots,n_{\ell-1},n_\ell+1,n_{\ell+1},\ldots,n_k)\frac{n_\ell+1}{n+1}\\[-0.4cm]
					&\qquad+\sum_{\ell=0}^kq_{n+1}(n_1,\ldots,n_\ell,1,n_{\ell+1},\ldots,n_k)\frac{1}{n+1}.
             \end{split}
          \end{equation}
          First consider the decomposition 
	$C(n+1)=C_-^{(n+1)}\concat(N_0^{(n+1)})\concat C_+^{(n+1)}$. 
	By removing one customer uniformly at random, 
	we obtain in the obvious way a triple $(\widetilde{C}_-^{(n)},\widetilde{N}_0^{(n)},\widetilde{C}_+^{(n)})$, with 
	the exception for the case when $N_0^{(n+1)}=1$ and this customer is removed by the down-step:  
	in the latter situation, to make sure that $\widetilde{N}_0^{(n)}$ is strictly positive, we choose the new marked table to be the nearest to the left with probability proportional to $\|C_-^{(n+1)}\|$, 
	and the nearest to the right with the remaining probability, proportional to $\|C_+^{(n+1)}\|$, and 
	we further decompose according to this new middle table to define $\big(\widetilde{C}_-^{(n)},\widetilde{N}_0^{(n)},\widetilde{C}_+^{(n)}\big)$.  

	Therefore, for $n_-, n_0, n_+\in \bN_0$ with $n_0\ne 0$ and $n_-+n_0+n_+ = n$,
	the  probability of the event  $\big\{\big(\|\widetilde{C}_-^{(n)}\|,\widetilde{N}_0^{(n)},\|\widetilde{C}_+^{(n)}\|\big)= (n_-, n_0, n_+) \big\}$ is 
	\begin{align}
		&\hspace{-0.4cm}p_{n+1}(n_-\!\!+\!1,n_0,n_+)\frac{n_-\!+\!1}{n\!\!+\!1} + p_{n\!+\!1}(n_-,n_0,n_+\!+\!1)\frac{n_+\!+\!1}{n\!+\!1}
		+ p_{n\!+\!1}(n_-,n_0\!+\!1,n_+)\frac{n_0\!+\!1}{n\!+\!1}\hspace{-0.4cm}\nonumber\\
		&+p_{n+1}(n_-\!\!+\!n_0,1,n_+) \frac{n_-\!\!+\!n_0}{n(n\!+\!1)}   r_{\theta_1}\!(n_-\!+\!n_0, n_0)\label{eqn:pn}\\
		&+p_{n+1}(n_-,1, n_0\!+\!n_+) \frac{n_0\!+\!n_+}{n(n\!+\!1)} r_{\theta_2}\!(n_0\!+\!n_+, n_0), \nonumber
	\end{align}
	where the meaning of each term should be clear. We will refer to the five terms as 1--5.
  Using \eqref{eq:dmn}  together with the expressions for $r_{\theta_1}$ and $r_{\theta_2}$ from Lemma \ref{lem:rcs}, we rewrite the sum as 
	\begin{align*}
	&	 p_n(n_-,n_0,n_+)\frac{n }{(n+1)(n-\alpha+\theta_1+\theta_2)}\times  \\
	&	 \left( n_- +\theta_1+n_+ +\theta_2 +(n_0-\alpha)\frac{n_0+1}{n_0} +
	\frac{1}{n_0}\frac{n_-\alpha+n_0\theta_1}{n}  +
	\frac{1}{n_0}\frac{n_+\alpha+n_0\theta_2}{n}  
	\right). 
	\end{align*}
	Straightforward calculation shows that this is equal to $p_n(n_-,n_0,n_+)$.

          Next, note that the weak sampling consistency of $\mathtt{oCRP}^{(\alpha)}_{N_{1:i-1}}(\theta_1, \alpha)$ and 
          $\mathtt{oCRP}^{(\alpha)}_{N_{i+1:k}}(\alpha,\theta_2)$ given by Lemma \ref{lem:rcs} can be expressed as two sums, which we will refer to as A and B
          \begin{equation}\label{eqn:rminus}
             \begin{split} \quad r_-(n_{1},\ldots,n_{i-1})=&\sum_{\ell=1}^{i-1} r_-(n_1,\ldots,n_{\ell-1},n_\ell+1,n_{\ell+1},\ldots,n_{i-1})\frac{n_\ell+1}{N_{1:i-1}+1}\ \ \ \,\quad\mbox{(A)}\\[-0.4cm]
                                                                                                         &\qquad+\sum_{\ell=0}^{i-1}r_-(n_{1},\ldots,n_\ell,1,n_{\ell+1},\ldots,n_{i-1})\frac{1}{N_{1:i-1}+1}\, \ \ \ \,\quad\mbox{(B)}
             \end{split}
          \end{equation}
          and another two sums, which we will refer to as C and D
          \begin{equation}\label{eqn:rplus}
             \begin{split} \ \ \ \  r_+(n_{i+1},\ldots,n_k)=&\sum_{\ell=i+1}^k r_+(n_{i+1},\ldots,n_{\ell-1},n_\ell+1,n_{\ell+1},\ldots,n_k)\frac{n_\ell+1}{N_{i+1:k}+1}\ \ \ \ \,\mbox{(C)}\\[-0.4cm]
                                                                                                         &\qquad\ \ \ +\sum_{\ell=i}^kr_+(n_{i+1},\ldots,n_\ell,1,n_{\ell+1},\ldots,n_k)\frac{1}{N_{i+1:k}+1}\ \ \ \ \,\mbox{(D)}
             \end{split}
          \end{equation}
         Finally, consider \eqref{eqn:qn}. We use \eqref{eqn:distn} on the LHS and replace $p_n(N_{1:i-1},n_i,N_{i+1:k})$ by  
         \eqref{eqn:pn}, with $n_-=N_{1:i-1}$, $n_0=n_i$ and $n_+=N_{i+1:k}$. For each $1\le i\le k$, this is a sum over five terms 1$_i-$5$_i$. To the $r_-(n_1,\ldots,n_{i-1})$ in the first of 
         these, with a numerator $N_{1:i-1}+1$, we apply \eqref{eqn:rminus}, and to the $r_+(n_{i+1},\ldots,n_k)$ in the second of these, with a numerator $N_{i+1:k}+1$, we apply 
         \eqref{eqn:rplus}. This yields two double sums over $1\!\le\! i\!\le\! k$ and subsets of $0\!\le\!\ell\!\le\! k$ whose terms we denote by 
         $1^{\mathrm{(A)}}_{i\ell}$, $1\le\ell<i\le k$, $1^{\mathrm{(B)}}_{i\ell}$, $0\le\ell<i\le k$, $2^{\mathrm{(C)}}_{i\ell}$, $1\le i<\ell\le k$, $2^{\mathrm{(D)}}_{i\ell}$, $1\le i\le\ell\le k$, and 
         we also have $3_i$, $4_i$ and $5_i$, $1\le i\le k$, which we can also write as $3_\ell$, $4_\ell$ and $5_\ell$, $1\le\ell\le k$.
      
         On the RHS of \eqref{eqn:qn}, we apply \eqref{eqn:distn} to $q_{n+1}(n_1,\ldots,n_{\ell-1},n_\ell+1,n_{\ell+1},\ldots,n_k)\frac{n_{\ell}}{n+1}$ in the first sum and identify this as 
         $\sum_{i=1}^{\ell-1} 2^{\mathrm{(C)}}_{i\ell} + 3_{\ell} + \sum_{i=\ell+1}^{k} 1^{\mathrm{(A)}}_{i\ell}$. 
          Similarly, we apply \eqref{eqn:distn} to 
         $q_{n+1}(n_1,\ldots,n_\ell,1,n_{\ell+1},\ldots,n_k)\frac{1}{n+1}$   in the second sum and identify this as 
         $\sum_{i=1}^{\ell} 2^{\mathrm{(D)}}_{i\ell} + 4_{\ell}\ind_{\{\ell > 0\}} +5_{\ell+1}\ind_{\{\ell < k\}}  + \sum_{i=\ell+1}^{k} 1^{\mathrm{(B)}}_{i\ell}$. 
         To justify this identity, we observe that, for $1\le\ell\le k-1$, 
         $4_{\ell}+5_{\ell+1}=p_{n+1}(N_{1:\ell},1,N_{\ell+1:k})r_-(n_1,\ldots,n_\ell)r_+(n_{\ell+1},\ldots,n_k)$, since  $r_{\theta_1}(N_{1:\ell},n_\ell)r_-(n_1,\ldots,n_{\ell-1})=r_-(n_1,\ldots,n_\ell)$, $r_{\theta_2}(N_{\ell+1: k},n_{\ell+1})r_+(n_{\ell+2},\ldots,n_k)$ $=r_+(n_{\ell+1},\ldots,n_k)$ and $N_{1:\ell}+N_{\ell+1:k}=n$; the cases $\ell=0$ and $\ell=k$ are similarly covered by $5_1$ and $4_k$, respectively. This completes the proof of \eqref{eqn:qn} and hence of this proposition.
\end{proof} 

\begin{remark} The last part of the proof, after identifying \eqref{eqn:pn} with $p_n(n_-,n_0,n_+)$, can be seen as a formalised version of the following conceptual sampling consistency argument.
	
	The triple description \eqref{eqn:distn} yields that, conditionally on $\|C_-^{(n+1)}\|=n_-$, $C_-^{(n+1)}$ has distribution $\mathtt{oCRP}^{(\alpha)}_{n_-}(\theta_1,\alpha)$. 
	Conditionally on $(\|\widetilde{C}_-^{(n)}\|,\widetilde{N}_0^{(n)},\|\widetilde{C}_+^{(n)}\|)= (n_-, n_0, n_+)$, we still have 
	that $\widetilde{C}_-^{(n)}\sim \mathtt{oCRP}^{(\alpha)}_{n_-}(\theta_1, \alpha)$. 
	This can be checked by looking at each situation: 
	in the down-step, if the customer is removed from the marked table (with size $\ge 2$) or the right part, then $\widetilde{C}_-^{(n)} = C_-^{(n+1)}$, 
	which has distribution $\mathtt{oCRP}^{(\alpha)}_{n_-}(\theta_1,\alpha)$;  
	if the customer is removed from the left part, then this is a consequence of the weak sampling consistency of $C_-^{(n+1)}$ given by Lemma~\ref{lem:rcs}; 
	if the marked table has one customer and she is removed, then 
	the claim holds because Lemma~\ref{lem:rcs} yields that $C_-^{(n+1)}$ is regenerative.  
	Similarly, $\widetilde{C}_+^{(n)}\sim \mathtt{oCRP}^{(\alpha)}_{n_+}(\theta_2)$. 
	Summarising, $(\widetilde{C}_-^{(n)},\widetilde{N}_0^{(n)},\widetilde{C}_+^{(n)})$ has the same distribution 
	as $(C_-^{(n)},N_0^{(n)},C_+^{(n)})$. 
\end{remark}

\subsection{The three-parameter family $\mathtt{PDIP}^{(\alpha)}(\theta_1,\theta_2)$}\label{sec:ocrp-pdip}

Our next aim is to study the asymptotics of an $\mathtt{oCRP}^{(\alpha)}_{n}(\theta_1,\theta_2)$, as $n\rightarrow\infty$. 
To this end, recall the space $(\mathcal{I}_H,d_H)$ of interval partitions introduced in the introduction. 
For an interval partition $\beta\in\mathcal{I}_H$, we refer
to the intervals $U\in\beta$ as \em blocks \em and to their lengths ${\rm Leb}(U)$ as their \em masses\em. We similarly refer to $\|\beta\|:=\sum_{U\in\beta}{\rm Leb}(U)$ as the 
\em total mass \em of $\beta$. 
For $c>0$ and $\beta\in \cI_H$, define a \emph{scaling map} by
\[
c \beta:= \{(ca,cb)\colon (a,b)\in \beta\}.
\] 
The \emph{left-right reversal} of $\beta$ is 
\begin{equation}\label{eq:reversal}
	{\rm rev}(\beta):=\{ (\|\beta\|-b, \|\beta\|-a):~ (a,b)\in\beta\}\in \cI_H.
\end{equation}
We also introduce the \emph{concatenation} of a family of interval partitions $(\beta_a)_{a\in \mathcal{A}}$, indexed by a totally ordered set $(\mathcal{A}, \preceq)$: 
\[
\Concat_{a\in \mathcal{A}} \beta_a 
:= \left\{ 
\big(x+ S_{\beta}(a-), y + S_{\beta}(a-)\big) \colon a\in \mathcal{A}, (x,y)\in \beta_{a}
\right\}, ~\text{where}~ S_{\beta}(a-):= \sum_{b\prec a} \|\beta_b\|.
\]
When $\mathcal{A} = \{1,2\}$, we denote this by $\beta_1 \concat \beta_2$.
Then each composition $(n_1,n_2,\ldots,n_k)\in \cC$ is identified with the interval partition 
$\Concat_{i=1}^k \{(0, n_i)\} \in \cI_H$. 
We therefore shall view $\cC$ as a subset of $\cI_H$. 
Let $\cI_{H,1}\subset \cI_{H}$ be the space of partitions of the unit interval $[0,1]$. 

\begin{proposition}\label{prop:crp-pdip}
	For $\alpha\in (0,1)$, $\theta_1,\theta_2\ge 0$, let $(C(n),n\ge 1)\sim \mathrm{oCRP}^{(\alpha)}(\theta_1,\theta_2)$.
	Then $\frac{1}{n} C(n)$ converges a.s.\@ to a random interval partition $\bar{\gamma}$ on $\cI_{H,1}$, under the metric $d_{H}$, as $n\to \infty$. 
\end{proposition}

When $\theta_1=\alpha$, this is known from \cite[Proposition~6]{PitmWink09} and the limit $\bar{\gamma}$ is called a \emph{regenerative $(\alpha,\theta_2)$ interval partition} in \cite{GnedPitm05,PitmWink09}. 

\begin{definition}[$\mathtt{PDIP}^{(\alpha)}( \theta_1, \theta_2)$]\label{defn:pdip} 
	The limiting random interval partition $\bar{\gamma}$ in Proposition~\ref{prop:crp-pdip} is called a \emph{Poisson--Dirichlet$(\alpha,\theta_1,\theta_2)$ interval partition}, whose law on $\cI_{H,1}$ is denoted by $\mathtt{PDIP}^{(\alpha)}( \theta_1,\theta_2)$. 	 
	When $\theta_2=\alpha$, we write $\mathtt{PDIP}^{(\alpha)}(\theta_1):= \mathtt{PDIP}^{(\alpha)}( \theta_1,\alpha)$. 
\end{definition}

The case $\theta_1=\theta_2 =0$ is degenerate with $\mathtt{PDIP}^{(\alpha)}( 0, 0) = \delta_{\{(0,1)\}}$.
The family of $\mathtt{PDIP}^{(\alpha)}( \theta_1, \theta_2)$ notably extends the subfamilies of \cite{GnedPitm05,PitmWink09,ShiWinkel-1}, whose ranked sequence of interval lengths in the Kingman simplex
\[
\nabla_{\infty} := \bigg\{ (x_1, x_2, \ldots) \colon x_1\ge x_2\ge \cdots \ge 0,\, \sum_{i\ge 1} x_i = 1 \bigg\}
\]
are members of the two-parameter family ${\tt PD}^{(\alpha)}(\theta)$, $\alpha\in(0,1)$, $\theta\ge 0$ of \emph{Poisson--Dirichlet distributions}.  Here, we include new cases of interval partitions, for which $\theta\in(-\alpha,0)$, completing the usual range of the two-parameter family of ${\tt PD}^{(\alpha)}(\theta)$ of  \cite[Definition~3.3]{CSP}.

\begin{proposition}\label{prop:pdip-pd}
	Let $\theta_1 ,\theta_2 \ge 0$ with $\theta_1+\theta_2 >0$. 
	The ranked interval lengths of a $\mathtt{PDIP}^{(\alpha)}( \theta_1, \theta_2)$ have $\pd^{(\alpha)}(\theta)$ distribution on $\nabla_{\infty}$ with $\theta:= \theta_1 + \theta_2 - \alpha>-\alpha$. 
\end{proposition}

\begin{proof}
	Let $C^{(n)} \sim \mathtt{oCRP}^{(\alpha)}_{n}(\theta_1, \theta_2)$,  
	then it follows immediately from 
	its construction that 
	$C^{(n)}$ ranked in decreasing order is an unordered $(\alpha, \theta)$-Chinese restaurant process with $\theta:= \theta_1 + \theta_2 -\alpha >-\alpha$.  
	As a consequence of Proposition~\ref{prop:crp-pdip},  the ranked interval lengths of a $\mathtt{PDIP}^{(\alpha)}( \theta_1, \theta_2)$ have the same distribution as the limit of $(\alpha, \theta)$-Chinese restaurant processes, which is 
	known to be $\pd^{(\alpha)}(\theta)$. 
\end{proof}

To prove Proposition~\ref{prop:crp-pdip}, we extend the parameters of Dirichlet distributions to 
every $\alpha_1, \ldots ,\alpha_m \ge 0$ with $m\ge 1$:  say $\alpha_{i_1},\ldots, \alpha_{i_k} >0$
and $\alpha_j =0$ for any other $j\le m$, let $(B_{i_1},\ldots ,B_{i_k})\sim \mathtt{Dir} (\alpha_{i_1}, \ldots, \alpha_{i_k})$ and $B_j:=0$ for all other $j$. 
Then we define $\mathtt{Dir} (\alpha_1, \ldots, \alpha_m)$ to be the law of $(B_1,\ldots,B_m)$. 
By convention $\mathtt{Dir} (\alpha)= \delta_1$ for any $\alpha\ge 0$.

\begin{proof}[Proof of Proposition~\ref{prop:crp-pdip}]
	We use the triple-description of $C(n)$ in the 
	proof of Proposition~\ref{prop:down}. Consider independent $(R_1(n), n\ge 1)\sim \mathrm{oCRP}^{(\alpha)}(\theta_1,\alpha)$,  $(R_2(n), n\ge 1)\sim \mathrm{oCRP}^{(\alpha)}(\alpha,\theta_2)$, and a P\'olya urn model with three colours $(N_1^{(n)}, N_0^{(n)}, N_2^{(n)})$, $n\ge 1$. 
	Then we can write $C(n) = R_1(N_1^{(n)}) \star \{(0,N_0^{(n)} )\}\star  R_2(N_2^{(n)})$. 
	
	The asymptotics of a P\'olya urn yield that $\frac{1}{n} (N_1^{(n)}, N_0^{(n)}, N_2^{(n)})$ converges a.s.\@ to some $(B_1, B_0, B_2)\sim \mathtt{Dir} (\theta_1, 1-\alpha,\theta_2)$. 
	By \cite[Proposition~6]{PitmWink09}, there exist independent $\bar{\gamma}_1\sim \pdip^{(\alpha)} (\theta_1)$ and $\bar{\gamma}_2\sim \pdip^{(\alpha)} (\theta_2)$, 
	such that  $\frac{1}{n}  R_1 (n) \to \bar{\gamma}_1$
	and  $\frac{1}{n}  R_2(n) \to \mathrm{rev}(\bar{\gamma}_2)$ a.s.\@ as $n\to\infty$. 
	Therefore,	$\frac{1}{n}  C(n)$ converges a.s.\@ to 
	$(B_1 \bar{\gamma}_1) \concat \{(0,B_0)\}\concat  (B_2\mathrm{rev}(\bar{\gamma}_2))$. 
\end{proof}

By the proof of Proposition~\ref{prop:crp-pdip}, we immediately have the following decomposition. 
\begin{corollary}\label{cor:pdip} 
	Let $\theta_1, \theta_2 \ge 0$. Consider   
	$(B_1, B_0, B_2)\sim \mathtt{Dir} (\theta_1, 1\!-\!\alpha,\theta_2)$, $\bar{\gamma}_1\sim \pdip^{(\alpha)} (\theta_1)$, and $\bar{\gamma}_2\sim \pdip^{(\alpha)} (\theta_2)$, independent of each other. 
	Let $\bar{\gamma}= B_1  \bar{\gamma}_1 \concat \{(0, B_0)\} \concat \mathrm{rev} (B_2  \bar{\gamma}_2)$. 
	Then $\bar{\gamma}\sim \mathtt{PDIP}^{(\alpha)}(\theta_1, \theta_2)$.
\end{corollary}

With independent $B\sim \mathtt{Beta}( 1\!-\!\alpha\!+\!\theta_1, \theta_2)$, $\bar\gamma\sim \mathtt{PDIP}^{(\alpha)}(\theta_1, 0)$, and $\bar\beta\sim \mathtt{PDIP}^{(\alpha)}(\alpha,\theta_2)$, it follows readily from Corollary~\ref{cor:pdip} that 
\begin{equation*}\label{eq:pdip-decomp-bis}
	B\bar{\gamma} \concat (1-B) \bar{\beta} \sim \mathtt{PDIP}^{(\alpha)}(\theta_1,\theta_2). 
\end{equation*} 
This extends \cite[Corollary 8]{PitmWink09} to the three-parameter case. 
When $\theta_1\ge \alpha$, we also have a different decomposition as follows. 

\begin{corollary}\label{cor:pdipdec}
	Suppose that $\theta_1> \alpha$. 
	With independent $B'\sim \mathtt{Beta}( \theta_1\!-\!\alpha, \theta_2)$, $\bar\gamma\sim \mathtt{PDIP}^{(\alpha)}(\theta_1, 0)$, and $\bar\beta\sim \mathtt{PDIP}^{(\alpha)}(\alpha,\theta_2)$, we have 
	\begin{equation}\label{eq:pdip-decomp}
		B'\bar{\gamma} \concat (1-B') \bar{\beta} \sim \mathtt{PDIP}^{(\alpha)}(\theta_1,\theta_2), 
	\end{equation}
	and $\bar{\gamma}\overset{d}{=}V^\prime\bar{\gamma}_1\concat\{(0,1-V^\prime)\}$ for independent
	$V^\prime\sim{\tt Beta}(\theta_1,1-\alpha)$ and $\bar{\gamma}_1\sim{\tt PDIP}^{(\alpha)}(\theta_1)$.
\end{corollary}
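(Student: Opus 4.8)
The plan is to derive \eqref{eq:pdip-decomp} by manipulating the defining decomposition in Definition~\ref{defn:pdip} for $\mathtt{PDIP}^{(\alpha)}(\theta_1,\theta_2)$ so that the left part gets re-bundled as a single $\mathtt{PDIP}^{(\alpha)}(\theta_1,0)$ factor and the remainder as a $\mathtt{PDIP}^{(\alpha)}(\alpha,\theta_2)$ factor. Concretely, start from $(B_1,B_0,B_2)\sim\mathtt{Dir}(\theta_1,1-\alpha,\theta_2)$, $\bar\gamma_1\sim\mathtt{PDIP}^{(\alpha)}(\theta_1)$ and $\bar\gamma_2\sim\mathtt{PDIP}^{(\alpha)}(\theta_2)$ independent, so that
\[
\bar\gamma = B_1\odotip\bar\gamma_1\concat\{(0,B_0)\}\concat\mathrm{rev}(B_2\odotip\bar\gamma_2)
\]
has law $\mathtt{PDIP}^{(\alpha)}(\theta_1,\theta_2)$. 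Using the standard aggregation property of Dirichlet vectors, write $B':=B_1+B_0$ (when $\theta_1\ge\alpha>0$, so $\theta_1>0$ and $B'$ is genuinely random), with $B'\sim\mathtt{Beta}(\theta_1+1-\alpha,\theta_2)$, and conditionally on $B'$ let $V:=B_1/B'\sim\mathtt{Beta}(\theta_1,1-\alpha)$ independent of $B'$ and of everything else; also $1-B'=B_2\sim\mathtt{Beta}(\theta_2,\theta_1+1-\alpha)$. Then the left portion of $\bar\gamma$ of total mass $B'$ equals $B'\odotip\big(V\odotip\bar\gamma_1\concat\{(0,1-V)\}\big)$, and the right portion of mass $1-B'$ equals $(1-B')\odotip\mathrm{rev}(\bar\gamma_2)$.

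Next I would identify the two pieces. For the left piece, Definition~\ref{defn:pdip} applied with $\theta_2=0$ gives $\mathtt{PDIP}^{(\alpha)}(\theta_1,0)$ as $B_1\odotip\bar\gamma_1\concat\{(0,B_0)\}$ with $(B_1,B_0)\sim\mathtt{Dir}(\theta_1,1-\alpha)=\mathtt{Beta}(\theta_1,1-\alpha)$; comparing, $V\odotip\bar\gamma_1\concat\{(0,1-V)\}\sim\mathtt{PDIP}^{(\alpha)}(\theta_1,0)$, which is exactly the second claimed identity $\bar\gamma\overset{d}{=}V'\bar\gamma_1\concat\{(0,1-V')\}$ (rename $V$ as $V'$). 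For the right piece, $\mathrm{rev}(\bar\gamma_2)$ with $\bar\gamma_2\sim\mathtt{PDIP}^{(\alpha)}(\theta_2)=\mathtt{PDIP}^{(\alpha)}(\theta_2,\alpha)$ has law $\mathtt{PDIP}^{(\alpha)}(\alpha,\theta_2)$ by the reversal symmetry already noted after Definition~\ref{defn:pdip-alphatheta} (equivalently $\mathrm{rev}$ interchanges the roles of $\theta_1$ and $\theta_2$ in $\mathtt{PDIP}^{(\alpha)}(\theta_1,\theta_2)$, which is immediate from Definition~\ref{defn:pdip}). Thus $\bar\gamma=B'\odotip\bar\gamma'\concat(1-B')\odotip\bar\beta$ with $\bar\gamma'\sim\mathtt{PDIP}^{(\alpha)}(\theta_1,0)$, $\bar\beta\sim\mathtt{PDIP}^{(\alpha)}(\alpha,\theta_2)$ and $B'\sim\mathtt{Beta}(\theta_1+1-\alpha,\theta_2)$, and all three are independent by the Dirichlet aggregation/independence structure.

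Finally I must reconcile the Beta parameter: the statement wants $B'\sim\mathtt{Beta}(\theta_1-\alpha,\theta_2)$, not $\mathtt{Beta}(\theta_1+1-\alpha,\theta_2)$. This is where the extra structure of $\mathtt{PDIP}^{(\alpha)}(\alpha,\theta_2)$ enters: its leftmost block behaves like an independent $\mathtt{Beta}(1-\alpha,\cdot)$-sized mass that can be split off and absorbed, shifting the Beta index down by $1-\alpha$. Precisely, I would use \eqref{eq:pdip:0-alpha} (a $\mathtt{Beta}(\alpha,1-\alpha)$ first-mass split of $\mathtt{PDIP}^{(\alpha)}(\alpha)$) together with the analogous regenerative decomposition of $\mathtt{PDIP}^{(\alpha)}(\alpha,\theta_2)$ — namely that its first block has size $\mathtt{Beta}(1-\alpha,\alpha+\theta_2)$ and the rescaled remainder is again $\mathtt{PDIP}^{(\alpha)}(\alpha,\theta_2)$, which follows from Lemma~\ref{lem:rcs} via the decrement matrix $r_{\theta_2}$ and reversal (the first block of $\mathrm{rev}(\bar\gamma_2)$ is the last block of $\bar\gamma_2$). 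Then a Beta-merging identity, $\mathtt{Beta}(a,b)$ independent of $\mathtt{Beta}(c,a+b)$ recombine so that the complement mass has a $\mathtt{Beta}(a+c,\cdot)$ law, lets me transfer the $1-\alpha$ worth of mass from the $\mathtt{PDIP}^{(\alpha)}(\alpha,\theta_2)$ factor's first block into the left $\mathtt{PDIP}^{(\alpha)}(\theta_1,0)$ factor — but $\mathtt{PDIP}^{(\alpha)}(\theta_1,0)$ with an extra $\mathtt{Beta}(1-\alpha,\theta_1)$-sized block appended on the right of total rescaled mass is, by Definition~\ref{defn:pdip} with $\theta_2=0$ reused, exactly... and the bookkeeping closes, converting the Beta index from $\theta_1+1-\alpha$ down to $\theta_1-\alpha$. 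The main obstacle is precisely this last step: getting the mass-transfer between the two $\mathtt{PDIP}$ factors and the Beta-index arithmetic to line up exactly, so that the residual block structure is reabsorbed into the correct $\mathtt{PDIP}^{(\alpha)}(\theta_1,0)$ and $\mathtt{PDIP}^{(\alpha)}(\alpha,\theta_2)$ laws without leftover terms; the condition $\theta_1\ge\alpha$ is exactly what keeps $\theta_1-\alpha\ge0$ so that $\mathtt{Beta}(\theta_1-\alpha,\theta_2)$ (interpreted via the degenerate convention when $\theta_1=\alpha$) makes sense.
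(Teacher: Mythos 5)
Your first two steps are fine: Dirichlet aggregation applied to Definition~\ref{defn:pdip} does give the second claim ($\bar\gamma\overset{d}{=}V'\bar\gamma_1\concat\{(0,1-V')\}$ with $V'\sim\mathtt{Beta}(\theta_1,1-\alpha)$), and it gives the decomposition $B\bar\gamma\concat(1-B)\bar\beta\sim\mathtt{PDIP}^{(\alpha)}(\theta_1,\theta_2)$ with $B\sim\mathtt{Beta}(1-\alpha+\theta_1,\theta_2)$ --- but that is exactly the display the paper states just \emph{before} the corollary, not the corollary. The whole content of \eqref{eq:pdip-decomp} is that the \emph{same} pair of independent factors, mixed instead by $B'\sim\mathtt{Beta}(\theta_1-\alpha,\theta_2)$, again yields $\mathtt{PDIP}^{(\alpha)}(\theta_1,\theta_2)$, and your argument for this reconciliation step breaks down. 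The ingredient you invoke --- that $\mathtt{PDIP}^{(\alpha)}(\alpha,\theta_2)$ has a first block of size $\mathtt{Beta}(1-\alpha,\alpha+\theta_2)$ with the rescaled remainder again $\mathtt{PDIP}^{(\alpha)}(\alpha,\theta_2)$ --- is false: since the left parameter is $\alpha>0$, new tables keep appearing to the left of the leftmost one in $\mathrm{oCRP}^{(\alpha)}(\alpha,\theta_2)$, so in the limit the intervals accumulate at the left endpoint and there is no leftmost block (equivalently, $\mathtt{PDIP}^{(\alpha)}(\theta_2)$ has no rightmost block, so there is no ``last block of $\bar\gamma_2$'' to reverse). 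Moreover the arithmetic and the direction of the transfer do not match: you need to move the first Beta parameter from $\theta_1+1-\alpha$ down to $\theta_1-\alpha$, a decrease by $1$, whereas absorbing extra mass into the left factor would increase it, and your stated shift is by $1-\alpha$, which would land at $\theta_1$. Note also that splitting the Definition~\ref{defn:pdip} representation at the other natural point ($B_1=BV$) gives $\mathtt{Beta}(\theta_1,\theta_2+1-\alpha)$, so no regrouping of the blocks in that representation produces the $\mathtt{Beta}(\theta_1-\alpha,\theta_2)$ split; the identity is genuinely nontrivial, and the decisive ``bookkeeping closes'' step in your sketch is left as an ellipsis precisely where the proof has to happen.

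For comparison, the paper proves \eqref{eq:pdip-decomp} at the discrete level: in an $\mathrm{oCRP}^{(\alpha)}(\theta_1,\theta_2)$ one colours a ``red'' table (the initial table with probability $(\theta_1-\alpha)/(\theta_1-\alpha+\theta_2)$, otherwise the first left-arriving table succeeding in independent coin flips with probability $1-\alpha/\theta_1$); splitting at the red table, the two parts are, conditionally on their sizes, independent $\mathrm{oCRP}^{(\alpha)}(\theta_1,0)$ and $\mathrm{oCRP}^{(\alpha)}(\alpha,\theta_2)$, and their sizes form a P\'olya urn with limiting proportions $\mathtt{Dir}(\theta_1-\alpha,\theta_2)$; Lemma~\ref{lem:crp-pdip} then transfers this to the limit. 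Some argument of this kind (or another genuinely new identity) is needed; the purely algebraic regrouping you propose cannot produce the $\mathtt{Beta}(\theta_1-\alpha,\theta_2)$ mixing law.
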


\begin{proof}
	Consider an $\mathrm{oCRP}^{(\alpha)}(\theta_1,\theta_2)$. 
	For the initial table, we colour it in red with probability $(\theta_1-\alpha)/(\theta_1-\alpha+\theta_2)$. If it is not coloured in red, then each time a new table arrives at the very left of the composition, we flip an unfair coin with success probability $1-\alpha/\theta_1$ and colour this new table in red at the first success.  
	In this way, we separate the composition at every step into two parts: the tables to the left of the red table (with the red table included), and everything to the right of the red table. 
	It is easy to see that the sizes of the two parts follow a P\'olya urn such that the asymptotic proportions follow  $\mathtt{Dir} (\theta_1-\alpha,\theta_2)$. Moreover, conditionally on the sizes of the two parts, they are independent 
	$\mathrm{oCRP}^{(\alpha)}(\theta_1,0)$ and $\mathrm{oCRP}^{(\alpha)}(\alpha,\theta_2)$ respectively. 
	Now the claim follows from Proposition~\ref{prop:crp-pdip} and Corollary~\ref{cor:pdip}.
\end{proof}

An ordered version \cite{Pitman97} of \emph{Kingman's paintbox processes} is described as follows.  Let $\gamma\in \cI_{H,1}$ and $(Z_i, i\ge 1)$ be i.i.d.\@ uniform random variables on $[0,1]$. 
Then customers $i$ and $j$ sit at the same table, if and only if $Z_i$ and $Z_j$ fall in the same block of $\gamma$. Moreover, the tables are ordered by their corresponding intervals. 
For any $n\ge 1$, the first $n$ variables $(Z_i, i\in [n])$ give rise to a \emph{composition of the set $[n]$}, i.e.\ an ordered family of  disjoint subsets of $[n]$:  
\begin{equation}\label{eqn:C*}
	C_{\gamma}^*(n)= \{B_U(n)\colon B_U(n)\!\ne\! \emptyset, U\!\in\! \gamma\}, ~\text{where}~ B_U(n) := \{j\!\le\! n \colon Z_j \!\in\! U\}. 
\end{equation}
Let $P_{n,\gamma}$ be the distribution of the random composition of $n$ induced by $C^*_{\gamma}(n)$. 

The following statement shows that the composition structure induced by an ordered CRP is a mixture of \emph{ordered Kingman paintbox processes}.

\begin{proposition}\label{prop:ocrp-pdip}
	The probability measure $\pdip^{(\alpha)}(\theta_1,\theta_2)$ is the unique probability measure on $\cI_{H}$, such that there is the identity 
	\[
	\mathtt{oCRP}_n^{(\alpha)}(\theta_1,\theta_2) (A) 
	= \int_{\cI_H} P_{n,\gamma} (A) ~ \pdip^{(\alpha)}(\theta_1,\theta_2)(d \gamma), \qquad \forall n\ge 1,\; \forall A \subseteq \cC_n. %
	\]
\end{proposition}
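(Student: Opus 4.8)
The plan is to recognise $\mathrm{oCRP}^{(\alpha)}(\theta_1,\theta_2)$ as a composition structure, invoke Gnedin's representation theorem \cite{Gnedin97}, and then pin down the abstract directing measure it produces by matching it against the almost sure scaling limit already established in Lemma~\ref{lem:crp-pdip}. First I would use the last assertion of Proposition~\ref{prop:down}: a sequence $(C(n),n\in\bN)\sim\mathrm{oCRP}^{(\alpha)}(\theta_1,\theta_2)$ is weakly sampling consistent, hence a composition structure in the sense of \cite{Gnedin97}. Gnedin's theorem then yields a \emph{unique} probability measure $\mu$ on $\cI_{H,1}$ with $\bP(C(n)\in A)=\int P_{n,\gamma}(A)\,\mu(d\gamma)$ for all $n\in\bN$ and $A\subseteq\cC_n$; equivalently $(C(n),n\in\bN)$ is distributed as the ordered paintbox sequence of \eqref{eqn:C*} directed by $\gamma\sim\mu$. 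Since $P_{n,\gamma}$ depends on $\gamma$ only through its left-to-right sequence of block lengths (the uniform points a.s.\ avoid $G(\gamma)$, which has zero Lebesgue measure) and $\pdip^{(\alpha)}(\theta_1,\theta_2)$ is carried by $\cI_{H,1}$, it suffices to argue within measures on $\cI_{H,1}$; the uniqueness clause on $\cI_H$ in the statement is to be read up to this identification.

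Next I would identify $\mu$. Fix a deterministic $\gamma\in\cI_{H,1}$ and i.i.d.\ uniforms $(Z_i)$ on $[0,1]$: by the strong law of large numbers, for each block $U\in\gamma$ the fraction of $Z_1,\dots,Z_n$ lying in $U$ tends a.s.\ to $\mathrm{Leb}(U)$, and upgrading this (together with convergence of the partial sums giving the block left endpoints) to $\tfrac1n\odotip C^*_\gamma(n)\to\gamma$ a.s.\ in $d_H$ is routine, since finitely many largest blocks and their endpoints control the Hausdorff distance on interval partitions of a fixed interval; this parallels the final assertion of Lemma~\ref{lem:rcs}. Passing to the paintbox realisation from the previous step, this forces $\tfrac1n\odotip C(n)$ to converge a.s.\ to a random interval partition whose law is $\mu$. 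On the other hand, Lemma~\ref{lem:crp-pdip} identifies the law of this same a.s.\ limit as $\pdip^{(\alpha)}(\theta_1,\theta_2)$ (the law of the limit is the weak limit of the laws of $\tfrac1n\odotip C(n)$, hence realisation-independent). Therefore $\mu=\pdip^{(\alpha)}(\theta_1,\theta_2)$, which gives the displayed identity, and uniqueness is exactly the uniqueness clause of Gnedin's theorem.

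The substantive content is this final identification step: Gnedin's theorem only supplies an anonymous directing measure, and the real work is to recognise it concretely, which I would do entirely through the matching of a.s.\ scaling limits rather than by any direct computation with decrement matrices. I expect the only mildly delicate technical point to be the upgrade from convergence of block-length compositions to convergence in $d_H$, which is a standard fact for interval partitions of a fixed interval (and is in any case already implicit in the proofs of Lemma~\ref{lem:rcs} and Lemma~\ref{lem:crp-pdip}). A self-contained alternative that avoids citing \cite{Gnedin97} would instead verify the displayed identity directly by induction on $n$, using the explicit block-length decomposition in Definition~\ref{defn:pdip} together with the regenerative (stick-breaking) structure underlying Lemma~\ref{lem:rcs}; this route is more computational, and I would not pursue it here.
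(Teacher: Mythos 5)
Your argument is correct and is essentially the paper's proof: the paper likewise combines the weak sampling consistency from Proposition~\ref{prop:down} with the scaling limit of Lemma~\ref{lem:crp-pdip} and concludes by citing \cite[Corollary~12]{Gnedin97}. The only difference is that you spell out the identification of the directing measure via the paintbox law of large numbers, which is exactly the content packaged in Gnedin's result that the paper invokes wholesale.
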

\begin{proof}
	Since an $\mathrm{oCRP}^{(\alpha)}(\theta_1,\theta_2)$ is a composition structure by Proposition~\ref{prop:down} and since renormalised $\mathtt{oCRP}_n^{(\alpha)}(\theta_1,\theta_2)$ converges weakly to $\pdip^{(\alpha)}(\theta_1,\theta_2)$ as $n\to \infty$ by Proposition~\ref{prop:crp-pdip}, 
	the statement follows from \cite[Corollary~12]{Gnedin97}. 
\end{proof}

\begin{remark}
	If we label the customers by $\bN$ in an  $\mathrm{oCRP}^{(\alpha)}(\theta_1,\theta_2)$ defined in  Definition~\ref{defn:oCRP}, then we also naturally obtain a composition of the set $[n]$ when $n$ customers have arrived. However, it does not have the same law as the $C^*_{\bar{\gamma}}(n)$ obtained from the paintbox in \eqref{eqn:C*} with $\bar{\gamma}\sim\pdip^{(\alpha)}(\theta_1,\theta_2)$, though we know from Proposition~\ref{prop:ocrp-pdip} that their induced integer compositions of $n$ have the same law. Indeed, $C^*_{\bar{\gamma}}(n)$ obtained by the paintbox is \emph{exchangeable} \cite{Gnedin97}, but it is easy to check that an $\mathrm{oCRP}^{(\alpha)}(\theta_1,\theta_2)$ with general parameters is not, the only exceptions being for $\theta_1=\theta_2=\alpha$. 
\end{remark}

\subsection{The scaling limits of PCRP}\label{sec:mainresult}
We consider a class of ordered Chinese restaurant processes with departures, parametrised by $\alpha\in (0,1)$ and $\theta_1,\theta_2\ge 0$.  
This model extends the model ${\rm PCRP}^{(\alpha)}(\theta)$, $\alpha\in(0,1)$, $\theta\ge 0$, of \cite{RogeWink20} introduced in the introduction.
In the extended model, when there is at least one customer in the restaurant, arrivals are according to the following rule, illustrated in Figure~\ref{fig:PCRP}:   
\begin{itemize}
	\item for each occupied table, say there are $m\ge 1$ customers, 
	a new customer comes to join this table at rate $m- \alpha$;
	\item at rate $\theta_1$, a new customer enters to start a new table to the left of the leftmost table; 
	\item at rate $\theta_2$,  a new customer begins a new table to the right of the rightmost table; 
	\item between each pair of two neighbouring occupied tables, a new customer enters and begins a new table there at rate $\alpha$. 
\end{itemize}
Furthermore, the chain jumps from the null vector $\emptyset$ to state $(1)$ at rate $\theta:= \theta_1+\theta_2-\alpha$ if $\theta>0$, and $\emptyset$ is an absorbing state if $\theta\le 0$. 

We refer to the arrival of a customer as an \emph{up-step}. 
In addition, each customer leaves at rate $1$ (a \emph{down-step}). 

At every time $t\ge 0$, let $C(t)$ be the vector of customer numbers at occupied tables, 
listed from left to right. 
In this way we have defined a continuous-time Markov chain $(C(t), t\ge 0)$.
This process is referred to as a \emph{Poissonised up-down ordered Chinese restaurant process (PCRP) with parameters 
	$\alpha$, $\theta_1$ and $\theta_2$ and initial state $C(0)\in \cC$}, denoted by $\mathrm{PCRP}^{(\alpha)}_{C(0)}(\theta_1,\theta_2)$. 
\begin{figure}[t]
	\centering
	\includegraphics[width=\linewidth]{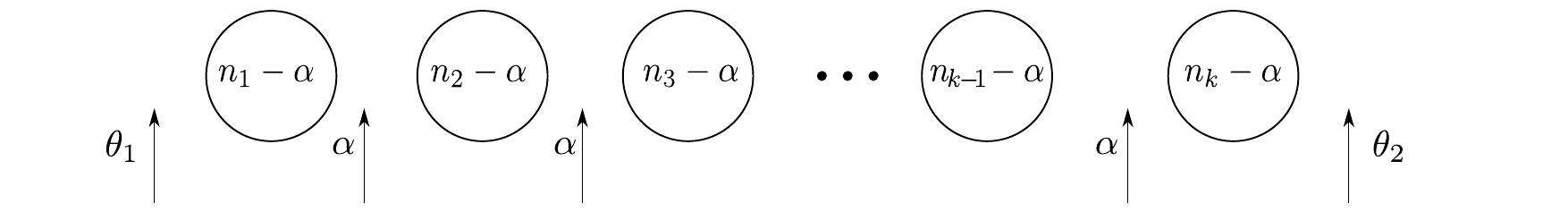}
	\caption{The rates at which new customers arrive in a $\mathrm{PCRP}^{(\alpha)}(\theta_1,\theta_2)$.
	}
	\label{fig:PCRP}
\end{figure}

When $\theta_2= \alpha$, a $\mathrm{PCRP}^{(\alpha)} (\theta_1, \alpha)=\mathrm{PCRP}^{(\alpha)}(\theta_1)$ was studied in \cite{RogeWink20}. Notably, our generalisation includes cases $\theta=\theta_1+\theta_2-\alpha\in[-\alpha,0)$, which did not arise in \cite{James2006,PitmWink09,RogeWink20}. 
Though we focus on the range $\alpha\in (0,1)$ in this paper, our model is clearly well-defined for $\alpha=0$ and it is straightforward to deal with this case; we include a discussion in Section~\ref{sec:zero}.

Our main result is a limit theorem in distribution 
in the space of $\cI_H$-valued c\`adl\`ag functions $\bD(\bR_+, \cI_H)$, endowed with 
the $J_1$-Skorokhod topology (see e.g.\@ \cite{Billingsley} for background).

\begin{theorem}\label{thm:crp-ip} Let $\alpha\in(0,1)$ and $\theta_1,\theta_2\ge 0$. 
	For $n\in \bN$, let  $(C^{(n)}(t),\, t\ge 0)$ be a $\mathrm{PCRP}^{(\alpha)}(\theta_1,\theta_2)$ starting from $C^{(n)}(0)= \gamma^{(n)}$.
	Suppose that  the initial interval partitions $ \frac{1}{n}  \gamma^{(n)}$ converge in distribution to 
	$\gamma\in \cI_H$ as $n\to \infty$, under $d_H$.
	Then there exists an $\cI_H$-valued path-continuous Hunt process $(\beta(t), t\ge 0)$ starting from $\beta(0)= \gamma$, such that 
	\begin{equation}\label{mainthmeq}
		\Big(\frac{1}{n}  C^{(n)}(2 n t),\, t\ge 0 \Big)
		\underset{n\to \infty}{\longrightarrow}  (\beta(t),\, t\ge 0) , \quad \text{in distribution in $\bD(\bR_+,\cI_H)$.} 
	\end{equation}
	Moreover, set $\zeta^{(n)} =\inf\{ t\ge 0\colon C^{(n)}(t) =\emptyset \}$ and $\zeta =\inf\{ t\ge 0\colon \beta(t) =\emptyset \}$ to be the respective first hitting times of $\emptyset$. If $\gamma\neq\emptyset$, then \eqref{mainthmeq} holds jointly with $\zeta^{(n)}/2n\to \zeta$, in distribution.
\end{theorem}
We call the limiting diffusion $(\beta(t),\, t\ge 0)$ on $\cI_H$ an  \emph{$(\alpha,\theta_1,\theta_2)$-self-similar interval partition evolution}, or $\mathrm{SSIP}^{(\alpha)}(\theta_1, \theta_2)$-evolution. These processes are indeed \emph{self-similar with index $1$}, in the language of self-similar Markov processes \cite{Lamperti72}, see also \cite[Chapter~13]{KyprianouBook}:
 if $(\beta(t), t\ge 0)$ is an 
$\mathrm{SSIP}^{(\alpha)}(\theta_1, \theta_2)$-evolution, then $(c \beta(c^{-1} t),\, t\ge 0)$ is an $\mathrm{SSIP}^{(\alpha)}(\theta_1, \theta_2)$-evolution starting from 
$c \beta(0)$, for any $c>0$.
We note that the definition of $\mathrm{PCRP}^{(\alpha)}(\theta_1,\theta_2)$ and the convergence to an 
$\mathrm{SSIP}^{(\alpha)}(\theta_1,\theta_2)$-evolution in Theorem~\ref{thm:crp-ip} are non-trivial even when $\theta_1=\theta_2=0$.

$\mathrm{SSIP}^{(\alpha)}(\theta_1, \theta_2)$-evolutions generalise the interval partition evolutions constructed in the literature \cite{Paper1-2,IPPAT,ShiWinkel-1}. 
From the proof of Theorem~\ref{thm:crp-ip}, we will see that an $\mathrm{SSIP}^{(\alpha)}( \theta_1, \alpha)$-evolution is an \emph{$(\alpha, \theta_1)$-self-similar interval partition evolution} in the sense of \cite{Paper1-2,IPPAT}, which we will also refer to as an $\mathrm{SSIP}^{(\alpha)}(\theta_1)$-evolution. 
In Section~\ref{sec:SSIP}, we will also explain a connection with \cite{ShiWinkel-1}.

While the construction in \cite{Paper1-1,Paper1-2,IPPAT,ShiWinkel-1} is purely in the continuum, via an approach developed in \cite{Paper1-1} by using the marked L\'evy processes, Theorem \ref{thm:crp-ip} is the first scaling limit result with an $\mathrm{SSIP}$-evolution as its limit.  For this smaller class with $\theta_2 =\alpha$, \cite{RogeWink20} provides a study of the family $\mathrm{PCRP}^{(\alpha)}( \theta_1, \alpha)$ and conjectures the existence of diffusion limits, which is thus confirmed by our Theorem~\ref{thm:crp-ip}. 
We conjecture that the convergence in Theorem~\ref{thm:crp-ip} can be extended to the case where $G(\frac{1}{n}  \gamma^{(n)})$ converges in distribution, with respect to the Hausdorff metric, to a compact set of positive Lebesgue measure; then the limiting process is a generalised interval partition evolution in the sense of \cite[Section~4]{IPPAT}. 

When $\theta=\theta_1\!+\!\theta_2\!-\!\alpha\in(0,1)$, a $\mathrm{PCRP}^{(\alpha)}(\theta_1,\theta_2)$ is reflected at $\emptyset$.
When $\theta= \theta_1\!+\!\theta_2\!-\!\alpha \le 0$, a $\mathrm{PCRP}^{(\alpha)}(\theta_1,\theta_2)$ is absorbed at $\emptyset$, and 
if the initial interval partitions $ \frac{1}{n}  \gamma^{(n)}$ converge in distribution to $\emptyset\in \cI_H$ as $n\to \infty$ under $d_H$, 
then the limiting process in Theorem~\ref{thm:crp-ip} is the constant process that stays in $\emptyset$. 
In both cases we refine the discussion and establish the convergence of rescaled PCRP excursions to a non-trivial limit in the following sense. 
\begin{theorem}\label{thm:Theta}
	Let $\alpha\in(0,1)$, $\theta_1, \theta_2\ge 0$ and suppose that $\theta= \theta_1\!+\!\theta_2\!-\!\alpha \in (-\alpha, 1)$. 
	Let $(C(t),t\!\ge\! 0)$ be a $\mathrm{PCRP}^{(\alpha)}(\theta_1,\theta_2)$ starting from state $(1)$ and denote by $\mathrm{P}^{(n)}$ the law of the process
	$\left( C^{(n)}(t):= \frac{1}{n}  C (2 n t\wedge\zeta(C)) ,\, t\ge 0 \right)$, where $\zeta(C):=\inf\{t\ge 0\colon C(t)=\emptyset\}$.  
	Then the following convergence holds vaguely under the Skorokhod topology: 
	\[
	\frac{ \Gamma(1+\theta)}{1-\theta} n^{1-\theta}  \mathrm{P}^{(n)} \underset{n\to \infty}{\longrightarrow} \Theta,  
	\]
	where the limit $\Theta$ is a $\sigma$-finite measure on the space of continuous excursions on $\cI_{H}$. 
\end{theorem}
A description of the limit $\Theta$ is given in Section~\ref{sec:exc}.
We refer to  $\Theta$ as the excursion measure of an $\mathrm{SSIP}^{(\alpha)} (\theta_1, \theta_2)$-evolution, which plays a crucial role in the study of nested interval partition evolutions in Section~\ref{sec:nested2}. 

In Section \ref{sec:crp}, we prove Theorem \ref{thm:crp-ip} in the two-parameter setting $\theta_2=\alpha$, building on \cite{Paper1-1,Paper1-2,IPPAT,RogeWink20}. In Section \ref{sec:pcrp-ssip}, we study the three-parameter setting and complete the proofs of Theorems \ref{thm:crp-ip} and \ref{thm:Theta} in Section~\ref{sec:results}. 
In the following, we state some properties of the limiting interval partition evolutions.

When $\theta_1=\theta_2=0$, the PCRP starting from $(1)$ only ever has a single table, and the convergence to an excursion measure is well-known. While PCRP and SSIP-evolutions are recurrent for $\theta=1$, SSIP-evolutions no longer return to $\emptyset$, as in the transient cases $\theta>1$, so there cannot be excursion measures in these cases. This follows from the connections to squared Bessel processes, to which we turn now.

Specifically, a  squared Bessel process $Z=(Z(t),\,t\ge 0)$  starting from $Z(0)=m\ge 0$ and with ``dimension'' parameter $\delta\in \bR$ is the unique strong solution of the following equation: 
\[
Z(t) = m +\delta t + 2 \int_0^t \sqrt{|Z(s)|} d B(s),
\]
where $(B(t),\,t\ge 0)$ is a standard Brownian motion. 
We refer to \cite{GoinYor03} for general properties of squared Bessel processes. 
Let  $\zeta(Z):= \inf \{ t\ge 0\colon Z(t)=0 \}$ be the first hitting time of zero. To allow $Z$ to re-enter $(0,\infty)$ where possible after hitting $0$, 
we define the \emph{lifetime of $Z$} by
\begin{equation}\label{eq:besq-zeta}
	\overline{\zeta}(Z):= 
	\begin{cases}
		\infty,	& \text{if}~ \delta>0,\\
		\zeta(Z), &\text{if}~ \delta\le 0.	\\
	\end{cases}
\end{equation}
We write $\besq_m(\delta)$ for the law of a squared Bessel process $Z$ with dimension $\delta$ starting from $m$, in the case $\delta\le 0$ absorbed in $\emptyset$ at the end of its (finite) lifetime $\overline{\zeta}(Z)$. When $\delta\le 0$, by our convention $\besq_0(\delta)$ is the law of the constant zero process.

In an $\mathrm{SSIP}^{(\alpha)}(\theta_1, \theta_2)$-evolution, informally speaking, 
each block evolves as ${\tt BESQ}(-2\alpha)$, independently of other blocks \cite{Paper1-1,Paper1-2}. Meanwhile, there is always immigration of rate $2\alpha$ between ``adjacent blocks'', rate $2\theta_1$ on the left \cite{IPPAT} and rate $2\theta_2$ on the right \cite{ShiWinkel-1}. 
Moreover, the total mass process $(\|\beta(t)\|,\,t\ge 0)$ of any $\mathrm{SSIP}^{(\alpha)}(\theta_1,\theta_2)$-evolution  $(\beta(t),\,t\ge 0)$  is ${\tt BESQ}_{\|\beta(0)\|}(2\theta)$ with 
$\theta:=\theta_1+\theta_2-\alpha$. We discuss this more precisely in Section~\ref{sec:SSIP}. 
We refer to $|2\theta|$ 
as the total \em immigration rate \em if $\theta>0$, and as the total \em emigration rate \em if $\theta<0$.

There are \emph{pseudo-stationary} $\mathrm{SSIP}^{(\alpha)}(\theta_1, \theta_2)$-evolutions, that have fluctuating total mass but stationary interval length proportions, in the sense  \cite{Paper1-2} of the following proposition. 
Recall from Definition~\ref{defn:pdip} that the family of $\mathtt{PDIP}^{(\alpha)}( \theta_1, \theta_2)$ gives the limiting block sizes in their left-to-right order of the three-parameter family of composition structures.

\begin{proposition}[Pseudo-stationarity]\label{prop:ps-theta1theta2-nokill}
	For $\alpha\in(0,1)$ and $\theta_1, \theta_2\ge 0$, consider independently $\bar\gamma\sim \mathtt{PDIP}^{(\alpha)}( \theta_1,\theta_2)$ and a $\besq(2 \theta)$-process $(Z(t),\,t\ge 0)$ with any initial distribution and parameter $\theta=\theta_1+\theta_2-\alpha$. 
	Let $(\beta(t),\, t\ge 0)$ be an $\mathrm{SSIP}^{(\alpha)}(\theta_1, \theta_2)$-evolution starting from $\beta(0)=Z(0)   \bar\gamma$. 
	Fix any $t\ge 0$, then $\beta(t)$ has the same distribution as $Z(t) \bar\gamma$. 
\end{proposition}

As in the case $\theta_2=\alpha$ studied in \cite{Paper1-2,IPPAT}, we define an associated family of $\cI_{H,1}$-valued evolutions via time-change and renormalisation (``de-Poissonisation'').
\begin{definition}[De-Poissonisation and $\mathrm{IP}^{(\alpha)} (\theta_1, \theta_2)$-evolutions]
	\label{defn:dePoi}
	Consider $\gamma\in \cI_{H,1}$,
	let $\boldsymbol{\beta}:= (\beta(t),\, t\ge 0)$ be an $\mathrm{SSIP}^{(\alpha)} (\theta_1, \theta_2)$-evolution starting from $\gamma$ and define a time-change function $\tau_{\boldsymbol{\beta}}$ by 	
	\begin{equation}\label{eq:tau-beta}
		\tau_{\boldsymbol{\beta}}(u):= \inf \left\{ t\ge 0\colon \int_0^t \|\beta(s)\|^{-1} d s>u \right\}, \qquad u\ge 0.	
	\end{equation}
	Then the process on $\cI_{H,1}$ obtained from $\boldsymbol{\beta}$ via the following \emph{de-Poissonisation}
	\[
	\overline{\beta}(u):= \big\| \beta(\tau_{\boldsymbol{\beta}}(u)) \big\|^{-1}  \beta(\tau_{\boldsymbol{\beta}}(u)),\qquad u\ge 0,
	\]
	is called a \emph{Poisson--Dirichlet $(\alpha,\theta_1,\theta_2)$-interval partition evolution} starting from $\gamma$, abbreviated as $\mathrm{IP}^{(\alpha)} (\theta_1, \theta_2)$-evolution. 
\end{definition}

\begin{theorem}\label{thm:dP}
	Let $\alpha\!\in\!(0,1)$, $\theta_1, \theta_2\ge 0$. 
	An $\mathrm{IP}^{(\alpha)} (\theta_1, \theta_2)$-evolution is a  Hunt process on 
	$(\cI_{H,1},d_{H})$ with continuous paths. It is continuous in the initial state and has a stationary distribution
	$\mathtt{PDIP}^{(\alpha)}( \theta_1, \theta_2)$. 
\end{theorem}
In the case $\theta_2=\alpha$, Proposition~\ref{prop:ps-theta1theta2-nokill} and Theorem~\ref{thm:dP} have been proved in \cite{Paper1-2,IPPAT}. The proofs of the more general results here will be given in Section~\ref{sec:results}.

For the two-parameter case (with $\theta_2=\alpha$), \cite{RivRiz} obtained the scaling limits of a family of discrete-time up-down ordered Chinese restaurant processes, in which the number of customers remains constant by coupling each $\mathrm{oCRP}^{(\alpha)}( \theta_1, \alpha)$-arrival with a departure. 
It is conjectured that the limits of \cite{RivRiz} are $\mathrm{IP}^{(\alpha)}(\theta_1,\alpha)$-evolutions, and we further conjecture that this extends to the three-parameter setting of  Definition~\ref{defn:dePoi}.

Define $\mathcal{H}$ to be the commutative unital algebra of functions on $\nabla_{\infty}$ generated by 
$q_k(x) = \sum_{i\ge 1} x_i^{k+1}$, $k\ge 1$, and $q_0(x)=1$.   
For every $\alpha\in (0,1)$ and $\theta>-\alpha$, define an operator $\mathcal{B}_{\alpha,\theta}\colon \mathcal{H} \to \mathcal{H}$ by
\[
\mathcal{B}_{\alpha, \theta}:= \sum_{i\ge 1} x_i \frac{\partial^2}{\partial x_i^2} 
-\sum_{i,j\ge 1} x_ix_j\frac{\partial^2}{\partial x_i \partial x_j}
- \sum_{i\ge 1} (\theta x_i +\alpha ) \frac{\partial}{\partial x_i}. 
\]
It has been proved in \cite{Petrov09} that there is a Markov process on $\nabla_{\infty}$ 
whose (pre-)generator on $\mathcal{H}$ is $\mathcal{B}_{\alpha, \theta}$, which shall be referred to as the 
Ethier--Kurtz--Petrov diffusion with parameter $(\alpha, \theta)$, for short ${\tt EKP}(\alpha,\theta)$-diffusion;  
moreover, ${\tt PD}^{(\alpha)}(\theta)$ is the unique invariant probability measure 
for ${\tt EKP}(\alpha, \theta)$. In \cite{Paper1-3}, the following connection will be established. 
\begin{itemize}
	\item Let $\alpha\in(0,1)$, $\theta_1 ,\theta_2 \ge 0$ with $\theta_1+\theta_2 >0$. 
	For an $\mathrm{IP}^{(\alpha)} (\theta_1, \theta_2)$-evolution $(\overline{\beta}(u),\, u\ge 0)$, 
	list the lengths of intervals of $\overline{\beta}(u)$ in decreasing order in a sequence $W(u)\in \nabla_{\infty}$.  
	Then the process $(W(u/2), u\ge 0)$ is an $\ekp(\alpha, \theta)$-diffusion with $\theta:= \theta_1+\theta_2-\alpha>-\alpha$.  
\end{itemize}

\section{Proof of Theorem~\ref{thm:crp-ip} when $\theta_2=\alpha$}\label{sec:crp}
We first recall in Section~\ref{sec:pre} the construction and some basic properties of the two-parameter family of $\mathrm{SSIP}^{(\alpha)}(\theta_1)$-evolutions from \cite{Paper1-1,Paper1-2, IPPAT}, 
and then prove that they are the diffusion limits of the corresponding $\mathrm{PCRP}^{(\alpha)}(\theta_1,\alpha)$, in Sections~\ref{sec:cv-0alpha} and \ref{sec:cv-thetaalpha} for $\theta_1=0$ and for $\theta_1\ge 0$ in general, respectively, thus proving Theorem~\ref{thm:crp-ip} for the case $\theta_2=\alpha$. 
The proofs rely on a representation of $\mathrm{PCRP}^{(\alpha)}(\theta_1,\alpha)$ by Rogers and Winkel \cite{RogeWink20} that we recall in Section~\ref{sec:PCRP} and an in-depth investigation of a positive-integer-valued Markov chain in Section~\ref{sec:ud-chain}. 

\subsection{Preliminaries: $\mathrm{SSIP}^{(\alpha)}(\theta_1)$-evolutions}\label{sec:pre}

In this section, we recall the \emph{scaffolding-and-spindles} construction and some basic properties of an $(\alpha,\theta_1)$ self-similar interval partition evolution, $\mathrm{SSIP}^{(\alpha)}(\theta_1)$. The material is collected from \cite{Paper1-1, Paper1-2, IPPAT}.

Let $\cE$ be the space of non-negative c\`adl\`ag excursions away from zero. Then 
for any $f\in \cE$, we have $\zeta (f):= \inf\{t>0\colon f(t)=0\}=\sup \{ t\ge 0\colon f(t)>0\}$.  
We will present the construction of SSIP-evolutions via the following \emph{skewer} map introduced in \cite{Paper1-1}. 
\begin{definition}[Skewer] \label{def:skewer}
	Let $N= \sum_{i\in I} \delta(t_i,f_i)$ be a point measure on $\bR_+\times \cE$ and $X$ a c\`adl\`ag process such that 
	\[\sum_{ \Delta X(t)> 0} \delta(t, \Delta X(t)) = \sum_{i\in I} \delta(t_i, \zeta(f_i)).\] 
	The \emph{skewer} of the pair $(N,X)$ at level $y$ is (when well-defined) the interval partition
	\begin{equation}
		\skewer(y,N,X) :=     
		\{ (M^y(t-),M^y(t)) \colon M^y(t-)<M^y(t), t\ge 0 \},
	\end{equation}
	where $M^y(t) = \int_{[0,t]\times\cE} f\big( y- X(s-) \big)  N(ds,df)$. 
	Denote the process by 
	\[
	\skewerbar(N,X):= (\skewer(y,N,X), y\ge 0). 
	\]
\end{definition}

Let $\theta\in (-1,1)$. We know from \cite{GoinYor03} that $\besq( 2\theta)$ has an exit boundary at zero. 
Pitman and Yor \cite[Section~3]{PitmYor82} construct a $\sigma$-finite excursion measure $\Lambda^{(2\theta)}_{\mathtt{BESQ}}$ associated with $\besq(2\theta)$ on the space $\cE$, 
such that 
\begin{equation}\label{eq:besq-exc}
	\Lambda_{\tt BESQ}^{(2\theta)}(\zeta>y):=\Lambda^{(2\theta)}_{\mathtt{BESQ}} \left\{f\in\cE\colon \zeta(f)> y\right\} =\frac{2^{\theta-1}}{\Gamma(2\!-\!\theta)} y^{-1+\theta}, \qquad y>0,
\end{equation}
and under $\Lambda^{(2\theta)}_{\mathtt{BESQ}}$, conditional on $\{ \zeta=y \}$ for $0<y<\infty$, 
the excursion is a squared Bessel bridge from $0$ to $0$ of length $y$, see \cite[Section 11.3]{RevuzYor}.    
\cite[Section~3]{PitmYor82} offers several other equivalent descriptions of $\Lambda^{(2\theta)}_{\mathtt{BESQ}}$; see also \cite[Section~2.3]{Paper1-1}.

\begin{figure}
	\centering
	\input{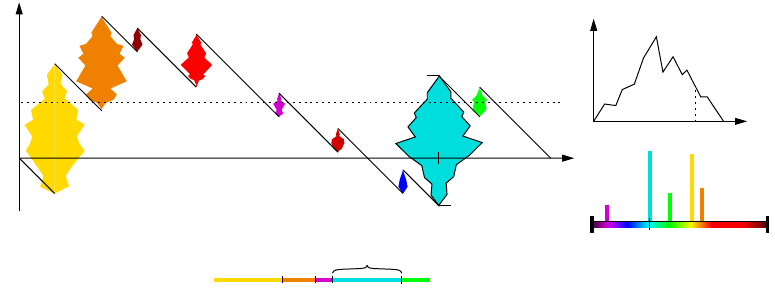_t}
	\caption{A scaffolding with marks (atom size evolutions as spindle-shapes and allelic types from a colour spectrum coded by $[0,1]$) and the skewer and superskewer (see Definition~\ref{def:superskewer}) at level $y$, not to scale.\label{fig:scaf-marks}}
\end{figure}
For $\alpha\in (0,1)$, let $\fN$ be a Poisson random measure on $\bR_+\times \cE$ with intensity $c_{\alpha}\mathrm{Leb} \otimes \Lambda^{(-2\alpha)}_{\mathtt{BESQ}}$, denoted by $\PRM(c_{\alpha}\mathrm{Leb} \otimes \Lambda^{(-2\alpha)}_{\mathtt{BESQ}})$, where
\begin{equation}\label{eq:nu-Lambda}
	c_{\alpha}:=2 \alpha(1\!+\!\alpha)/\Gamma(1\!-\!\alpha). 
\end{equation} 
Each atom of $\fN$, which is an excursion function in $\cE$, shall be referred to as a \emph{spindle}, in view of illustration of $\fN$ as in Figure~\ref{fig:scaf-marks} . 
We pair $\fN$ with a \emph{scaffolding function} $\xi_{\fN}:=(\xi_{\fN}(t), t\ge 0)$ defined by
\begin{equation}\label{eq:scaffolding}
	\xi_{\fN}(t):=  \lim_{z\downarrow 0} \bigg( 
	\int_{[0,t]\times \{g\in \cE\colon \zeta(g)>z\}} \zeta(f)  \fN(ds,df) - \frac{(1+\alpha)t}{(2z)^{\alpha}\Gamma(1-\alpha)\Gamma(1+\alpha)}
	\bigg).
\end{equation}
This is a spectrally positive stable L\'evy process of index $(1+\alpha)$, with L\'evy measure $c_{\alpha}\Lambda^{(-2\alpha)}_{\mathtt{BESQ}} (\zeta \in  d y)$ and Laplace exponent 
$
(2^{-\alpha}q^{1+\alpha} /\Gamma(1+\alpha),  q\ge 0). 
$

For $x>0$, let $\ff\sim \besq_x(- 2\alpha)$, independent of $\fN$.  
Write $\mathtt{Clade}_x(\alpha)$ for the law of a \emph{clade of initial mass $x$}, which is a random point measure on $\bR_+\times \cE$ defined by
\begin{equation*}\label{eq:clade}
	\clade (\ff, \fN):= \delta(0,\ff)  + \fN\,\big|_{\left(0, T_{-\zeta(\ff)} (\xi_\fN) \right]\times \cE}, 
	~\text{where}~ T_{-y} (\xi_\fN):= \inf\{t\ge 0 \colon \xi_{\fN}(t)= -y\}. 
\end{equation*}

\begin{definition}[$\mathrm{SSIP}^{(\alpha)}(0)$-evolution]\label{defn:ip0}
	For $\gamma \in \cI_H$, let $(\fN_U, U\in \gamma)$ be a family of independent clades, with each $\fN_U \sim \mathtt{Clade}_{\mathrm{Leb}(U)} (\alpha)$. 
	An $\mathrm{SSIP}^{(\alpha)}(0)$-evolution starting from $\gamma \in \cI_H$ is a process distributed as $\boldsymbol{\beta}= (\beta(y), y\ge 0)$ defined by 
	\[
	\beta(y):=  \Concat_{U\in \gamma} \skewer (y, \fN_{U}, \xi_{\fN_{U}}), \qquad y\ge  0. 
	\]
\end{definition}


We now turn to the case $\theta_1>0$. 
Let $\fN$ be a $\mathtt{PRM}(c_{\alpha}\mathrm{Leb}\otimes \Lambda^{(-2\alpha)}_{\mathtt{BESQ}})$ and $\fX_{\alpha}= \xi_{\fN}$  its scaffolding. 
Define the \em modified  scaffolding \em process 
\begin{equation}\label{eq:X-alphatheta}
	\fX_{\theta_1}(t) := \fX_{\alpha}(t) + \left(1 - \alpha/\theta_1 \right)\ell(t) \quad \text{where} \quad \ell(t) := -\inf_{u\leq t}\fX_{\alpha}(u) \quad \text{for }t\ge 0.
\end{equation}
For any $y\ge 0$, let 
\[
T^{-y}_{\alpha}:=T_{-y}(\mathbf{X}_\alpha)=\inf\{t\ge 0\colon \fX_{\alpha}(t)= -y \} = \inf\{t\ge 0\colon \ell(t)\ge y \}. 
\]
Notice that 
$\inf_{u\le t} \fX_{\theta_1} (u)= -(\alpha/\theta_1) \ell (t)$, 
then we have the identity 
\begin{equation}\label{eq:Talphatheta}
	T^{-y}_{\theta_1} 
	:=T_{-y}(\fX_{\theta_1})=  \inf\{t\ge 0\colon \fX_{\theta_1}(t)=- y \} 
	= T^{-(\theta_1/\alpha) y}_{\alpha}. 
\end{equation}
For each $j\in \bN$, define an interval-partition-valued process 
\[
\cev{\beta}_j(y) := 
\skewer (y, \fN\big|_{[0,T_{\theta_1}^{-j})}, j+\fX_{\theta_1}\big|_{[0,T_{\theta_1}^{-j})}), \qquad y\in [0,j].
\]
For any $z>0$, the shifted process  
$(z +\fX_{\alpha} (T_{\alpha}^{-z}+t), t\ge 0)$
has the same distribution as $\fX_{\alpha}$, by the strong Markov property of $\fX_{\alpha}$. 
As a consequence, $(-z+\ell(t+T_{\alpha}^{-z}), t\ge 0)$ has the same law as $\ell$. 
Combing this and \eqref{eq:Talphatheta}, we deduce that, 
for any $k\ge j$, the following two pairs have the same law:
\[
\left( \big(L_{T_{\theta_1}^{j-k}} \fN \big)\Big|_{[0, T_{\theta_1}^{-k}-T_{\theta_1}^{j-k})}, 
k+{\big( L_{T_{\theta_1}^{j-k}}\fX_{\theta_1}}\big)\Big|_{[0,T_{\theta_1}^{-k}-T_{\theta_1}^{j-k})} 
\right)
\stackrel{d}{=} 
\left({\fN}\big|_{[0,T_{\theta_1}^{-j})}, j+{\fX_{\theta_1}}\big|_{[0,T_{\theta_1}^{-j})} 
\right),
\]
where $L$ stands for the shift operator and we have also used the Poisson property of $\fN$. 
This leads to 
$(\cev{\beta}_j(y),\, y\in[0,j] )  \stackrel{d}{=} (\cev{\beta}_k(y),\, y\in[0,j] )$. 
Thus, by Kolmogorov's extension theorem, there exists a process $(\cev{\beta}(y), y\ge 0)$ such that 
\begin{equation}\label{eq:backarrow}
	\left(\cev{\beta}(y),\, y\in[0,j]\right)  \stackrel{d}{=} \left(\cev{\beta}_j(y),\, y\in[0,j] \right)\quad\mbox{for every $j\in \bN$.}
\end{equation}

\begin{definition}[$\mathrm{SSIP}^{(\alpha)}(\theta_1)$-evolution]~\label{defn:alphatheta}
	For $\theta_1>0$, let $(\cev{\beta}(y), y\ge 0)$ be as in \eqref{eq:backarrow} and $(\vecc{\beta}(y), y\ge 0)$ 
	an independent $\mathrm{SSIP}^{(\alpha)}(0)$-evolution starting from $\gamma\in \cI_H$. 
	Then $(\beta(y) = \cev{\beta}(y)\concat \vecc{\beta}(y),\, y\ge 0)$
	is called an \emph{$\mathrm{SSIP}^{(\alpha)}(\theta_1)$-evolution} starting from $\gamma$. 
\end{definition}

In \cite{IPPAT}, an $\mathrm{SSIP}^{(\alpha)}(\theta_1)$-evolution is defined in a slightly different way that more explicitly handles the Poisson random measure of excursions of $\fX_\alpha$ above the minimum. Indeed, the passage from $\alpha$ to $\theta_1$ in \cite{IPPAT} is by changing the intensity by a factor of $\theta_1/\alpha$. 
The current correspondence can be easily seen to have the same effect.

\begin{proposition}[{\cite[Proposition~1.3]{IPPAT}}]\label{prop:alphatheta}
	For $\theta_1 \!\ge\! 0$, an $\mathrm{SSIP}^{(\alpha)}(\theta_1)$-evolution  
	is a path-continuous Hunt process and its total mass process  
	is a $\besq(2\theta_1)$. 
\end{proposition}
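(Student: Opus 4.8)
The plan is to treat the two parameter regimes separately. For $\theta_1=0$ there is essentially nothing new: an $\mathrm{SSIP}^{(\alpha)}(0)$-evolution (Definition~\ref{defn:ip0}) is obtained by concatenating independent clades over the blocks of $\gamma$, and its path-continuity, (strong) Markov property and quasi-left-continuity --- hence the Hunt property --- were established in \cite{Paper1-1,Paper1-2}. For the total mass I would recall from \cite{Paper1-1} that for a single clade $\clade(\ff,\fN)$ of initial mass $x$ the process $y\mapsto\|\skewer(y,\clade(\ff,\fN))\|$ is a $\besq_x(0)$; summing over the independent clades over the blocks of $\gamma$ and using the additivity of squared Bessel processes \cite{RevuzYor} gives that $(\|\beta(y)\|,\,y\ge 0)$ is a $\besq_{\|\gamma\|}(0)=\besq_{\|\gamma\|}(2\cdot 0)$, as required.

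For $\theta_1>0$ the main task is to identify the process of Definition~\ref{defn:alphatheta} with the construction of \cite{IPPAT}, after which one may quote \cite[Theorem~1.4 and Proposition~3.4]{IPPAT}. The right part $\vecc{\beta}$ is by definition an $\mathrm{SSIP}^{(\alpha)}(0)$-evolution, so only the left part $\cev{\beta}$, built from the modified scaffolding $\fX_{\theta_1}=\fX_{\alpha}+(1-\alpha/\theta_1)\ell$ of \eqref{eq:X-alphatheta}, needs attention. The key step is to decompose $\fX_{\alpha}$ at its running infimum: the excursions of $\fX_{\alpha}$ above $-\ell$, together with the spindles they carry, form a Poisson random measure of clades indexed by the infimum level, and by \eqref{eq:Talphatheta} the reparametrisation $y\mapsto T_{\theta_1}^{-y}=T_{\alpha}^{-(\theta_1/\alpha)y}$ amounts to running this Poisson measure at speed $\theta_1/\alpha$. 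Hence $\cev{\beta}$ is obtained by skewering a family of left-edge clades driven by a $\PRM\big(\tfrac{\theta_1}{\alpha}c_{\alpha}\,\mathrm{Leb}\otimes\Lambda^{(-2\alpha)}_{\mathtt{BESQ}}\big)$, which is precisely the construction in \cite{IPPAT} (cf.\ the remark preceding the proposition). Given this identification, path-continuity, the Hunt property and continuity in the initial state follow from \cite[Theorem~1.4]{IPPAT}, and the total mass being a $\besq(2\theta_1)$ from \cite[Proposition~3.4]{IPPAT}; alternatively one argues directly that $\|\cev{\beta}(\cdot)\|$ is a $\besq_0(2\theta_1)$ --- the dimension $2\theta_1$ appearing as the total left-immigration intensity $\tfrac{\theta_1}{\alpha}\cdot 2\alpha$ --- independent of $\|\vecc{\beta}(\cdot)\|\sim\besq_{\|\gamma\|}(0)$, whence $\|\beta(\cdot)\|=\|\cev{\beta}(\cdot)\|+\|\vecc{\beta}(\cdot)\|$ is a $\besq_{\|\gamma\|}(2\theta_1)$ by squared-Bessel additivity.

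The step I expect to be the main obstacle is this identification for $\theta_1>0$: one must verify carefully that reflecting the stable scaffolding at its running infimum and then rescaling the infimum clock by the factor $\theta_1/\alpha$ produces, after applying the skewer map, exactly the same left-immigration interval-partition process as the direct superposition of a rescaled Poisson family of clades along the left edge. This includes controlling the accumulation of infinitely many vanishingly small immigrant blocks near the left boundary, so that the concatenation $\cev{\beta}(y)\concat\vecc{\beta}(y)$ is well defined and jointly path-continuous --- the technical heart of \cite{Paper1-1,IPPAT} that the present construction inherits. Everything else (the $\theta_1=0$ case, squared-Bessel additivity, and transferring the Hunt property through the identification) is then routine.
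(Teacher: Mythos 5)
Your proposal is correct and follows essentially the same route as the paper: Proposition~\ref{prop:alphatheta} is quoted there directly from \cite[Theorem~1.4 and Proposition~3.4]{IPPAT}, with the only genuine content being the identification of Definition~\ref{defn:alphatheta} with the construction of \cite{IPPAT}, which the paper disposes of in the remark preceding the proposition by noting that passing from $\alpha$ to $\theta_1$ amounts to changing the excursion intensity by the factor $\theta_1/\alpha$ --- exactly the excursion-decomposition/time-change argument you spell out. Your additional squared-Bessel additivity argument for the total mass is a harmless (correct) supplement to the citation of \cite[Proposition~3.4]{IPPAT}.
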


We refer to \cite{IPPAT} for the transition kernel of an $\mathrm{SSIP}^{(\alpha)}(\theta_1)$-evolution.

\subsection{Poissonised ordered up-down Chinese restaurant processes}\label{sec:PCRP}

For $\theta >-1$, let $Z:= (Z(t), t\ge 0)$ be a continuous-time Markov chain on $\bN_0$, whose non-zero transition rates are
\begin{equation*}
	Q_{i,j} (\theta)=	\begin{cases}  
		i  +\theta,&  i\ge 1, j = i+1; \\
		i,& i\ge 1, j = i-1, \\
		\theta\vee 0,& i=0, j=1.
	\end{cases} 
\end{equation*}
In particular, $0$ is an absorbing state when $\theta\le 0$.  
For $k\in \bN_0$, we define
\begin{equation}
	\label{eq:pi}
	\pi_k(\theta) \colon \text{ the law of the process } Z \text{ starting from }  Z(0)=k. 
\end{equation} 
Let $\zeta(Z):= \inf\{t> 0 \colon Z(t) = 0\}$ be its first hitting time of zero. 

Let $\alpha\in (0,1)$ and $\theta_1,\theta_2\ge 0$.  
Recall from the introduction that a Poissonised ordered up-down Chinese restaurant process (PCRP) with parameters $\alpha$, $\theta_1$ and $\theta_2$, starting from $C\in \cC$, is denoted it by $\mathrm{PCRP}^{(\alpha)}_C (\theta_1, \theta_2)$.

When $\theta_2= \alpha$, a $\mathrm{PCRP}^{(\alpha)} (\theta_1, \alpha)$ is well-studied by Rogers and Winkel \cite{RogeWink20}. 
They develop a representation of a PCRP by using scaffolding and spindles, in a similar way to the construction of an $\mathrm{SSIP}^{(\alpha)} (\theta_1)$-evolution. 
Their approach draws on connections with \emph{splitting trees} and results 
of the latter object developed in \cite{GeigKers97,Lambert10}. 

Let $\mathbf{D}\sim\PRM(\alpha \cdot \mathrm{Leb} \otimes \pi_1(-\alpha))$ and define its scaffolding function by 
\begin{equation}\label{eq:scaffolding-D}
	J_{\mathbf{D}} (t) :=- t + \int_{[0,t] \times \cE}  \zeta(f) \mathbf{D} (ds, df) ,\qquad t\ge 0. 
\end{equation}
Let $Z\sim \pi_m(-\alpha)$ with $m\in \bN$, independent of $\mathbf{D}$. For $y>0$, set $T_{-y} (J_\mathbf{D})= \inf\{t\ge 0 \colon J_{\mathbf{D}}(t)= -y\}$. 
Then a \emph{discrete clade with initial mass $m$} is a random point measure on $\bR_+\times \cE$ defined by
\begin{equation}\label{eq:clade-D}
	\clade^D (Z, \mathbf{D}):= \delta(0,Z)  + \mathbf{D}\mid_{\big(0, T_{-\zeta(Z)} (J_\mathbf{D}) \big]\times \cE},  
\end{equation}
Write $\mathtt{Clade}^D_m(\alpha)$ for the law of $\clade^D (Z, \mathbf{D})$. 

Recall that we view $\cC$ as a subspace of $\cI_{H}$, with each composition $(n_1, \ldots, n_k)\in \cC$ identified with the interval partition  $\{ (s_{i-1},s_{i}), 1\le i\le k \}$, where $s_i=n_1+\cdots+n_i$. 

\begin{lemma} [{\cite[Theorem 1.2]{RogeWink20}}]\label{lem:pcrp0}
	For $\gamma \in \cC$, let $(\mathbf{D}_U, U\!\in\! \gamma)$ be an independent family with each $\mathbf{D}_U \!\sim\! \mathtt{Clade}^D_{\mathrm{Leb}(U)} (\alpha)$. 
	Then the process 
	$
	\big(\Concat_{U\in \gamma}\skewer(y, \mathbf{D}_{U}, J_{\mathbf{D}_{U}}), y\ge 0\big)
	$
	is a $\mathrm{PCRP}^{(\alpha)}_{\gamma}(0, \alpha)$. 
\end{lemma}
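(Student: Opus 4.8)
\emph{Overall plan.} The statement should follow by reducing to a single clade, using the Markov branching structure of $\mathrm{PCRP}^{(\alpha)}(0,\alpha)$, and then identifying the level-indexed skewer of one clade with a single-table $\mathrm{PCRP}^{(\alpha)}(0,\alpha)$, by combining the elementary birth--death dynamics of the spindles with an excursion-theoretic decomposition of the clade at a fixed level.

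\emph{Reduction to one clade.} The first step is to record that $\mathrm{PCRP}^{(\alpha)}(0,\alpha)$ has a Markov branching structure: attribute the rate-$\alpha$ creation of a new table in a gap to the table immediately on its left, and the rate-$\theta_2=\alpha$ creation at the right end to the rightmost table. Then every table independently runs the birth--death chain with up-rate $m-\alpha$ and down-rate $m$ --- which is exactly the chain $Z$ of law $\pi_k(-\alpha)$ --- and, at rate $\alpha$, emits a new size-$1$ table placed immediately to its right; since the future of a table depends only on its current size, forgetting the genealogy yields a composition-valued Markov chain with precisely the $\mathrm{PCRP}^{(\alpha)}(0,\alpha)$ rates, and starting from $(n_1,\dots,n_k)$ the $k$ \emph{family trees} (each initial table together with the table-tree it generates) evolve independently, the composition at every time being their left-to-right concatenation. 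The right-hand side of the lemma is, by definition of $\skewer$, the concatenation of the skewers of the independent clades $\mathbf{D}_U$; hence it suffices to prove that for a single clade $\mathbf{D}_U\sim\mathtt{Clade}^D_m(\alpha)$ the process $(\skewer(y,\mathbf{D}_U,J_{\mathbf{D}_U}),\,y\ge 0)$ is one family tree of initial size $m$, equivalently a $\mathrm{PCRP}^{(\alpha)}(0,\alpha)$ started from $(m)$.

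\emph{The single clade.} Write the clade as $\clade^D(Z,\mathbf{D})$ with $Z\sim\pi_m(-\alpha)$ and $\mathbf{D}\sim\PRM(\alpha\,\mathrm{Leb}\otimes\pi_1(-\alpha))$; its scaffolding $J=J_{\clade^D(Z,\mathbf{D})}$ starts at height $\zeta(Z)$, has drift $-1$, makes positive jumps of sizes $\zeta(f_i)$ at rate $\alpha$, and is run until it first hits $0$. A block of $\skewer(y,\cdot,J)$ corresponds to a jump-time $s$ with $J(s-)<y<J(s)$, carries mass $f_s(y-J(s-))$, and the blocks are ordered by $s$; in particular $\delta(0,Z)$ gives the leftmost block, alive for $y\in(0,\zeta(Z))$ with size $Z(y)$, exactly a table with up-rate $m-\alpha$ and down-rate $m$, while each further spindle $f_i$ attached at height $v_i=J(s_i-)$ is born at level $v_i$ as a size-$1$ block (since $f_i$ starts from mass $1$), evolves as the same birth--death chain, dies at level $v_i+\zeta(f_i)$, and, by the depth-first structure of the scaffolding exploration, creates a block immediately to the right of the block carrying its parent spindle. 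It then remains to show that, as $y$ increases, such births occur at rate $\alpha$ per currently alive block and independently, and that nothing else is produced: I would fix a level $y$ and decompose $(\mathbf{D},J)$ into its parts above and below $y$, using the strong Markov property of the spectrally positive compound Poisson process $J$ at the first-passage times $T_{-y}(J)$ together with the Poisson structure of $\mathbf{D}$, so that the part above level $y$ splits, via excursion theory of $J$ relative to level $y$, into one independent sub-clade per block of $\skewer(y,\cdot,J)$, each distributed as a fresh clade started from the corresponding current mass (the current spindle continued as a birth--death chain, emitting further spindles at rate $\alpha$); no extra blocks arise because $\theta_1=0$ and the $\emptyset\to(1)$ rate $\theta=\theta_1+\theta_2-\alpha$ vanishes, so $\emptyset$ is absorbing. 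This matches the branching description above, showing $(\skewer(y,\cdot,J),\,y\ge 0)$ is Markov with the correct rates and initial law.

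\emph{Main obstacle.} The delicate step is precisely this level-$y$ decomposition of a clade: one must show that cutting $(\mathbf{D},J)$ at level $y$ yields, conditionally on $\skewer(y,\cdot,J)=(m_1,\dots,m_j)$, exactly $j$ independent sub-clades of initial masses $m_1,\dots,m_j$ and no further immigration, which hinges on the interplay between the rate-$\alpha$ Poissonian jumps of $J$ and the occupation/first-passage structure of the compound Poisson process $J$ near level $y$. Equivalently, one needs the Markov property of the (spindle-marked) width process of the splitting tree encoded by $(\mathbf{D},J)$ through its jumping chronological contour; this is the point at which \cite{RogeWink20} draws on the splitting-tree results of \cite{GeigKers97,Lambert10}. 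By contrast, the birth--death dynamics of the individual spindles and the block-order bookkeeping under the depth-first exploration are routine once this decomposition is in hand.
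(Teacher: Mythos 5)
The paper itself contains no proof of this lemma: it is imported verbatim as \cite[Theorem 1.2]{RogeWink20}, with Section~3.2 merely noting that the proof there rests on the splitting-tree results of \cite{GeigKers97,Lambert10}. Your sketch reconstructs essentially that same argument --- the reduction of a $\mathrm{PCRP}^{(\alpha)}(0,\alpha)$ to independent contiguous single-table families, then the identification of one family with the skewer of a discrete clade via the jumping chronological contour process --- and you correctly isolate the one non-routine ingredient (cutting the clade at a level $y$ into independent sub-clades, one per skewer block, with no extra immigration since $\theta_1=0$ and $\theta=0$) exactly where the cited work invokes splitting-tree theory; so there is nothing in this paper to contrast your route with, and as a reconstruction it is sound, with the caveat that this central level-$y$ decomposition is asserted rather than proved in your write-up and would need Lambert-style JCCP arguments to make the proof self-contained.
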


To construct a $\mathrm{PCRP}^{(\alpha)} (\theta_1,\alpha)$ with $\theta_1>0$,  define for $t\ge 0$
\begin{equation}\label{eqnstar}
	J_{\theta_1, \mathbf{D}}(t):= J_{\mathbf{D}}(t) + \left(1-\frac{\alpha}{\theta_1}\right) \ell(t), \quad \text{where}~ \ell (t):= -\inf_{u\le t} J_{\mathbf{D}}(u). 
\end{equation}
Then $\inf\{t\!\ge\! 0\colon\! J_{\mathbf{D}}(t) \!=\!-z\}\!=\!\inf\{t\!\ge\! 0\colon\! J_{\theta_1,\mathbf{D}}(t)\!=\!-(\alpha/\theta_1)z\}\!=:\!T^{-(\alpha/\theta_1)z}_{\theta_1}$ for $z\ge 0$. Set
\[
\cev{C}_j(y) := \skewer\left(y, \mathbf{D}\big|_{[0,T_{\theta_1}^{-j})}, j+ J_{\theta_1,\mathbf{D}}\big|_{[0,T_{\theta_1}^{-j})}\right), \qquad y\in [0,j],\quad j\in\bN.
\]
Then, for any $k>j$, we have 

\begin{align*}
	&\left(  \big(L_{T_{\theta_1}^{j\!-\!k}} \mathbf{D} \big)\big|_{[0,T^{-k}_{\theta_1}-T^{j\!-\!k}_{\theta_1})},
	\; k+ \big(L_{T_{\theta_1}^{j\!-\!k}}J_{\theta_1,\mathbf{D}}\big)\big|_{[0, T^{-k}_{\theta_1}-T^{j\!-\!k}_{\theta_1})} \right)\\
& \ed\left(
	\mathbf{D}\mid_{[0,T^{-j}_{\theta_1})},\;
	 j+ J_{\theta_1,\mathbf{D}}\big|_{[0,T^{-j}_{\theta_1})}
	\right).
\end{align*}

As a result, 
$(\cev{C}_k(y),\, y\in [0,j])\ed (\cev{C}_j(y),\, y\in [0,j])$. 
Then by Kolmogorov's extension theorem there exists 
a c\`adl\`ag process $(\cev{C}(y),\, y\ge 0)$
such that 
\begin{equation}\label{eq:discrbackarrow}
	(\cev{C}(y),\, y\in [0,j])\ed (\cev{C}_j(y),\, y\in [0,j]) \quad\text{for all}~ j\in \bN. 
\end{equation}

\begin{theorem}[{\cite[Theorem 2.5]{RogeWink20}}]\label{thm:jccp}
	For $\theta_1 > 0$, let $(\cev{C}(y),\, y\ge 0)$ be the process defined in \eqref{eq:discrbackarrow}.  
	For $\gamma\in \cC$, 
	let $(\vecc{C}(y),\,y\ge 0)$ be a $\mathrm{PCRP}^{(\alpha)} (0,\alpha)$ starting from $\gamma$.  
	Then the $\cC$-valued process 
	$(C(y):= \cev{C}(y)\concat \vecc{C}(y),\,y\ge 0)$ 
	is a $\mathrm{PCRP}^{(\alpha)} (\theta_1,\alpha)$ starting from $\gamma$. 
\end{theorem}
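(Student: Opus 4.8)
The plan is to split the statement into two pieces and recombine them. Piece (a): the left component $(\cev{C}(y),\,y\ge 0)$ is, on its own, a $\mathrm{PCRP}^{(\alpha)}(\theta_1,\alpha)$ started from $\emptyset$. Piece (b): concatenating an independent $\mathrm{PCRP}^{(\alpha)}(\theta_1,\alpha)_\emptyset$ on the left of a $\mathrm{PCRP}^{(\alpha)}(0,\alpha)_\gamma$ produces a $\mathrm{PCRP}^{(\alpha)}(\theta_1,\alpha)_\gamma$. Since $\cev{C}$ is built from $\mathbf{D}$ while $\vecc{C}$ is built from an independent family of discrete clades, the two are independent; since $\vecc{C}$ is a $\mathrm{PCRP}^{(\alpha)}(0,\alpha)_\gamma$ by hypothesis, (a) and (b) together give the theorem. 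First I would record that $\cev{C}(0)=\emptyset$: this is immediate because $j+J_{\theta_1,\mathbf{D}}\ge 0$ on $[0,T^{-j}_{\theta_1})$ (the process $J_{\theta_1,\mathbf{D}}$ has no negative jumps, so it reaches $-j$ continuously), hence the integrand defining the level-$0$ skewer vanishes; consequently $C(0)=\emptyset\concat\gamma=\gamma$.

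For (b) I would argue by colouring the Markov chain. In a $\mathrm{PCRP}^{(\alpha)}(\theta_1,\alpha)_\gamma$, call a table \emph{ancestral} if it is one of the blocks of $\gamma$ or is born immediately to the right of an ancestral table, and \emph{left-immigrant} otherwise, i.e.\ if it is born as a new leftmost table or immediately to the right of a left-immigrant table. A table born immediately to the right of a left-immigrant table still lies to the left of every ancestral table, so the left-immigrant tables always form an initial segment of the composition. Reading off the dynamics, that initial segment is a $\mathrm{PCRP}^{(\alpha)}(\theta_1,\alpha)$ from $\emptyset$ (new leftmost at rate $\theta_1$, new tables between and to the right of left-immigrant tables at rate $\alpha$, internal up-rate $m-\alpha$ and down-rate $m$), the ancestral segment is a $\mathrm{PCRP}^{(\alpha)}(0,\alpha)_\gamma$, the two segments are driven by disjoint parts of the Poissonian dynamics hence independent, and the single gap between them is fed new tables at rate $\alpha$, which is exactly the right-immigration rate carried by the left segment. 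This is routine to verify by comparing transition rates on $\cC$, and it is consistent with the convention $\emptyset\to(1)$ at rate $\theta=\theta_1$ (here $\theta_2=\alpha$).

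The substance is piece (a), and it is where the modified scaffolding $J_{\theta_1,\mathbf{D}}=J_{\mathbf{D}}+(1-\alpha/\theta_1)\ell$ does its work. I would use excursion theory above the running minimum of the spectrally positive process $J_{\mathbf{D}}$ of \eqref{eq:scaffolding-D}: with $\ell=-\inf J_{\mathbf{D}}$, the excursions of $J_{\mathbf{D}}$ above $-\ell$, indexed by $\ell$, form a Poisson point process whose atoms — each taken together with the spindle straddling its left endpoint — are discrete clades distributed as $\clade^D(Z,\mathbf{D})$ with $Z\sim\pi_1(-\alpha)$, arriving at rate $\alpha$ per unit of $\ell$; this is precisely the mechanism that produces the rate-$\alpha$ immigration in Lemma~\ref{lem:pcrp0}. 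The modification leaves these excursions unchanged and only retimes the minimum: the running minimum of $j+J_{\theta_1,\mathbf{D}}$ equals $j-(\alpha/\theta_1)\ell$, so re-indexing the same Poisson point process by the scaffolding level $v:=j-(\alpha/\theta_1)\ell\in[0,j]$ at which each clade is planted — a decreasing linear reparametrisation as $\ell$ ranges over $[0,(\theta_1/\alpha)j]$ — turns the planting intensity from $\alpha\,d\ell$ into $\theta_1\,dv$. Taking the skewer, a clade planted at level $v$ contributes a block that first appears, as the leftmost block of $\cev{C}_j$, at level $v$ and then evolves exactly as the ``right part'' of a $\mathrm{PCRP}^{(\alpha)}(0,\alpha)$ clade; hence $\cev{C}_j$ has, on $[0,j]$, the dynamics of a $\mathrm{PCRP}^{(\alpha)}(\theta_1,\alpha)$ from $\emptyset$. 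The Markov property in the level variable follows from the strong Markov property of $J_{\mathbf{D}}$ at first-passage times together with the Poisson structure, and consistency over $j$ is \eqref{eq:discrbackarrow}, so the extension $(\cev{C}(y),\,y\ge 0)$ inherits these dynamics on all of $\bR_+$.

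The main obstacle is making the identification in (a) fully rigorous: recognising the straddling-spindle-plus-excursion of $J_{\mathbf{D}}$ as a genuine $\clade^D$ (this draws on the splitting-tree and fluctuation analysis underlying Lemma~\ref{lem:pcrp0}, cf.\ \cite{GeigKers97,Lambert10}), verifying that the reparametrisation from $\ell$ to $v$ transports the Poisson intensity by the factor $\theta_1/\alpha$, and checking that the level-indexed process so obtained is càdlàg and Markov with exactly the stated jump rates — in particular at the levels where the left part is momentarily empty, where the construction must plant a new block at rate $\theta_1$. Once (a) is in place, the independence of $\cev{C}$ and $\vecc{C}$, the rate comparison in (b), and the final assembly are bookkeeping.
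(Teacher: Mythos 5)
This theorem is not proved in the paper at all: it is quoted from \cite[Theorem 2.5]{RogeWink20}, so there is no in-paper argument to compare against, and your proposal is in effect a reconstruction of the cited result. The reconstruction is sound and follows the same mechanism as the source: (a) the tilt $J_{\theta_1,\mathbf{D}}=J_{\mathbf{D}}+(1-\alpha/\theta_1)\ell$ leaves the excursions above the running minimum untouched and only relabels the level at which each clade is planted, converting the rate-$\alpha$-per-unit-$\ell$ Poisson family of i.i.d.\ $\clade^D$'s (initial spindle $\sim\pi_1(-\alpha)$) into a rate-$\theta_1$-per-unit-level family entering at the left, so $\cev{C}$ is a $\mathrm{PCRP}^{(\alpha)}(\theta_1,\alpha)$ from $\emptyset$; (b) left-concatenation with an independent $\mathrm{PCRP}^{(\alpha)}(0,\alpha)$ from $\gamma$ reproduces the $(\theta_1,\alpha)$-rates, the boundary gap and the empty-part cases matching because $\theta_2=\alpha$ and the convention $\emptyset\to(1)$ at rate $\theta=\theta_1$. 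Two remarks. First, the ``main obstacle'' you flag in (a) is lighter than you fear: $J_{\mathbf{D}}$ is a compound Poisson process with drift $-1$ (spindles arrive at finite rate $\alpha$), hence piecewise linear with finitely many jumps on compacts, so the decomposition into straddling spindle plus excursion above the minimum, with intensity $\alpha\,d\ell$, is an elementary application of the strong Markov property at jump and first-passage times; no fluctuation-theoretic input beyond centering of the scaffolding is needed, and local finiteness of the clade family on $[0,j]$ is automatic. Second, in (b) be careful about which direction you argue: $\cev{C}\concat\vecc{C}$ is a non-injective function of the Markov pair, so concluding it is Markov from a bare rate comparison requires the lumping observation that the induced rates depend only on the concatenation; your colouring argument, run in the decomposition direction (the coloured pair inside a $\mathrm{PCRP}^{(\alpha)}(\theta_1,\alpha)_\gamma$ is Markov with product rates, hence equals in law the independent pair $(\cev{C},\vecc{C})$, whose concatenation recovers the chain), avoids this issue and is the cleaner way to finish — your table-by-table rate check is exactly the verification needed. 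Finally, note that independence of $\cev{C}$ and $\vecc{C}$, which you use, is implicit in the statement and intended in \cite{RogeWink20}.
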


\subsection{Study of the up-down chain on positive integers}\label{sec:ud-chain}
For $\theta>-1$ and $n, k\in \bN$, define a probability measure:  
\begin{equation}\label{eq:pi-n}
	\pi_k^{(n)}(\theta) \text{ is the law of the process } 
	\left( (n^{-1} Z(2 n y),\, y\ge 0 \right), ~\text{where}~ Z \sim \pi_{k} (\theta)\text{ as in } \eqref{eq:pi}. 
\end{equation}
In preparation of proving Theorem~\ref{thm:crp-ip}, we present the following convergence concerning scaffoldings and spindles.  
\begin{proposition}\label{prop:cv-prm}
	For $n\in \bN$, let $\fN^{(n)}$ be a Poisson random measure on $\bR_+\times \cE$ with intensity 
	$ \mathrm{Leb}\otimes ( 2 \alpha n^{1+ \alpha}\cdot\pi_1^{(n)} (-\alpha))$, 
	and define its scaffolding $\xi^{(n)}:= (\xi^{(n)}(t))_{t\ge 0}$, where
	\begin{equation}\label{eq:scaffolding-n}
		\xi^{(n)}(t):= J_{\fN^{(n)}}^{(n)} (t) :=  -   n^{\alpha} t + \int_{[0,t] \times \cE}  \zeta(f)  \fN^{(n)} (ds, df) ,\qquad t\ge 0. 
	\end{equation}
	Write
	$\ell^{(n)}:= \left( \ell^{(n)}(t):= -\inf_{s\in [0,t]} \xi^{(n)}(s) , t\ge 0\right)$.  
	Let $\fN\sim \PRM( c_{\alpha} \cdot\mathrm{Leb} \otimes \Lambda^{(-2\alpha)}_{\mathtt{BESQ}})$, where $\Lambda^{(-2\alpha)}_{\mathtt{BESQ}}$ is the excursion measure associated with $\besq(- 2\alpha)$ 
	introduced in Section~\ref{sec:pre} and $c_{\alpha}=2 \alpha(1\!+\!\alpha)/\Gamma(1\!-\!\alpha)$ as in \eqref{eq:nu-Lambda}. Define its scaffolding  $\xi_{\fN}$ as in \eqref{eq:scaffolding}, 
	and $\ell_{\fN}= (\ell_{\fN}(t)= -\inf_{s\in [0,t]} \xi_{\fN}(s), t\ge 0)$.
	Then the joint distribution of the triple  $(\fN^{(n)}, \xi^{(n)}, \ell^{(n)})$ converges to $(\fN, \xi_{\fN}, \ell_{\fN})$ in distribution, under the product of vague and Skorokhod topologies. 
\end{proposition}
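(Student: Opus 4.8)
The plan is to recognise $(\fN^{(n)},\xi^{(n)},\ell^{(n)})$ as a deterministic rescaling of the single fixed object from \cite{RogeWink20}, so that the statement becomes a scaling-limit theorem for a L\'evy process in the domain of attraction of a stable law, together with its marked jump measure. Let $\mathbf{D}\sim\PRM(\alpha\cdot\mathrm{Leb}\otimes\pi_1(-\alpha))$ with scaffolding $J_{\mathbf{D}}$ as in \eqref{eq:scaffolding-D}, and let $\Phi_n$ be the map sending a point measure $\sum_i\delta(s_i,f_i)$ to $\sum_i\delta\big(s_i/(2n^{1+\alpha}),\,n^{-1}f_i(2n\,\cdot)\big)$. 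A comparison of intensities shows $\fN^{(n)}\overset{d}{=}\Phi_n(\mathbf{D})$, and under this coupling a short computation using $\zeta(n^{-1}f(2n\,\cdot))=\zeta(f)/2n$ and $\sum_{s_i\le u}\zeta(f_i)=J_{\mathbf{D}}(u)+u$ yields the scaffolding identity $\xi^{(n)}(t)=\tfrac{1}{2n}J_{\mathbf{D}}(2n^{1+\alpha}t)$, and hence $\ell^{(n)}(t)=-\inf_{s\le t}\tfrac{1}{2n}J_{\mathbf{D}}(2n^{1+\alpha}s)$. I work throughout with this coupling.

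\textbf{Two inputs.} First I would pin down the mean of the jump distribution of $J_{\mathbf{D}}$, i.e.\ of $\zeta$ under $\pi_1(-\alpha)$: the scale function of the birth-death chain with up-rates $i-\alpha$ and down-rates $i$ has increments $\rho_j=\Gamma(j+1)\Gamma(1-\alpha)/\Gamma(j+1-\alpha)$, whence the Green's function is $G(1,k)=1/(k\rho_{k-1})$ and $\bE_{\pi_1(-\alpha)}[\zeta]=\sum_{k\ge1}1/(k\rho_{k-1})=\Gamma(1-\alpha)^{-1}\sum_{k\ge1}\Gamma(k-\alpha)/\Gamma(k+1)=1/\alpha$, the last equality from the binomial-series identity $\sum_{k\ge0}\Gamma(k-\alpha)/(\Gamma(-\alpha)\Gamma(k+1))=(1-1)^{\alpha}=0$. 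In particular the Laplace exponent $\psi$ of $J_{\mathbf{D}}$ has $\psi'(0)=1-\alpha\bE_{\pi_1(-\alpha)}[\zeta]=0$, matching the mean-zero stable process $\xi_{\fN}$. Second, and this is the real work, I would establish the vague convergence of intensities on $\cE$,
\[
2\alpha n^{1+\alpha}\,\pi_1^{(n)}(-\alpha)\ \longrightarrow\ c_{\alpha}\,\Lambda^{(-2\alpha)}_{\mathtt{BESQ}},
\]
by combining (a) the diffusion approximation of the rescaled chain, which follows from a Taylor expansion of the rescaled generator (the asymmetry of the up-rate $i-\alpha$ and down-rate $i$ produces the drift coefficient $-2\alpha$, the diffusive part the coefficient $2x$), identifying the limit of $n^{-1}Z(2n\,\cdot)$ as $\besq(-2\alpha)$, with (b) the tail estimate $\pi_1(-\alpha)(\zeta>x)\sim\Gamma(1-\alpha)^{-1}\Gamma(1+\alpha)^{-1}x^{-1-\alpha}$ coming from the scale function, which fixes the normalisation; one tests this against the Pitman--Yor bridge/entrance-law description of $\Lambda^{(-2\alpha)}_{\mathtt{BESQ}}$ recalled in Section~\ref{sec:pre}.

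\textbf{Assembly.} The intensity convergence gives $\fN^{(n)}\to\fN$ in distribution for the vague topology. Next, $\xi^{(n)}$ is a L\'evy process whose L\'evy measure is the image of $2\alpha n^{1+\alpha}\pi_1^{(n)}(-\alpha)$ under $\zeta$, converging to the L\'evy measure $c_{\alpha}\Lambda^{(-2\alpha)}_{\mathtt{BESQ}}\circ\zeta^{-1}$ of $\xi_{\fN}$, while the compensating drift terms agree because both processes have zero mean; by the standard convergence criterion for L\'evy processes this yields $\xi^{(n)}\to\xi_{\fN}$ in $\bD(\bR_+,\bR)$ under $J_1$ (equivalently, one checks directly $2n^{1+\alpha}\psi(q/2n)\to 2^{-\alpha}q^{1+\alpha}/\Gamma(1+\alpha)$, the Laplace exponent of $\xi_{\fN}$). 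To upgrade to joint convergence $(\fN^{(n)},\xi^{(n)})\to(\fN,\xi_{\fN})$ I would use the big-jumps/small-jumps decomposition $\xi^{(n)}=\xi^{(n),\varepsilon}+R^{(n),\varepsilon}$, where $\xi^{(n),\varepsilon}$ retains only spindles with $\zeta>\varepsilon$ together with their compensating drift and is a continuous functional of $\fN^{(n)}|_{\{\zeta>\varepsilon\}}$, so $(\fN^{(n)}|_{\{\zeta>\varepsilon\}},\xi^{(n),\varepsilon})$ converges jointly by continuous mapping; the remainder $R^{(n),\varepsilon}$ is a compensated small-jump martingale whose variance at any fixed time is, by the truncated second-moment estimate from the tail asymptotics, bounded by a constant times $\varepsilon^{1-\alpha}$ uniformly in $n$, so letting $\varepsilon\downarrow0$ gives the joint convergence. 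Finally $\ell^{(n)}$ and $\ell_{\fN}$ are the images of $\xi^{(n)}$ and $\xi_{\fN}$ under the running-infimum map $x\mapsto(-\inf_{s\le t}x(s))_{t\ge0}$, which is continuous on $\bD(\bR_+,\bR)$ at $\xi_{\fN}$ (a spectrally positive path, whose running infimum is continuous), so the continuous mapping theorem delivers the asserted joint convergence of the triple under the product of vague and Skorokhod topologies.

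\textbf{Main obstacle.} I expect the delicate step to be the excursion-measure convergence: passing from ``the rescaled chain converges to $\besq(-2\alpha)$'' to the vague convergence $2\alpha n^{1+\alpha}\pi_1^{(n)}(-\alpha)\to c_{\alpha}\Lambda^{(-2\alpha)}_{\mathtt{BESQ}}$ requires controlling the conditional law of a long rescaled excursion, not merely its duration, and matching the discrete entrance/bridge decomposition with Pitman--Yor's $\sigma$-finite construction, with care needed around the absorption of $\besq(-2\alpha)$ at $0$. The bookkeeping for the joint convergence of the point measure and the L\'evy process built from it --- in particular the choice of boundedness on $\bR_+\times\cE$ --- is the other point requiring care, though it is by now routine within the scaffolding-and-spindles framework of \cite{Paper1-1,RogeWink20}.
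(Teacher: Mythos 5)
Your overall architecture matches the paper's quite closely: the decisive input in both cases is the vague convergence of the rescaled excursion laws, $2\alpha n^{1+\alpha}\pi_1^{(n)}(-\alpha)\to c_\alpha\Lambda^{(-2\alpha)}_{\mathtt{BESQ}}$, which is exactly the paper's Proposition~\ref{prop:vague} (stated for general $\theta\in(-1,1)$ and applied with $\theta=-\alpha$), followed by \cite[Theorem~4.11]{KallenbergRM} for the point-measure convergence and the scaffolding convergence, which the paper simply cites from \cite[Theorem~1.5]{RogeWink20} while you reprove it via the scaling coupling $\xi^{(n)}(t)=\frac{1}{2n}J_{\mathbf{D}}(2n^{1+\alpha}t)$ and Laplace exponents (both fine; your mean computation $\bE_{\pi_1(-\alpha)}[\zeta]=1/\alpha$ is correct). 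Where you genuinely diverge is the upgrade to \emph{joint} convergence of $(\fN^{(n)},\xi^{(n)},\ell^{(n)})$: the paper argues by tightness of the triple, extraction of a subsequential limit, Skorokhod representation, and identification of the limit scaffolding through the L\'evy--It\^o decomposition (matching the jumps of size $>\varepsilon$ with the atoms of the limiting PRM); you instead use an $\varepsilon$-truncation into a big-spindle part (a continuous functional of $\fN^{(n)}|_{\{\zeta>\varepsilon\}}$ plus drift) and a compensated small-jump martingale whose variance is $O(\varepsilon^{1-\alpha})$ uniformly in $n$. Both are valid; your route avoids the subsequence argument at the cost of checking convergence of the truncated drifts, the paper's avoids moment estimates. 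The final step (running infimum, continuity at the spectrally positive limit) is the same in both.

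The one substantive gap is the step you yourself flag as ``the real work'': the vague convergence of intensities. Your sketch combines (a) the diffusion limit of the \emph{unconditioned} rescaled chain and (b) the tail normalisation from the scale function, and then proposes to ``test this against'' the Pitman--Yor description. That combination does not by itself yield the convergence of the \emph{conditional} law of an excursion given that it is large, which is the heart of the matter: the chain starts from $1$ (rescaled, from $1/n\to 0$) and one conditions on an event of probability $O(n^{-1-\alpha})$, so neither (a) nor (b) applies directly. The paper's mechanism (proof of Proposition~\ref{prop:vague}, part 3) is to Doob $h$-transform the birth--death chain by its scale function, observe that the conditioned chain is again a birth--death chain with up-rates $(i+\theta)s(i+1)/s(i)$ and down-rates $is(i-1)/s(i)$, and verify the drift/diffusion/jump conditions of \cite[Theorem~IX.4.21]{JacodShiryaev} to identify the limit as the ${\tt BESQ}(4-2\theta)$ ($={\tt BESQ}(4+2\alpha)$ here) ``up-diffusion''; this is then glued, via Pitman--Yor's first description \cite[(3.1), (3.5)]{PitmYor82}, to the post-maximum piece handled by \cite[Theorem~1.3]{RogeWink20} (Lemma~\ref{lem:ud}), with Skorokhod-topology care at the concatenation time. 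Your proposal contains no substitute for this $h$-transform/conditioned-chain argument, so as written the key lemma is assumed rather than proved. A related minor point: the tail asymptotic for $\zeta$ does not come from the scale function alone (the scale function directly controls the supremum, not the duration); the paper sidesteps this by running Kallenberg's vague-convergence criteria with the supremum functional $A=\sup|f|$ rather than $\zeta$, and you would be well advised to do the same.
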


Note that, for $n\in \bN$, we can construct the $\mathbf{N}^{(n)}$ in Proposition~\ref{prop:cv-prm} from a Poisson random measure $\mathbf{D}\sim \PRM(\alpha\mathrm{Leb} \otimes \pi_1 (-\alpha))$, by setting  
\[
\mathbf{N}^{(n)} = 
\sum_{(s,f)\text{ atom of } \mathbf{D}}
\delta \left({\textstyle\frac12}n^{-(1+\alpha)} s,  n^{-1} f (2 n \,\cdot\, ) \right). 
\]
This suggests a direct relation between $\fN^{(n)}$ and a rescaled PCRP, which will be specified in \eqref{relation1}--\eqref{relation2}.

The up-down chain defined in \eqref{eq:pi-n} plays a central role in the proof of Proposition~\ref{prop:cv-prm}. Let us first record a convergence result obtained in \cite[Theorem 1.3--1.4]{RogeWink20}. 
Similar convergence in a general context of discrete-time Markov chains converging to positive self-similar Markov processes has been established in \cite{BertKort16}.

\begin{lemma}[{\cite[Theorem 1.3--1.4]{RogeWink20}}]\label{lem:ud} 
	Fix $a>0$ and $\theta>-1$. For every $n\in \bN$, let $Z^{(n)} \sim \pi_{\lfloor n a\rfloor}^{(n)} (\theta)$. 
	Then the following convergence holds in the space $\bD(\bR_+, \bR_+)$ of c\`adl\`ag functions endowed with the Skorokhod topology: 
	\[
	Z^{(n)} \underset{n\to \infty}{\longrightarrow} Z\sim \besq_a (2 \theta)\quad \text{in distribution}.
	\]
	Moreover, if $\theta\in (-1, 0]$, then the convergence holds jointly with the convergence of first hitting times of 0.
\end{lemma}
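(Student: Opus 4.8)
The plan is to prove Lemma~\ref{lem:ud} by the classical route for diffusion approximations: generator convergence on a suitable core, tightness, identification of the limit through a well-posed martingale problem, and then a separate near-boundary estimate for the joint convergence of hitting times. Write $Z^{(n)}\sim\pi^{(n)}_{\lfloor na\rfloor}(\theta)$; by definition $Z^{(n)}(y)=n^{-1}Y(2ny)$ for $Y\sim\pi_{\lfloor na\rfloor}(\theta)$. For $f\in C^3_c((0,\infty))$ and a state $x=i/n$ with $i\ge1$, the generator of the rescaled, time-changed chain is
\[
\mathcal{A}^{(n)}f(x)=2n\Big[(nx+\theta)\big(f(x+\tfrac1n)-f(x)\big)+nx\big(f(x-\tfrac1n)-f(x)\big)\Big],
\]
and a Taylor expansion gives $\mathcal{A}^{(n)}f(x)=2xf''(x)+2\theta f'(x)+O(n^{-1})$, uniformly for $x$ in compact subsets of $(0,\infty)$; the right-hand side is precisely the generator of $\besq(2\theta)$. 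Away from $0$ this is the convergence we want; at $0$ the chain waits an exponential time of mean $(2n(\theta\vee0))^{-1}$ before jumping by $1/n$, so the boundary effect vanishes in the scaling limit, consistently with $\besq(2\theta)$ being absorbed at $0$ when $\theta\le0$ and instantaneously reflecting when $\theta>0$.

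For tightness I would test against $f(x)=x$, for which $\mathcal{A}^{(n)}f\equiv2\theta$ off the boundary, so $Z^{(n)}$ is, modulo an absolutely continuous drift bounded by $2|\theta|$ per unit time, a martingale whose predictable quadratic variation has density $4Z^{(n)}(y)+O(n^{-1})$. This yields $\sup_{y\le T}\bE[Z^{(n)}(y)]<\infty$ uniformly in $n$, hence compact containment, and, via the Aldous--Rebolledo criterion, tightness in $\bD(\bR_+,\bR_+)$. Since the jumps of $Z^{(n)}$ have size $1/n$ and, by compact containment together with the locally linear jump rates, do not accumulate on bounded time intervals, every subsequential limit is path-continuous and solves the $\besq(2\theta)$ martingale problem on $(0,\infty)$ up to the first hitting time of $0$. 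As $\besq(2\theta)$ is the pathwise unique solution of its defining SDE and its behaviour at $0$ is the standard one (absorbing for $\theta\le0$, continuous reflection for $\theta>0$), the martingale problem is well posed, so every subsequential limit equals $\besq_a(2\theta)$; with tightness this gives $Z^{(n)}\to Z\sim\besq_a(2\theta)$ in distribution. Alternatively one may invoke the general scaling-limit theory of \cite{BertKort16} for chains on $\bN$ converging to positive self-similar Markov processes, $\besq$ being $1$-self-similar, and then add the reflection at $0$ when $\theta>0$.

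For $\theta\in(-1,0]$, where $0$ is absorbing and $\zeta(Z)<\infty$ almost surely, I would upgrade this to joint convergence of $\zeta^{(n)}:=\inf\{y\colon Z^{(n)}(y)=0\}$ towards $\zeta:=\inf\{y\colon Z(y)=0\}$. Fix $\varepsilon>0$: the map $\omega\mapsto\inf\{y\colon\omega(y)\le\varepsilon\}$ is continuous at continuous paths that strictly cross below $\varepsilon$ after reaching it, which holds almost surely for $Z$ since it has no absorption in $(0,\infty)$; hence the convergence just established gives $(Z^{(n)},\zeta^{(n)}_\varepsilon)\to(Z,\zeta_\varepsilon)$ jointly, where $\zeta^{(n)}_\varepsilon,\zeta_\varepsilon$ denote the respective level-$\varepsilon$ passage times. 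Letting $\varepsilon\downarrow0$, path-continuity and finiteness of $\zeta$ give $\zeta_\varepsilon\uparrow\zeta$; on the discrete side one needs a uniform-in-$n$ bound showing that, once the chain drops below level $\varepsilon n$, it is absorbed within rescaled time that is small with $\varepsilon$. This can be read off from the birth--death structure with birth rates $b_i=i+\theta$ and death rates $d_i=i$ (so $b_i/d_i=1+\theta/i\le1$) together with comparison to the total-mass time-change, and a Markov-type inequality then closes the passage from the level-$\varepsilon$ passage times to the absorption time.

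The main obstacle throughout is the boundary point $0$: one must choose the core and boundary regime so that the limit is exactly $\besq(2\theta)$ with its correct $0$-behaviour (absorbing for $\theta\le0$, continuous reflection for $\theta>0$), and, for the hitting-time claim, control the discrete chain near $0$ uniformly in $n$. Both are tractable because the only downward moves are by $1$ at rate $\approx i$, so $Z^{(n)}$ cannot overshoot $0$ and spends little time near it; nonetheless, transferring the level-$\varepsilon$ passage times to the absorption time uniformly in $n$, especially in the critical case $\theta=0$ where $\zeta$ has infinite mean, is where the genuine work lies.
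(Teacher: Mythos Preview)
The paper does not prove this lemma at all: it is quoted verbatim from \cite[Theorems~1.3--1.4]{RogeWink20}, so there is no ``paper's own proof'' to compare against. Your outline is the standard diffusion-approximation route (generator convergence on $C^3_c((0,\infty))$, Aldous--Rebolledo tightness, identification via well-posedness of the $\besq(2\theta)$ martingale problem) and is essentially correct. The alternative you mention via \cite{BertKort16} is in fact closer to what \cite{RogeWink20} does, exploiting that $\besq$ is a positive $1$-self-similar Markov process.

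Two points are worth sharpening. First, for $\theta\in(0,1)$ the boundary $0$ is regular for $\besq(2\theta)$, so ``the martingale problem is well posed'' only once you specify the boundary condition; you need to argue that the rescaled chain selects the reflecting extension, e.g.\ by showing the occupation time of $\{0\}$ vanishes in the limit (holding rate $2n\theta$ at $0$ gives mean holding time $O(n^{-1})$), or by localising away from $0$ and using that $\besq(2\theta)$ a.s.\ spends zero Lebesgue time at $0$. Second, for the hitting-time part when $\theta\in(-1,0]$, your level-$\varepsilon$ argument is correct in principle, and for $\theta<0$ the supermartingale bound $\bE_k[\tau_0]=k/(-\theta)$ gives the needed uniform estimate directly; the case $\theta=0$ genuinely needs more, since $\bE[\tau_0]=\infty$.

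For comparison, the paper's proof of the adjacent Lemma~\ref{lem:ud-bis} (extending the hitting-time convergence to $\theta\in(0,1)$) proceeds quite differently: it passes to the Lamperti representation $Z(t)=\exp(\xi(\sigma(t)))$ with $\xi$ a drifted Brownian motion, establishes convergence of the discrete Lamperti processes $\xi_n\to\xi$ via \cite[Theorem~IX.4.21]{JacodShiryaev}, and then controls $\tau_n=\int_0^\infty e^{\xi_n(s)}ds$ by a Foster--Lyapunov argument with $g(x)=x^\beta$, $\beta\in(0,1-\theta)$, to obtain uniform moment bounds on the residual absorption time from small initial states. That machinery would equally handle your $\theta=0$ case; your level-$\varepsilon$ approach is more elementary but leaves exactly that uniformity as the residual work, as you note.
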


For our purposes, we study this up-down chain in more depth and obtain the following two 
convergence results. Their proofs are postponed to Appendix~\ref{sec:ud-proof}.

\begin{lemma}\label{lem:ud-bis} 
	In Lemma~\ref{lem:ud}, the joint convergence of first hitting time of 0 also holds when $\theta\in (0,1)$.
\end{lemma}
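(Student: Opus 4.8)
The plan is to reduce the whole lemma to a single quantitative estimate on the un-rescaled chain, and then to feed this estimate into a strong Markov argument at a first-passage time. Write $\bP_m$ for the law of $Z\sim\pi_m(\theta)$ from \eqref{eq:pi} (so $\theta\in(0,1)$ is fixed), and $\tau_0:=\inf\{t>0\colon Z(t)=0\}$ for its first hitting time of $0$; set $Z^{\infty}\sim\besq_a(2\theta)$ and $\zeta:=\inf\{y\colon Z^{\infty}(y)=0\}$, which is a.s.\ finite since $2\theta\in(0,2)$. One half of the statement is soft: appealing to Lemma~\ref{lem:ud} and Skorokhod's representation theorem we may realise $Z^{(n)}\to Z^{\infty}$ a.s.\ in $\bD(\bR_+,\bR_+)$, hence (the limit being path-continuous) uniformly on compacts; as $Z^{\infty}$ is strictly positive and bounded below on each $[0,\zeta-\varepsilon]$, eventually $Z^{(n)}>0$ there, so $\liminf_n\zeta(Z^{(n)})\ge\zeta$ a.s. The content of the lemma is the matching upper bound, and for this it suffices to establish the following uniform tail estimate: there is a non-increasing $\delta\colon(0,\infty)\to(0,\infty)$ with $\delta(T)\to0$ as $T\to\infty$ such that $\sup_{m\ge1}\bP_m(\tau_0>Tm)\le\delta(T)$ for all $T>0$.

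To prove the estimate I would work with the scale function $S(k)=\sum_{l=0}^{k-1}\prod_{i=1}^{l}\frac{i}{i+\theta}$ of the birth--death chain $\pi_\bullet(\theta)$, whose rates are $\lambda_k=k+\theta$ (up) and $\mu_k=k$ (down). Since $\prod_{i=1}^{l}\frac{i}{i+\theta}\sim\Gamma(1+\theta)\,l^{-\theta}$ and $\theta\in(0,1)$, one gets $S(k)\sim\frac{\Gamma(1+\theta)}{1-\theta}k^{1-\theta}\to\infty$; hence every ratio $S(k)/S(2k)$ is $<1$ and they converge to $2^{\theta-1}<1$, so $q:=\sup_{k\ge1}S(k)/S(2k)<1$. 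This is exactly where $\theta<1$ --- equivalently, recurrence of $0$ --- is used. Consequently $\bP_k(\tau_{2k}<\tau_0)=S(k)/S(2k)\le q$, so running the chain from $m$ in successive ``doubling phases'' (first to $\{0,2m\}$, then, if $2m$ was reached, to $\{0,4m\}$, and so on), the number $J$ of phases before absorption at $0$ satisfies $\bP(J>j)\le q^{\,j}$. The duration $R_j$ of the $j$-th phase is the exit time of the chain from $\{1,\dots,2^{j+1}m-1\}$ started at $2^{j}m$; a standard Green's-function computation gives $\bE[R_j\mid J>j]\le C\,2^{j}m$ for a universal $C$ (the $\sim(2^{j}m)^2$ embedded steps being offset by holding times $\sim(2^{j}m)^{-1}$ at the relevant levels), and subdividing time into blocks of length $\sim 2^{j}m$ and iterating Markov's inequality yields a uniform exponential tail bound $\bP(R_j>s\,2^{j}m\mid J>j)\le2e^{-cs}$ (conditionally on the chain up to the start of phase $j$). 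Writing $\tau_0=\sum_{j<J}R_j\le 2^{J}m\max_{j<J}(R_j/2^{j}m)$ and splitting on $\{J\le K\}$ versus $\{J>K\}$ with $K=K(T)\approx\log_2 T$ gives $\bP_m(\tau_0>Tm)\le q^{K}+2K\,e^{-cT/2^{K}}\to0$ as $T\to\infty$, uniformly in $m$. (One could alternatively extract the same bound from the continued-fraction expression for $\bE_m[e^{-\lambda\tau_0}]$, but the doubling argument is more transparent and exhibits the heavy index-$(1-\theta)$ tail of $\tau_0/m$, consistent with $\zeta\ed a/2G$, $G\sim\mathtt{Gamma}(1-\theta,1)$.)

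Granting the estimate, I would conclude as follows. For small $\varepsilon>0$ put $\eta=\varepsilon^2<a$ and let $\sigma^{(n)}_\eta:=\inf\{y\colon Z^{(n)}(y)\le\eta\}$, a stopping time for the (Markov) process $Z^{(n)}$ with $\sigma^{(n)}_\eta\le\zeta(Z^{(n)})$ and $\sigma^{(n)}_\eta<\infty$ a.s. Since the diffusion $Z^{\infty}$ passes strictly below every level $\eta\in(0,a)$ immediately after first reaching it, the first-passage map $z\mapsto\inf\{y\colon z(y)\le\eta\}$ is a.s.\ continuous at $Z^{\infty}$, so $(Z^{(n)},\sigma^{(n)}_\eta)\to(Z^{\infty},\sigma_\eta)$ in distribution, where $\sigma_\eta:=\inf\{y\colon Z^{\infty}(y)\le\eta\}\le\zeta$ and $\sigma_\eta\uparrow\zeta$ as $\eta\downarrow0$. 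By the strong Markov property of $Z^{(n)}$ at $\sigma^{(n)}_\eta$, the process restarts from a state $m_n\le n\eta$, so the tail estimate (with $T=2n\varepsilon/m_n\ge2/\varepsilon$) gives
\[
\bP\big(\zeta(Z^{(n)})-\sigma^{(n)}_\eta>\varepsilon \,\big|\, \cF^{Z^{(n)}}_{\sigma^{(n)}_\eta}\big)\ \le\ \sup_{1\le m\le n\eta}\bP_m\big(\tau_0>2n\varepsilon\big)\ \le\ \delta(2/\varepsilon)
\]
for every $n$. Hence, for any bounded Lipschitz $\phi$ on $\bD(\bR_+,\bR_+)\times\bR_+$, using $\zeta(Z^{(n)})-\sigma^{(n)}_\eta\ge0$ we get $\big|\bE\phi(Z^{(n)},\zeta(Z^{(n)}))-\bE\phi(Z^{(n)},\sigma^{(n)}_\eta)\big|\le\mathrm{Lip}(\phi)\,\varepsilon+2\|\phi\|_\infty\,\delta(2/\varepsilon)$, while $\bE\phi(Z^{(n)},\sigma^{(n)}_\eta)\to\bE\phi(Z^{\infty},\sigma_\eta)$ as $n\to\infty$ and $\bE\phi(Z^{\infty},\sigma_\eta)\to\bE\phi(Z^{\infty},\zeta)$ as $\eta=\varepsilon^2\downarrow0$. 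Letting $\varepsilon\downarrow0$ then yields $\bE\phi(Z^{(n)},\zeta(Z^{(n)}))\to\bE\phi(Z^{\infty},\zeta)$, i.e.\ $Z^{(n)}\to Z^{\infty}$ jointly with $\zeta(Z^{(n)})\to\zeta$ in distribution, as required. The main obstacle is the uniform tail estimate: because $0$ is \emph{null} recurrent for $\pi_\bullet(\theta)$ when $\theta\in(0,1)$, first-moment methods fail ($\bE_m[\tau_0]=\infty$), and one genuinely has to control the heavy tail of $\tau_0/m$ uniformly in the starting state $m$, which is what the doubling argument above is designed to do.
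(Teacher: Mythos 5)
Your proof is correct, but it takes a genuinely different route from the paper. The paper adapts Bertoin--Kortchemski: it passes to the Lamperti representation $Z_n(t)=\exp(\xi_n(\sigma_n(t)))$, proves $\xi_n\to\xi$ (Brownian motion with drift) via the semimartingale convergence criteria of Jacod--Shiryaev, and then controls $\tau_n=\int_0^\infty e^{\xi_n(s)}ds$ by establishing a bound, uniform in $n$, on extinction times started from small states, obtained from a Foster--Lyapunov function $g(x)=x^\beta$, $\beta\in(0,1-\theta)$, together with the moment bounds of Menshikov--Petritis and Markov's inequality. You instead stay with the unrescaled birth--death chain and prove the same kind of uniform control directly: the explicit scale function gives $\sup_k S(k)/S(2k)=q<1$ (this is exactly where $\theta<1$ enters, mirroring the paper's condition $\beta(\beta-1+\theta)<0$), the doubling scheme with Green's-function exit-time expectations $\le CN$ and iterated-Markov exponential tails yields $\sup_m\bP_m(\tau_0>Tm)\le\delta(T)\to0$, and a strong Markov argument at the first passage below $\eta=\varepsilon^2$ (where continuity of the first-passage map at the limiting regular diffusion gives $(Z^{(n)},\sigma^{(n)}_\eta)\Rightarrow(Z^\infty,\sigma_\eta)$) converts this into the joint convergence of hitting times. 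Your route is more elementary and self-contained -- it needs neither the Lamperti change of variables nor external Lyapunov-function results, and it makes the index-$(1-\theta)$ tail of $\tau_0/m$ visible -- while the paper's route plugs into general machinery that simultaneously re-derives the process convergence and is closer in spirit to the framework it cites. One small repair: with $K(T)=\log_2T$ exactly, the term $2Ke^{-cT/2^K}$ does not vanish; you need $K\to\infty$ with $2^K=o(T)$, e.g.\ $K=\lfloor\tfrac12\log_2T\rfloor$, which makes $q^K+2Ke^{-cT/2^K}\to0$ as claimed, and likewise the exponential-tail and $\bE_x[\text{exit time}]\le CN$ steps deserve the one-line Green's-function computation to be fully rigorous.
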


Recall that for $\theta\ge 1$, the first hitting time of 0 by ${\tt BESQ}(2\theta)$ is infinite.

\begin{proposition}\label{prop:vague}
	Let $\theta\in (-1,1)$ and $Z\sim\pi_1(\theta)$. Denote by $\widetilde{\pi}_1^{(n)}(\theta)$ the distribution of 
	$\big(\frac{1}{n}Z(2nt\wedge\zeta(Z)),\,t\ge 0\big)$. 
	Then the following convergence holds vaguely
	\[
	\frac{\Gamma(1\!+\!\theta)}{1\!-\!\theta} n^{1-\theta} \cdot \widetilde{\pi}_1^{(n)}(\theta) \underset{n\to \infty}{\longrightarrow}  \Lambda^{(2\theta)}_{\mathtt{BESQ}}
	\]
	on the space of c\`adl\`ag excursions equipped with the Skorokhod topology. 
\end{proposition}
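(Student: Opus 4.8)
The plan is to reduce the asserted vague convergence to a countable family of \emph{weak} convergences of finite measures. It suffices to prove that for every $\varepsilon>0$ the restriction of $\frac{\Gamma(1+\theta)}{1-\theta}n^{1-\theta}\widetilde{\pi}_1^{(n)}(\theta)$ to $\{f\in\cE\colon\sup f>\varepsilon\}$ converges weakly, as $n\to\infty$, to the restriction of $\Lambda^{(2\theta)}_{\mathtt{BESQ}}$ to the same set, because these sets exhaust $\cE$ as $\varepsilon\downarrow 0$, each carries finite $\Lambda^{(2\theta)}_{\mathtt{BESQ}}$-mass, and any test functional entering the definition of vague convergence vanishes on some $\{\sup f\le\varepsilon\}$. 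Under $\widetilde{\pi}_1^{(n)}(\theta)$, the event $\{\sup f>\varepsilon\}$ coincides, up to a boundary correction that is negligible in the limit, with the event that the chain $Z\sim\pi_1(\theta)$ reaches the level $N_n:=\lceil n\varepsilon\rceil$ before being absorbed at $0$.

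I would first compute the total masses. Hitting probabilities of $Z$ depend only on its embedded jump chain, a birth-and-death walk on $\bN_0$ with up/down ratio $j/(j+\theta)$ at state $j$, so the gambler's-ruin formula gives $\bP_1(Z\text{ hits }N_n\text{ before }0)=1/\Sigma_{N_n}$ with $\Sigma_N:=\sum_{m=0}^{N-1}\prod_{j=1}^m\frac{j}{j+\theta}=\sum_{m=0}^{N-1}\frac{\Gamma(m+1)\Gamma(1+\theta)}{\Gamma(m+1+\theta)}$. A Stirling estimate yields $\Sigma_N\sim\frac{\Gamma(1+\theta)}{1-\theta}N^{1-\theta}$ as $N\to\infty$ (here $\theta<1$ is used), so that $\frac{\Gamma(1+\theta)}{1-\theta}n^{1-\theta}/\Sigma_{N_n}\to\varepsilon^{\theta-1}$. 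On the continuum side, a direct computation using the squared-Bessel-bridge description of $\Lambda^{(2\theta)}_{\mathtt{BESQ}}$ (equivalently, self-similarity of $\besq(2\theta)$ together with the normalisation \eqref{eq:besq-exc} and the scale function $x\mapsto x^{1-\theta}$ of $\besq(2\theta)$, which parallels $\Sigma_N\propto N^{1-\theta}$) gives $\Lambda^{(2\theta)}_{\mathtt{BESQ}}(\sup f>\varepsilon)=\varepsilon^{\theta-1}$; the two limiting total masses agree, which is exactly what the constant $\frac{\Gamma(1+\theta)}{1-\theta}$ is chosen to achieve.

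Next I would decompose each excursion at $\sigma_\varepsilon:=\inf\{t\colon f(t)\ge\varepsilon\}$ into a pre-$\sigma_\varepsilon$ and a post-$\sigma_\varepsilon$ fragment, which on both sides are conditionally independent given $f(\sigma_\varepsilon)$ by the strong Markov property, and handle the two conditional laws (on $\{\sup f>\varepsilon\}$) separately. For the post-fragment, the strong Markov property of $Z$ at the first hitting time of $N_n$ shows it is $\frac1n$ times a $\pi_{N_n}(\theta)$-chain run until absorption at $0$, with $N_n/n\to\varepsilon$; by Lemma~\ref{lem:ud} (for $\theta\le 0$) and Lemma~\ref{lem:ud-bis} (for $\theta\in(0,1)$, where $\besq(2\theta)$ still hits $0$) this converges in distribution, jointly with its absorption time, to $\besq_\varepsilon(2\theta)$ run until it hits $0$ — which is precisely the post-$\sigma_\varepsilon$ fragment of $\Lambda^{(2\theta)}_{\mathtt{BESQ}}(\,\cdot\mid\sup f>\varepsilon)$ by the strong Markov property of the excursion measure. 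For the pre-fragment, conditioning $Z$ to hit $N_n$ before $0$ is the Doob $h$-transform by $h_n(j)=\Sigma_j/\Sigma_{N_n}$; since $h_n(\lceil nx\rceil)\to(x/\varepsilon)^{1-\theta}$, a scalar multiple of the $\besq(2\theta)$ scale function, the rescaled conditioned chains should converge to the corresponding $h$-transform of $\besq(2\theta)$, namely a $\besq(4-2\theta)$ started from $0$ stopped at its first hitting time of $\varepsilon$, which is the pre-$\sigma_\varepsilon$ fragment of $\Lambda^{(2\theta)}_{\mathtt{BESQ}}(\,\cdot\mid\sup f>\varepsilon)$ by the classical description of the $\besq$ excursion measure below a level. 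As the concatenation-at-$\sigma_\varepsilon$ map is a.s.\ continuous for the limiting laws, combining convergence of the total mass, the pre-fragment and the post-fragment yields weak convergence of the restricted measures, and hence the proposition.

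The main obstacle is the pre-fragment: proving functional convergence of the $h_n$-transformed chains to $\besq(4-2\theta)$, which is delicate near time $0$ (the conditioning is exactly what keeps the tiny chain from dying) and requires a genuine tightness argument — I would adapt the Lamperti-representation and the Foster--Lyapunov/Markov-inequality estimates from the proof of Lemma~\ref{lem:ud-bis}, together with the $\besq$-excursion-at-a-level decomposition (available from Pitman--Yor \cite{PitmYor82} and used, for $\besq(-2\alpha)$, in \cite{Paper1-1}). An alternative that avoids identifying the pre-fragment explicitly is to show that the limits $\Lambda_\varepsilon$ of the restricted measures are consistent, $\Lambda_\varepsilon|_{\{\sup f>\varepsilon'\}}=\Lambda_{\varepsilon'}$ for $\varepsilon'>\varepsilon$ (apply the strong Markov property of $Z$ successively at the hitting times of $N_n$ and $N'_n$ and pass to the limit), assemble them into a $\sigma$-finite measure $\Lambda_*$, and note that a measure with $\besq(2\theta)$-dynamics after every first-passage time and mass $\varepsilon^{\theta-1}$ on $\{\sup f>\varepsilon\}$ must equal $\Lambda^{(2\theta)}_{\mathtt{BESQ}}$ by uniqueness of the excursion measure of $\besq(2\theta)$ up to normalisation. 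The remaining, routine, point is the constant bookkeeping in the mass computation.
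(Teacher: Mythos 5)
Your proposal follows essentially the same route as the paper's proof: the reduction of the vague convergence to convergence of the masses of $\{\sup f>\varepsilon\}$ together with weak convergence of the conditioned laws (the paper phrases this via \cite[Lemma 4.1]{KallenbergRM}, adding the trivial boundary check $\Lambda^{(2\theta)}_{\mathtt{BESQ}}(\sup f=\varepsilon)=0$); the mass computation via the scale function of the birth--death chain, $s(k)\sim\frac{\Gamma(1+\theta)}{1-\theta}k^{1-\theta}$, and gambler's ruin, where your limiting value $\varepsilon^{\theta-1}$ is indeed the correct one (it is what the paper's own optional-stopping computation yields; the exponent $a^{1-\theta}$ printed there is a sign slip, since the mass of $\{\sup f>a\}$ must decrease in $a$); and the splitting of the conditioned law at the first passage over the level, with the post-fragment handled exactly as you say by \cite[Theorem 1.3]{RogeWink20} and Lemmas \ref{lem:ud}--\ref{lem:ud-bis}, and the pre-fragment identified through the Pitman--Yor first description \cite[(3.1), (3.5)]{PitmYor82} as a ${\tt BESQ}(4-2\theta)$ run from $0$ until it reaches the level.

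The one substantive step you do not prove is precisely the pre-fragment limit: you identify the conditioned chain as the Doob $h$-transform by $h_n(j)=s(j)/s(N_n)$ and then say it ``should converge''. This is where the paper does its real work, and it does so more directly than the tools you propose: writing out the conditioned birth--death process, whose up-rates are $(i+\theta)s(i+1)/s(i)$ and down-rates $i\,s(i-1)/s(i)$, it verifies the drift, diffusion and jump conditions of \cite[Theorem IX.4.21]{JacodShiryaev} (infinitesimal drift $\to 4-2\theta$, diffusion coefficient $\to 4x$, no macroscopic jumps for large $n$), so no separate tightness, Lamperti-representation or Foster--Lyapunov argument of the kind used for Lemma \ref{lem:ud-bis} is needed; the joint convergence of the first-passage times then follows from regularity of the limiting diffusion above the level, as in \cite[Lemma 3.3]{RogeWink20}. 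Your fallback route (assembling the restricted limits by consistency and appealing to uniqueness of the $\besq(2\theta)$ excursion measure given its post-passage dynamics and the masses $\varepsilon^{\theta-1}$) is plausible but merely shifts the burden onto a uniqueness/identification statement that you would still have to establish; the characteristics check is the shorter path. With that step filled in as in the paper, your argument is complete.
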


\begin{proof}[Proof of Proposition~\ref{prop:cv-prm}]
	Proposition~\ref{prop:vague} shows that the intensity measure of the Poisson random measure $\fN^{(n)}$ 
	converges vaguely as $n\to \infty$. Then the weak convergence of $\fN^{(n)}$, under the vague topology,  follows from \cite[Theorem~4.11]{KallenbergRM}. 
	The weak convergence $\xi^{(n)} \to \xi_{\fN}$ has already been proved by Rogers and Winkel \cite[Theorem 1.5]{RogeWink20}. 
	
	Therefore, both sequences $(\fN^{(n)},n\in \bN)$ and $(\xi^{(n)}, n\in \bN)$ are tight (see e.g.\ \cite[VI~3.9]{JacodShiryaev}),  
	and the latter implies the tightness of $(\ell^{(n)},n\in \bN)$. 
	We hence deduce immediately the tightness of the triple-valued sequence $((\fN^{(n)}, \xi^{(n)},\ell^{(n)}), n\in \bN)$. 
	As a result, we only need to prove that, for any subsequence $(\fN^{(n_i)},\xi^{(n_i)},\ell^{(n_i)})$ that
	converges in law, the limiting distribution is the same as $(\fN,\xi_{\fN}, \ell_{\fN})$.
	By Skorokhod representation, we may assume that $(\fN^{(n_i)},\xi^{(n_i)},\ell^{(n_i)})$ converges a.s.\@ to $(\fN,\widetilde{\xi}, \widetilde{\ell})$, 
	and it remains to prove that $\widetilde{\xi}=\xi_{\fN}$ and $\widetilde{\ell}=\ell_{\fN}$ a.s..  
	
	For any $\varepsilon>0$, since a.s.\@ $\fN$ has no spindle of length equal to $\varepsilon$, 
	the vague convergence of $\fN^{(n_i)}$ implies that, 
	a.s.\@ for any $t\ge 0$, we have the following weak convergence of finite point measures: 
	\[
	\sum_{s\le t} \mathbf{1}\{|\Delta \xi^{(n_i)} (s)|>\varepsilon\} \delta\left(s, \Delta \xi^{(n_i)}(s)\right)
	\quad \Longrightarrow \quad   
	\sum_{s\le t} \mathbf{1}\{|\Delta \xi_{\fN} (s)|>\varepsilon\} \delta\left(s, \Delta \xi_{\fN} (s)\right). 
	\]
	The subsequence above also converges a.s.\@ to $\sum_{s\le t} \mathbf{1}\{|\Delta \xi (s)|>\varepsilon\} \delta\big(s, \Delta \widetilde{\xi}(s)\big)$, since $\xi^{(n_i)}\rightarrow\widetilde{\xi}$ in
	$\bD(\bR_+,\bR)$.  
	By the L\'evy--It\^o decomposition, this is enough to conclude that 
	$\widetilde{\xi} = \xi_{\fN}$ a.s.. 
	
	For any $t\ge 0$, since $\xi^{(n_i)}\to \xi_{\fN}$ a.s.\@ and $\xi_{\fN}$ is a.s.\@ continuous at $t$, we have $(\xi^{(n_i)}(s), s\in [0,t]) \to (\xi_{\fN}(s), s\in [0,t])$ in $\bD([0,t], \bR)$ a.s.. Then $\inf_{s\in [0,t]} \xi^{(n)}(s) \to  \inf_{s\in [0,t]}\xi_{\fN}(s)$ a.s., because it is a continuous functional (w.r.t.\@ the Skorokhod topology). 
	In other words, $\widetilde{\ell}(t)=\ell_{\fN}(t)$ a.s.. By the continuity of the process $\ell_{\fN}$ we  have 
	$\widetilde{\ell}=\ell_{\fN}$ a.s., completing the proof.  
\end{proof}

\begin{lemma}[First passage over a negative level]\label{lem:T-}
	Suppose that $(\fN^{(n)}, \xi^{(n)})$ as in Proposition~\ref{prop:cv-prm} converges a.s.\@ to $(\fN, \xi_{\fN})$ as $n\to \infty$. Define
	\begin{equation}
		T_{-y}^{(n)} := T_{-y} (\xi^{(n)}):= \inf\{t\ge 0 \colon \xi^{(n)}(t)= -y\}, \qquad y>0,
	\end{equation}
	and similarly $T_{-y}:= T_{-y} (\xi_{\fN})$.
	Let $(h^{(n)})_{n\in \bN}$ be a sequence of positive numbers with $\lim_{n\to \infty} h^{(n)}=h>0$. 
	Then 
	$T^{(n)}_{- h^{(n)}} $ converges to $T_{-h}$ a.s..  
\end{lemma}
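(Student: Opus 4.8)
The plan is to turn the statement into an elementary fact about convergence of right‑continuous inverses of continuous non‑decreasing functions. By hypothesis we may work on a probability space on which $\xi^{(n)}\to\xi_\fN$ a.s.\ in $\bD(\bR_+,\bR)$; together with Proposition~\ref{prop:cv-prm} (or directly, since the running‑infimum map is continuous for the $J_1$‑topology) this also gives $\ell^{(n)}\to\ell_\fN$ a.s., and the limit $\ell_\fN$ is continuous because $\xi_\fN$ is spectrally positive. Moreover, by \eqref{eq:scaffolding-n} the scaffolding $\xi^{(n)}$ decreases continuously (with slope $-n^\alpha$) between its finitely many upward jumps, so it passes continuously below every negative level; hence $\ell^{(n)}$ is continuous and non‑decreasing, and $T^{(n)}_{-y}=\inf\{t\ge0\colon\xi^{(n)}(t)\le -y\}=\inf\{t\ge0\colon\ell^{(n)}(t)\ge y\}$, i.e.\ $T^{(n)}_{-y}$ is the right‑continuous inverse of the continuous function $\ell^{(n)}$ evaluated at $y$.

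First I would record the corresponding statement for the limit: since $\xi_\fN$ is a spectrally positive $(1+\alpha)$‑stable L\'evy process of unbounded variation, it creeps below every level, so $\xi_\fN(T_{-y})=-y$ and $T_{-y}=\inf\{t\ge0\colon\ell_\fN(t)\ge y\}$; thus $T^{(n)}_{-h^{(n)}}$ and $T_{-h}$ are the right‑continuous inverses of $\ell^{(n)},\ell_\fN$ evaluated at $h^{(n)}\to h$ and at $h$. That $T_{-h}<\infty$ a.s.\ is clear since a stable process of index in $(1,2)$ oscillates, i.e.\ $\ell_\fN(t)\to\infty$. The one nondegeneracy input needed from the limit is that $\ell_\fN$ has no flat stretch at height $h$, i.e.\ $\inf\{t\colon\ell_\fN(t)>h\}=\inf\{t\colon\ell_\fN(t)\ge h\}=T_{-h}$ a.s. I would deduce this from classical fluctuation theory: $(T_{-y})_{y\ge0}$ is the first‑passage process over increasing levels of the spectrally negative stable process $-\xi_\fN$, hence a (stable) subordinator, which a.s.\ has no jump at the fixed level $h$; equivalently, $0$ is regular for $(-\infty,0)$ for $\xi_\fN$, so by the strong Markov property at $T_{-h}$ the process enters $(-\infty,-h)$ immediately. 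Consequently $\ell_\fN(T_{-h}-\delta)<h<\ell_\fN(T_{-h}+\delta)$ a.s.\ for every $\delta\in(0,T_{-h})$, both inequalities strict.

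With these ingredients the conclusion is a standard sandwich. Fix $\delta\in(0,T_{-h})$. Since the convergence $\ell^{(n)}\to\ell_\fN$ is locally uniform (the limit being continuous) and $h^{(n)}\to h$, for all $n$ large enough $\ell^{(n)}(T_{-h}-\delta)<h^{(n)}<\ell^{(n)}(T_{-h}+\delta)$; monotonicity of $\ell^{(n)}$ then forces $T_{-h}-\delta\le T^{(n)}_{-h^{(n)}}\le T_{-h}+\delta$. Letting $\delta\downarrow0$ yields $T^{(n)}_{-h^{(n)}}\to T_{-h}$ a.s.

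I expect the only genuinely delicate point to be the a.s.\ regularity of $\ell_\fN$ at the deterministic level $h$ (no flat stretch there); but for the spectrally positive process $\xi_\fN$ this is routine and requires no new work. Everything else is bookkeeping: checking that $\ell^{(n)}$ and $\ell_\fN$ are continuous non‑decreasing and that $T^{(n)}_{-y}$, $T_{-y}$ are their inverses (immediate from the absence of negative jumps and the creeping property), plus the elementary inversion lemma. An essentially equivalent route would keep the $J_1$‑time‑changes $\lambda_n$ with $\xi^{(n)}\circ\lambda_n\to\xi_\fN$, choosing for the upper bound a time $s^\ast$ slightly after $T_{-h}$ with $\xi_\fN(s^\ast)<-h$ and for the lower bound the level $\inf_{s\le T_{-h}-\delta}\xi_\fN(s)>-h$; I would prefer the inverse‑function formulation because it keeps the role of $h^{(n)}\to h$ most transparent.
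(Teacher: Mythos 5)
Your proof is correct, but it takes a genuinely different route from the paper. The paper argues in two halves: first it invokes the fact that $(T^{(n)}_{(-y)+},\,y\ge 0)$ is a subordinator with Laplace exponent $(\Phi^{(n)})^{-1}$ and deduces from the convergence of the Laplace exponents that $T^{(n)}_{-h^{(n)}}\to T_{-h}$ \emph{in distribution} (using that the limiting first-passage subordinator has a.s.\ no fixed discontinuity at $y=h$); second, it extracts from the $J_1$-convergence of $\xi^{(n)}$ the one-sided pathwise bound $\limsup_{n} T^{(n)}_{-h^{(n)}}\le T_{-h}$ a.s., and then combines the distributional convergence with this a.s.\ upper bound to upgrade to a.s.\ convergence. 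You instead argue entirely pathwise: you rewrite $T^{(n)}_{-y}$ and $T_{-y}$ as generalized inverses of the continuous nondecreasing running-infimum processes $\ell^{(n)}$ and $\ell_{\fN}$ (legitimate, since neither scaffolding has negative jumps, so both pass below levels continuously), use that $J_1$-convergence to the continuous limit $\ell_{\fN}$ is locally uniform, and sandwich $T^{(n)}_{-h^{(n)}}$ between $T_{-h}\pm\delta$. The only probabilistic input you need is the a.s.\ absence of a flat stretch of $\ell_{\fN}$ at the fixed height $h$, i.e.\ that $\xi_{\fN}$ enters $(-\infty,-h)$ immediately after $T_{-h}$; this is exactly the same nondegeneracy the paper uses (no fixed jump of the first-passage subordinator at $h$), and your justification via regularity of $0$ for $(-\infty,0)$ for the $(1+\alpha)$-stable process, or via the subordinator having no fixed discontinuities, is sound. (In fact your lower inequality $\ell_{\fN}(T_{-h}-\delta)<h$ needs no probabilistic input at all: it is immediate from $T_{-h}=\inf\{t\colon\ell_{\fN}(t)\ge h\}$, which holds by creeping.) What each approach buys: the paper's route gives, as a by-product, Skorokhod convergence of the whole first-passage processes, which is occasionally convenient elsewhere; your route avoids the Laplace-exponent computation and the slightly delicate ``distributional limit plus a.s.\ one-sided bound implies a.s.\ convergence'' step, and delivers the a.s.\ statement directly. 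Either proof is acceptable.
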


\begin{proof}
	Since the process $\xi^{(n)}$ is a spectrally positive L\'evy process with some Laplace exponent  
	$\Phi^{(n)}$, we know from \cite[Theorem~VII.1]{BertoinLevy} that 
	$(T^{(n)}_{(-y)+},y\ge 0)$ is a subordinator with Laplace exponent $(\Phi^{(n)})^{-1}$.  
	On the one hand, the convergence of $\Phi^{(n)}$ leads to $(T^{(n)}_{(-y)+},y\ge 0)\to (T_{(-y)+}, y\ge 0)$ in distribution under the Skorokhod topology. 
	Since $\xi_{\fN}$ is a.s.\ continuous at $T_{-h}$, we have
	$T^{(n)}_{-h^{(n)}}\to T_{-h}$ in distribution.

	On the other hand, we deduce from the convergence of the process $\xi^{(n)}$ in $\bD(\bR_+,\bR)$ that, for any $\varepsilon>0$, a.s.\@ there exists $N_1\in \bN$ such that for all $n>N_1$, 
	\[
	|\xi^{(n)}(T_{-h}) - (-h)|<\varepsilon. 
	\] 
	We may assume that $|h^{(n)} - h|<\varepsilon$ for all $n\in \bN$. 
	As a result, a.s.\@ for any $n> N_1$ and $y'<h^{(n)}-2\varepsilon$, 
	we have $T^{(n)}_{-y'}< T_{-h}$. 
	Hence, by the arbitrariness of $\varepsilon$ and the left-continuity of $T^{(n)}_{-y}$ with respect to $y$, we have
	$
	\limsup_{n\to \infty} T^{(n)}_{-h^{(n)}}\le T_{-h}$ a.s.. Recall that $T^{(n)}_{-h^{(n)}}\to T_{-h}$ in distribution, it follows that $T^{(n)}_{-h^{(n)}}\to T_{-h}$ a.s..
\end{proof}
%
\subsection{The scaling limit of a $\mathrm{PCRP}^{(\alpha)} (0,\alpha)$}\label{sec:cv-0alpha}

\begin{theorem}[Convergence of $\mathrm{PCRP}^{(\alpha)} (0,\alpha)$]
	\label{thm:crp-ip-0}
	For $n\in \bN$, let  $(C^{(n)}(y), y\ge 0)$ be a $\mathrm{PCRP}^{(\alpha)}(0,\alpha)$ starting from 
	${C^{(n)}(0)}\in \cC$ 
	and $(\beta(y), y\ge 0)$ be an $\mathrm{SSIP}^{(\alpha)}(0)$-evolution starting from $\beta(0)\in \cI_H$. 
	Suppose that  the interval partition $\frac{1}{n}  C^{(n)}(0)$ converges in distribution to $\beta(0)$ as $n\to \infty$, under $d_H$.
	Then the rescaled process $(\frac{1}{n}  C^{(n)}(2 n y), y\ge 0)$ converges in distribution to $(\beta(y), y\ge 0)$ as $n\to \infty$ in the Skorokhod sense and hence uniformly. 
\end{theorem}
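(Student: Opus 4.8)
The plan is to transport the convergence through the scaffolding-and-spindles representations available on both sides. By Lemma~\ref{lem:pcrp0}, a $\mathrm{PCRP}^{(\alpha)}(0,\alpha)$ started from $\gamma^{(n)}$ is the concatenation, over the blocks $U$ of $\gamma^{(n)}$, of the independent skewer processes $(\skewer(y,\mathbf{D}_U,J_{\mathbf{D}_U}),\,y\ge 0)$ of discrete clades $\mathbf{D}_U\sim\mathtt{Clade}^D_{\mathrm{Leb}(U)}(\alpha)$, and by Definition~\ref{defn:ip0} an $\mathrm{SSIP}^{(\alpha)}(0)$-evolution from $\gamma$ is the analogous concatenation of skewer processes of continuum clades $\mathbf{N}_U\sim\mathtt{Clade}_{\mathrm{Leb}(U)}(\alpha)$. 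First I would invoke Skorokhod's representation theorem to realise all the $\frac1n\odotip\gamma^{(n)}$ together with $\gamma$ on one probability space with $\frac1n\odotip\gamma^{(n)}\to\gamma$ almost surely in $d_H$. The argument then has two parts: a single-clade scaling limit, and the passage from clades to their concatenation.

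For the single-clade limit, fix a block of $\gamma^{(n)}$ of rescaled mass $m^{(n)}/n\to x>0$. The $\odotip$-action by $1/n$ corresponds, on the scaffolding-and-spindles side, to rescaling the spindle magnitudes by $1/n$, and the skewer is invariant both under monotone reparametrisations of the scaffolding and under a joint rescaling of the scaffolding range, the spindle time-arguments and the skewer level. Combining these, the process $(\frac1n\odotip\skewer(2ny,\mathbf{D}_U,J_{\mathbf{D}_U}),\,y\ge 0)$ is a skewer process of a rescaled discrete clade whose bulk spindles and scaffolding are, in distribution, the objects $\mathbf{N}^{(n)}$ and $\xi^{(n)}$ of Proposition~\ref{prop:cv-prm}, whose initial spindle is a rescaled $\pi_{m^{(n)}}(-\alpha)$-process, and whose scaffolding is truncated at the first passage of its running infimum below minus the lifetime of that initial spindle. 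Now Proposition~\ref{prop:cv-prm} yields the joint convergence of $(\mathbf{N}^{(n)},\xi^{(n)},\ell^{(n)})$ to $(\mathbf{N},\xi_{\mathbf{N}},\ell_{\mathbf{N}})$; Lemma~\ref{lem:ud}, applied with $\theta=-\alpha\in(-1,0)$ so that it also delivers joint convergence of extinction times, identifies the limit of the rescaled initial spindle as a $\besq_x(-2\alpha)$-excursion $\mathbf{f}$ and of its lifetime as $\zeta(\mathbf{f})$; and Lemma~\ref{lem:T-}, applied with levels $h^{(n)}\to\zeta(\mathbf{f})$ equal to the rescaled initial-spindle lifetimes, gives the convergence of the truncation time to $T_{-\zeta(\mathbf{f})}(\xi_{\mathbf{N}})$. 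After one further Skorokhod representation, this amounts to almost sure joint convergence of the rescaled discrete clade, in the product of the vague and Skorokhod topologies, and of its scaffolding, to the continuum clade $\clade(\mathbf{f},\mathbf{N})$ and its scaffolding.

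The remaining step, and the one I expect to be the main obstacle, is to transfer this to convergence of the skewer processes themselves in $\bD(\bR_+,\cI_H)$: the skewer map is not continuous in general, so one must establish its continuity at the limiting configuration. Here I would rely on the continuity properties of the skewer map proved in \cite{Paper1-1}, which exploit that the limiting scaffolding is a spectrally positive stable process of index $1+\alpha$ — in particular that a given level is not approached in a degenerate two-sided way — together with the uniform control of $\besq(-2\alpha)$ spindles near a level from that same reference. This establishes the single-clade statement; since the limit is path-continuous, the Skorokhod convergence automatically holds locally uniformly. Then, using the independence of the clades over the blocks of $\gamma^{(n)}$ and the continuity of concatenation on finitely many arguments, the convergence asserted in Theorem~\ref{thm:crp-ip-0} follows whenever $\gamma$ has finitely many blocks.

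For a general $\gamma\in\cI_H$ I would approximate it by the interval partition formed by its $K$ largest blocks, whose masses (for all but countably many $K$, so that there is no tie at the cut-off) and positions converge to those of the $K$ largest blocks of $\gamma^{(n)}$, so that the previous step applies to the corresponding truncated processes. The part of the $\mathrm{PCRP}^{(\alpha)}(0,\alpha)$ generated by the discarded blocks is, again by Lemma~\ref{lem:pcrp0}, itself a $\mathrm{PCRP}^{(\alpha)}(0,\alpha)$, whose total-mass process is an autonomous critical birth--death chain (up-rate and down-rate both equal to the current total); hence, after rescaling, this total mass is a nonnegative martingale started from the small discarded mass, and Doob's inequality controls its running supremum in probability, uniformly in $n$. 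Since a partition of small total mass is $d_H$-close to $\emptyset$ and deleting it from a concatenation displaces the remaining blocks by at most that total mass, the discarded clades perturb the full process by a quantity that is small uniformly in $n$ and locally uniformly in $y$; the same estimate, with $\besq(0)$ in place of the rescaled chain, controls the discarded part of the $\mathrm{SSIP}^{(\alpha)}(0)$-evolution. A triangle inequality combining these bounds with the finitely-many-blocks convergence — which is precisely the continuity in the initial state of $\mathrm{SSIP}^{(\alpha)}(0)$-evolutions, cf.\ \cite{IPPAT} — then gives the convergence of $(\frac1n\odotip C^{(n)}(2ny),\,y\ge 0)$ to $(\beta(y),\,y\ge 0)$ in $\bD(\bR_+,\cI_H)$, hence uniformly on compact intervals.
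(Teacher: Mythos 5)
Your overall architecture is the one the paper actually uses: represent the rescaled $\mathrm{PCRP}^{(\alpha)}(0,\alpha)$ as skewers of rescaled discrete clades, get joint convergence of clade, scaffolding and truncation time from Proposition~\ref{prop:cv-prm}, Lemma~\ref{lem:ud} (with $\theta=-\alpha$) and Lemma~\ref{lem:T-}, and then pass to a general initial state by keeping finitely many large blocks and controlling the discarded mass (your Doob bound for the critical remainder chain plays the role of the paper's ${\tt BESQ}(0)$/$\pi^{(n)}(0)$ comparison; the appeal to continuity in the initial state from \cite{IPPAT} is not needed). The one place where the proposal does not go through as written is exactly the step you flag as the main obstacle, and there the fix you propose is not available.

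Deferring the passage from convergence of $(\fN^{(n)}_{\mathrm{cld}},\xi^{(n)}_{\mathrm{cld}})$ to convergence of the skewer processes to ``continuity properties of the skewer map proved in \cite{Paper1-1}'' is a genuine gap. The skewer map is not continuous in the product of the vague and Skorokhod topologies, and the results of \cite{Paper1-1} concern the limiting configuration only: they control ${\tt BESQ}(-2\alpha)$ spindles over the stable scaffolding, whereas the pre-limit spindles here are rescaled birth--death chains, and vague convergence of $\fN^{(n)}_{\mathrm{cld}}$ gives no uniform-in-$n$ control of the aggregate mass carried by the many small spindles of the approximating clades (mass could escape into ``dust'' without contradicting vague convergence). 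This is precisely what must be excluded, and no continuity-at-the-limit statement supplies it. The paper's proof of Lemma~\ref{lem:cv-clade} closes this by a total-mass device: it adjoins the rescaled total-mass processes to the converging triple, shows uniform convergence of the skewers restricted to spindles of lifetime $>\rho$ (finitely many), and then identifies the a.s.\ limit $\widetilde{M}$ of the total masses with $\|\boldsymbol{\beta}\|$ by combining the a.s.\ inequality $\widetilde{M}(y)\ge\|\beta_{>\rho}(y)\|$, monotone convergence in $\rho$, and equality in law of $\widetilde{M}$ and $\|\boldsymbol{\beta}\|$; only then does the uniform convergence $\|\boldsymbol{\beta}^{(n)}\|\to\|\boldsymbol{\beta}\|$ yield that the small-spindle aggregate mass $M^{(n)}_{\le\rho}$ is small uniformly in $n$, which gives the $d_H$-bound. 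You in fact use an argument of exactly this flavour at the concatenation stage (mass of discarded blocks), but it is missing inside the single-clade limit, where it is the crux; without it (or the identification step $\widetilde{M}=\|\boldsymbol{\beta}\|$, which is itself not immediate since a priori $\widetilde{M}$ only has the right law), the proof is incomplete.
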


We start with the simplest case. 

\begin{lemma}\label{lem:cv-clade}
	The statement of Theorem~\ref{thm:crp-ip-0} holds, if $\beta(0)= \{(0,b)\}$ and $C^{(n)} (0) = \{(0,b^{(n)})\}$, where $ \lim_{n\to \infty} n^{-1} b^{(n)}= b> 0$. 
\end{lemma}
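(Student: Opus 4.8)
The plan is to realise both sides as skewer processes of a single clade and then to transfer the already-available convergence of scaffolding-and-spindles to a convergence of skewers. First I would invoke Lemma~\ref{lem:pcrp0}: a $\mathrm{PCRP}^{(\alpha)}(0,\alpha)$ started from $\{(0,b^{(n)})\}$ is the skewer process of a discrete clade $\clade^D(Z^{(n)},\widetilde{\mathbf{D}})$ built from independent $Z^{(n)}\sim\pi_{b^{(n)}}(-\alpha)$ and $\widetilde{\mathbf{D}}\sim\PRM(\alpha\,\mathrm{Leb}\otimes\pi_1(-\alpha))$, while by Definition~\ref{defn:ip0} an $\mathrm{SSIP}^{(\alpha)}(0)$-evolution from $\{(0,b)\}$ is the skewer process of a continuum clade $\clade(\ff,\fN)$ built from independent $\ff\sim\besq_b(-2\alpha)$ and $\fN\sim\PRM(c_\alpha\,\mathrm{Leb}\otimes\Lambda^{(-2\alpha)}_{\mathtt{BESQ}})$. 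By the scaling property of the skewer map, the rescaled chain $\big(\tfrac1n\odotip C^{(n)}(2ny),\,y\ge 0\big)$ is itself the skewer process of a \emph{rescaled} clade, whose driving point measure is precisely the $\fN^{(n)}$ of Proposition~\ref{prop:cv-prm} --- with scaffolding $\xi^{(n)}$ and running infimum $\ell^{(n)}$ as there --- and whose initial spindle is $\ff^{(n)}:=n^{-1}Z^{(n)}(2n\,\cdot\,)\sim\pi_{b^{(n)}}^{(n)}(-\alpha)$. So the task reduces to showing that the skewer process of this rescaled clade converges in $\bD(\bR_+,\cI_H)$ to that of $\clade(\ff,\fN)$.

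Next I would assemble the convergence inputs. Proposition~\ref{prop:cv-prm} gives $(\fN^{(n)},\xi^{(n)},\ell^{(n)})\to(\fN,\xi_{\fN},\ell_{\fN})$ in distribution; since $-\alpha\in(-1,0)$, Lemma~\ref{lem:ud} gives $\ff^{(n)}\to\ff\sim\besq_b(-2\alpha)$ in distribution jointly with convergence of the first hitting times of $0$, hence $\zeta(\ff^{(n)})\to\zeta(\ff)$ with $\zeta(\ff)\in(0,\infty)$ a.s.; and by independence of $Z^{(n)}$ and $\widetilde{\mathbf{D}}$ all of this holds jointly, so after Skorokhod's representation theorem I may take it to hold almost surely. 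Then Lemma~\ref{lem:T-}, applied with the random level $h^{(n)}=\zeta(\ff^{(n)})\to h=\zeta(\ff)>0$ --- its pathwise argument is insensitive to this randomness, and $\xi_{\fN}$ is a.s.\ continuous at $T_{-\zeta(\ff)}$ because $\ff$ and $\fN$ are independent --- yields $T_{-\zeta(\ff^{(n)})}(\xi^{(n)})\to T_{-\zeta(\ff)}(\xi_{\fN})$ a.s. Since $\fN$ a.s.\ charges no fibre $\{t\}\times\cE$, the restrictions $\fN^{(n)}|_{(0,\,T_{-\zeta(\ff^{(n)})}(\xi^{(n)})]}$ converge vaguely to $\fN|_{(0,\,T_{-\zeta(\ff)}(\xi_{\fN})]}$; combining with $\ff^{(n)}\to\ff$ in $\cE$, the rescaled discrete clades converge a.s., as marked point measures, to $\clade(\ff,\fN)$, and their associated scaffolding functions converge a.s.\ in $\bD(\bR_+,\bR)$, all as sufficiently continuous functionals of the quadruple $(\ff^{(n)},\fN^{(n)},\xi^{(n)},\ell^{(n)})$ and the passage time.

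The last step --- deducing convergence of the skewer processes in $\bD(\bR_+,\cI_H)$ from this convergence of (clade, scaffolding) pairs --- is where the real difficulty lies, because the skewer map is not continuous in general: instability comes from spindles whose lifetime straddles a level at which the scaffolding attains a local extremum, and in the limit infinitely many infinitesimal spindles accumulate near such levels. I would control this through the continuity properties of the skewer established in \cite{Paper1-1}, reducing to two points: that the aggregate mass functions $M^{y,(n)}(t)=\int_{[0,t]\times\cE}f\big(y-\xi^{(n)}(s-)\big)\,\fN^{(n)}(ds,df)$, together with the contribution $\ff^{(n)}(y)$ of the initial spindle, converge to their continuum counterparts uniformly for $(t,y)$ in compact sets --- using the a.s.\ convergence of $\xi^{(n)}$ and $\ell^{(n)}$ together with the vague convergence of the spindles --- and that the induced sets of partition points converge in the Hausdorff metric, which rests in turn on the a.s.\ absence of degeneracies in the level-set structure of $\clade(\ff,\fN)$, a consequence of standard properties of the stable-$(1+\alpha)$ process $\xi_{\fN}$ and of $\besq(-2\alpha)$. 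Granting this, the skewer process of the rescaled clade converges to that of $\clade(\ff,\fN)$, the desired limit; and since that limit is path-continuous by Proposition~\ref{prop:alphatheta} with $\theta_1=0$, Skorokhod convergence automatically strengthens to uniform convergence on compacts. The skewer-continuity along this sequence is the step I expect to require the most care.
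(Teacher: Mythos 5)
Your setup is exactly the paper's: represent the rescaled chain as the skewer of a rescaled discrete clade whose driving measure is the $\fN^{(n)}$ of Proposition~\ref{prop:cv-prm} and whose initial spindle is $\ff^{(n)}\sim\pi^{(n)}_{b^{(n)}}(-\alpha)$, invoke Proposition~\ref{prop:cv-prm} and Lemma~\ref{lem:ud} for joint convergence, pass to almost sure convergence by Skorokhod representation, and use Lemma~\ref{lem:T-} to get convergence of the passage times and hence of the clades $(\fN^{(n)}_{\mathrm{cld}},\xi^{(n)}_{\mathrm{cld}})\to(\fN_{\mathrm{cld}},\xi_{\mathrm{cld}})$. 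Up to that point you are on solid ground and aligned with the paper.

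The gap is in the last step, which you yourself flag but then only ``grant''. Vague convergence of the spindle measures together with a.s.\ convergence of the scaffolding does \emph{not} by itself give uniform convergence of the aggregate mass functions $M^{y,(n)}$: it controls the finitely many spindles of lifetime $>\rho$, but says nothing about the total contribution of the microscopic spindles, where mass could a priori be lost (or fail to converge) in the limit. Appealing to ``continuity properties of the skewer in \cite{Paper1-1}'' does not close this, because the skewer map is not continuous precisely for this reason. The paper's resolution is a total-mass comparison that your proposal omits: include the total mass process $\|\boldsymbol{\beta}^{(n)}\|$ (a rescaled $\pi(0)$-chain, convergent in law to ${\tt BESQ}_b(0)$ by Lemma~\ref{lem:ud}) in the tight triple, extract a subsequential a.s.\ limit $\widetilde M$, show via the $\rho$-truncated skewers and monotone convergence that $\widetilde M(y)\ge\|\beta(y)\|$ pointwise, and then use that $\widetilde M$ and $\|\boldsymbol{\beta}\|$ have the \emph{same law} (both ${\tt BESQ}_b(0)$, by Proposition~\ref{prop:alphatheta}) to conclude they are indistinguishable. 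This identity is what yields uniform smallness of the small-spindle mass $M^{(n)}_{\le\rho}=\|\boldsymbol{\beta}^{(n)}\|-\|\boldsymbol{\beta}^{(n)}_{>\rho}\|$ uniformly in $n$, and hence the uniform $d_H$-convergence of the skewer processes (your final remark that Skorokhod convergence upgrades to uniform convergence is then not even needed, since the paper proves uniform convergence directly). Without this law-identification-plus-domination argument, or an equivalent uniform control of the microscopic spindle mass, your proof is incomplete at its decisive step.
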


To prove this lemma, we first give a representation of the rescaled PCRP. 
Let $(\mathbf{N}^{(n)},\xi^{(n)})$ be as in Proposition~\ref{prop:cv-prm}. 
For each $n\in \bN$, we define a random point measure on $\bR_+\times \cE$ by
\begin{equation}\label{relation1}
	\mathbf{N}^{(n)}_{\mathrm{cld}}:= \delta(0,\ff^{(n)})  + \mathbf{N}^{(n)}\Big|_{(0, T_{-\zeta(\ff^{(n)})} (\xi^{(n)})]\times \cE}, 
\end{equation}
where $\ff^{(n)}\sim \pi_{b^{(n)}}^{(n)}(-\alpha)$, independent of $\mathbf{N}^{(n)}$,  and $T_{-y} (\xi^{(n)}):= \inf\{t\ge 0 \colon \xi^{(n)}(t)=  -y\}$. 
Then we may assume that $\mathbf{N}^{(n)}_{\mathrm{cld}}$ is obtained from $\mathbf{D}^{(n)} \sim \mathtt{Clade}^{D}_{b^{(n)}}(\alpha)$ defined in \eqref{eq:clade-D} such that 
for each atom $\delta(s,f)$ of $\mathbf{D}^{(n)}$, $\mathbf{N}^{(n)}_{\mathrm{cld}}$ has an atom $\delta(\frac12n^{-(1+\alpha)} s,  n^{-1} f (2 n \cdot ) )$. 
Let $\xi^{(n)}_{\mathrm{cld}}$ be the scaffolding associated with $\mathbf{N}^{(n)}_{\mathrm{cld}}$ as in \eqref{eq:scaffolding-n}. 
As a consequence, we have the identity
\begin{equation}\label{relation2}
\beta^{(n)}(y):= \skewer\left(y,\mathbf{N}^{(n)}_{\mathrm{cld}}, \xi^{(n)}_{\mathrm{cld}}\right)
= \frac{1}{n} \skewer\left(2 n y,\mathbf{D}^{(n)}, \xi_{\mathbf{D}^{(n)}}\right), \quad y\ge 0,
\end{equation}
where $\xi_{\mathbf{D}^{(n)}}$ is defined as in \eqref{eq:scaffolding-D}. 
By Lemma~\ref{lem:pcrp0}, we may assume that $\beta^{(n)}(y) = \frac{1}{n}  C^{(n)}(2 n y)$ with $C^{(n)}$ 
a $\mathrm{PCRP}^{(\alpha)}(0,\alpha)$ starting from $C^{(n)} (0) = \{(0,b^{(n)})\}$.

\begin{proof}[Proof of Lemma~\ref{lem:cv-clade}]
	With notation as above, we shall prove that the rescaled process $\boldsymbol{\beta}^{(n)}:=(\beta^{(n)}(y),\,y\ge 0)$ converges to 
	an $\mathrm{SSIP}^{(\alpha)}(0)$-evolution $\boldsymbol{\beta}:=(\beta(y),\,y\ge 0)$ starting from $\{(0,b)\}$. 
	By Definition~\ref{defn:ip0} we can write
	$\boldsymbol{\beta}= \skewerbar(\mathbf{N}_{\mathrm{cld}}, \xi_{\mathrm{cld}})$, with 
	$\mathbf{N}_{\mathrm{cld}} = \clade (\ff, \fN)$ and $\xi_{\mathrm{cld}}$ its associated scaffolding, 
	where $\ff\sim \besq_{b} (-2\alpha)$ and $\fN$ is a Poisson random measure on $[0,\infty)\times \cE$ with intensity 
	$c_\alpha\mathrm{Leb}\otimes \Lambda^{(-2\alpha)}_{\mathtt{BESQ}}$, independent of $\ff$. 
	Using Proposition~\ref{prop:cv-prm} and Lemma~\ref{lem:ud}, we have 
	$(\fN^{(n)}, \xi^{(n)})\to (\fN, \xi)$ and
	$(\ff^{(n)}, \zeta(\ff^{(n)}))\to (\ff, \zeta(\ff))$ in distribution, independently. 
	Then it follows from Lemma~\ref{lem:T-} that this convergence also holds jointly with $T_{-\zeta(\ff^{(n)})}(\xi^{(n)})\to T_{-\zeta(\ff)}(\xi)$. 
	As a consequence, we have $(\fN_{\mathrm{cld}}^{(n)}, \xi_{\mathrm{cld}}^{(n)})\to (\fN_{\mathrm{cld}}, \xi_{\mathrm{cld}})$ in distribution. 
	
	With notation as above, consider the triple-valued sequence $(\fN^{(n)},\xi^{(n)}, \|\boldsymbol{\beta}^{(n)}\|)_{n\in \bN}$.  
	For each element in the triple, we know its tightness from Proposition~\ref{prop:cv-prm} and Lemma~\ref{lem:ud}, 
	then the triple-valued sequence is also tight. 
	Therefore, we can extract a subsequence  $\left(\fN_{\mathrm{cld}}^{(n_i)},\xi_{\mathrm{cld}}^{(n_i)},\|\boldsymbol{\beta}^{(n_i)}\|\right)_{i\in \bN}$ that converges in distribution to a limit process $(\fN_{\mathrm{cld}},\xi_{\mathrm{cld}},\widetilde{M})$. 
	Using Skorokhod representation, we may assume that this convergences holds a.s.. 
	We shall prove that $\boldsymbol{\beta}^{(n_i)}$ converges to $\boldsymbol{\beta}$ a.s., from which the lemma 
	follows. 
	
	We stress that the limit $\widetilde{M}$  
	has the same law as the total mass process $\|\boldsymbol{\beta}\|$, 
	but at this stage it is not clear if they are indeed equal. 
	We will prove that $\widetilde{M}= \|\boldsymbol{\beta}\|$ a.s..

	To this end, let us consider the contribution of the spindles with lifetime longer than $\rho>0$. 
	On the space
	${\bR_+\times \{f\in\cE\colon \zeta(f)>\rho \}}$,	$\fN_{\mathrm{cld}}$ has a.s.\@ a finite number of atoms, say enumerated in chronological order by $(t_j ,f _j)_{ j\le K}$ with $K\in \bN$. 
	Since $\fN_{\mathrm{cld}}$ has no spindle of length exactly equal to $\rho$, 
	by the a.s.\@ convergence $\fN^{(n_i)}_{\mathrm{cld}}\to \fN_{\mathrm{cld}}$, we may assume that  
	each $\fN^{(n_i)}_{\mathrm{cld}}$ also has $K$ atoms $(t^{(n_i)}_j ,f^{(n_i)}_j)_{ j\le K}$ on  
	${\bR_+\times \{f\in\cE\colon \zeta(f)>\rho \}}$, 
	and, for every $j\le K$, that  
	\begin{equation}\label{eq:tjfj}
		\lim_{i\to \infty} t^{(n_i)}_j = t_j, \quad \lim_{i\to \infty} \sup_{t\ge 0} \left|f^{(n_i)}_j (t)- f_j(t)\right| =0, \quad
		\text{and}~ \lim_{i\to \infty} \zeta(f^{(n_i)}_j) = \zeta (f_j) \quad \text{a.s.}. 
	\end{equation}
	Note that $ \zeta(f^{(n_i)}_j)=\Delta \xi^{(n_i)}_{\mathrm{cld}} (t^{(n_i)}_j) $.
	Since $\xi^{(n_i)}_{\mathrm{cld}} \to \xi_{\mathrm{cld}}$ in $\bD(\bR_+, \bR)$, we deduce that 
	\begin{equation}\label{eq:xicld-tj}
		\lim_{i\to \infty} \xi^{(n_i)}_{\mathrm{cld}} (t^{(n_i)}_j -)= \xi_{\mathrm{cld}}(t_j-) \quad \text{a.s..} 
	\end{equation}

	By deleting all spindles whose lifetimes are smaller than $\rho$, we obtain from $\boldsymbol{\beta}^{(n_i)}$ an interval partition evolution  
	\[
	\beta^{(n_i)}_{>\rho}(y) := \left\{ \left(M^{(n_i)} _{k-1} (y, \rho), M^{(n_i)} _{k} (y, \rho)\right), 1\le k\le K\right\} , \qquad y\ge 0,
	\] 
	where  $M^{(n_i)} _k (y, \rho) = \sum_{j\in [k]} f^{(n_i)}_j \left(y - \xi^{(n_i)}_{\mathrm{cld}} (t^{(n_i)}_j -)\right)$. 
	We similarly define $M _k (y, \rho)$ and $\beta_{>\rho}(y)$ from $\boldsymbol{\beta}$. 	
	By \eqref{eq:xicld-tj} and \eqref{eq:tjfj}, for all $k\le K$,
	\[
	\lim_{n\to \infty} \sup_{y\ge 0} \left| M^{(n_i)} _k (y, \rho)  - M _k (y, \rho)\right| =0\quad \text{a.s..}
	\]
	It follows that
	\begin{equation}\label{eq:cv>rho}
		\lim_{i\to \infty} \sup_{y\ge 0} d_H \left( \beta^{(n_i)}_{>\rho}(y) ,\beta_{>\rho}(y) \right) =0\quad \text{a.s..}
	\end{equation}
	In particular, for all $y,\rho>0$, 
	\[
	\widetilde{M} (y)= \lim_{i\to \infty}
	\|\beta^{(n_i)} (y)\|  \ge \lim_{i\to \infty}
	\|\beta^{(n_i)}_{>\rho} (y)\| = \|\beta_{>\rho}(y)\|, \quad \text{a.s..}
	\] 
	Then monotone convergence leads to, for all $y>0$, 
	\[
	\widetilde{M} (y)\ge  \lim_{\rho\downarrow 0} \|\beta_{>\rho}(y)\| = \|\beta (y)\|, \quad \text{a.s..}
	\]
	Moreover, since $\widetilde{M}$ and $\|\boldsymbol{\beta}\|$ also have the same law,  
	we conclude that $\widetilde{M}$  and $\|\boldsymbol{\beta}\|$   are indistinguishable. 

	Next, we shall show that $\boldsymbol{\beta}_{> \rho}$ approximates arbitrarily closely to $\boldsymbol{\beta}$ as $\rho \to 0$. 	 	
	Write $M_{\le\rho}:= \|\boldsymbol{\beta}\|-\|\boldsymbol{\beta}_{> \rho}\|$. 
	Then a.s.\ 
	$\lim_{\rho \to 0} M_{\le \rho}(y)=0$ for each rational $y>0$. 
	Noticing the monotonicity in $\rho$, we deduce by Dini's theorem the uniform convergence $\sup_{y\ge 0} M_{\le\rho}(y)\to 0$ a.s., as $\rho\to 0$. 

%
		For any $\varepsilon>0$, we can thus find a certain $\rho>0$ such that $\sup_{y\ge 0} M_{\le\rho}(y)<\varepsilon$. 
	With this specified $\rho$, since $d_{H} (\beta(y), \beta_{>\rho} (y))\le M_{\le \rho} (y)$ for each $y\ge 0$, 
	we have 
	\begin{equation}\label{eq:dH>rho}
		\sup_{y\ge 0} d_{H} \left(\beta(y), \beta_{>\rho} (y)\right)<\varepsilon. 
	\end{equation}
	Using \eqref{eq:cv>rho} and the uniform convergence $\|\boldsymbol{\beta}^{(n_i)}\| \to \widetilde{M}=\|\boldsymbol{\beta}\|$,  we deduce that the process $ M^{(n_i)}_{\le \rho} := \|\boldsymbol{\beta}^{(n_i)}\|-\|\boldsymbol{\beta}^{(n_i)}_{> \rho}\|$ converges to 
	$M_{\le\rho}$ uniformly. Then, for all $n$ large enough, 
	we also have 
	\[
	\sup_{y\ge 0}d_{H} \left(\beta^{(n_i)}(y), \beta^{(n_i)}_{>\rho} (y)\right)\le \sup_{y\ge 0} M^{(n_i)}_{\le \rho}(y)<2\varepsilon
	\] 
	Combining this inequality with \eqref{eq:cv>rho} and \eqref{eq:dH>rho}, 	 
	we deduce that 
	\[
	\limsup_{i\to \infty} \sup_{y\ge 0} d_{H}\left(\beta^{(n_i)}(y), \beta(y)\right) \le 3 \varepsilon. 
	\]
	As $\varepsilon$ is arbitrary, we conclude that $\boldsymbol{\beta}^{(n_i)}$ converges to $\boldsymbol{\beta}$ a.s.\@ under the uniform topology, completing the proof.  
\end{proof}

To extend to a general initial state, let us record the following result that characterises the convergence under $d_H$. 
\begin{lemma}[{\cite[Lemma~4.4]{IPPAT}}]\label{lem:dH} Let $\beta,\,\beta_n\in\mathcal{I}_H$, $n\ge 1$. Then $d_H(\beta_n,\beta)\rightarrow 0$ as $n\rightarrow\infty$
	if and only if 
	\begin{equation}\label{crit1}\forall_{(a,b)\in\beta}\ \exists_{n_0\ge 1}\ \forall_{n\ge n_0}\ \exists_{(a_n,b_n)\in\beta_n}\ a_n\rightarrow a\ \mbox{and}\ b_n\rightarrow b
	\end{equation}
	and
	\begin{equation}\label{crit2}\forall_{(n_k)_{k\ge 1}\colon n_k\rightarrow\infty}\ \forall_{(c_k,d_k)\in\beta_{n_k},\,k\ge 1\colon d_k\rightarrow d\in(0,\infty],\,c_k\rightarrow c\neq d}\ (c,d)\in\beta.
	\end{equation}
\end{lemma}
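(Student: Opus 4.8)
The plan is to recast the statement entirely in terms of the compact sets of partition points. By definition $d_H(\beta_n,\beta)$ is the Hausdorff distance in $[0,\infty)$ between $G(\beta_n)$ and $G(\beta)$, so it suffices to characterise when $G(\beta_n)\to G(\beta)$ in that sense. I would use two elementary facts throughout: for any block $(a,b)\in\gamma$ one has $a,b\in G(\gamma)$ and $(a,b)\cap G(\gamma)=\emptyset$, and conversely every maximal open subinterval of $[0,\|\gamma\|]\setminus G(\gamma)$ is a block of $\gamma$; and, since $G(\gamma)$ has zero Lebesgue measure and hence contains no non-degenerate interval, every point of $G(\gamma)$ that is not itself a block endpoint is a two-sided limit of block endpoints, so $G(\gamma)$ equals the closure of its set of block endpoints together with $\{0,\|\gamma\|\}$. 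With these, the two implications become translations of the two one-sided Hausdorff inclusions.

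For the forward implication, $d_H(\beta_n,\beta)\to 0$ implying both \eqref{crit1} and \eqref{crit2}, I would fix $(a,b)\in\beta$ and a small $\varepsilon\in(0,(b-a)/8)$. Once $d_H(\beta_n,\beta)<\varepsilon$, the inclusion $G(\beta_n)\subseteq\bigcup_{x\in G(\beta)}(x-\varepsilon,x+\varepsilon)$ forces $G(\beta_n)\cap(a+\varepsilon,b-\varepsilon)=\emptyset$, so the midpoint $(a+b)/2$ lies in a block $(a_n,b_n)$ of $\beta_n$ with $a_n\le a+\varepsilon$ and $b_n\ge b-\varepsilon$, while the reverse inclusion $G(\beta)\subseteq\bigcup_{x\in G(\beta_n)}(x-\varepsilon,x+\varepsilon)$ puts a point of $G(\beta_n)$ into each of $(a-\varepsilon,a+\varepsilon)$ and $(b-\varepsilon,b+\varepsilon)$, pinning $a_n>a-\varepsilon$ and $b_n<b+\varepsilon$. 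As $(a_n,b_n)$ is ``the block of $\beta_n$ through $(a+b)/2$'' independently of $\varepsilon$, letting $\varepsilon\downarrow0$ (with $n$ large depending on $\varepsilon$) gives $a_n\to a$ and $b_n\to b$, i.e.\ \eqref{crit1}. For \eqref{crit2} I would take $n_k\to\infty$ and $(c_k,d_k)\in\beta_{n_k}$ with $d_k\to d\in(0,\infty]$ and $c_k\to c\neq d$; since $c_k,d_k\in G(\beta_{n_k})$ and $r_k:=d_H(\beta_{n_k},\beta)\to 0$, the inclusion $G(\beta_{n_k})\subseteq\bigcup_{x\in G(\beta)}(x-r_k,x+r_k)$ gives $c,d\in G(\beta)$, so in particular $d<\infty$ and $c<d$; and if some $y\in(c,d)\cap G(\beta)$ existed, the other inclusion would produce $y_k\in G(\beta_{n_k})$ with $y_k\to y$, hence $c_k<y_k<d_k$ for large $k$, which is impossible for a block. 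Thus $(c,d)$ is a maximal open subinterval of $[0,\|\beta\|]\setminus G(\beta)$, i.e.\ $(c,d)\in\beta$.

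For the converse, assuming \eqref{crit1} and \eqref{crit2}, my plan is to bound the two one-sided suprema $\sup_{x\in G(\beta)}\mathrm{dist}(x,G(\beta_n))$ and $\sup_{x\in G(\beta_n)}\mathrm{dist}(x,G(\beta))$ separately. For the first I would argue by contradiction: compactness of $G(\beta)$ produces $\varepsilon>0$, $n_k\to\infty$ and $x_k\in G(\beta)$ with $x_k\to x^\ast\in G(\beta)$ and $\mathrm{dist}(x^\ast,G(\beta_{n_k}))\ge\varepsilon/2$ eventually; but $x^\ast$ is a limit of block endpoints of $\beta$, so I can fix a block of $\beta$ with an endpoint $a$ satisfying $|a-x^\ast|<\varepsilon/8$, and then \eqref{crit1} supplies blocks of $\beta_n$ whose endpoint $a_n\to a$, so $a_{n_k}\in G(\beta_{n_k})$ lies within $\varepsilon/4$ of $x^\ast$ for large $k$, a contradiction. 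For the second, I would again suppose it fails: there are $\varepsilon>0$, $n_k\to\infty$ and $p_k\in G(\beta_{n_k})$ with $\mathrm{dist}(p_k,G(\beta))\ge\varepsilon$; passing to a convergent subsequence $p_k\to p$ (using boundedness, see below), $\mathrm{dist}(p,G(\beta))\ge\varepsilon$ forces $p$ into a block $(a,b)\in\beta$ with $a\le p-\varepsilon$ and $b\ge p+\varepsilon$; by \eqref{crit1} there are blocks $(a_n,b_n)\in\beta_n$ with $a_n\to a$ and $b_n\to b$, so $[a+\varepsilon/2,b-\varepsilon/2]\subseteq(a_n,b_n)$ for large $n$, whence $G(\beta_n)\cap[a+\varepsilon/2,b-\varepsilon/2]=\emptyset$, contradicting $p_k\in G(\beta_{n_k})$ and $p_k\to p\in[a+\varepsilon,b-\varepsilon]$.

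The step I expect to be the real obstacle is the uniformity in the converse, specifically the passage to a bounded convergent subsequence of the $p_k$, i.e.\ a uniform bound $\sup_n\|\beta_n\|<\infty$. Criteria \eqref{crit1} and \eqref{crit2} only detect \emph{macroscopic} blocks of the $\beta_n$: \eqref{crit2} is precisely what prevents a non-vanishing limit block of the $\beta_n$ from straying outside $G(\beta)$, thereby pinning $\sup G(\beta_n)\to\sup G(\beta)=\|\beta\|$ and controlling the endpoints of all non-vanishing blocks, but a uniform bound on the total masses — which in every application of the lemma is available a priori, e.g.\ from convergence of $\|\beta_n\|$ or from a common ambient interval — is what rules out mass escaping to infinity through vanishingly small blocks; I would carry that bound as a standing hypothesis and treat the degenerate case $\beta=\emptyset$ separately, where the statement reduces to $\|\beta_n\|\to0$. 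Granting it, the two contradiction arguments above complete the proof.
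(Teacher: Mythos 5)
The paper does not actually prove this lemma: it is imported with the citation [IPPAT, Lemma~4.3] (\cite{IPPAT}), so there is no internal argument to compare yours against, and I can only assess your proposal on its own terms. Your forward implication is correct and complete: the sandwich argument for the block of $\beta_n$ through the midpoint of a fixed $(a,b)\in\beta$ gives \eqref{crit1}, and the two one-sided Hausdorff inclusions together with the maximality of complementary intervals give \eqref{crit2}. This is also the only direction the paper actually invokes (in the proofs of Theorems~\ref{thm:crp-ip-0}, \ref{thm:crp-ip-bis} and \ref{thm:continst}).

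The genuine gap is in your converse, and it is not the one your proposed patch addresses. In the second contradiction argument you pass to $p_k\to p$ and assert that $\mathrm{dist}(p,G(\beta))\ge\varepsilon$ ``forces $p$ into a block $(a,b)\in\beta$''; this presupposes $p\le\|\beta\|$, and neither \eqref{crit1}, \eqref{crit2} nor your standing hypothesis $\sup_n\|\beta_n\|<\infty$ delivers that. Concretely, take $\beta=\{(0,1)\}$ and $\beta_n=\{(0,1)\}\cup\{(1+(j-1)/n,\,1+j/n)\colon 1\le j\le n\}$: \eqref{crit1} holds via the block $(0,1)\in\beta_n$; \eqref{crit2} holds because any sequence of blocks meeting its hypotheses is eventually equal to $(0,1)$ (the microscopic blocks have both endpoints within $1/n_k$ of each other, so they can only produce limits with $c=d$); the total masses equal $2$ for every $n$; yet $2\in G(\beta_n)$ gives $d_H(\beta_n,\beta)\ge 1$. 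So bounded total mass, plus treating $\beta=\emptyset$ separately, does not close the hole --- and the same example shows that, read literally, the ``if'' direction of the quoted statement itself requires more than \eqref{crit1}--\eqref{crit2}, so your instinct that an extra input is needed is sound, but the correct input is $\limsup_n\|\beta_n\|\le\|\beta\|$, equivalently $\|\beta_n\|\to\|\beta\|$, since \eqref{crit1} already yields $\liminf_n\|\beta_n\|\ge\|\beta\|$ (right endpoints of blocks of $\beta$ accumulate at $\|\beta\|$). With that condition the limit $p$ lies in $[0,\|\beta\|]$, your two contradiction arguments go through, and in fact \eqref{crit2} is then not needed for the converse at all; the $\beta=\emptyset$ case has the same defect, since \eqref{crit1}--\eqref{crit2} alone do not force $\|\beta_n\|\to0$ there. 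You should therefore either prove the lemma with the total-mass condition made explicit (it is available in every application in this paper) or verify how the original [IPPAT, Lemma~4.3] handles this point.
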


\begin{proof}[Proof of Theorem~\ref{thm:crp-ip-0}]
	For the case $\beta(0)= \emptyset$, by convention $\beta(y) =\emptyset$ for every $y\ge 0$. 
	Then the claim is a simple consequence of the convergence of the total mass processes. 
	
	So we may assume that $\beta(0)\ne \emptyset$. By Definition~\ref{defn:ip0}, we can write  
	$\boldsymbol{\beta} = \Concat_{U\in \beta(0)} \boldsymbol{\beta}_U$, 
	where each process $\boldsymbol{\beta}_U:=(\beta_U(y), y\ge 0)\sim \mathtt{Clade}_{\mathrm{Leb}(U)}(\alpha)$, independent of the others. 
	We claim that a.s., for any $\varepsilon>0$ we can find at most a finite (random) number of intervals, say $U_1, U_2, \ldots, U_{K_\varepsilon}\in \beta(0)$, listed from left to right, 
	such that 
	\begin{equation}\label{eq:R_k}
		\sup_{y\ge 0} R_{K_\varepsilon}(y) <\varepsilon, \quad \text{where}~R_k(y) := \|\beta(y)\| - \sum_{i=1}^k \|\beta_{U_i} (y)\|, \; k\ge 1. 
	\end{equation}
	Indeed, let $(U_i, i\ge 1)$ be a sequence containing all intervals of $\beta(0)$, ranked by decreasing order of length. 
	Since $\|\boldsymbol{\beta}\|\sim \besq_{\|\beta(0)\|}(0)$, $\|\boldsymbol{\beta}_{U_i}\|\sim 
	\besq_{\mathrm{Leb}(U_i)}(0)$ for each $i\ge 1$, and, for each $k\ge 1$, $R_k$ is independent of the family $\{\boldsymbol{\beta}_{U_i}, i\in [k]\}$, 
	we deduce from Proposition~\ref{prop:alphatheta} that 
	$R_k \sim \besq_{r_k}(0)$ with $r_k:= \|\beta(0)\| - \sum_{i=1}^k\mathrm{Leb}(U_i)$. 
	By self-similarity and monotonicity, $\sup_{y\ge 0} R_k(y)\downarrow 0$ a.s., as $k\rightarrow\infty$;  on
	this a.s.\ event, we find, for each $\varepsilon>0$ a random $K_\varepsilon$ so that \eqref{eq:R_k} holds.

	For each $n\in \bN$, we similarly assume that $C^{(n)}(y) = \Concat_{U\in C^{(n)}(0)} C^{(n)}_U(y), y\ge 0$, 
	where each process $(C^{(n)}_U(y), y\ge 0)\sim \mathrm{PCRP}_{\mathrm{Leb}(U)}^{(\alpha)}(0,\alpha)$, independent of the others. 
	
	Due to the convergence of the initial state  $\frac{1}{n} C^{(n)}(0) \to \beta(0)$ and     
	by Lemma~\ref{lem:dH} we can find for each $i\ge 1$ a sequence 
	$U^{(n)}_i = (a^{(n)}_i, b^{(n)}_i) \in C^{(n)}(0)$, $n\in \bN$, 
	such that  $a^{(n)}_i/n\to \inf U_i$ and $b^{(n)}_i/n\to \sup U_i $. 
	In particular, we have ${\rm Leb}(U^{(n)}_i)/n \to {\rm Leb}(U_i)$ for every $i\le K_\varepsilon$. 
	Then we may assume by Lemma~\ref{lem:cv-clade} that, for all $i\ge 1$,
	\begin{equation}\label{eq:beta_i}
		\lim_{n\to \infty} \sup_{y\ge 0}d_{H} \left(\frac{1}{n} C_{U^{(n)}_i}^{(n)}(2 n y), \beta_{U_i}(y) \right)  = 0\quad  \text{a.s..}
	\end{equation}
	
	Moreover, it is easy to see that the total mass of a $\mathrm{PCRP}^{(\alpha)}(0,\alpha)$ is a Markov chain described by $\pi (0)$ in \eqref{eq:pi}. 
	By independence, for every $k\ge 1$, the rescaled process   
	\[
	R_k^{(n)}(y) := \frac{1}{n}\Big\|C^{(n)}(2 n y)\Big\| -\frac{1}{n} \sum_{i=1}^k \Big\|C^{(n)}_{U^{(n)}_i} (2 n y)\Big\|,\quad  y\ge 0,
	\] 
	has the law of $\pi^{(n)}_{r^{(n)}_k} (0)$ as in \eqref{eq:pi-n}, where 
	$r^{(n)}_k= \|C^{(n)}(0) \|- \sum_{i=1}^k {\rm Leb}(U^{(n)}_i)$.  
	By Lemma~\ref{lem:ud} and Skorokhod representation, we may assume,  as $n\to \infty$, $\sup_{y\ge 0} |R_k^{(n)}(y) - R_k(y)|\to 0$ a.s.\ and hence $\sup_{y\ge 0}|R_{K_\varepsilon}^{(n)}(y)-R_{K_\varepsilon}(y)|\rightarrow 0$ a.s..

	An easy estimate shows that
	\[
	d_{H} \left(\frac{1}{n} C^{(n)} (2 n y), \beta(y) \right)
	\le 2 R_{K_\varepsilon}^{(n)}(y) +2 R_{K_\varepsilon}(y) + \sum_{i=1}^{K_\varepsilon} d_{H} \left(\frac{1}{n} C^{(n)}_{U^{(n)}_i} (2 n y), \beta_{U_i}(y)\right). 
	\]
	As a result, combining \eqref{eq:R_k} and \eqref{eq:beta_i}, we have 
	\[
	\limsup_{n\to \infty} \sup_{y\ge 0}d_{H} \left(\frac{1}{n} C^{(n)}(2 n y), \beta(y) \right)  
	\le 4\varepsilon \quad \text{a.s.}.
	\]
	By the arbitrariness of $\varepsilon$ we deduce the claim. 
\end{proof}

\subsection{The scaling limit of a $\mathrm{PCRP}^{(\alpha)} (\theta_1,\alpha)$}\label{sec:cv-thetaalpha}

\begin{proposition}
	[Convergence of a $\mathrm{PCRP}^{(\alpha)}( \theta_1,\alpha)$] 
	\label{prop:crp-ip-theta}
	Let $\theta_1\ge 0$. 
	For $n\in \bN$, let  $(C^{(n)}(y), y\ge 0)$ be a $\mathrm{PCRP}^{(\alpha)}(\theta_1, \alpha)$
	starting from $C^{(n)}(0)\in \cC$.
	Suppose that  the interval partition $ \frac{1}{n}  C^{(n)}(0)$ converges in distribution to 
	$\beta(0)\in \cI_H$ as $n\to \infty$, under $d_H$.
	Then the process $(\frac{1}{n} C^{(n)}(2 n y), y\ge 0)$ converges in distribution to an $\mathrm{SSIP}^{(\alpha)}(\theta_1)$-evolution starting from $\beta(0)$, as $n\to \infty$, in $\bD(\bR_+,\cI_{H})$ under the Skorokhod topology. 
\end{proposition}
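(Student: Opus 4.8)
The plan is to bootstrap from the two results already in place — the forward dynamics of Theorem~\ref{thm:crp-ip-0} and the immigration scaffolding of Proposition~\ref{prop:cv-prm} — glued together through the Rogers--Winkel decomposition of Theorem~\ref{thm:jccp}. One may assume $\theta_1>0$, since for $\theta_1=0$ the statement is precisely Theorem~\ref{thm:crp-ip-0}. For each $n$ I would decompose the $\mathrm{PCRP}^{(\alpha)}(\theta_1,\alpha)$ as $C^{(n)}=\cev{C}^{(n)}\concat\vecc{C}^{(n)}$ as in Theorem~\ref{thm:jccp}: $\vecc{C}^{(n)}$ is a $\mathrm{PCRP}^{(\alpha)}(0,\alpha)$ started from $C^{(n)}(0)$, the immigration part $\cev{C}^{(n)}$ is built from the modified scaffolding $J_{\theta_1,\mathbf{D}}$ via \eqref{eqnstar}--\eqref{eq:discrbackarrow}, and the two parts are independent. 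Dually, Definition~\ref{defn:alphatheta} writes the target $\mathrm{SSIP}^{(\alpha)}(\theta_1)$-evolution as $\boldsymbol{\beta}=\cev{\beta}\concat\vecc{\beta}$, with $\vecc{\beta}$ an independent $\mathrm{SSIP}^{(\alpha)}(0)$-evolution from $\beta(0)$. Theorem~\ref{thm:crp-ip-0} already gives $\frac1n\odotip\vecc{C}^{(n)}(2n\,\cdot\,)\to\vecc{\beta}$ in distribution, so the remaining work is the backward part and the concatenation.

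For the backward part I would fix $J>0$, put $j=j(n):=\lceil nJ\rceil$, and carry out the construction \eqref{eqnstar}--\eqref{eq:discrbackarrow} under the atom-rescaling $\delta(s,f)\mapsto\delta(n^{-(1+\alpha)}s,\,n^{-1}f(2n\,\cdot\,))$ used in Section~\ref{sec:cv-0alpha} (so that $J_{\mathbf{D}}$ becomes $\xi^{(n)}$ and its running infimum becomes $\ell^{(n)}$). This lets one realise $\frac1n\odotip\cev{C}^{(n)}(2ny)$, for $y\in[0,j/n]$, as $y\mapsto\skewer\big(y,\,\fN^{(n)}|_{[0,T^{(n)})},\;\tfrac{j}{n}+\fX^{(n)}_{\theta_1}|_{[0,T^{(n)})}\big)$, where $\fX^{(n)}_{\theta_1}:=\xi^{(n)}+(1-\alpha/\theta_1)\ell^{(n)}$ is the discrete analogue of $\fX_{\theta_1}$ in \eqref{eq:X-alphatheta} and $T^{(n)}:=\inf\{t\ge0\colon\fX^{(n)}_{\theta_1}(t)=-j/n\}$. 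Now Proposition~\ref{prop:cv-prm} gives $(\fN^{(n)},\xi^{(n)},\ell^{(n)})\to(\fN,\xi_{\fN},\ell_{\fN})$ in distribution, hence $\fX^{(n)}_{\theta_1}\to\fX_{\theta_1}$ since $\ell_{\fN}$ is continuous; and by the analogue of \eqref{eq:Talphatheta} one has $T^{(n)}=T_{-(\theta_1/\alpha)(j/n)}(\xi^{(n)})$ with $(\theta_1/\alpha)(j/n)\to(\theta_1/\alpha)J$, so Lemma~\ref{lem:T-} yields $T^{(n)}\to T_{-(\theta_1/\alpha)J}(\xi_{\fN})=T_{-J}(\fX_{\theta_1})$, jointly with the rest. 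Passing to a Skorokhod coupling of all these objects together with the total masses, I would then run the skewer-convergence argument from the proof of Lemma~\ref{lem:cv-clade} essentially verbatim — keep only spindles of length $>\rho$, observe that these finitely many spindles have positions $\fX^{(n)}_{\theta_1}(\,\cdot\,-)$ and shapes converging, and control the residual mass uniformly — to obtain $\frac1n\odotip\cev{C}^{(n)}(2ny)\to\cev{\beta}(y)$ uniformly on $[0,J]$, almost surely. Since $J$ is arbitrary and the truncated processes are consistent in $j$ by \eqref{eq:discrbackarrow} and \eqref{eq:backarrow}, this upgrades to convergence in $\bD(\bR_+,\cI_H)$.

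The one input not already available — and, I expect, the main obstacle — is the uniform control of the residual mass for the immigration part, which in Lemma~\ref{lem:cv-clade} relied on the leftover mass being an honest $\besq(0)$. Here it comes down to identifying the limiting total mass of the backward part. My plan is: the pair $\big(\|\frac1n\odotip\cev{C}^{(n)}(2n\,\cdot\,)\|,\,\|\frac1n\odotip\vecc{C}^{(n)}(2n\,\cdot\,)\|\big)$ is tight and has independent coordinates by Theorem~\ref{thm:jccp}; its coordinate sum $\|\frac1n\odotip C^{(n)}(2n\,\cdot\,)\|$ converges to $\besq_{\|\beta(0)\|}(2\theta_1)$ by Lemma~\ref{lem:ud}, while its second coordinate converges to $\besq_{\|\beta(0)\|}(0)$ by Theorem~\ref{thm:crp-ip-0}; since a $\besq_{m}(2\theta_1)$ is the sum of an independent $\besq_{m}(0)$ and $\besq_{0}(2\theta_1)$, and this deconvolution is unique (finite-dimensional Laplace transforms determine the law), every subsequential limit of the first coordinate has law $\besq_{0}(2\theta_1)$, which is exactly the law of $\|\cev{\beta}\|$. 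With uniform convergence of the total masses thus secured, the residual estimate of Lemma~\ref{lem:cv-clade} goes through unchanged; everything else in the argument is a direct adaptation of Section~\ref{sec:cv-0alpha}.

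Finally, the concatenation. Since $\cev{C}^{(n)}\perp\vecc{C}^{(n)}$ and $\cev{\beta}\perp\vecc{\beta}$, the two marginal convergences combine to $\big(\frac1n\odotip\cev{C}^{(n)}(2n\,\cdot\,),\,\frac1n\odotip\vecc{C}^{(n)}(2n\,\cdot\,)\big)\to(\cev{\beta},\vecc{\beta})$ in distribution. The map $(\gamma,\delta)\mapsto\gamma\concat\delta$ is $1$-Lipschitz in each coordinate at fixed level and the limits $\cev{\beta},\vecc{\beta}$ are path-continuous with $\|\cev{\beta}\|$ a.s.\ finite, so it is continuous at $(\cev{\beta},\vecc{\beta})$; hence, via one more Skorokhod coupling, $\frac1n\odotip C^{(n)}(2ny)=\big(\tfrac1n\odotip\cev{C}^{(n)}(2ny)\big)\concat\big(\tfrac1n\odotip\vecc{C}^{(n)}(2ny)\big)\to\cev{\beta}\concat\vecc{\beta}=\boldsymbol{\beta}$, and therefore in distribution in $\bD(\bR_+,\cI_H)$. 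The degenerate case $\beta(0)=\emptyset$ is subsumed, since then $\frac1n\odotip\vecc{C}^{(n)}(2n\,\cdot\,)\to\emptyset$ by Theorem~\ref{thm:crp-ip-0}.
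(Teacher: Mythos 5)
Your proposal is correct and follows essentially the same route as the paper: the Rogers--Winkel decomposition of Theorem~\ref{thm:jccp} into an independent forward part (handled by Theorem~\ref{thm:crp-ip-0}) and an immigration part realised as a skewer of the rescaled scaffolding, with Proposition~\ref{prop:cv-prm} and Lemma~\ref{lem:T-} giving convergence of the scaffolding, local time and passage times, and the $\rho$-truncation argument of Lemma~\ref{lem:cv-clade} finishing level by level --- the paper merely phrases this as ``reduce to $C^{(n)}(0)=\emptyset$ and then concatenate''. Your additional deconvolution step identifying the limiting backward total mass as $\besq_0(2\theta_1)$ is a sound way of supplying a detail the paper omits; just note that when $\|\beta(0)\|=0$ Lemma~\ref{lem:ud} does not literally apply to the coordinate sum, but since the law of the immigration part does not depend on the initial state you can run the identical argument with any auxiliary positive initial mass, so this is only a presentational point.
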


\begin{proof}
	We only need to prove the case when $\theta_1>0$ and $C^{(n)}(0)=\emptyset$ for every $n\in \bN$; then combining this special case and Theorem~\ref{thm:crp-ip-0} leads to the general result. 
	The arguments are very similar to those in the proof of Lemma~\ref{lem:cv-clade}; we only sketch the strategy here and omit the details.    
	
	Fix $j\in \bN$. Let $(\fN^{(n)},\xi^{(n)},\ell^{(n)})_{n\in \bN}$ be the sequence given in 
	Proposition~\ref{prop:cv-prm}.  
	For each $n\in \bN$, by using Theorem~\ref{thm:jccp}, we may write  
	\[
	\beta^{(n)} (y):=\frac{1}{n}  C^{(n)}(2 n y) = \skewer\left(y, \fN^{(n)}\big|_{[0,T^{(n)}_{-j\theta_1/\alpha})}, j+ \xi^{(n)}_{\theta_1 }\big|_{[0,T^{(n)}_{-j\theta_1/\alpha})}\right), \quad y\in [0,j],
	\]
	where 
	$
	\xi^{(n)}_{\theta_1}:= \xi^{(n)} + (1-\alpha/\theta_1) \ell^{(n)}
	$
	and $T^{(n)}_{-j\theta_1/\alpha}:=T_{-j\theta_1/\alpha}(\xi^{(n)})=T_{-j}(\xi_{\theta_1}^{(n)})$.
	By Proposition~\ref{prop:cv-prm} and Skorokhod representation, we may assume that 
	$(\fN^{(n)},\xi^{(n)} ,\xi^{(n)}_{\theta_1})$ converges  a.s.\@ to $(\fN,\mathbf{X}_\alpha,\mathbf{X}_{\theta_1})$. 
	Then it follows from Lemma~\ref{lem:T-} that $T^{(n)}_{-j\theta_1/\alpha}\to T_{-j\theta_1/\alpha}(\mathbf{X}_\alpha)=T_{-j}(\mathbf{X}_{\theta_1})$, cf. \eqref{eq:Talphatheta}. 
	Next, in the same way as in the proof of Lemma~\ref{lem:cv-clade}, 
	we consider for any $\rho>0$ the interval partition evolution $\boldsymbol{\beta}^{(n)}_{>\rho}$ associated with the spindles of $\boldsymbol{\beta}^{(n)}$ with lifetime longer than $\rho$. 
	By proving that for any $\rho>0$, 
	$(\beta^{(n)}_{>\rho}(y), y\in [0,j]) \to (\beta_{>\rho}(y), y\in [0,j])$ as $n\to \infty$, 
	and that $\|\boldsymbol{\beta}^{(n)}\|\!-\! \| \boldsymbol{\beta}^{(n)}_{>\rho}\| \to 0$ as $\rho\downarrow 0$ uniformly for all $n\in \bN$, 
	we deduce the convergence of $(\beta^{(n)}(y), y\in [0,j])$. 
	This leads to the desired statement.  
\end{proof}

\section{Convergence of the three-parameter family}\label{sec:pcrp-ssip}

In this section we consider the general three-parameter family $\mathrm{PCRP}^{(\alpha)}(\theta_1, \theta_2)$ with $\theta_1,\theta_2\ge 0$. 
In Section~\ref{sec:cvproof} we establish a related convergence result, Theorem~\ref{thm:crp-ip-bis}, for the processes killed upon hitting $\emptyset$, with the limiting diffusion being an $\mathrm{SSIP}_{\!\dagger}$-evolution introduced in \cite{ShiWinkel-1}. Using Theorem~\ref{thm:crp-ip-bis}, we obtain a pseudo-stationary distribution for an $\mathrm{SSIP}_{\!\dagger}$-evolution in Proposition~\ref{prop:ps-theta1theta2}, which enables us to introduce an excursion measure and thereby  construct an $\mathrm{SSIP}$-evolution from excursions, for suitable parameters, in Sections~\ref{sec:exc} and ~\ref{sec:rec} respectively.   
In Section~\ref{sec:results}, we finally complete the proofs of Theorem~\ref{thm:crp-ip} and the other results stated in Section \ref{sec:mainresult}.

\subsection{Convergence when $\emptyset$ is absorbing}\label{sec:cvproof}

If we choose any table in a $\mathrm{PCRP}^{(\alpha)}(\theta_1, \theta_2)$, then its size evolves as a  $\pi(-\alpha)$-process until the first hitting time of zero; before the deletion of this table, the tables to its left form a $\mathrm{PCRP}^{(\alpha)}(\theta_1, \alpha)$ and the tables to its right a $\mathrm{PCRP}^{(\alpha)}(\alpha, \theta_2)$. 
This observation suggests us to make such decompositions and to use the convergence results obtained in the previous section. 
A similar idea has been used in \cite{ShiWinkel-1} for the construction of an $\mathrm{SSIP}$-evolution with absorption in $\emptyset$, abbreviated as $\mathrm{SSIP}_{\!\dagger}$-evolution, which we shall now recall. 
Specifically, define a function $\phi\colon \cI_H \to \big(\cI_H \times  (0,\infty)\times \cI_H\big) \cup \{(\emptyset,0,\emptyset)\}$ by setting $\phi(\emptyset):= (\emptyset,0,\emptyset)$ and, for $\beta\ne \emptyset$, 
\begin{equation}\label{eq:phi}
	\phi(\beta) := \big( \beta \cap (0,\inf U), \mathrm{Leb}(U), \beta \cap (\sup U, \|\beta\|) - \sup U \big), 
\end{equation}
where  $U$ is the longest interval in $\beta$; we take $U$ to be the leftmost one if this is not unique. 

\begin{definition}[$\mathrm{SSIP}$-evolution with absorption in $\emptyset$, Definition 1.3 of \cite{ShiWinkel-1}] \label{defn:ipe}
	$\!\!$Consider $\theta_1\ge 0$, $\theta_2 \ge 0$ and $\gamma\in \cI_H$. 
	Set $T_0 := 0$ and $\beta(0):= \gamma$. 
	For $k\ge 0$, suppose by induction that we have obtained $(\beta(t), t\le T_k)$.
	\begin{itemize}
		\item If $\beta(T_k)= \emptyset$, then we stop and set 
		$T_{i}:= T_k$ for every $i\ge k$ and $\beta(t):=\emptyset $ for $t\ge T_k$. 
		\item If $\beta(T_k)\!\ne\! \emptyset$, denote $(\beta^{(k)}_1,m^{(k)},\beta^{(k)}_2):= \phi(\beta(T_k))$. Conditionally on the history, let $\ff^{(k)}\sim \besq_{m^{(k)}}(-2 \alpha)$ and $\boldsymbol{\gamma}^{(k)}_i=(\gamma_i^{(k)}(s),\,s\ge 0)$ an $\mathrm{SSIP}^{(\alpha)}( \theta_i)$-evolution starting from $\beta_i^{(k)}$, $i=1,2$,   
		with $\ff^{(k)}, \boldsymbol{\gamma}_1^{(k)},\boldsymbol{\gamma}_2^{(k)}$ independent. Set $T_{k+1}:= T_k + \zeta(\ff^{(k)})$.
		We define
		\[
		\beta(t) := \gamma^{(k)}_1(t\!-\! T_k) \concat\left\{\left( 0,\ff^{(k)}(t\!-\!T_k)\right)\right\} \concat {\rm rev}\big(\gamma^{(k)}_2(t\!-\!T_k)\big) , \qquad t\in (T_k, T_{k+1}]. 
		\]
	\end{itemize}
	We refer to $(T_k)_{k\ge1}$ as the \emph{renaissance levels} and $T_{\infty}:= \sup_{k\ge 1} T_k \in [0, \infty]$ as the \emph{degeneration level}. 
	If $T_{\infty}< \infty$, then by convention we set  
	$\beta(t) :=\emptyset$ for all $t\ge  T_{\infty}$. 
	Then the process $\boldsymbol{\beta}:=(\beta(t), t\ge 0)$ is called an \emph{$\mathrm{SSIP}_{\!\dagger}^{(\alpha)}( \theta_1, \theta_2)$-evolution} starting from $\gamma$. 
\end{definition}

Note that $\emptyset$ is an absorbing state of an $\mathrm{SSIP}_{\!\dagger}^{(\alpha)}( \theta_1, \theta_2)$-evolution by construction. Let us summarise a few results obtained in \cite[Theorem 1.4 and Corollary 3.6]{ShiWinkel-1}.

\begin{theorem}[\cite{ShiWinkel-1}]\label{thm:IvaluedMP}
	For $\theta_1,\theta_2\ge 0$, let $(\beta(t),t\ge 0)$ be an $\mathrm{SSIP}_{\!\dagger}^{(\alpha)}(\theta_1, \theta_2)$-evolution, with renaissance levels $(T_k,k\ge 0)$ and degeneration level $T_{\infty}$. Set $\theta= \theta_1+\theta_2-\alpha$. 
	\begin{longlist}
		\item (Hunt property)  $(\beta(t),t\ge 0)$ is a  Hunt process with continuous paths in $(\cI_H ,d_{H})$.
		\item \label{item:mass} (Total-mass)	
		$(\|\beta(t)\|,t\ge 0)$ is a ${\tt BESQ}_{\|\beta^0\|}(2\theta)$ killed at its first hitting time of zero.
		\item \label{item:levels} (Degeneration level) If $\theta\ge 1$ and $\beta(0)\ne \emptyset$, then a.s.\ $T_{\infty}=\infty$ and $\beta(t)\ne \emptyset$ for every $t\ge 0$;  
		if $\theta<1$, then a.s.\ $T_{\infty}<\infty$ and $\lim_{t\uparrow T_{\infty}} d_H(\beta(t), \emptyset) =0$. 
	\end{longlist} 
\end{theorem}

\begin{theorem}\label{thm:crp-ip-bis} 
	Let $\theta_1,\theta_2\ge 0$ and $\theta=\theta_1\!+\!\theta_2\!-\!\alpha$. 
	For $n\in \bN$, let  $(C^{(n)}(t), t\ge 0)$ be a $\mathrm{PCRP}^{(\alpha)}(\theta_1,\theta_2)$ starting from $C^{(n)}(0)= \gamma^{(n)}$ and killed at $\zeta^{(n)} =\inf\{ t\ge 0\colon C^{(n)}(t) =\emptyset \}$.  Let $(\beta(t),t\!\ge\! 0)$ be an $\mathrm{SSIP}_{\!\dagger}^{(\alpha)}(\theta_1,\theta_2)$-evolution starting from $\gamma$ and 
	$\zeta \!=\!\inf\{ t\!\ge\! 0\colon \beta(t)\!=\!\emptyset\}$. 
	Suppose that either $\gamma\ne \emptyset$ or $\theta<1$. 
	If $ \frac{1}{n}  \gamma^{(n)}$ converges in distribution to 
	$\gamma$ under $d_H$, as $n\to \infty$, then the following convergence holds in $\bD(\bR_+,\cI_H)$:  
	\[
	\Big(\frac{1}{n}  C^{(n)}\big((2 n t)\wedge\zeta^{(n)}\big), t\ge 0 \Big)
	\underset{n\to \infty}{\longrightarrow}  (\beta(t), t\ge 0) , \quad \text{in distribution}. 
	\]	
	Moreover, $\zeta^{(n)}/2n$ converges to $\zeta$ in distribution jointly.
\end{theorem}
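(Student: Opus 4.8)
The plan is to mirror the recursive construction of the $\mathrm{SSIP}_{\!\dagger}^{(\alpha)}(\theta_1,\theta_2)$-evolution in Definition~\ref{defn:ipe}, decomposing the discrete process along a distinguished table and passing to the limit renaissance interval by renaissance interval; the convergence $\zeta^{(n)}/2n\to\zeta$ is obtained separately and more directly, from the total-mass process.

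First I would handle the hitting times. From any state with $i\ge1$ customers the total up-rate of a $\mathrm{PCRP}^{(\alpha)}(\theta_1,\theta_2)$ is $i+\theta$ and the total down-rate is $i$, with $\theta=\theta_1+\theta_2-\alpha$, so the number of customers is a $\pi(\theta)$-chain killed at $0$, and $C^{(n)}(t)=\emptyset$ exactly when this number vanishes; hence $\zeta^{(n)}$ is its first hitting time of $0$. Since $\tfrac1n\odotip\gamma^{(n)}\to\gamma$ in $d_H$ forces $\|\gamma^{(n)}\|/n\to\|\gamma\|$, Lemma~\ref{lem:ud} (when $\theta\le0$) and Lemma~\ref{lem:ud-bis} (when $\theta\in(0,1)$) give $\bigl(\tfrac1n\|C^{(n)}(2nt)\|,\,t\ge0\bigr)\to\besq_{\|\gamma\|}(2\theta)$ jointly with convergence of the first hitting times of $0$ whenever $\theta<1$; when $\theta\ge1$ the limit $\besq(2\theta)$ never reaches $0$, and a Skorokhod-continuity argument on each finite horizon gives $\zeta^{(n)}/2n\to\infty=\zeta$. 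Because $d_H(\beta,\emptyset)\le\|\beta\|$ for all $\beta\in\cI_H$, the total mass of an $\mathrm{SSIP}_{\!\dagger}^{(\alpha)}(\theta_1,\theta_2)$-evolution vanishes precisely at $\zeta$; by the total-mass description in Theorem~\ref{thm:IvaluedMP} it equals $\besq_{\|\gamma\|}(2\theta)$ killed at $0$, so $\zeta$ is the first hitting time of $0$ of that $\besq$, and the convergence just stated yields $\zeta^{(n)}/2n\to\zeta$. This also controls the process near and after $\zeta$ when $\theta<1$: then $T_\infty<\infty$ with $\|\beta(t)\|\to0$ as $t\uparrow T_\infty$ (Theorem~\ref{thm:IvaluedMP}), so for $\varepsilon>0$ one may pick $s$ with $\bP\bigl(\sup_{t\ge s}\|\beta(t)\|<\varepsilon\bigr)>1-\varepsilon$, and then by the total-mass convergence the rescaled killed PCRP stays within $2\varepsilon$ of $\emptyset$ on $[s,\infty)$ with high probability; when $\theta\ge1$ one has $T_\infty=\infty$ and it suffices to treat each finite horizon $[0,s]$ directly.

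It remains to prove the process convergence on each $[0,s]$, which I would establish by induction over the renaissance levels $(T_k)_{k\ge0}$ of $\boldsymbol{\beta}$, with discrete counterparts $T_k^{(n)}$ (the successive death times of the tracked tables). The passage from $[0,T_k]$ to $[0,T_{k+1}]$ reduces, by the strong Markov property of the Markov chain $C^{(n)}$ at the stopping time $2nT_k^{(n)}$ together with the recursive definition of $\boldsymbol{\beta}$, to the first renaissance interval. For $\gamma\neq\emptyset$, choose $U^{(n)}\in\gamma^{(n)}$ with $\tfrac1n\odotip U^{(n)}\to U$, the block selected by $\phi(\gamma)$; such $U^{(n)}$ exist for $n$ large by criterion \eqref{crit1} of Lemma~\ref{lem:dH}, with $\tfrac1n\mathrm{Leb}(U^{(n)})\to\mathrm{Leb}(U)$. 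Tracking this table, the dynamics of a $\mathrm{PCRP}^{(\alpha)}(\theta_1,\theta_2)$ split, up to the first hitting time $T_1^{(n)}$ of $0$ by its size, into three independent pieces (by independence of the Poisson clocks of the dynamics, as recalled at the start of this section): its size runs as a $\pi_{\mathrm{Leb}(U^{(n)})}(-\alpha)$-chain, the tables to its left form a $\mathrm{PCRP}^{(\alpha)}(\theta_1,\alpha)$ started from $\gamma^{(n)}\cap(0,\inf U^{(n)})$, and the tables to its right a $\mathrm{PCRP}^{(\alpha)}(\alpha,\theta_2)$ started from $\gamma^{(n)}\cap(\sup U^{(n)},\|\gamma^{(n)}\|)-\sup U^{(n)}$, and $C^{(n)}(t)$ is their concatenation for $t\le T_1^{(n)}$. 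Applying Proposition~\ref{prop:crp-ip-theta} to the left piece and (after a left--right reversal) to the right piece, Lemma~\ref{lem:ud} with $\theta=-\alpha\in(-1,0)$ to the middle piece jointly with its death time, independence to make the convergence joint, and the $d_H$-continuity of concatenation and of left--right reversal, I would obtain convergence of $\bigl(\tfrac1n\odotip C^{(n)}(2n(t\wedge T_1^{(n)})),\,t\ge0\bigr)$ to the first renaissance piece of $\boldsymbol{\beta}$ in Definition~\ref{defn:ipe}, jointly with $T_1^{(n)}\to T_1$ and $\tfrac1n\odotip C^{(n)}(2nT_1^{(n)})\to\beta(T_1)$; the deletion of the size-one table at $T_1^{(n)}$ changes the rescaled partition by $O(1/n)$, consistently with the continuity of $\boldsymbol{\beta}$. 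When $\phi(\gamma)$ is not uniquely determined one replaces the leftmost-longest convention by any block of $\gamma$ realised as a limit of blocks of $\gamma^{(n)}$ and invokes the fact from \cite{ShiWinkel-1} that the decomposition underlying Definition~\ref{defn:ipe} has the same law for any choice of block. Running all these convergences through one Skorokhod representation of the driving objects so that they are almost sure, and combining the induction with the tail control of the previous paragraph, yields convergence in $\bD(\bR_+,\cI_H)$, jointly with $\zeta^{(n)}/2n\to\zeta$.

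I expect the main obstacle to be the organisation of the induction over renaissance levels: one must show that on a fixed horizon only finitely many of them matter (using that $T_\infty<\infty$ with $\|\beta(t)\|\to0$ when $\theta<1$, and $T_\infty=\infty$ when $\theta\ge1$, from Theorem~\ref{thm:IvaluedMP}), propagate the joint convergence of the process, the renaissance times, and the states reached, without error accumulating across the levels, and check that the discrete table-decomposition is consistent with $\phi$ in the limit. The analytic inputs --- the $\pi(-\alpha)$- and $\pi(\theta)$-chain scaling limits with their hitting times, the convergence $\mathrm{PCRP}^{(\alpha)}(\theta_1,\alpha)\to\mathrm{SSIP}^{(\alpha)}(\theta_1)$, and the continuity of concatenation and reversal --- are already available from the preceding sections and from \cite{ShiWinkel-1}.
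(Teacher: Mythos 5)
Your proposal is correct in outline and follows essentially the same route as the paper's proof: decompose the PCRP along a tracked table into an independent triple (a $\mathrm{PCRP}^{(\alpha)}(\theta_1,\alpha)$, a $\pi(-\alpha)$-chain, a $\mathrm{PCRP}^{(\alpha)}(\alpha,\theta_2)$), pass to the limit via Proposition~\ref{prop:crp-ip-theta} and Lemmas~\ref{lem:ud}--\ref{lem:ud-bis} under a Skorokhod representation, iterate over renaissance levels mirroring Definition~\ref{defn:ipe} (using that $\beta(T_k)$ a.s.\ has a unique largest block, so $\phi$ is continuous there), and treat $\theta\ge1$ via $T_\infty=\infty$ and $\theta<1$ by a splicing argument with total-mass and hitting-time control. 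The only differences are cosmetic: the paper builds the coupled discrete processes recursively from independent pieces rather than decomposing a given PCRP by the strong Markov property, and its $\theta<1$ tail estimate is performed at the random renaissance level $T_K$ (as in \eqref{eq:cv-1}--\eqref{eq:cv-3}) rather than at a deterministic time $s$, which is how your splicing step should be organised.
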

To prove Theorem~\ref{thm:crp-ip-bis}, we shall construct a sequence of  $\mathrm{PCRP}^{(\alpha)} (\theta_1, \theta_2)$ on a sufficiently large probability space by using  $\mathrm{PCRP}^{(\alpha)} (\theta_1,\alpha)$, $\mathrm{PCRP}^{(\alpha)} (\alpha,\theta_2)$  and up-down chains of law $\pi_k(-\alpha)$ defined in \eqref{eq:pi}; the idea is similar to Definition~\ref{defn:ipe}. 
Then the convergences obtained in Proposition~\ref{prop:crp-ip-theta} and Lemmas~\ref{lem:ud}--\ref{lem:ud-bis} permit us to conclude. A detailed proof of Theorem~\ref{thm:crp-ip-bis} is postponed to Appendix~\ref{sec:proof-thm:crp-ip-bis}.

\subsection{Pseudo-stationarity of $\mathrm{SSIP}_{\!\dagger}$-evolutions}
\begin{proposition}[Pseudo-stationary distribution of an $\mathrm{SSIP}_{\!\dagger}^{(\alpha)}(\theta_1, \theta_2)$-evolution]\label{prop:ps-theta1theta2}
	For $\theta_1, \theta_2\ge 0$ and $\theta:=\theta_1+\theta_2-\alpha$, let $(Z(t),\,t\ge 0)$ be a $\besq (2 \theta)$ killed at zero with $Z(0)>0$, independent of $\bar\gamma\sim \mathtt{PDIP}^{(\alpha)}( \theta_1,\theta_2)$. 
	Let $(\beta(t),\, t\ge 0)$ be an $\mathrm{SSIP}_{\!\dagger}^{(\alpha)}(\theta_1, \theta_2)$-evolution starting from $Z(0)   \bar\gamma$. 
	Fix any $t\ge 0$, then $\beta(t)$ has the same distribution as $Z(t) \bar\gamma$. 
\end{proposition}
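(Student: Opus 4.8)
The plan is to establish a discrete analogue of the statement at the level of $\mathrm{PCRP}^{(\alpha)}(\theta_1,\theta_2)$ and then to transfer it to the scaling limit via Theorem~\ref{thm:crp-ip-bis}. Since both sides of the asserted identity depend on the law of $Z(0)$ only through a mixture (with $\bar\gamma$ kept independent), it is enough to treat the case $Z(0)=z>0$ deterministic, the general case following by integrating over the distribution of $Z(0)$. So fix $z>0$.

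\emph{Step 1: discrete pseudo-stationarity.} Fix $N\ge1$, let $(C(t),t\ge0)$ be a $\mathrm{PCRP}^{(\alpha)}(\theta_1,\theta_2)$ killed upon hitting $\emptyset$ and started from $C(0)\sim\mathtt{oCRP}^{(\alpha)}_N(\theta_1,\theta_2)$. Counting the rates defining a $\mathrm{PCRP}^{(\alpha)}(\theta_1,\theta_2)$, from any composition with $m\ge1$ customers the total up-rate is $m+\theta$ and the total down-rate is $m$, irrespective of the number of blocks; hence $(\|C(t)\|,t\ge0)$ is autonomously a $\pi_N(\theta)$-chain killed at $0$ in the notation of \eqref{eq:pi}, and in particular the PCRP is non-explosive. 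I would then prove: for every fixed $t\ge0$, conditionally on $\{\|C(t)\|=m\}$ with $m\ge1$ one has $C(t)\sim\mathtt{oCRP}^{(\alpha)}_m(\theta_1,\theta_2)$, and $C(t)=\emptyset$ otherwise. Writing $\mu_m:=\mathtt{oCRP}^{(\alpha)}_m(\theta_1,\theta_2)$ for $m\ge1$, $\mu_0:=\delta_\emptyset$, and $G$ for the generator of the killed PCRP, the two structural inputs are: (i) the PCRP up-rates at a composition with $m$ customers are precisely $(m+\theta)$ times the $(\alpha,\theta_1,\theta_2)$-seating probabilities of Definition~\ref{defn:oCRP}, so adding a customer to $\mu_m$ (at total rate $m+\theta$) produces $\mu_{m+1}$; and (ii) by the weak sampling consistency of Proposition~\ref{prop:down}, removing a uniformly chosen customer from $\mu_m$ (at total rate $m$) produces $\mu_{m-1}$. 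Together with the holding-rate bookkeeping this yields, for all $m\ge1$,
\[
\mu_m G \;=\; m\,\mu_{m-1} + (m+\theta)\,\mu_{m+1} - (2m+\theta)\,\mu_m ,
\]
and $\mu_0 G=0$. Writing $(p_t(N,\cdot))$ for the transition kernel of the killed $\pi_N(\theta)$-chain, it follows that $t\mapsto\sum_m p_t(N,m)\,\mu_m$ solves the Kolmogorov forward equation for $G$ with initial datum $\mu_N$; by non-explosiveness this solution is unique, so $\mathrm{Law}(C(t))=\sum_m p_t(N,m)\,\mu_m$, which is exactly the claim since $p_t(N,m)=\bP(\|C(t)\|=m)$. (Equivalently, the display is an intertwining $\Lambda G=\widetilde G\Lambda$ of $G$ with the birth--death generator $\widetilde G$ through $\Lambda(m,\cdot)=\mu_m$.)

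\emph{Step 2: passage to the limit.} I would take $N^{(n)}:=\lceil nz\rceil$ and $C^{(n)}(0)\sim\mathtt{oCRP}^{(\alpha)}_{N^{(n)}}(\theta_1,\theta_2)$. By Lemma~\ref{lem:crp-pdip} and $\frac1n\odotip C^{(n)}(0)=\frac{N^{(n)}}{n}\odotip\big(\frac{1}{N^{(n)}}\odotip C^{(n)}(0)\big)$, one gets $\frac1n\odotip C^{(n)}(0)\to z\odotip\bar\gamma$ in distribution with $\bar\gamma\sim\mathtt{PDIP}^{(\alpha)}(\theta_1,\theta_2)$, the limit being a.s.\ non-empty. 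Theorem~\ref{thm:crp-ip-bis} then gives $\big(C^{(n)}_t:=\tfrac1n\odotip C^{(n)}((2nt)\wedge\zeta^{(n)})\big)_{t\ge0}\to\boldsymbol\beta$ in distribution, jointly with $\zeta^{(n)}/2n\to\zeta$, where $\boldsymbol\beta$ is an $\mathrm{SSIP}_{\!\dagger}^{(\alpha)}(\theta_1,\theta_2)$-evolution from $z\odotip\bar\gamma$; path-continuity of $\boldsymbol\beta$ then gives $C^{(n)}_t\to\beta(t)$ in distribution for each fixed $t$. On the discrete side, $\|C^{(n)}_t\|$ is a killed $\pi^{(n)}_{N^{(n)}}(\theta)$-chain at time $t$, so by Lemmas~\ref{lem:ud}--\ref{lem:ud-bis} (the joint convergence of the hitting time of $0$ being supplied by Lemma~\ref{lem:ud-bis} when $\theta\in(0,1)$, by Lemma~\ref{lem:ud} when $\theta\le0$, and being vacuous when $\theta\ge1$), $\|C^{(n)}_t\|\to Z(t)$ in distribution with $Z\sim\besq_z(2\theta)$ killed at $0$. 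By Step~1, conditionally on $\{\|C^{(n)}_t\|=m/n\}$ with $m\ge1$ one has $C^{(n)}_t\sim\tfrac1n\odotip\mathtt{oCRP}^{(\alpha)}_m(\theta_1,\theta_2)$, and $\tfrac1n\odotip\mathtt{oCRP}^{(\alpha)}_{m_n}(\theta_1,\theta_2)\to y\odotip\bar\gamma$ in distribution whenever $m_n/n\to y$ (again Lemma~\ref{lem:crp-pdip} and scaling, the case $y=0$ handled by the total mass tending to $0$). Thus, for bounded continuous $F\colon\cI_H\to\bR$, the functions $g_n(m/n):=\bE[F(\tfrac1n\odotip C_m)]$ (with $C_m\sim\mathtt{oCRP}^{(\alpha)}_m(\theta_1,\theta_2)$, $C_0=\emptyset$) converge to the bounded continuous $g(y):=\bE[F(y\odotip\bar\gamma)]$ in the continuous sense, i.e.\ $g_n(z_n)\to g(y)$ whenever $z_n\to y$ along $\{0,\tfrac1n,\tfrac2n,\dots\}$. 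An extended continuous mapping argument (e.g.\ via Skorokhod representation and bounded convergence) applied to $\|C^{(n)}_t\|\to Z(t)$ then gives, for every bounded continuous $h\colon\bR_+\to\bR$,
\[
\bE\!\left[F(\beta(t))\,h(\|\beta(t)\|)\right]
=\lim_{n\to\infty}\bE\!\left[g_n(\|C^{(n)}_t\|)\,h(\|C^{(n)}_t\|)\right]
=\bE\!\left[g(Z(t))\,h(Z(t))\right]
=\bE\!\left[F(Z(t)\odotip\bar\gamma)\,h(Z(t))\right],
\]
with $\bar\gamma\sim\mathtt{PDIP}^{(\alpha)}(\theta_1,\theta_2)$ independent of $Z$. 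Taking $h\equiv1$ and letting $F$ range over bounded continuous functions on the Polish space $\cI_H$ gives $\beta(t)\ed Z(t)\odotip\bar\gamma$; mixing over the law of $Z(0)$ then completes the proof.

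\emph{Main obstacle.} The crux is Step~1, the discrete pseudo-stationarity: verifying the generator identity for the $\mu_m$ is where the combinatorial structure is genuinely used (the PCRP up-rates being proportional to the $\mathrm{oCRP}$ seating rule of Definition~\ref{defn:oCRP}, together with Proposition~\ref{prop:down}), and where care is needed about the killing at $\emptyset$ and the behaviour at small masses. Everything afterwards is essentially routine given Theorem~\ref{thm:crp-ip-bis} and Lemmas~\ref{lem:ud}--\ref{lem:ud-bis}, modulo the bookkeeping turning the continuous convergence of the conditional laws into convergence of the relevant expectations.
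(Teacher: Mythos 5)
Your proposal is correct and takes essentially the same route as the paper: a discrete pseudo-stationarity statement for the PCRP (the paper's Lemma~\ref{prop:crp-ps}, which it proves by embedding the PCRP in its total-mass chain and inducting over jump times using Proposition~\ref{prop:down}, rather than via your generator/intertwining computation, though the structural inputs are identical), followed by passage to the limit through Lemma~\ref{lem:crp-pdip}, Theorem~\ref{thm:crp-ip-bis} and Lemmas~\ref{lem:ud}--\ref{lem:ud-bis}. The continuous-mapping bookkeeping you spell out in Step~2 is exactly the argument the paper leaves terse.
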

Analogous results for $\mathrm{SSIP}^{(\alpha)}(\theta_1)$-evolutions have been obtained in \cite{Paper1-2,IPPAT}, however, the strategy used in their proofs does not easily apply to our three-parameter model.  
We shall use a completely different method, which crucially relies on the discrete approximation by  $\mathrm{PCRP}^{(\alpha)}(\theta_1,\theta_2)$ in Theorem~\ref{thm:crp-ip-bis}. It is easy to see that the total mass of a $\mathrm{PCRP}^{(\alpha)}(\theta_1,\theta_2)$ evolves according to a Markov chain defined by $\pi(\theta)$ as in \eqref{eq:pi}, with $\theta=\theta_1+\theta_2-\alpha$. 
Conversely, given any $C (0)\in \cC$ and $Z\sim \pi_{\|C(0)\|} (\theta)$, we can embed a process $(C (t),\, t\ge 0)\sim \mathrm{PCRP}^{(\alpha)} (\theta_1, \theta_2)$, starting from $C (0)$, such that its total-mass evolution 
is $Z$. More precisely, in the language of the Chinese restaurant process, at each jump time when $Z$ increases by one, add a customer according to the seating rule in Definition~\ref{defn:oCRP}; and whenever $Z$ decreases by one, perform a down-step, i.e.\@ one uniformly chosen customer leaves. It is easy to check that this process indeed satisfies the definition of $\mathrm{PCRP}^{(\alpha)} (\theta_1, \theta_2)$ in the introduction. 
Recall the probability law $\mathtt{oCRP}^{(\alpha)}_{m}(\theta_1,\theta_2)$ from Definition~\ref{defn:oCRP}. 

\begin{lemma}[Marginal distribution of a $\mathrm{PCRP}^{(\alpha)} (\theta_1, \theta_2)$]\label{prop:crp-ps}
	Consider a $\mathrm{PCRP}^{(\alpha)} (\theta_1, \theta_2)$  $(C(t),\, t\ge 0)$ 
	starting from $C(0)\sim \mathtt{oCRP}^{(\alpha)}_{m}(\theta_1,\theta_2)$ with $m\in \bN_0$.
	Then, at any time $t \ge 0$, $C(t)$ has a mixture distribution $\mathtt{oCRP}^{(\alpha)}_{\|C(t)\|}(\theta_1,\theta_2)$, where the total number of customers has distribution $(\|C(t)\|,\,t\ge 0)\sim \pi_m(\theta)$ with $\theta:= \theta_1+\theta_2- \alpha$. 
\end{lemma}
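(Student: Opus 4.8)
The plan is to exploit the embedding of a $\mathrm{PCRP}^{(\alpha)}(\theta_1,\theta_2)$ into its total-mass chain $Z$ recalled just above the statement, and to run an induction over the successive jumps of $Z$, controlling up-steps by Definition~\ref{defn:oCRP} and down-steps by the weak sampling consistency of Proposition~\ref{prop:down}. First I would fix conventions: set $\mathtt{oCRP}^{(\alpha)}_0(\theta_1,\theta_2):=\delta_{\emptyset}$ and note that a composition with law $\mathtt{oCRP}^{(\alpha)}_n(\theta_1,\theta_2)$ lies in $\cC_n$, so its total mass is deterministically $n$; in particular $\|C(0)\|=m$, the embedded $Z$ has law $\pi_m(\theta)$, and $\|C(t)\|=Z(t)$ for all $t\ge 0$. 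In this embedding $Z$ is realised first and $(C(t),t\ge 0)$ is then built from $Z$ using fresh independent randomness: at each up-jump of $Z$ a customer is added by the $(\alpha,\theta_1,\theta_2)$-seating rule (or $\emptyset$ is replaced by $(1)$ when $Z$ leaves $0$), at each down-jump a uniformly chosen customer leaves and any emptied block is deleted, and $C$ is constant between jumps of $Z$. As $Z$ is a non-explosive birth--death chain, it has finitely many jumps in $[0,t]$ almost surely.

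I would then prove by induction on $k\ge 0$ that whenever $Z$ has exactly $k$ jumps in $[0,t]$, conditionally on the whole trajectory $(Z(s),s\ge 0)$ the composition $C(t)$ has law $\mathtt{oCRP}^{(\alpha)}_{Z(t)}(\theta_1,\theta_2)$. For $k=0$ this is the hypothesis $C(0)\sim\mathtt{oCRP}^{(\alpha)}_m(\theta_1,\theta_2)$ together with $Z(t)=Z(0)=m$. For the inductive step let $s\le t$ be the last jump time of $Z$ before $t$; applying the case $k$ at some time $s'<s$ with $k$ jumps in $[0,s']$ and $Z(s')=Z(s-)$ gives $C(s-)=C(s')\sim\mathtt{oCRP}^{(\alpha)}_{Z(s-)}(\theta_1,\theta_2)$ conditionally on $Z$, since $C$ is constant on $[s',s)$. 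The move performed at $s$ uses randomness independent of $(Z,C(s-))$: if $Z$ increases, it is the $(\alpha,\theta_1,\theta_2)$-seating rule for $n:=Z(s-)$ customers (or $\emptyset\mapsto(1)$ when $n=0$), which by the very definition of the ordered CRP (Definition~\ref{defn:oCRP}) maps $\mathtt{oCRP}^{(\alpha)}_{n}$ to $\mathtt{oCRP}^{(\alpha)}_{n+1}$; if $Z$ decreases, it is uniform deletion of a customer with removal of the emptied block, which by the weak sampling consistency of Proposition~\ref{prop:down} maps $\mathtt{oCRP}^{(\alpha)}_{n}$ to $\mathtt{oCRP}^{(\alpha)}_{n-1}$. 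Either way $C(s)\sim\mathtt{oCRP}^{(\alpha)}_{Z(s)}(\theta_1,\theta_2)$ conditionally on $Z$, and since $C$ and $Z$ are constant on $[s,t]$ the same holds for $C(t)$, closing the induction.

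Finally, since the conditional law just obtained depends on the trajectory of $Z$ only through $Z(t)=\|C(t)\|$, taking expectations over $Z$ shows that $C(t)$ has the mixture distribution $\mathtt{oCRP}^{(\alpha)}_{\|C(t)\|}(\theta_1,\theta_2)$ with mixing variable $\|C(t)\|=Z(t)$, whose law is $\pi_m(\theta)$ by construction. This is exactly the assertion of the lemma.

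The step requiring the most care is the legitimacy of conditioning on the whole trajectory of $Z$ while still applying the seating rule and the uniform deletion with their unconditional laws and independently of the past. In the embedded construction this is automatic, because $Z$ is generated first and the updates use independent randomness; the only external input needed is that the embedded process genuinely has law $\mathrm{PCRP}^{(\alpha)}(\theta_1,\theta_2)$, which is the remark preceding the statement and rests on the fact that from any composition of $n\ge 1$ the total up-rate equals $n+\theta$ and the total down-rate equals $n$, both independent of the configuration, so that ``up at rate $n+\theta$, then seat by the rule'' and ``down at rate $n$, then delete uniformly'' reproduce precisely the Poissonian mechanism of the introduction. I would simply invoke that remark for this identification.
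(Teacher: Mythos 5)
Your proposal is correct and follows essentially the same route as the paper: embed the PCRP in the total-mass chain $Z\sim\pi_m(\theta)$, then induct over the jump times of $Z$, handling up-jumps by the $(\alpha,\theta_1,\theta_2)$-seating rule of Definition~\ref{defn:oCRP} and down-jumps by the weak sampling consistency established in Proposition~\ref{prop:down}. Your write-up merely spells out in more detail the conditioning on the trajectory of $Z$ and the base/inductive steps that the paper's proof leaves implicit.
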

\begin{proof} 
	Let $Z\sim \pi_{\|C(0)\|} (\theta)$ and we consider $(C (t),\, t\ge 0)\sim \mathrm{PCRP}^{(\alpha)} (\theta_1, \theta_2)$, starting from $C(0)$, as a process embedded in $Z\sim \pi_{\|C(0)\|} (\theta)$, in the way we just explained as above.  
	Before the first jump time $J_1$ of $Z$, $C(t) = C(0)\sim \mathtt{oCRP}^{(\alpha)}_{m}(\theta_1,\theta_2)$ by assumption. 
	At the first jump time $J_1$ of $Z$, it follows from Proposition~\ref{prop:down} that, given $Z(J_1)$, $C(J_1)$ has conditional distribution $\mathtt{oCRP}^{(\alpha)}_{Z(J_1)}(\theta_1,\theta_2)$. 
	The proof is completed by induction. 
\end{proof}

\begin{proof}[Proof of Proposition~\ref{prop:ps-theta1theta2}]
	For $n\in \bN$, consider a process $C^{(n)}\sim \mathrm{PCRP}^{(\alpha)}(\theta_1, \theta_2)$, 
	starting from $C^{(n)}(0) \sim\mathtt{oCRP}_{\lfloor n Z(0)\rfloor}^{(\alpha)}(\theta_1, \theta_2)$ and killed at $\emptyset$.  
	It follows from Lemma~\ref{prop:crp-ps}  that, for every $t\ge 0$, 
	$C^{(n)}(t)$ has the mixture distribution $\mathtt{oCRP}^{(\alpha)}_{N^{(n)}(t\wedge\zeta(N^{(n)}))}(\theta_1, \theta_2)$ with 
	$(N^{(n)}(t),t\ge 0)\sim\pi_{\lfloor n Z(0)\rfloor} (\theta)$. 
	By Proposition~\ref{prop:crp-pdip},
	$ \frac{1}{n}  C^{(n)}(0)$ converges in distribution to $Z(0)   \bar\gamma$ under $d_H$.
	For any fixed $t\ge 0$, it follows from Theorem~\ref{thm:crp-ip-bis} that 
	$\frac{1}{n} C^{(n)}(2 n t)$ converges in distribution to $\beta(t)$. 
	Using Lemmas~\ref{lem:ud}--\ref{lem:ud-bis} and Proposition~\ref{prop:crp-pdip} leads to the desired statement. 
\end{proof}

\subsection{SSIP-evolutions}\label{sec:SSIP}

Let $\alpha\in (0,1)$ and $\theta_1,\theta_2\ge 0$. 
Recall that the state $\emptyset$ has been defined to be a trap of an $\mathrm{SSIP}_{\!\dagger}^{(\alpha)}(\theta_1, \theta_2)$-evolution. 
In this section, we will show that, for certain cases, depending on the value of $\theta:= \theta_1+\theta_2-\alpha$, we can include $\emptyset$ as an initial state such that it leaves $\emptyset$ continuously. 

More precisely, consider independent $(Z(t),\, t\ge 0)\sim \besq_0(2 \theta)$
and $\bar\gamma\sim \pdip^{(\alpha)}(\theta_1, \theta_2)$. 
Define for every $t\ge 0$ a probability kernel $K_t$ on $\cI_H$: for $\beta_0\in \cI_H$ and measurable $A \subseteq \cI_H$, 
\begin{equation}\label{eq:kernel-ssip}
	K_t(\beta_0, A)= \bP\big(\beta(t)\in A,\, t< \zeta(\boldsymbol{\beta})\big)
	+ \int_0^t  \bP(Z({t\!-\!r})\bar\gamma\in A) \bP(\zeta(\boldsymbol{\beta})\in dr),
\end{equation}
where $\boldsymbol{\beta}=(\beta(t),\,t\ge 0)$ is an $\mathrm{SSIP}_{\!\dagger}^{(\alpha)}(\theta_1, \theta_2)$-evolution starting from $\beta_0$, and $\zeta(\boldsymbol{\beta})$ is the first hitting time of $\emptyset$ by $\boldsymbol{\beta}$. 
Note that \cite[Corollary XI.(1.4)]{RevuzYor} yields for fixed $s\ge 0$, that
\begin{equation}\label{eq:gamma}(Z(t),\, t\ge 0)\sim \besq_0(2 \theta),\quad\theta>0,\qquad\Rightarrow\qquad Z(s)\sim \mathtt{Gamma}(\theta,1/2s).
\end{equation}
When $\beta_0=\emptyset$, we have  by convention $\zeta(\boldsymbol{\beta})=0$ and the first term in \eqref{eq:kernel-ssip} vanishes.

\begin{theorem}\label{thm:hunt}
	Let $\theta_1,\theta_2\ge 0$. 
	The family $(K_t,\,t\ge 0)$ defined in \eqref{eq:kernel-ssip} is the transition semigroup of a path-continuous Hunt process on the Polish space $\cI_H$. 
\end{theorem}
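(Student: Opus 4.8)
The plan is to split on the three phases of $\theta:=\theta_1+\theta_2-\alpha$ and, in each, reduce the statement to the Hunt property of $\mathrm{SSIP}_{\!\dagger}^{(\alpha)}(\theta_1,\theta_2)$-evolutions (Theorem~\ref{thm:IvaluedMP}) together with pseudo-stationarity (Proposition~\ref{prop:ps-theta1theta2}). First I would dispose of the cases where \eqref{eq:kernel-ssip} adds nothing: when $\theta\in[-\alpha,0]$, $\besq_0(2\theta)$ is the constant-$0$ process, so $Z(t-r)\odotip\bar\gamma=\emptyset$ and $K_t(\beta_0,A)=\bP(\beta(t)\in A,\,t<\zeta(\boldsymbol\beta))+\bP(\zeta(\boldsymbol\beta)\le t)\mathbf{1}\{\emptyset\in A\}$ is exactly the transition semigroup of the $\mathrm{SSIP}_{\!\dagger}^{(\alpha)}(\theta_1,\theta_2)$-evolution with $\emptyset$ absorbing; and when $\theta\ge1$ and $\beta_0\ne\emptyset$ one has $\zeta(\boldsymbol\beta)=\infty$ a.s.\ by Theorem~\ref{thm:IvaluedMP}, so again $K_t(\beta_0,\cdot)$ coincides with the $\mathrm{SSIP}_{\!\dagger}$-semigroup, which is that of a path-continuous Hunt process. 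What remains is therefore (i) admitting $\emptyset$ as an initial state when $\theta\ge1$, and (ii) the recurrent phase $\theta\in(0,1)$.

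For (i), taking $\beta_0=\emptyset$ in \eqref{eq:kernel-ssip} gives $K_t(\emptyset,\cdot)=\mathrm{Law}(Z(t)\odotip\bar\gamma)=:\nu_t$ with $Z\sim\besq_0(2\theta)$ and $\bar\gamma\sim\pdip^{(\alpha)}(\theta_1,\theta_2)$, and I would construct the process started from $\emptyset$ by letting it enter $\cI_H\setminus\{\emptyset\}$ instantaneously through this entrance law and then run as an $\mathrm{SSIP}_{\!\dagger}^{(\alpha)}(\theta_1,\theta_2)$-evolution (no return to $\emptyset$ occurs, since $\besq_0(2\theta)$ with $\theta\ge1$ never hits $0$). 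The one point to check is the entrance-law identity $\nu_sK_t=\nu_{s+t}$: applying $K_t$ to $Z(s)\odotip\bar\gamma$ and using Proposition~\ref{prop:ps-theta1theta2} with the random initial mass $Z(s)$ turns $\nu_sK_t$ into $\mathrm{Law}(Z'(t)\odotip\bar\gamma)$ with $Z'\sim\besq_{Z(s)}(2\theta)$, which by the Markov property and the entrance behaviour of $\besq(2\theta)$ at $0$ (see \cite{GoinYor03,RevuzYor} and \eqref{eq:gamma}) is $\besq_0(2\theta)$ evaluated at time $s+t$, i.e.\ $\nu_{s+t}$. Path-continuity at time $0$ then follows from the elementary identity $d_H(\beta,\emptyset)=\|\beta\|$ together with $Z(t)\to0$ as $t\downarrow0$, and the remaining properties transfer from Theorem~\ref{thm:IvaluedMP}.

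The substance of the theorem is (ii), the recurrent phase $\theta\in(0,1)$, where $0$ is regular and instantaneous for the total mass $\besq_0(2\theta)$. Here I would realise the process as a recurrent extension of the $\mathrm{SSIP}_{\!\dagger}^{(\alpha)}(\theta_1,\theta_2)$-evolution via excursion theory (this construction is carried out in Sections~\ref{sec:exc}--\ref{sec:rec}): let $\Theta$ be the excursion measure of Theorem~\ref{thm:Theta}, note from its total-mass description (which mirrors \eqref{eq:besq-exc}) that $\int(1\wedge\zeta)\,d\Theta<\infty$, and take a Poisson point process of excursions of intensity $\mathrm{Leb}\otimes\Theta$ indexed by a local time at $\emptyset$ whose inverse is a driftless $(1-\theta)$-stable subordinator; concatenating these excursions, preceded (when $\beta_0\neq\emptyset$) by an $\mathrm{SSIP}_{\!\dagger}$-evolution from $\beta_0$ run until its first hit of $\emptyset$, yields a càdlàg $\cI_H$-valued path. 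Since each $\Theta$-excursion is $d_H$-continuous, begins and ends at $\emptyset$, and $d_H(\beta,\emptyset)=\|\beta\|$, the concatenation is in fact continuous and spends Lebesgue-null time at $\emptyset$; standard excursion-theoretic arguments (strong Markov property of the underlying Hunt process $\mathrm{SSIP}_{\!\dagger}$, the Poisson structure of the excursions, and quasi-left-continuity) then show the resulting process is Hunt. Finally, to identify its transition semigroup with \eqref{eq:kernel-ssip} I would decompose at the last exit from $\emptyset$ before time $t$: the pre-exit portion yields the first term of \eqref{eq:kernel-ssip}, while the excursion straddling $t$, conditioned to have age $t-r$ when the last exit was at time $r$, must be shown to be distributed as $Z(t-r)\odotip\bar\gamma$ — and this is exactly what pseudo-stationarity delivers, in the strengthened form (provable along the lines of Proposition~\ref{prop:ps-theta1theta2}, now tracking the entire total-mass path in Lemma~\ref{prop:crp-ps} and passing to the limit through Theorem~\ref{thm:crp-ip-bis}) that, conditionally on its total-mass path, an $\mathrm{SSIP}_{\!\dagger}$-evolution has law $\|\beta(t)\|\odotip\bar\gamma$ at time $t$, with $\bar\gamma\sim\pdip^{(\alpha)}(\theta_1,\theta_2)$ independent.

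The main obstacle is this last identification: pinning down the law of the excursion straddling a fixed time and matching it against $Z(t-r)\odotip\bar\gamma$, which requires both the explicit description of $\Theta$ in terms of $\Lambda^{(2\theta)}_{\mathtt{BESQ}}$ and $\pdip^{(\alpha)}(\theta_1,\theta_2)$ and the strengthened pseudo-stationarity just mentioned. Once these are in hand, verifying Chapman--Kolmogorov $K_sK_t=K_{s+t}$ (by the strong Markov property of $\mathrm{SSIP}_{\!\dagger}$ applied at the hitting time of $\emptyset$) and completing the Hunt property are comparatively routine.
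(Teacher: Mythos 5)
Your overall architecture matches the paper's: split on $\theta=\theta_1+\theta_2-\alpha$, reduce the phases $\theta\in[-\alpha,0]$ and $\theta\ge1$ (from $\beta_0\ne\emptyset$) to the $\mathrm{SSIP}_{\!\dagger}$-theory of Theorem~\ref{thm:IvaluedMP}, handle $\emptyset$ as an entrance point for $\theta\ge 1$ via the pseudo-stationary law, and build the $\theta\in(0,1)$ case as a recurrent extension from the excursion measure $\Theta$ of Sections~\ref{sec:exc}--\ref{sec:rec}. Two points in your treatment of the recurrent phase deserve comment. First, the step ``standard excursion-theoretic arguments then show the resulting process is Hunt'' is exactly where the paper does real work: it invokes Salisbury's synthesis theorem \cite[Theorem 4.1]{Salisbury86} and verifies its hypotheses one by one (the Hunt property of $\mathrm{SSIP}_{\!\dagger}$, concentration of $\Theta$ on finite-lifetime excursions started from $\emptyset$, finiteness of $\Theta\{\sup_t\|\gamma(t)\|\ge a\}$, the normalisation $\int(1-e^{-\zeta})\,b_\theta\Theta=1$, and the entrance-law identity \eqref{eq:Theta:entrance}); in particular the scale of your $(1-\theta)$-stable inverse local time must be tied to the normalisation of $\Theta$ (the paper's constant $b_\theta$) if the total mass is to come out exactly $\besq_0(2\theta)$. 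As a sketch this is acceptable, but you should cite such a synthesis theorem rather than treat it as folklore.

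Second, and more substantively: your identification of the semigroup with \eqref{eq:kernel-ssip} rests on a ``strengthened pseudo-stationarity'' (conditionally on the \emph{entire} total-mass path, $\beta(t)=\|\beta(t)\|\odotip\bar\gamma$ with $\bar\gamma\sim\pdip^{(\alpha)}(\theta_1,\theta_2)$ independent). This is stronger than anything proved in the paper; while the discrete analogue is implicit in the proof of Lemma~\ref{prop:crp-ps} (the construction conditions on the whole mass chain), pushing a conditional law given an entire path through the weak limit of Theorem~\ref{thm:crp-ip-bis} is not routine and would need a genuine argument. The good news is that it is also unnecessary: by construction \eqref{eq:entrance} and the Pitman--Yor duality used in Section~\ref{sec:exc}, the entrance law $N_t$ of $\Theta$ factorises as the $\Lambda_{\mathtt{BESQ}}^{(2\theta)}$-entrance law for the mass tensored with an independent $\pdip^{(\alpha)}(\theta_1,\theta_2)$ shape (cf.\ Proposition~\ref{prop:Theta-besq}), and since $\besq_0(2\theta)$ is itself the recurrent extension of the killed $\besq(2\theta)$ built from $\Lambda_{\mathtt{BESQ}}^{(2\theta)}$ as in \eqref{eq:besq-exc}, the one-dimensional law from $\emptyset$ is $\mathrm{Law}(Z(t)\odotip\bar\gamma)$ without any last-exit decomposition or path-conditioning; this, plus the strong Markov property of $\mathrm{SSIP}_{\!\dagger}$ at the hitting time of $\emptyset$, gives \eqref{eq:kernel-ssip} directly, which is how Proposition~\ref{prop:rec-extension} proceeds. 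So: same approach as the paper, correct in outline, but your route to the semigroup identification replaces an available soft argument by an unproven strengthening that you would have to establish.
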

\begin{definition}[$\mathrm{SSIP}^{(\alpha)}(\theta_1,\theta_2)$-evolutions]\label{defn:ssip}
	For $\theta_1,\theta_2\ge 0$, a path-continuous Markov process with transition semigroup	$(K_t,\,t\ge 0)$ is called an \emph{$\mathrm{SSIP}^{(\alpha)}(\theta_1,\theta_2)$-evolution}. 
\end{definition}

\begin{proposition}\label{prop:mass}
	For $\theta_1,\theta_2\ge 0$, let $(\beta(t) ,\,t\ge 0)$ be a Markov process with transition semigroup	$(K_t,t\!\ge \!0)$. 
	Then the total mass $(\|\beta(t)\| ,t\!\ge \!0)$ is a $\besq(2\theta)$ with $\theta =\theta_1\!+\!\theta_2\!-\!\alpha$. 
\end{proposition}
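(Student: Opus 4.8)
The plan is to compute the push-forward of the transition kernel $K_t$ of \eqref{eq:kernel-ssip} under the total-mass map $\beta\mapsto\|\beta\|$, to observe that this push-forward depends on the initial state $\beta_0$ only through $\|\beta_0\|$, and then to recognise the resulting one-dimensional semigroup as that of $\besq(2\theta)$.

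First I would record the elementary scaling identities $\|c\odotip\beta\|=c\|\beta\|$ and $\|\bar\gamma\|=1$ for $\bar\gamma\sim\pdip^{(\alpha)}(\theta_1,\theta_2)$, which give $\|Z(t-r)\odotip\bar\gamma\|=Z(t-r)$ whenever $Z$ and $\bar\gamma$ are independent. Next I would invoke Theorem~\ref{thm:IvaluedMP}\ref{item:mass}: the total mass of an $\mathrm{SSIP}_{\!\dagger}^{(\alpha)}(\theta_1,\theta_2)$-evolution $\boldsymbol\beta$ started from $\beta_0$ is a $\besq_{\|\beta_0\|}(2\theta)$ killed at its first hitting time of $0$, and $\zeta(\boldsymbol\beta)$ equals that hitting time (since $\|\beta\|=0$ forces $\beta=\emptyset$). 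Feeding this into \eqref{eq:kernel-ssip} with $A=\|\cdot\|^{-1}(B)$, and writing $m=\|\beta_0\|$, one obtains
\[
K_t\big(\beta_0,\|\cdot\|^{-1}(B)\big)=\bP\big(Z^\dagger(t)\in B,\ t<\zeta\big)+\int_0^t\bP\big(Z^0(t-r)\in B\big)\,\bP(\zeta\in dr),
\]
where $Z^\dagger\sim\besq_m(2\theta)$ is killed at its first hitting time $\zeta$ of $0$ and $Z^0\sim\besq_0(2\theta)$ is independent of $Z^\dagger$; equivalently, this is the law of the process obtained by running $Z^\dagger$ until time $\zeta$ and then following an independent $Z^0$. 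Since the right-hand side factors through $m$, the classical criterion for a measurable image of a Markov process to be Markov (it suffices that the one-step kernel of $\beta$, evaluated on sets $\|\cdot\|^{-1}(B)$, depend on $\beta_0$ only via $\|\beta_0\|$) shows that $(\|\beta(t)\|,t\ge0)$ is a time-homogeneous Markov process in its own filtration with the semigroup displayed above.

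It then remains to identify this semigroup with that of $\besq(2\theta)$, which I would do by cases in $\theta$. If $\theta\le0$, then $Z^0$ is the constant path at $0$, $\mathrm{SSIP}_{\!\dagger}=\mathrm{SSIP}$, and the identification is exactly Theorem~\ref{thm:IvaluedMP}\ref{item:mass}. If $\theta\ge1$, the degeneration-level part of Theorem~\ref{thm:IvaluedMP} gives $\zeta=\infty$ a.s.\ for $m>0$, so the integral term vanishes and the semigroup from $m>0$ is that of $\besq_m(2\theta)$ (which never reaches $0$), while from $\beta_0=\emptyset$ one has $\zeta=0$ and the mass is $Z^0\sim\besq_0(2\theta)$; this matches $\besq(2\theta)$. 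The substantive case is $\theta\in(0,1)$: here $\zeta<\infty$ a.s.\ when $m>0$, and one applies the strong Markov property of the (re-entering) $\besq_m(2\theta)$-diffusion of dimension $2\theta\in(0,2)$, whose behaviour at $0$ is recalled in \cite{GoinYor03} (see also \cite[Ch.~XI]{RevuzYor}), at the first hitting time $\zeta$ of $0$: the value there is $0$, the process leaves $0$ continuously, and the post-$\zeta$ path is a $\besq_0(2\theta)$ independent of $\mathcal F_\zeta$; hence the concatenation described by the displayed kernel is precisely $\besq_m(2\theta)$. Path-continuity of the mass at $\zeta$ is consistent with $d_H(\beta(t),\emptyset)\to0$ as $t\uparrow\zeta$, again by Theorem~\ref{thm:IvaluedMP}, so the finite-dimensional marginals together with continuity pin down the law of $(\|\beta(t)\|,t\ge 0)$ as that of $\besq(2\theta)$.

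I expect the main obstacle to be the case $\theta\in(0,1)$: one must be sure that the recurrent-extension recipe built into \eqref{eq:kernel-ssip} — kill the mass at $0$, then restart from an independent $\besq_0(2\theta)$ — reproduces exactly the canonical instantaneously-reflecting $\besq_m(2\theta)$, i.e.\ that $\besq_0(2\theta)$ (with the entrance normalisation reflected in \eqref{eq:gamma}) is the correct entrance law and that no further gluing data is needed. This is a standard fact for $\besq$ of dimension in $(0,2)$, but it is exactly where care with the strong Markov property at the hitting time of $0$, and with the convention that $\besq_m(\delta)$ re-enters $(0,\infty)$ when $\delta>0$, is essential.
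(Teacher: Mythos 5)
Your proposal is correct and follows essentially the same route as the paper: the paper's (two-line) proof likewise combines Theorem~\ref{thm:IvaluedMP}(ii) with the kernel description \eqref{eq:kernel-ssip} to read off the $\besq(2\theta)$ semigroup for the total mass. You simply spell out the details the paper leaves implicit — the push-forward/Dynkin step and, for $\theta\in(0,1)$, the strong Markov property of $\besq(2\theta)$ at its first hitting time of $0$ identifying the kill-and-restart kernel with the canonical instantaneously reflecting $\besq(2\theta)$ — and these details are accurate.
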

\begin{proof}
	We know from Theorem~\ref{thm:IvaluedMP} that the total mass of an $\mathrm{SSIP}_{\!\dagger}^{(\alpha)}(\theta_1,\theta_2)$-evolution evolves according to a $\besq(2\theta)$ killed at zero. Therefore, the description in \eqref{eq:kernel-ssip} implies that $(\|\beta(t)\| ,\,t\ge 0)$ has the semigroup of $\besq(2\theta)$. 
\end{proof}

The proof of Theorem~\ref{thm:hunt} is postponed to Section~\ref{sec:rec}. We distinguish three phases: 
\begin{itemize}
	\item $\theta\in  [-\alpha,0]$: by convention, $Z\sim \besq_0(2 \theta)$ is the constant zero process and thus the second term in \eqref{eq:kernel-ssip} vanishes; then $(K_t,\,t\ge 0)$ is just the semigroup of an $\mathrm{SSIP}_{\!\dagger}^{(\alpha)}(\theta_1, \theta_2)$-evolution. In this case Theorem~\ref{thm:hunt} is encompassed by Theorem~\ref{thm:IvaluedMP}. 
	\item $\theta\in (0,1)$: by Theorem~\ref{thm:IvaluedMP}~(\ref{item:mass}) and \cite[Equation~(13)]{GoinYor03} we deduce that $\zeta(\boldsymbol{\beta})$ is a.s.\ finite in \eqref{eq:kernel-ssip}, with $\zeta(\boldsymbol{\beta})\ed \|\beta(0)\|/2G$, where $G\sim \mathtt{Gamma}(1-\theta, 1)$. In this case, we will construct  an $\mathrm{SSIP}^{(\alpha)}(\theta_1,\theta_2)$-evolution as a \emph{recurrent extension} of $\mathrm{SSIP}_{\!\dagger}^{(\alpha)}(\theta_1, \theta_2)$-evolutions, by using an excursion measure that will be introduced in Section~\ref{sec:exc}. 
	\item $\theta\ge 1$: since $\zeta(\boldsymbol{\beta})=\infty$ a.s., the second term in \eqref{eq:kernel-ssip} vanishes unless $\beta(0)=\emptyset$ and $\emptyset$ is an entrance boundary with an entrance law  
	$K_t(\emptyset,\,\cdot\,) =\bP(Z(t)\bar\gamma\in \cdot\,)$, by Proposition \ref{prop:ps-theta1theta2}. See also \cite[Proposition 4.14]{ShiWinkel-1},
	where this was shown using a different construction and a different formulation of the entrance law, which is seen to be equivalent
	to \eqref{eq:kernel-ssip} by writing $\bar{\gamma}=B'\big(V^\prime\bar{\gamma}_1\concat\{(0,1\!-\!V^\prime)\}\big) \concat (1\!-\!B') \bar{\beta}\sim \mathtt{PDIP}^{(\alpha)}(\theta_1,\theta_2)$ as in Corollary \ref{cor:pdipdec}.\end{itemize}

\subsection{The excursion measure of an SSIP-evolution when $\theta\in (-\alpha,1)$}\label{sec:exc}
In this section, we fix $\theta_1,\theta_2\ge 0$ and suppose that $-\alpha< \theta \!=\! \theta_1 \!+\!\theta_2 \!-\!\alpha \!<\!1$. 
We shall construct an $\mathrm{SSIP}^{(\alpha)} (\theta_1, \theta_2)$ excursion measure $\Theta:= \Theta^{(\alpha)} (\theta_1, \theta_2)$, which is a $\sigma$-finite measure on the space $\bC([0,\infty), \cI_H)$ of  
continuous functions in $(\cI_H, d_{H})$, endowed with the uniform metric and the Borel $\sigma$-algebra. 
Our construction is in line with Pitman and Yor \cite[(3.2)]{PitmYor82}, by the following steps. 
\begin{itemize}
	\item 
	For each $t> 0$, define a measure $N_t$ on $\cI_H$ by
	\begin{align}\label{eq:entrance}
		N_t(A) &:= \bE\left[ (Z(t))^{\theta-1}  \mathbf{1}_{A} (Z(t)  \bar\gamma)\right],\quad \text{ measurable }A\subseteq \cI_H\setminus\{\emptyset\}, \\
		N_t (\{\emptyset\}) &:=\infty,\nonumber
	\end{align}
	where $Z=(Z(t),\, t\ge 0)\sim \besq_0(4 - 2 \theta)$
	and $\bar\gamma\sim \pdip^{(\alpha)}(\theta_1, \theta_2)$ are independent.  
	As $4-2\theta>2$, the process $Z$ never hits zero. 
	We have $N_t (\cI_H\setminus\{\emptyset\})=t^{\theta-1}/2^{1-\theta}\Gamma(2-\theta)$.
	
	Then $(N_t,\, t\ge 0)$ is an entrance law for an $\mathrm{SSIP}_{\!\dagger}^{(\alpha)} (\theta_1, \theta_2)$-evolution $(\beta(t),\, t\ge 0)$. Indeed, with notation above, we have by Proposition~\ref{prop:ps-theta1theta2} that, for every $s,t\ge 0$ and $f$ non-negative measurable, 
	\begin{align*}
		\int \bE\left[f(\beta(s)) \mid \beta(0) = \gamma\right]N_t (d\gamma)
		&= \bE\left[ (Z(t))^{\theta-1} \bE_{Z(t)\bar\gamma}\left[f(\beta(s))\right] \right]\\
		&=\bE\left[ (Z'(0))^{\theta-1}\bE_{Z'(0)}\left[f(Z'(s) \bar\gamma)\right] \right],
	\end{align*}
	where $(Z'(s),\, s\ge 0)$ is a $\besq(2 \theta)$ killed at zero with $Z'(0) = Z(t)$. 
	Since we know from the duality property of $\besq(2 \theta)$, see e.g.\ \cite[(3.b) and (3.5)]{PitmYor82}, that
	\[
	(Z'(0))^{\theta-1}  \bE_{Z'(0)} \big[g(Z'(s))\big]=	\bE_{Z'(0)} \big[g(\widetilde{Z}(s)) (\widetilde{Z}(s))^{\theta-1}\big] , \qquad \forall  s> 0,  
	\]
	where  $\widetilde{Z}\sim \besq(4 -2 \theta)$ starting from $Z'(0)$, 
	it follows from the Markov property that  
	\begin{align*}
		\bE\left[\! (Z'(0))^{\theta-1}\bE_{Z'(0)}[f(Z'({s}) \bar\gamma)] \right]\! 
		&=  \bE\left[ \bE_{Z'(0)}\left[(\widetilde{Z}(s))^{\theta-1} f(\widetilde{Z}({s}) \bar\gamma)\right]  \right] \\
		&= \bE\left[ (Z({t+s}))^{\theta-1} f(Z({t+s}) \bar\gamma) \right] 
		= \int f(\gamma) N_{t+s}(d\gamma). 
	\end{align*}
	We conclude that 
	\[
	\int \bE\big[f(\beta(s)) \mid\beta(0) = \gamma\big]N_t (d\gamma) = \int f(\gamma) N_{t+s} (d \gamma), \quad \forall s,t\ge 0. 
	\]
	\item 
	As a consequence, there exists a unique $\sigma$-finite measure $\Theta$ on $\bC((0,\infty), \cI_H)$ such that for all $t> 0$ and $F$ bounded measurable functional, we have the identity  
	\begin{equation}\label{eq:Theta:entrance}
		\Theta [F\circ L_t]  = \int \bE \left[ F( \beta(s),\, s\ge 0) \mid \beta(0) = \gamma\right] N_t (d \gamma) ,  
	\end{equation}
	where $(\beta(s),\, s\ge 0)$ is an $\mathrm{SSIP}_{\!\dagger}^{(\alpha)} (\theta_1, \theta_2)$-evolution and $L_t$ stands for the shift operator. 
	See \cite[VI.48]{RogersWilliams} for details. 
	In particular, for each $t>0$ and measurable $A\subseteq \cI_H\setminus\{\emptyset\}$, we have the identity
	$\Theta \{(\beta(s),\,s>0)\in\bC((0,\infty),\cI_H)\colon\beta(t) \in A \} =N_t(A)$. In particular, 
	\begin{equation}\label{eq:Theta-zeta}
		\Theta(\zeta>t)=\Theta\big\{(\beta(s),s>0)\in\bC((0,\infty),\cI_H)\colon\beta(t) \ne \emptyset \big\} = t^{\theta-1}/2^{1-\theta}\Gamma(2-\theta).
	\end{equation}
	\item The image of $\Theta$ by the mapping $(\beta(t),\,t>0)\mapsto(\|\beta(t)\|,\,t>0)$ is equal to the push-forward of $\Lambda_{\mathtt{BESQ}}^{(2 \theta)}$ from $\bC([0,\infty),\cI_H)$ to $\bC((0,\infty),\cI_H)$ under the restriction map, where $\Lambda_{\mathtt{BESQ}}^{(2 \theta)}$ is the excursion measure of 
	$\besq (2 \theta)$ as in \eqref{eq:besq-exc}.  
	In particular,  we have for $\Theta$-almost every $(\beta(t),\,t>0)\in\bC((0,\infty),\cI_H)$
	\begin{equation}\label{eq:Theta:tozero}
		\limsup_{t\downarrow 0} \|\beta(t)\|=0 \quad \Longrightarrow \quad \lim_{t\downarrow 0} d_H(\beta(t),\emptyset)=0.
	\end{equation}
	Therefore, we can ``extend'' $\Theta$ to $\bC([0,\infty), \cI_H)$, by defining $\beta(0)=\emptyset$ for $\Theta$-almost every $(\beta(t),t>0)\in\bC((0,\infty),\cI_H)$, and we also set 
	\begin{equation}\label{eq:Theta:zero}
		\Theta\big\{\boldsymbol{\beta}\in\bC([0,\infty),\cI_H)\colon\boldsymbol{\beta}\equiv \emptyset\big\} = 0.
	\end{equation} 
\end{itemize}

Summarising, we have the following statement. 

\begin{proposition}\label{prop:Theta-besq}
	Let $\theta_1,\theta_2\ge 0$ and suppose that $-\alpha< \theta \!=\! \theta_1 +\theta_2 -\alpha \!<\!1$. 
	Then there is a unique $\sigma$-finite measure $\Theta= \Theta^{(\alpha)} (\theta_1, \theta_2)$ on $\bC([0,\infty), \cI_H)$ that satisfies 
	\eqref{eq:Theta:entrance} and \eqref{eq:Theta:zero}. 
	Moreover,  the image of $\Theta$ by the mapping $(\beta(t),\,t\ge 0)\mapsto(\|\beta(t)\|,\,t\ge 0)$ is $\Lambda_{\mathtt{BESQ}}^{(2 \theta)}$, the excursion measure of $\besq (2 \theta)$. 
\end{proposition}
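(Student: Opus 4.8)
The strategy is to assemble the four-step construction set out above and supply the two things it does not yet contain: the general principle that manufactures $\Theta$ from the entrance law $(N_t)$, and the uniqueness assertion. The analytic core --- that $(N_t,\,t>0)$ is an entrance law for the semigroup of an $\mathrm{SSIP}_{\!\dagger}^{(\alpha)}(\theta_1,\theta_2)$-evolution --- has been verified in the computation preceding the statement; its only non-routine ingredients are the pseudo-stationarity of Proposition~\ref{prop:ps-theta1theta2} and the $\besq$ time-reversal identity $(Z'(0))^{\theta-1}\bE_{Z'(0)}[g(Z'(s))]=\bE_{Z'(0)}[g(\widetilde Z(s))(\widetilde Z(s))^{\theta-1}]$ interchanging $\besq(2\theta)$ and $\besq(4-2\theta)$, valid here because $4-2\theta>2$ keeps $\widetilde Z$ off $0$. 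Granting this, I would invoke the standard construction of a $\sigma$-finite measure attached to an entrance law of a Borel right process, as in \cite[VI.48]{RogersWilliams}: the Hunt property of $\mathrm{SSIP}_{\!\dagger}$-evolutions (Theorem~\ref{thm:IvaluedMP}(i)) supplies the regularity needed, path-continuity (also Theorem~\ref{thm:IvaluedMP}(i)) ensures that $\Theta$ is carried by $\bC((0,\infty),\cI_H)$, and $\Theta(\zeta>t)=N_t(\cI_H\setminus\{\emptyset\})=t^{\theta-1}/2^{1-\theta}\Gamma(2-\theta)<\infty$ makes $\Theta$ genuinely $\sigma$-finite.

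For the total-mass image I would use that $\|Z(t)\odotip\bar\gamma\|=Z(t)$, since $\bar\gamma$ partitions the unit interval, so the push-forward of $N_t$ under $\gamma\mapsto\|\gamma\|$ is $A\mapsto\bE[(Z(t))^{\theta-1}\mathbf{1}_A(Z(t))]$ with $Z\sim\besq_0(4-2\theta)$, which is exactly the entrance law of the Pitman--Yor excursion measure $\Lambda_{\mathtt{BESQ}}^{(2\theta)}$ (\cite[(3.2)--(3.5)]{PitmYor82}); moreover the total mass of an $\mathrm{SSIP}_{\!\dagger}$-evolution is $\besq(2\theta)$ killed at $0$ by Theorem~\ref{thm:IvaluedMP}(ii). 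Hence the image of $\Theta$ under the continuous map $(\beta(t))\mapsto(\|\beta(t)\|)$ is a $\sigma$-finite measure on excursion space sharing both its entrance law and its underlying killed dynamics with $\Lambda_{\mathtt{BESQ}}^{(2\theta)}$, so by the uniqueness of the measure attached to an entrance law the two agree. Since $\Lambda_{\mathtt{BESQ}}^{(2\theta)}$ is carried by excursions vanishing at time $0$ and $d_H(\gamma,\emptyset)\le\|\gamma\|$, this yields $\lim_{t\downarrow0}d_H(\beta(t),\emptyset)=0$ for $\Theta$-a.e.\ path; setting $\beta(0):=\emptyset$ extends $\Theta$ to $\bC([0,\infty),\cI_H)$, and $\Theta(\{\boldsymbol{\beta}\equiv\emptyset\})\le\Lambda_{\mathtt{BESQ}}^{(2\theta)}(\{f\equiv0\})=0$ gives \eqref{eq:Theta:zero}.

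Uniqueness is then formal: any $\sigma$-finite $\Theta'$ obeying \eqref{eq:Theta:entrance} has, by applying \eqref{eq:Theta:entrance} to functionals $F$ of finitely many coordinates and using the Markov property, the same finite-dimensional distributions on $(0,\infty)$ as $\Theta$ and the same finite masses $\Theta'(\zeta>1/k)<\infty$; since $\bC((0,\infty),\cI_H)$ is Polish and $\{\zeta>1/k\}\uparrow\{\zeta>0\}$, this forces $\Theta'=\Theta$ there, after which the a.e.\ continuity at $0$ just established, together with \eqref{eq:Theta:zero}, pins down the extension to $[0,\infty)$. I expect the only point needing real care to be this Kuznetsov-measure step: checking that the hypotheses of the general entrance-law construction are met for $\mathrm{SSIP}_{\!\dagger}$-evolutions and that it produces a measure carried by continuous (not merely c\`adl\`ag) paths, and then correctly identifying the resulting total-mass image with $\Lambda_{\mathtt{BESQ}}^{(2\theta)}$ via the uniqueness clause --- everything else being routine given the results already in hand.
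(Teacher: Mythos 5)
Your proposal follows essentially the same route as the paper: verify that $(N_t)$ is an entrance law via Proposition~\ref{prop:ps-theta1theta2} and the $\besq$ duality, invoke the general entrance-law construction of \cite[VI.48]{RogersWilliams} for existence and uniqueness of $\Theta$ on $\bC((0,\infty),\cI_H)$, identify the total-mass image with $\Lambda_{\mathtt{BESQ}}^{(2\theta)}$ by matching entrance laws and the killed $\besq(2\theta)$ dynamics from Theorem~\ref{thm:IvaluedMP}, and extend to time $0$ using $d_H(\beta(t),\emptyset)\le\|\beta(t)\|$. This matches the paper's argument (Section~\ref{sec:exc}), with your uniqueness step merely making explicit what the paper delegates to the cited reference.
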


For the case $\theta_1=\theta_2 =0$, the law $\pdip^{(\alpha)}(\theta_1, \theta_2)$ coincides with the Dirac mass $\delta_{\{(0,1)\}}$. 
As a consequence, the $\mathrm{SSIP}^{(\alpha)} (0,0)$ excursion measure is just the pushforward of 
$\Lambda_{\mathtt{BESQ}}^{(2 \theta)}$, by the map $ x \mapsto \{(0,x)\}$ from $[0,\infty)$ to $\cI_H$. 
When $\theta_1=0$ and $\theta_2=\alpha$, it is easy to check using \cite[Proposition 2.12(i), Lemma 3.5(ii), Corollary 3.9]{IPPAT} that $2\alpha\Theta^{(\alpha)}(0,\alpha)$ is the push-forward via the mapping $\skewerbar$ in Definition~\ref{def:skewer} of the measure 
$\nu^{(\alpha)}_{\mathrm{\bot cld}}$ studied in \cite[Section 2.3]{IPPAT}. 

\subsection{Recurrent extension when $\theta\in (0,1)$}\label{sec:rec}
Consider the $\mathrm{SSIP}^{(\alpha)} (\theta_1, \theta_2)$ excursion measure $\Theta:= \Theta^{(\alpha)} (\theta_1, \theta_2)$ and suppose that $\theta= \theta_1 +\theta_2-\alpha \in (0,1)$.  
It is well-known \cite{Salisbury86} in the theory of Markov processes that excursion measures such as $\Theta$ can be used to construct a recurrent extension of a Markov process. To this end,  
let $\mathbf{G}\sim\mathtt{PRM}(\mathrm{Leb}\otimes b_\theta\Theta)$, where $b_\theta=2^{1-\theta}\Gamma(2-\theta)/\Gamma(\theta)$. 

For every $s\ge 0$, set $\sigma_s = \int_{[0,s]\times \cI_H} \zeta (\boldsymbol{\gamma}) \mathbf{G} (dr, d\boldsymbol{\gamma})$. As the total mass process under $\Theta$ is the $\mathtt{BESQ}(2\theta)$ excursion measure with $\theta\in (0,1)$, 
the process $(\sigma_s, s\ge 0)$ coincides with the inverse local time of a $\mathtt{BESQ}(2\theta)$, which is well-known to be a subordinator. 
We define 
\begin{equation}\label{eq:nonconcat}
	\beta(t)= \Concat_{\text{ points }(s,\boldsymbol{\gamma}_s)\text{ of } \mathbf{G},\, \sigma_{s-}< t\le \sigma_{s}}   \gamma_s(t- \sigma_{s-}), \qquad t\ge 0.    
\end{equation}
This ``concatentation'' consists of at most one interval partition since $(\sigma_s,\,s\ge 0)$ is increasing.

\begin{proposition}\label{prop:rec-extension}
	The process $(\beta(t),\, t\ge 0)$ of \eqref{eq:nonconcat} is a path-continuous Hunt process with transition semigroup $(K_t,\,t\ge 0)$.  Its total mass process $(\|\beta(t)\|,\, t\ge 0)$ is a $\besq(2\theta)$. 
\end{proposition}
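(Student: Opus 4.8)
The plan is to verify that the process $(\beta(t),\,t\ge 0)$ built in \eqref{eq:nonconcat} via the Itô-type synthesis from $\mathbf{G}\sim\mathtt{PRM}(\mathrm{Leb}\otimes b_\theta\Theta)$ is a recurrent extension of the $\mathrm{SSIP}_{\!\dagger}^{(\alpha)}(\theta_1,\theta_2)$-evolution with the prescribed semigroup. I would follow the standard excursion-theory recipe \cite{Salisbury86, RogersWilliams}, but the key inputs have already been assembled: Proposition~\ref{prop:Theta-besq} identifies $\Theta$ as a genuine excursion measure whose total-mass image is $\Lambda_{\mathtt{BESQ}}^{(2\theta)}$, and \eqref{eq:Theta:entrance}--\eqref{eq:Theta-zeta} record the entrance-law compatibility $\Theta[F\circ L_t]=\int\bE[F(\beta(s),s\ge0)\mid\beta(0)=\gamma]N_t(d\gamma)$ together with $\Theta(\zeta>t)=t^{\theta-1}/2^{1-\theta}\Gamma(2-\theta)$. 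Since $\theta\in(0,1)$, $\Lambda_{\mathtt{BESQ}}^{(2\theta)}$ has finite total excursion lifetime mass near $0$ in the right sense (indeed $\Lambda_{\mathtt{BESQ}}^{(2\theta)}(\zeta>y)=2^{\theta-1}y^{-1+\theta}/\Gamma(2-\theta)$ is integrable near $y=\infty$ against $dy$? no — one checks $\int(1\wedge y)\,\Lambda_{\mathtt{BESQ}}^{(2\theta)}(\zeta\in dy)<\infty$, which holds for $\theta<1$), so $(\sigma_s,s\ge0)$ in \eqref{eq:nonconcat} is a well-defined subordinator, in fact the inverse local time at $0$ of a $\besq(2\theta)$; this is precisely the statement that the total mass of the concatenation is a $\besq(2\theta)$, giving the last sentence of the Proposition for free once the semigroup identification is done.

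Concretely, the steps I would carry out are: (i) observe that the total-mass process of \eqref{eq:nonconcat} is, by Proposition~\ref{prop:Theta-besq}, the concatenation over the same $\mathtt{PRM}$ of $\besq(2\theta)$-excursions under $\Lambda_{\mathtt{BESQ}}^{(2\theta)}$; the normalising constant $b_\theta=2^{1-\theta}\Gamma(1-\theta)/\Gamma(\theta)$ is chosen (matching \cite[Equation (13)]{GoinYor03} and the local-time normalisation implicit in \eqref{eq:besq-exc}) exactly so that this concatenation reconstructs a $\besq_0(2\theta)$ started from $0$, reflected at $0$; (ii) show $(\beta(t),t\ge0)$ is strong Markov with semigroup $(K_t,t\ge0)$ from \eqref{eq:kernel-ssip}. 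The decomposition \eqref{eq:kernel-ssip} is the canonical ``last-exit'' decomposition: starting from $\beta_0\ne\emptyset$, the process runs as an $\mathrm{SSIP}_{\!\dagger}$-evolution up to $\zeta(\boldsymbol\beta)$ (first term), then the process is absorbed at $\emptyset$ instantaneously re-emitted, and the excursion straddling a fixed time $t>\zeta$ contributes the second term $\int_0^t\bP(Z(t-r)\bar\gamma\in A)\bP(\zeta(\boldsymbol\beta)\in dr)$, where by \eqref{eq:entrance}--\eqref{eq:Theta:entrance} the law of the excursion at its own local age $t-r$, size-biased appropriately, is exactly $Z(t-r)\odotip\bar\gamma$ with $Z\sim\besq_0(2\theta)$ (the $4-2\theta$ in \eqref{eq:entrance} becomes $2\theta$ after the Markov/duality step already performed in Section~\ref{sec:exc}). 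The master formula $\Theta[F\circ L_t]=\int\bE[\,\cdot\,]N_t$ is precisely what makes the pieced-together kernel Markovian: this is the verification, via the compensation formula for $\mathbf{G}$, that $K_{t}K_{s}=K_{t+s}$.

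The remaining work is to upgrade the Markov semigroup statement to a genuine \textbf{path-continuous Hunt process}. Path-continuity: on each excursion interval $(\sigma_{s-},\sigma_s)$ the path $\gamma_s$ is $\Theta$-a.e.\ continuous with $\gamma_s(0+)=\emptyset$ and $\gamma_s(\zeta-)=\emptyset$ by the $\mathrm{SSIP}_{\!\dagger}$-evolution regularity in Theorem~\ref{thm:IvaluedMP}(iii) together with \eqref{eq:Theta:tozero}; since the concatenation glues endpoints $\emptyset$ to $\emptyset$ and $(\sigma_s)$ is strictly increasing with no interval of constancy, continuity across gluing points and at the (Lebesgue-null) set of times not covered by any excursion follows — here one invokes that the set $\{t\colon\beta(t)=\emptyset\}$ has empty interior and that $d_H(\beta(t),\emptyset)=\|\beta(t)\|\to0$ along it because $\|\beta(\cdot)\|$ is a continuous $\besq(2\theta)$. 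For the Hunt property (quasi-left-continuity, right-continuity of filtration, strong Markov) I would cite the general excursion-theoretic machinery: a recurrent extension obtained from a $\sigma$-finite exit/excursion measure satisfying the entrance-law identity is automatically a Hunt process on the Polish state space $(\cI_H,d_H)$ (Polish by \cite[Theorem~2.3]{Paper1-0}); see \cite{Salisbury86} and \cite[VI.48]{RogersWilliams}. \textbf{The main obstacle} I anticipate is the careful bookkeeping that the second term of \eqref{eq:kernel-ssip} really matches the $\mathbf{G}$-construction at a fixed time — i.e.\ that the age-$r$ entrance law of an excursion is the size-biased $\besq_0(2\theta)$ spread by $\bar\gamma$, requiring the $\besq$ duality identity $(Z'(0))^{\theta-1}\bE_{Z'(0)}[g(Z'(s))]=\bE_{Z'(0)}[g(\widetilde Z(s))(\widetilde Z(s))^{\theta-1}]$ (already used in Section~\ref{sec:exc}) to be tracked through the compensation formula; the rest is essentially routine once Propositions~\ref{prop:ps-theta1theta2} and \ref{prop:Theta-besq} are in hand.
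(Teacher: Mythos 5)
Your proposal is correct and follows essentially the same route as the paper: it invokes the recurrent-extension machinery of \cite{Salisbury86} (and \cite[VI.48]{RogersWilliams}) using the entrance-law identity \eqref{eq:Theta:entrance} and the properties of $\Theta$, identifies the total mass as $\besq(2\theta)$ via Proposition~\ref{prop:Theta-besq}, and then argues path-continuity directly from the continuity of the excursions and of the $\besq(2\theta)$ total-mass process at times when $\beta(t)=\emptyset$. The only difference is presentational: the paper explicitly lists and checks the hypotheses of \cite[Theorem 4.1]{Salisbury86} (e.g.\ $\Theta\{\sup_t\|\gamma(t)\|\ge a\}<\infty$ and $\int(1-e^{-\zeta})\,b_\theta\Theta(d\boldsymbol{\gamma})=1$), which you gesture at rather than verify item by item.
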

\begin{proof}
	We can use \cite[Theorem 4.1]{Salisbury86}, since we have the following properties: 
	\begin{itemize}
		\item  an $\mathrm{SSIP}_{\!\dagger}^{(\alpha)} (\theta_1, \theta_2)$-evolution is a Hunt process; 
		\item $\Theta$ is concentrated on $\{\boldsymbol{\gamma}\in\bC([0,\infty),\cI_H)\colon 0<\zeta(\boldsymbol{\gamma})<\infty, \gamma(t) = \emptyset\mbox{ for all }t\ge \zeta(\boldsymbol{\gamma}) \}$;
		\item for any $a>0$, we have $\Theta\{\boldsymbol{\gamma}\in\bC([0,\infty),\cI_H)\colon\sup_{t\ge 0} \|\gamma(t)\|\ge a  \}<\infty$;
		\item $\int (1-e^{-\zeta(\boldsymbol{\gamma})}) b_\theta\Theta (d\boldsymbol{\gamma}) = 1$; 
		\item \eqref{eq:Theta:entrance} holds;
		\item $\Theta$ is infinite and $\Theta\{\boldsymbol{\gamma}\in\bC([0,\infty),\cI_H)\colon \gamma(0)\ne\emptyset \}=0$. 
	\end{itemize}
	It follows that $(\beta(t),\, t\ge 0)$ is a Borel right Markov process with transition semigroup $(K_t,\, t\ge 0)$. Moreover, the total mass process $(\|\beta(t)\|,\, t\ge 0)$ evolves according to a $\besq(2\theta)$ by Proposition~\ref{prop:Theta-besq}. 
	
	In fact, $(\beta(t),\,t\ge 0)$ a.s.\ has continuous paths. Fix any path on the almost sure event that the total mass process $(\|\beta(t)\|, t\ge 0)$ and all excursions $\boldsymbol{\gamma}_s$ are continuous. For any $t\ge 0$, if $\sigma_{s-}<t<\sigma_{s}$ for some $s\ge 0$, then the continuity at $t$ follows from that of $\boldsymbol{\gamma}_s$. 
	For any other $t$, we have $\beta(t)=\emptyset$ and the continuity at $t$ follows from the continuity of the ${\tt BESQ}(2\theta)$ total mass process. This completes the proof. 
\end{proof}

We are now ready to give the proof of Theorem \ref{thm:hunt}, which claims that $(K_t,\,t\ge 0)$ defined in \eqref{eq:kernel-ssip} is the transition semigroup of a path-continuous Hunt process.
\begin{proof}[Proof of Theorem~\ref{thm:hunt}] When $\theta\in (0,1)$, this is proved by Proposition~\ref{prop:rec-extension}.
	When $\theta\le 0$, the state $\emptyset$ is absorbing, and an $\mathrm{SSIP}^{(\alpha)}(\theta_1, \theta_2)$-evolution coincides with an $\mathrm{SSIP}_{\!\dagger}^{(\alpha)}(\theta_1, \theta_2)$-evolution.   
	For $\theta\ge 1$, the state $\emptyset$ is inaccessible, but an entrance boundary of the $\mathrm{SSIP}^{(\alpha)}(\theta_1,\theta_2)$-evolution, see
	also \cite[Proposition 4.14]{ShiWinkel-1}. For these cases, the proof is completed by Theorem~\ref{thm:IvaluedMP}, the only modification is when starting from $\emptyset$. Specifically, the modified semigroup is still measurable. Right-continuity starting from $\emptyset$ follows from the continuity of the total mass process, and this entails the strong Markov property by the usual approximation argument. 
\end{proof}

\subsection{Proofs of  results in Section~\ref{sec:mainresult} }\label{sec:results}
We will first prove  Theorem~\ref{thm:crp-ip}  and identify the limiting diffusion in Theorem~\ref{thm:crp-ip} with an $\mathrm{SSIP}$-evolution as defined in  Definition~\ref{defn:ssip}. Then we complete proofs of the other results in Section~\ref{sec:mainresult}. 

\begin{lemma}\label{lem:entrance}
	Let $\alpha\in(0,1)$, $\theta_1,\theta_2\ge 0$ and $\theta=\theta_1+\theta_2-\alpha$. 	For $n\in \bN$, let  $(C^{(n)}(t), t\ge 0)$ be a $\mathrm{PCRP}^{(\alpha)}(\theta_1,\theta_2)$ starting from $C^{(n)}(0)=\gamma^{(n)}$.
	If $ \frac{1}{n}  \gamma^{(n)}$ converges to $\emptyset$ under $d_H$, then for any $t\ge 0$, 
	\[
	\frac{1}{n} C^{(n)}(2nt) \to Z(t) \bar{\gamma}\quad \text{ in distribution},
	\]
	where $(Z(t),\,t\ge 0)\sim \mathtt{BESQ}_0(2\theta)$ and $\bar{\gamma}\sim \mathtt{PDIP}^{(\alpha)} (\theta_1,\theta_2)$ are independent. 
\end{lemma}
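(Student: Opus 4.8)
The plan is to peel off the total-mass process of the $\mathrm{PCRP}$, which is a $\pi(\theta)$-chain as in \eqref{eq:pi} with $\theta=\theta_1+\theta_2-\alpha$, and to show that the composition asymptotically ``forgets'' the vanishingly small initial partition; I would treat the phases $\theta\le 0$, $\theta\in(0,1)$ and $\theta\ge 1$ separately. Two preliminary remarks set the stage. Since the right endpoint $\|\gamma\|$ always lies in $G(\gamma)$, the hypothesis $d_H(\tfrac1n\odotip\gamma^{(n)},\emptyset)\to0$ forces $k_n:=\|\gamma^{(n)}\|$ to satisfy $k_n/n\to0$; and the family $\pi_k(\theta)$ of birth--death chains is stochastically monotone in $k$. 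Combining monotonicity with Lemma~\ref{lem:ud}, for each $\varepsilon>0$ and $t>0$ the variable $\tfrac1n\|C^{(n)}(2nt)\|$ is, for $n$ large, stochastically dominated by the time-$t$ value of a process with law $\pi^{(n)}_{\lceil n\varepsilon\rceil}(\theta)$, hence in the limit by $\besq_\varepsilon(2\theta)(t)$; letting $\varepsilon\downarrow0$ and using the additivity and continuity of $\besq(2\theta)$ in its starting point, this bound decreases to $\besq_0(2\theta)(t)=Z(t)$. When $\theta\le0$, $Z$ is by convention the constant $0$-process, so the display already gives $\tfrac1n\|C^{(n)}(2nt)\|\to0$ in probability; since $d_H(\beta,\emptyset)=\|\beta\|$ for every $\beta\in\cI_H$, this gives $\tfrac1n\odotip C^{(n)}(2nt)\to\emptyset$ and proves the lemma (including the last sentence) in this case.

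Assume now $\theta>0$ and $t>0$ (the case $t=0$ being trivial). I would first upgrade the total-mass statement to $\tfrac1n\|C^{(n)}(2nt)\|\to Z(t)$ in distribution: the upper bound is as above, and the matching lower bound comes from $\pi_1(\theta)\preceq\pi_{k_n}(\theta)$ (for $k_n\ge1$; $k_n=0$ only adds an $\mathrm{Exp}(\theta)/2n\to0$ holding time) together with the scaling limit of the up-down chain started from the fixed state $1$ to the entrance law of $\besq(2\theta)$ from $0$, which is a standard positive-self-similar-Markov-process scaling limit, cf.\ \cite{BertKort16} (for $\theta\in(0,1)$ it can alternatively be synthesised from the $\besq$-excursion convergence of Proposition~\ref{prop:vague}). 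Moreover, when $\theta\in(0,1)$ the same monotone sandwich, now using the joint convergence of hitting times in Lemmas~\ref{lem:ud}--\ref{lem:ud-bis} and $\zeta(\besq_\varepsilon(2\theta))\overset{d}{=}\varepsilon/2G$ with $G\sim\mathtt{Gamma}(1-\theta,1)$, shows that the first hitting time $\zeta^{(n)}$ of $\emptyset$ satisfies $\zeta^{(n)}/2n\to0$ in probability. Hence, for fixed $t>0$ and with probability tending to $1$, the strong Markov property lets me replace $C^{(n)}(2nt)$ by the value at time $2nt-\zeta^{(n)}$ of an independent re-entering $\mathrm{PCRP}^{(\alpha)}(\theta_1,\theta_2)$ started from $\emptyset$; after its initial $\mathrm{Exp}(\theta)$ holding time this is a $\mathrm{PCRP}$ from $(1)=\mathtt{oCRP}_1$, so by Lemma~\ref{prop:crp-ps} it is, at every time $s$, distributed as a mixture $\mathtt{oCRP}^{(\alpha)}_{\widetilde N(s)}(\theta_1,\theta_2)$ with $\widetilde N\sim\pi_0(\theta)$. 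Feeding in $\zeta^{(n)}/2n\to0$, the entrance-law convergence $\pi^{(n)}_0(\theta)\to\besq_0(2\theta)$, and Lemma~\ref{lem:crp-pdip} (which gives, conditionally on the total mass $\approx nm$, convergence of the rescaled composition to $m\odotip\bar\gamma$ with $\bar\gamma\sim\pdip^{(\alpha)}(\theta_1,\theta_2)$ independent of $m$), I obtain $\tfrac1n\odotip C^{(n)}(2nt)\to Z(t)\odotip\bar\gamma$.

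For $\theta\ge1$ the chain need not return to $\emptyset$, and I would instead squeeze using a monotone coupling of $\mathrm{PCRP}$s with respect to the ``add customers'' partial order $\preceq$, for which $d_H(\beta,\beta')\le\|\beta'\|-\|\beta\|$ whenever $\beta\preceq\beta'$ and under which the mass gap $\|\beta'(\cdot)\|-\|\beta(\cdot)\|$ of two coupled $\mathrm{PCRP}$s evolves as a $\pi(0)$-chain (a critical branching process). Since $\gamma^{(n)}\preceq\gamma^{(n)}\concat\{(0,\lceil n\varepsilon\rceil)\}$, coupling the two $\mathrm{PCRP}$s $C^{(n)}\preceq W^{(n)}$ yields $d_H\big(\tfrac1n\odotip C^{(n)}(2nt),\tfrac1n\odotip W^{(n)}(2nt)\big)\le\tfrac1n\big(\|W^{(n)}(2nt)\|-\|C^{(n)}(2nt)\|\big)$, whose right-hand side converges in distribution to $\besq_\varepsilon(0)(t)$ and hence to $0$ as $\varepsilon\downarrow0$. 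On the other hand $\tfrac1n\odotip(\gamma^{(n)}\concat\{(0,\lceil n\varepsilon\rceil)\})\to\{(0,\varepsilon)\}\ne\emptyset$, so Theorem~\ref{thm:crp-ip-bis}, together with the fact that for $\theta\ge1$ an $\mathrm{SSIP}_{\!\dagger}$-evolution from a non-empty partition never reaches $\emptyset$ (Theorem~\ref{thm:IvaluedMP}), so that the killed and unkilled processes agree on any finite time window, gives $\tfrac1n\odotip W^{(n)}(2nt)\to K_t(\{(0,\varepsilon)\},\,\cdot\,)$; and as $\varepsilon\downarrow0$, $K_t(\{(0,\varepsilon)\},\,\cdot\,)\to K_t(\emptyset,\,\cdot\,)=\bP(Z(t)\odotip\bar\gamma\in\,\cdot\,)$ by the entrance-boundary property of $\emptyset$ (see \eqref{eq:kernel-ssip}, Theorem~\ref{thm:hunt} and \cite[Proposition 5.11]{ShiWinkel-1}). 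Letting $n\to\infty$ and then $\varepsilon\downarrow0$ concludes.

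The main obstacle is exactly this loss of memory of the initial partition for $\theta\ge1$: the $\mathrm{PCRP}$ cannot be ``restarted'' from $\emptyset$, and one has to extract the convergence from the monotone coupling, the crucial points being the distance bound $d_H(\beta,\beta')\le\|\beta'\|-\|\beta\|$ for $\beta\preceq\beta'$ and the identification of the mass gap $\|W^{(n)}(\cdot)\|-\|C^{(n)}(\cdot)\|$ as a critical birth--death chain, which keeps it of order $n\varepsilon$. The only other delicate input, needed for $0<\theta<1$, is the scaling limit of the up-down chain from a fixed state to the $\besq(2\theta)$ entrance law, which I would obtain from \cite{BertKort16}, or from Proposition~\ref{prop:vague} in the range $\theta\in(0,1)$.
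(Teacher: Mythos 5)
Your treatment of the phases $\theta\le 0$ and $\theta\in(0,1)$ is essentially the paper's own argument: show $\zeta^{(n)}/2n\to 0$ by dominating the total mass (a $\pi(\theta)$-chain as in \eqref{eq:pi}) by one started from $\lfloor n\varepsilon\rfloor$ and using Lemmas~\ref{lem:ud}--\ref{lem:ud-bis}, then restart at $\zeta^{(n)}$ by the strong Markov property, use Lemma~\ref{prop:crp-ps} to see that the restarted composition is at every time a mixture of $\mathtt{oCRP}^{(\alpha)}_{m}(\theta_1,\theta_2)$ laws, and conclude with the total-mass convergence and Lemma~\ref{lem:crp-pdip}. (You are in fact more explicit than the paper about the entrance-law convergence of the mass chain from a vanishing initial state, which the paper attributes loosely to Lemma~\ref{lem:ud}; your sandwich plus a citation of \cite{BertKort16} is a reasonable way to make that precise.)

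For $\theta\ge 1$ you genuinely diverge from the paper. The paper exploits that $\theta\ge1$ forces $\theta_1\ge\alpha$ or $\theta_2\ge\alpha$, writes $C^{(n)}=C^{(n)}_1\concat C^{(n)}_0\concat C^{(n)}_2$ with independent $\mathrm{PCRP}^{(\alpha)}(\theta_1,0)$ and $\mathrm{PCRP}^{(\alpha)}(\alpha,\theta_2)$ started from $\emptyset$ and a middle $\mathrm{PCRP}^{(\alpha)}(\alpha,0)$ carrying $\gamma^{(n)}$ (which dies out by the $\theta\le0$ case), identifies the two outer limits via \eqref{eq:gamma}, Lemmas~\ref{prop:crp-ps}, \ref{lem:ud}, \ref{lem:crp-pdip}, and reassembles $\mathtt{PDIP}^{(\alpha)}(\theta_1,\theta_2)$ via \eqref{eq:pdip-decomp} (Corollary~\ref{cor:pdipdec}). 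Your monotone-coupling squeeze is an attractive alternative, and the two facts you assert do hold: if $\beta\preceq\beta'$ by block insertion then $d_H(\beta,\beta')\le\|\beta'\|-\|\beta\|$, and in the natural coupling the extra mass $M$ carried by $j$ extra tables has birth rate $(M-j\alpha)+j\alpha=M$ and death rate $M$, i.e.\ is a $\pi(0)$-chain; but you should spell out this rate bookkeeping (including the matching of split gaps and of the boundary rates), since the coupling is the backbone of the argument and is nowhere constructed in the paper. The one step I would call a genuine soft spot is the final limit $K_t(\{(0,\varepsilon)\},\cdot)\to K_t(\emptyset,\cdot)$ as $\varepsilon\downarrow 0$: continuity of the semigroup at $\emptyset$ for $\theta\ge1$ is not established in this paper prior to Lemma~\ref{lem:entrance} (the identification of the entrance law in \eqref{eq:kernel-ssip} leans on Proposition~\ref{prop:ps-theta1theta2}, and the paper's continuity statement, Theorem~\ref{thm:continst}, neither precedes this lemma nor clearly covers $\gamma=\emptyset$), and ``entrance boundary'' as quoted from \cite{ShiWinkel-1} does not by itself deliver convergence of laws started from shrinking nonempty states. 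This is easily repaired within your own scheme: dominate instead by $W^{(n)}$ started from $\gamma^{(n)}\concat D^{(n)}$ with $D^{(n)}\sim\mathtt{oCRP}^{(\alpha)}_{\lceil n\varepsilon\rceil}(\theta_1,\theta_2)$ independent; then $\frac1n\odotip W^{(n)}(0)\to\varepsilon\odotip\bar\gamma$, Theorem~\ref{thm:crp-ip-bis} plus Proposition~\ref{prop:ps-theta1theta2} identify the limit law of $\frac1n\odotip W^{(n)}(2nt)$ as that of $Z_\varepsilon(t)\odotip\bar\gamma$ with $Z_\varepsilon\sim\besq_\varepsilon(2\theta)$, and additivity of $\besq$ sends this to $Z(t)\odotip\bar\gamma$ as $\varepsilon\downarrow0$, with the same $\pi(0)$ gap bound controlling the discrepancy. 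With that substitution your argument is self-contained and a valid alternative to the paper's decomposition-plus-Corollary~\ref{cor:pdipdec} route.
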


\begin{proof}
	We start with the case when $\theta<1$. 
	Let $\zeta^{(n)}$ be the hitting time of $\emptyset$ for $C^{(n)}$, then Theorem~\ref{thm:crp-ip-bis} yields that $\zeta^{(n)}/2n\to 0$ in probability as $n\to \infty$. 
	For any $t>0$ and any bounded continuous function $f$ on $\cI_H$, we have 
	\begin{align}\label{eq:lifesplit}
		\bE\left[f\left(\frac{1}{n}C^{(n)}(2 n t)\right) \right]
		=&	\bE\left[\ind\{\zeta^{(n)} \le 2nt \} f\left(\frac{1}{n}\widetilde{C}^{(n)}\big(2nt - \zeta^{(n)}\big)\right) \right]\\
		&+ 	\bE\left[\ind\{\zeta^{(n)} > 2nt \} f\left(\frac{1}{n}C^{(n)}(2 n t)\right) \right], \nonumber
	\end{align}
	where $\widetilde{C}^{(n)}(s)= C^{(n)}(s+\zeta^{(n)})$, $s\ge 0$. 
	As $n\to \infty$, since  $\zeta^{(n)}/2n\to 0$ in probability, the second term tends to zero. 
	By the strong Markov property  and Lemma~\ref{prop:crp-ps},   $\widetilde{C}^{(n)}(s)$ has the mixture distribution  $\mathtt{oCRP}_{\|\widetilde{C}^{(n)}(s)\|}^{(\alpha)}(\theta_1,\theta_2)$. 
	Since $\|C^{(n)}(2nt)\|/n \to Z(t)$ in distribution by Lemma~\ref{lem:ud}, we deduce by Proposition~\ref{prop:crp-pdip} that 
	the first term tends to 
	$\bE\left[f\left(Z(t)\bar{\gamma}\right) \right]$, as desired.

	For $\theta\ge 1$, at least one of $\theta_1\ge\alpha$ or $\theta_2\ge\alpha$. Say, $\theta_1\ge\alpha$. We may assume that
	$C^{(n)}(t)=C_1^{(n)}(t)\concat C_0^{(n)}(t)\concat C_2^{(n)}(t)$ for independent $(C_1^{(n)} (t),\,t\ge 0)\sim \mathrm{PCRP}^{(\alpha)} (\theta_1,0)$ starting from $\emptyset$, $(C_0^{(n)} (t),t\!\ge\! 0)\!\sim\! \mathrm{PCRP}^{(\alpha)} (\alpha,0)$ starting from $C^{(n)}(0)$, and $(C_2^{(n)} (t),t\!\ge\! 0)\linebreak \sim\mathrm{PCRP}^{(\alpha)} (\alpha,\theta_2)$ starting from $\emptyset$. 
	For the middle term $C_0^{(n)}$, the $\theta\!\le\! 0$ case yields that $\frac{1}{n} C_0^{(n)}(2nt) \to \emptyset$ in distribution.   
	For the other two, applying \eqref{eq:gamma} and Lemmas~\ref{prop:crp-ps}, \ref{lem:ud} and Proposition~\ref{prop:crp-pdip}  yields 
	$ \frac{1}{n} C_1^{(n)}(2nt) \to Z_1(t) \bar{\gamma}_1$ in distribution, with $Z_1(t)\sim \mathtt{Gamma}(\theta_1\!-\!\alpha,1/2t)$ and $\bar{\gamma}_1\sim \mathtt{PDIP}^{(\alpha)} (\theta_1,0)$, and  $ \frac{1}{n} C_2^{(n)}(2nt) \to Z_2(t) \bar{\gamma}_2$ in distribution, with $Z_2(t)\sim \mathtt{Gamma}(\theta_2,1/2t)$ and $\bar{\gamma}_2\sim \mathtt{PDIP}^{(\alpha)} (\alpha,\theta_2)$. 
	We complete the proof by applying the decomposition \eqref{eq:pdip-decomp}.
\end{proof}

\begin{proof}[Proof of Theorem~\ref{thm:crp-ip}] 
	We first consider the case $\theta\le 0$. Then the state $\emptyset$ is absorbing, and an $\mathrm{SSIP}^{(\alpha)}(\theta_1, \theta_2)$-evolution coincides with an $\mathrm{SSIP}_{\!\dagger}^{(\alpha)}(\theta_1, \theta_2)$-evolution.  
	For this case, the proof is completed by Theorem~\ref{thm:crp-ip-bis}.  

	We next consider $\theta>0$ and prove that the limiting diffusion is given by an $\mathrm{SSIP}^{(\alpha)}(\theta_1, \theta_2)$-evolution $\boldsymbol{\beta}=(\beta(t),\,t\ge 0)$ with $\zeta(\boldsymbol{\beta})=\inf\{t\ge 0\colon\beta(t)=\emptyset\}$ as defined in Definition~\ref{defn:ssip}. 
	It suffices to prove the convergence in $\bD([0,T], \cI_{H})$ for a fixed $T>0$. 	
	The convergence of finite-dimensional distributions follows readily from Theorem~\ref{thm:crp-ip-bis}, Lemma~\ref{lem:entrance} and the description in \eqref{eq:kernel-ssip}.
	Specifically, for $\theta\in(0,1)$, we proceed as in the proof of Lemma~\ref{lem:entrance} and see the first term in \eqref{eq:lifesplit} converge to $\bE[\ind\{\zeta(\boldsymbol{\beta})\le t\}f(Z_0(t-\zeta(\boldsymbol{\beta}))\bar{\gamma})]$ where $Z_0\sim{\tt BESQ}_0(2\theta)$ and $\bar{\gamma}\sim{\tt PDIP}^{(\alpha)}(\theta_1,\theta_2)$ are independent and jointly independent of $\boldsymbol{\beta}$, while the second term converges to $\bE[\ind\{\zeta(\boldsymbol{\beta})>t\}f(\beta(t))]$. For $\theta\ge 1$ and $\beta(0)=\emptyset$, convergence of marginals holds by Lemma \ref{lem:entrance} and \eqref{eq:kernel-ssip}. Theorem \ref{thm:crp-ip-bis} then establishes finite-dimensional convergence, also when $\beta(0)\neq\emptyset$.
	
	It remains to check tightness. Let $\boldsymbol{\beta}^{(n)}=(\beta^{(n)}(t),\,t\ge 0):= \frac{1}{n} C^{(n)}(2n\,\cdot\,)$. 
	Since we already know from Lemma~\ref{lem:ud} that the sequence of total mass processes $\|\boldsymbol{\beta}^{(n)}\|$, $n\ge 1$, converges in distribution, it is tight.
	For $h>0$, define the modulus of continuity by
	\[\omega \left(\|\boldsymbol{\beta}^{(n)}\|, h\right) = \sup \left\{ \big|\|\beta^{(n)}(s)\|- \|\beta^{(n)}(t)\|\big|\colon |s-t|\le h\right\}.\] 
	For any $\varepsilon>0$, the tightness implies that there exists $\Delta'$ such that for any $h\le 2\Delta'$, 
	\[
	\limsup_{n\to \infty} \bE\left[\omega \left(\|\boldsymbol{\beta}^{(n)}\|, h\right)\wedge 1\right] <\varepsilon; 
	\]
	this is an elementary consequence of \cite[Proposition VI.3.26]{JacodShiryaev}. See also \cite[Theorem~16.5]{Kallenberg}.
	
	For $1\le i\le \lfloor T/\Delta' \rfloor$, set $t_i = i \Delta'$ and let 
	$\boldsymbol{\beta}^{(n)}_i$ be the process obtained by shifting $\boldsymbol{\beta}^{(n)}$ to start from $t_i$, killed at $\emptyset$. 
	The convergence of the finite-dimensional distributions yields that each $\beta^{(n)}(t_i)$ converges weakly to $\beta(t_i)$. 
	Since $\beta(t_i)\ne \emptyset$ a.s., by Theorem~\ref{thm:crp-ip-bis} each sequence $\boldsymbol{\beta}^{(n)}_i$ converges in distribution as $n\to \infty$. 
	So the sequence $(\boldsymbol{\beta}^{(n)}_i, n\in \bN)$ is tight, as the space $(\cI_{H},d_H)$ is Polish.  
	By tightness there  
	exists $\Delta_i$ such that for any $h<\Delta_i$, 
	\[
	\limsup_{n\to \infty} \bE\left[\omega \left(\boldsymbol{\beta}^{(n)}_i, h\right)\wedge 1\right] < 2^{-i} \varepsilon. 
	\]
	Now let $\Delta = \min (\Delta', \Delta_0, \Delta_1, \ldots, \Delta_{\lfloor T/\Delta' \rfloor} )$. 
	For any $s\le t\le T$ with $t\!-\!s\le \Delta$, consider $i$ such that $t_i \le s < t_{i+1}$, then 
	$t\!-\!t_i < \Delta'\! +\! t\!-\!s \le 2 \Delta'$. 
	If $\zeta(\boldsymbol{\beta}^{(n)}_i) \le t\!-\!t_i$, then $\boldsymbol{\beta}^{(n)}$ touches $\emptyset$ during the time interval $[t_i, t]$ and thus 
	$\max (\|\beta^{(n)}(s)\|,\|\beta^{(n)}(t)\|) \le \omega \left(\|\boldsymbol{\beta}^{(n)}\|, 2\Delta'\right)$. 
	Therefore, we have 
	\[
	d_H\left(\beta^{(n)}(s), \beta^{(n)}(t)\right) \le d_H\left(\beta^{(n)}_i(s), \beta^{(n)}_i(t)\right)+ 2\omega \left(\|\boldsymbol{\beta}^{(n)}\|, 2\Delta'\right). 
	\] 
	It follows that for $h<\Delta$, 
	\[
	\bE\left[\omega \left(\boldsymbol{\beta}^{(n)}, h\right)\wedge 1\right]\le 
	2\bE\left[\omega \left(\|\boldsymbol{\beta}^{(n)}\|, 2\Delta'\right)\wedge 1\right] + 
	\sum_{i=0}^{\lfloor T/\Delta' \rfloor}\bE\left[\omega \left(\boldsymbol{\beta}^{(n)}_i, \Delta_i\right)\wedge 1\right] . 
	\]	
	So we have  $\limsup_{n\to \infty}\bE\left[\omega \left(\boldsymbol{\beta}^{(n)}, h\right)\wedge 1\right]\le 4\varepsilon$. 
	This leads to the tightness, e.g.\ via \cite[Theorem 16.10]{Kallenberg}.
\end{proof}

\begin{proof}[Proof of Proposition~\ref{prop:ps-theta1theta2-nokill}]
	This follows from Proposition~\ref{prop:ps-theta1theta2} and the semigroup description in \eqref{eq:kernel-ssip}.
\end{proof}

\begin{theorem}\label{thm:continst} Let $\alpha\in(0,1)$, $\theta_1,\theta_2\ge 0$ and $\gamma_n\in\cI_H$ with $\gamma_n\rightarrow\gamma\in\cI_H$. Let 
	$\boldsymbol{\beta}_n$, $n\ge 1$, and $\boldsymbol{\beta}$ be ${\rm SSIP}^{(\alpha)}(\theta_1,\theta_2)$-evolutions starting from $\gamma_n$, $n\ge 1$, and $\gamma$, respectively. Then 
	$\boldsymbol{\beta}_n\rightarrow\boldsymbol{\beta}$ in distribution in $\mathbb{C}(\mathbb{R}_+,\cI_H)$ equipped with the locally uniform 
	topology.
\end{theorem}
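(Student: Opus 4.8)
The plan is to obtain Theorem~\ref{thm:continst} as a soft consequence of the scaling limit Theorem~\ref{thm:crp-ip}, by a diagonal argument; this is in the spirit of how Proposition~\ref{prop:ps-theta1theta2-nokill} was deduced from the discrete approximation. Fix $T>0$; since $(\cI_H,d_H)$ induces a Polish topology, so does $\mathbb{C}([0,T],\cI_H)$ under the uniform metric, and it is enough to prove $\boldsymbol{\beta}_n\to\boldsymbol{\beta}$ in distribution on $\mathbb{C}([0,T],\cI_H)$ for every such $T$, since weak convergence in $\mathbb{C}(\mathbb{R}_+,\cI_H)$ under the locally uniform topology is equivalent to weak convergence of the restrictions to $\mathbb{C}([0,T],\cI_H)$ for all $T>0$. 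Write $\Phi(\delta)\in\mathcal{P}(\mathbb{C}(\mathbb{R}_+,\cI_H))$ for the law of the $\mathrm{SSIP}^{(\alpha)}(\theta_1,\theta_2)$-evolution started from $\delta\in\cI_H$ (well-defined and carried by continuous paths by Theorem~\ref{thm:hunt}), and fix a complete metric $\mathbf{d}$ metrising weak convergence on $\mathcal{P}(\mathbb{C}([0,T],\cI_H))$.

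The single analytic input is Theorem~\ref{thm:crp-ip} together with the elementary fact that every $\delta\in\cI_H$ is a $d_H$-limit of rescaled compositions: there exist $\delta^{[m]}\in\cC$ with $d_H(\tfrac1m\odotip\delta^{[m]},\delta)\to 0$ as $m\to\infty$ (approximate the mass-zero compact set $G(\delta)$ by $\tfrac1m\mathbb{Z}$-lattice points; take $\delta^{[m]}=\emptyset$ if $\delta=\emptyset$). Thus, letting $C^{[m]}_\delta\sim\mathrm{PCRP}^{(\alpha)}_{\delta^{[m]}}(\theta_1,\theta_2)$ and $\mu^{[m]}_\delta$ the law of $\big(\tfrac1m\odotip C^{[m]}_\delta(2m\,\cdot\,)\big)$, Theorem~\ref{thm:crp-ip} gives $\mu^{[m]}_\delta\to\Phi(\delta)$ weakly; the convergence there is for the $J_1$-topology on $\bD(\bR_+,\cI_H)$, but since $\Phi(\delta)$ is carried by continuous paths this upgrades to weak convergence in $\mathbb{C}([0,T],\cI_H)$ under the uniform metric (cf.\ \cite[VI.1.17]{JacodShiryaev}).

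Now I would diagonalise. Applying the previous paragraph to $\delta=\gamma_n$ for each $n$, choose $k(n)\in\mathbb{N}$, which may be taken strictly increasing in $n$, with $d_H\big(\tfrac{1}{k(n)}\odotip\gamma_n^{[k(n)]},\gamma_n\big)<1/n$ and $\mathbf{d}\big(\mu^{[k(n)]}_{\gamma_n},\Phi(\gamma_n)\big)<1/n$. Build a single sequence of compositions $(\eta_m)_{m\ge1}$ by setting $\eta_{k(n)}:=\gamma_n^{[k(n)]}$ and $\eta_m:=\gamma^{[m]}$ for all other $m$. A triangle-inequality check shows $d_H(\tfrac1m\odotip\eta_m,\gamma)\to 0$: on $m=k(n)$ it is at most $1/n+d_H(\gamma_n,\gamma)$ and $n\to\infty$ there, and on the remaining $m$ it is $d_H(\tfrac1m\odotip\gamma^{[m]},\gamma)$. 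Hence Theorem~\ref{thm:crp-ip}, applied to $E_m\sim\mathrm{PCRP}^{(\alpha)}_{\eta_m}(\theta_1,\theta_2)$, yields that the law of $\big(\tfrac1m\odotip E_m(2m\,\cdot\,)\big)$ converges weakly to $\Phi(\gamma)$; restricting to the subsequence $m=k(n)$ gives $\mu^{[k(n)]}_{\gamma_n}\to\Phi(\gamma)$, and combined with $\mathbf{d}(\mu^{[k(n)]}_{\gamma_n},\Phi(\gamma_n))\to0$ we get $\Phi(\gamma_n)\to\Phi(\gamma)$, i.e.\ $\boldsymbol{\beta}_n\to\boldsymbol{\beta}$ in distribution, as claimed.

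I do not expect a genuine obstacle: all the work sits inside Theorem~\ref{thm:crp-ip}. The points needing care are bookkeeping ones---the existence of the lattice approximants $\delta^{[m]}$, the $J_1$-to-uniform upgrade, and the double use of Theorem~\ref{thm:crp-ip} (once for each fixed $\gamma_n$ to produce $\mu^{[\,\cdot\,]}_{\gamma_n}$, once for the diagonal sequence $(\eta_m)$ whose rescaled initial states converge to $\gamma$). The degenerate cases $\gamma=\emptyset$ or $\gamma_n=\emptyset$ are automatically included, since Theorem~\ref{thm:crp-ip} covers $\gamma=\emptyset$, with limit the constant path $\emptyset$ when $\theta\le0$ and the $\mathrm{SSIP}^{(\alpha)}(\theta_1,\theta_2)$-evolution leaving $\emptyset$ when $\theta>0$.
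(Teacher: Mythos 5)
Your proposal is correct in substance, but it takes a genuinely different route from the paper. The paper proves Theorem~\ref{thm:continst} by a direct pathwise coupling of the recursive construction in Definition~\ref{defn:ipe}: it couples the two-parameter ingredients via the Feller property of ${\rm SSIP}^{(\alpha)}(\theta_i)$-evolutions from \cite{IPPAT} (through \cite[Theorem 19.25]{Kallenberg} and Skorokhod representation), couples the middle ${\tt BESQ}(-2\alpha)$ spindles by deterministic scaling, and proceeds by induction along the renaissance levels, handling $\theta<1$ by the hitting-time arguments from the end of the proof of Theorem~\ref{thm:crp-ip-bis}; this yields an almost sure locally uniform coupling, which is stronger than the distributional statement. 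You instead deduce the statement softly from the scaling limit Theorem~\ref{thm:crp-ip} by a diagonal argument, using density of rescaled compositions in $(\cI_H,d_H)$; this is legitimate and non-circular, since the proof of Theorem~\ref{thm:crp-ip} (via Theorem~\ref{thm:crp-ip-bis}, Lemma~\ref{lem:entrance} and the tightness argument) nowhere invokes continuity in the initial state, and Theorem~\ref{thm:crp-ip} does identify the limit law as $\Phi(\delta)$ for deterministic initial states. Your approach buys brevity and avoids re-entering the clade construction, at the cost of delivering only convergence in distribution (which is all the theorem asserts) and of requiring the density of $\{\frac1m\odotip\delta^{[m]}\}$, which indeed holds (e.g.\ take as partition points the $\frac1m\mathbb{Z}$-points within $\frac1m$ of $G(\delta)$, adjusting the right endpoint), whereas the paper's coupling is reusable machinery. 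One bookkeeping slip should be fixed: the laws $\mu^{[m]}_\delta$ are carried by $\bD(\bR_+,\cI_H)$, not by $\mathbb{C}([0,T],\cI_H)$, so the metric $\mathbf{d}$ must metrise weak convergence on $\mathcal{P}(\bD(\bR_+,\cI_H))$ with the $J_1$-topology; run the triangle inequality there to get $\Phi(\gamma_n)\to\Phi(\gamma)$ weakly in $\bD$, and only then upgrade to the locally uniform topology on $\mathbb{C}(\bR_+,\cI_H)$, using that all these limit laws charge continuous paths (Skorokhod representation plus \cite[VI.1.17]{JacodShiryaev}).
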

\begin{proof} 
	We first assume $\gamma\ne \emptyset$. 
	It follows easily from Lemma \ref{lem:dH} that we may assume that $\gamma_n=\beta_{n,1}^{(0)}\concat\{(0,m_n^{(0)})\}\concat\beta_{n,2}^{(0)}$ with $m_n^{(0)}\rightarrow m^{(0)}$, $\beta_{n,i}^{(0)}\rightarrow\beta_i^{(0)}$, $i=1,2$, and $\phi(\gamma)=(\beta_1^{(0)},m^{(0)},\beta_2^{(0)})$. We will now couple the constructions in Definition \ref{defn:ipe} and use the notation from there.  
	
	Given $(\beta_{n,1}^{(k)},m_n^{(k)},\beta_{n,2}^{(k)})\rightarrow(\beta_1^{(k)},m^{(k)},\beta_2^{(k)})$ a.s., for some $k\ge 0$, the Feller property of 
	\cite[Theorem 1.8]{IPPAT} allows us to apply \cite[Theorem 19.25]{Kallenberg} and, by Skorokhod representation, we obtain 
	$\boldsymbol{\gamma}_{n,i}^{(k)}\!\rightarrow\!\boldsymbol{\gamma}_i^{(k)}$ a.s.\ in $\mathbb{C}(\mathbb{R}_+,\cI_H)$, $i\!=\!1,2$, as $n\!\rightarrow\!\infty$. For 
	$\mathbf{f}^{(k)}\!\sim\!{\tt BESQ}_{m^{(k)}}(-2\alpha)$ and \linebreak
	$\mathbf{f}^{(k)}_n(s)\!:=\!(m_n^{(k)}\!/m^{(k)})\mathbf{f}^{(k)}((m^{(k)}\!/m_n^{(k)})s)$, $s\!\ge\! 0$, we find 
	$\mathbf{f}^{(k)}_n\!\!\sim\!{\tt BESQ}_{m^{(k)}_n}(-2\alpha)$. As $n\!\rightarrow\!\infty$, \linebreak
	$(\mathbf{f}^{(k)}_n,\zeta(\mathbf{f}^{(k)}_n))\rightarrow(\mathbf{f}^{(k)},\zeta(\mathbf{f}^{(k)}))$ a.s.. 
	And as $\gamma_1^{(k)}(\zeta(\mathbf{f}^{(k)}))\concat\gamma_2^{(k)}(\zeta(\mathbf{f}^{(k)}))$ has a.s.\ a unique longest interval,
	$\phi\big(\gamma_{n,1}^{(k)}(\zeta(\mathbf{f}^{(k)}_n))\concat\gamma_{n,2}^{(k)}(\zeta(\mathbf{f}^{(k)}_n))\big)\rightarrow\phi\big(\gamma_1^{(k)}(\zeta(\mathbf{f}^{(k)}))\concat\gamma_2^{(k)}(\zeta(\mathbf{f}^{(k)}))\big)$, a.s..
	
	Inductively, the convergences stated in this proof so far hold a.s.\ for all $k\ge 0$. When $\theta:=\theta_1+\theta_2-\alpha\ge 1$, by Theorem~\ref{thm:IvaluedMP}~(\ref{item:levels}) this construction extends to the entire time axis and thus gives rise to coupled 
	$\boldsymbol{\beta}_n$ and $\boldsymbol{\beta}$.  When $\theta<1$, arguments as at the end of the proof of Theorem \ref{thm:crp-ip-bis} allow us to prove the convergence until the first hitting time of $\emptyset$ jointly with the convergence of the hitting times. When $\theta\le 0$, we extend the constructions by absorption in $\emptyset$. When 
	$\theta\in(0,1)$, we extend by the same ${\rm SSIP}^{(\alpha)}(\theta_1,\theta_2)$ starting from $\emptyset$. In each case, we deduce
	that $\boldsymbol{\beta}_n\rightarrow\boldsymbol{\beta}$ a.s., locally uniformly. 
	
	We next deal with the case $\gamma =\emptyset$. 
	For $\theta<1$, we know from the total mass convergence (Lemmas~\ref{lem:ud} and ~\ref{lem:ud-bis}) that  the interval-partition valued process until its the first hitting time to $\emptyset$ converges to a constant zero process,  with the hitting times converging to zero jointly. Then  the same coupling arguments as in the previous paragraph complete the proof. 
	For $\theta \ge 1$, we first prove the convergence of the marginal distributions as in the proof of Lemma~\ref{lem:entrance}. 
	More precisely, since at least one of $\theta_1\ge\alpha$ and $\theta_2\ge\alpha$ holds, we may assume by the left-right symmetry (\cite[Proposition~4.4]{ShiWinkel-1}) that $\theta_2\ge\alpha$.  
	Also, by combining \cite[Definition~4.3(ii)]{ShiWinkel-1} and Definition \ref{defn:alphatheta}, we obtain a decomposition of the process, with 
	$\beta_n(t)=\beta_{1}(t)\concat \beta_{n,0}(t)\concat \beta_{2}(t)$, $t\ge 0$, for independent $(\beta_{1}(t),\,t\ge 0)\linebreak \sim \mathrm{SSIP}^{(\alpha)} (\theta_1,\alpha)$ starting from $\emptyset$, $(\beta_{n,0} (t),t\!\ge\! 0)\!\sim\! \mathrm{SSIP}^{(\alpha)} (0,\alpha)$ starting from $\gamma_n$, and $(\beta_{2} (t),t\!\ge\! 0) \sim\mathrm{SSIP}^{(\alpha)} (0,\theta_2)$ starting from $\emptyset$. 
	For the middle term, the convergence of the total mass yields that $\frac{1}{n} \beta_{n,0}(2nt) \to \emptyset$ in distribution.   
	For the other two, applying Proposition~\ref{prop:ps-theta1theta2-nokill}, we deduce that 
	$(\beta_{1}(t), \beta_{2}(t))$ has the same distribution as $(Z_1(t) \bar{\gamma}_1, Z_2(t) \bar{\gamma}_2)$, with $Z_1(t)\sim \mathtt{Gamma}(\theta_1,1/2t)$,  $\bar{\gamma}_1\sim \mathtt{PDIP}^{(\alpha)} (\theta_1,\alpha)$, $Z_2(t)\sim \mathtt{Gamma}(\theta_2-\alpha,1/2t)$ and $\bar{\gamma}_2\sim \mathtt{PDIP}^{(\alpha)} (0,\theta_2)$, independent of each other. 
	It follows from the decomposition \eqref{eq:pdip-decomp} that $\beta_n(t)\rightarrow\beta_1(t)\concat\beta_2(t)=\beta(t)$, where $\beta(t)$ has the same distribution as $Z(t) \bar{\gamma}$ with independent $\mathtt{Gamma}(\theta,1/2t)$ and $\bar{\gamma}\sim \mathtt{PDIP}^{(\alpha)} (\theta_1,\theta_2)$. 
	This gives the convergence of the marginal distributions and therefore the finite-dimensional distributions, due to the non-empty initial state case that we have already proved. We further extend this to the convergence of processes, by the same tightness arguments as in the proof of Theorem~\ref{thm:crp-ip}.  	
\end{proof}

\begin{proof}[Proof of Theorem~\ref{thm:dP}]
	For an $\mathrm{SSIP}$-evolution, we have established the pseudo-stationarity (Proposition~\ref{prop:ps-theta1theta2-nokill}), self-similarity,  
	path-continuity, Hunt property (Theorem~\ref{thm:crp-ip}) and the continuity in the initial state (Theorem \ref{thm:continst}).  	
	With these properties in hand, we can easily prove this theorem by following the same arguments as in \cite[proof of Theorem~1.6]{Paper1-2}. Details are left to the reader. 
\end{proof}

We now prove Theorem~\ref{thm:Theta}, showing that when $\theta=\theta_1+\theta_2-\alpha\in (-\alpha,1)$, the excursion measure $\Theta:=\Theta^{(\alpha)}(\theta_1,\theta_2)$ of Section \ref{sec:exc} is the limit of rescaled PCRP excursion measures.  
Recall that the total mass process of ${\rm PCRP}^{(\alpha)}(\theta_1,\theta_2)$ has distribution $\pi_1(\theta)$. 
We have already obtained the convergence of the total mass process from Proposition~\ref{prop:vague}.  

\begin{proof}[Proof of Theorem~\ref{thm:Theta}]
	Recall that $\zeta(\boldsymbol{\gamma})= \inf\{t>0\colon \gamma(t)=\emptyset \}$ denotes the lifetime of an excursion  $\boldsymbol{\gamma}\in \bD([0,\infty),\cI_H)$. To prove vague convergence, we proceed as in the proof of Proposition~\ref{prop:vague}. In the present setting, we work on the space of measures on $\bD([0,\infty),\cI_H)$  that are bounded on $\{\zeta>t\}$ for all $t>0$. We denote by $\mathrm{P}^{(n)}$ the distribution of $C^{(n)}$, a killed $\mathrm{PCRP}^{(\alpha)}(\theta_1,\theta_2)$ starting from $(1)$. It suffices to prove for fixed $t>0$, 
	\begin{enumerate}\item[1.] $\Theta(\zeta = t)=0$,
		\item[2.] $(\Gamma(1+\theta)/(1-\theta)) n^{1-\theta} \cdot \mathrm{P}^{(n)} (\zeta>t) \underset{n\to \infty}{\longrightarrow} \Theta(\zeta > t)$,
		\item[3.] $\mathrm{P}^{(n)}(\,\cdot\,|\,\zeta>t) \underset{n\to \infty}{\longrightarrow} \Theta(\,\cdot\,|\,\zeta>t)$ weakly.
	\end{enumerate}
	
	1. This follows from \eqref{eq:Theta-zeta}. 
	
	2. Since the total-mass process $\|C^{(n)}\|$ is an up-down chain of law $\widetilde{\pi}_1^{(n)}(\theta)$, Proposition~\ref{prop:vague} implies the following weak convergence of finite measures on $(0,\infty)$: 	
	\begin{align}\label{eq:cv-nu}
		&\frac{ \Gamma(1+\theta)}{1-\theta} n^{1-\theta} \bP \left(\|C^{(n)}(t)\|\in \cdot~;~ \zeta (C^{(n)} )> t\right) \\[-0.1cm]
		&\underset{n\to \infty}{\longrightarrow}
		\Lambda_{\mathtt{BESQ}}^{(2 \theta)}\big\{f\in\bC([0,\infty),[0,\infty))\colon f(t) \in \cdot~;~ \zeta(f)> t\big\} 
		= N_t \big\{\gamma\in\mathcal{I}_H\colon \|\gamma\|\in \cdot \big\}, \nonumber
	\end{align}
	where $N_t$ is the entrance law of $\Theta$ given in \eqref{eq:entrance}. This implies the desired convergence. 
	
	3. For any $t>0$, given $(\|C^{(n)}(r)\|, r\le 2 n t)$, we know from Lemma~\ref{prop:crp-ps} that the conditional distribution of $C^{(n)}(t) =\frac{1}{n} C(2 n t)$ is 
	the law of $\frac{1}{n} C_n$, where 
	$C_n$ is $\mathtt{oCRP}_{m}^{(\alpha)}(\theta_1,\theta_2)$ with $m= \|C(2 n t)\|$. 
	By Proposition~\ref{prop:crp-pdip}, we can strengthen \eqref{eq:cv-nu} to the following weak convergence on $\cI_H\setminus \{\emptyset\}$: 
	\[
	\bP \left(C^{(n)}(t)\in \cdot \,\middle|\, \zeta (C^{(n)} )> t\right) 
	\underset{n\to \infty}{\longrightarrow} N_t(\,\cdot \,|\, \cI_H\setminus\{\emptyset\}). 
	\]		
	Next, by the Markov property of a PCRP and the convergence result Theorem~\ref{thm:crp-ip-bis}, we deduce that, conditionally on $\{ \zeta (C^{(n)} )> t\}$, the process $(C^{(n)}(t+s), s\ge 0)$ converges weakly to an $\mathrm{SSIP}^{(\alpha)}(\theta_1,\theta_2)$-evolution $(\beta(s),\,s\ge 0)$ starting from  $\beta(0)\sim N_t(\,\cdot \,|\, \cI_H\setminus\{\emptyset\})$. By the description of $\Theta$ in  \eqref{eq:Theta:entrance}, this implies the convergence of finite-dimensional distributions for times 
	$t\le t_1<\cdots<t_k$. For $t>t_1$, this holds under $\mathrm{P}^{(n)}(\,\cdot\,|\,\zeta>t_1)$ and $\Theta(\,\cdot\,|\,\zeta>t_1)$ and can be further conditioned on $\{\zeta>t\}$, by 1.\ and 2.
	
	It remains to prove tightness. 
	For every $n\ge 1$, let $\tau_n$ be a stopping time with respect to the natural filtration of $C^{(n)}$ and $h_n$ a positive constant. Suppose that the sequence $\tau_n$ is bounded and $h_n\to 0$. 
	By Aldous's criterion \cite[Theorem 16.11]{Kallenberg}, it suffices to show that for any $\delta>0$,  
	\begin{equation}\label{eq:Aldous}
		\lim_{n\to\infty} \bP\left(d_H\left(C^{(n)}(\tau_n\!+\!h_n) , C^{(n)}(\tau_n) \right) >\delta \,\middle|\, \zeta (C^{(n)} )> t\right) =0. 
	\end{equation}
	By the total mass convergence in Proposition~\ref{prop:vague}, for any $\varepsilon>0$, there exists a constant $s>0$, such that 
	\begin{equation}\label{eq:Aldous-1}
		\limsup_{n\to\infty} \bP\left(\sup_{r\le 2s} \|C^{(n)}(r)\| >\delta/3 \,\middle|\, \zeta (C^{(n)} )> t\right)  \le \varepsilon. 
	\end{equation}
	Moreover, since $(C^{(n)}(s+z), z\ge 0)$ conditionally on $\{ \zeta (C^{(n)} )> s\}$ converges weakly to a continuous process, by \cite[Proposition~VI.3.26]{JacodShiryaev} we have for any $u>s$, 
	\begin{equation}\label{eq:Aldous-2}
		\lim_{n\to\infty} \bP\left(\sup_{r\in[s,u]} d_H\left(C^{(n)}(r\!+\!h_n) , C^{(n)}(r) \right) >\delta/3 \,\middle|\, \zeta (C^{(n)} )> t\right) =0.  
	\end{equation}
	Then \eqref{eq:Aldous} follows from \eqref{eq:Aldous-1} and \eqref{eq:Aldous-2}. This completes the proof. 	
\end{proof}

\subsection{The case $\alpha=0$}\label{sec:zero}

In a PCRP model with $\alpha=0$, the size of each table evolves according to an up-down chain $\pi(0)$ as in \eqref{eq:pi}, and new tables are only started to the left or to the right, but not between existing tables. 
We can hence build a $\mathrm{PCRP}^{(0)}(\theta_1,\theta_2)$ starting from  $(n_1, \ldots,n_k)\in \cC$ by a Poissonian construction. Specifically, consider independent $\mathbf{f}_i\sim \pi_{n_i}(0)$, $i\in [k]$, as size evolutions of the initial tables, $\mathbf{F}_1\sim \mathtt{PRM}(\theta_1\mathrm{Leb}\otimes \pi_1(0))$ whose atoms  describe the birth times and size evolutions of new tables added to the left, and $\mathbf{F}_2\sim \mathtt{PRM}(\theta_2\mathrm{Leb}\otimes \pi_1(0))$ for new tables added to the right. 
For $t\ge 0$, set
\[
C_1(t)= \Concat_{\text{atoms }(s,f)\text{ of }\mathbf{F}_1\downarrow: s\le t} \{(0, f(t-s))\},
\]
where $\downarrow$ means that the concatenation is from larger $s$ to smaller, 
\[
C_0(t)=  \Concat_{i\in [k]} \{(0,\mathbf{f}_i(t))\}, \text{ and } 
C_2(t)= \Concat_{\text{atoms }(s,f)\text{ of }\mathbf{F}_2:s\le t} \{(0, f(t-s))\}. 
\]
Then $(C(t)= C_1(t)\concat C_0(t)\concat C_2(t) ,t\ge 0)$ is  a  $\mathrm{PCRP}^{(0)}(\theta_1,\theta_2)$ starting from  $(n_1, \ldots,n_k)$.

\begin{proposition}
	The statement of Theorem~\ref{thm:crp-ip} still holds when $\alpha=0$. 
\end{proposition}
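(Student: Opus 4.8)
The approach is to follow the proof of Theorem~\ref{thm:crp-ip} above, which collapses to something much simpler when $\alpha=0$: since tables never spawn neighbours, there is no scaffolding-and-spindles construction, and the Poissonian description of a $\mathrm{PCRP}^{(0)}(\theta_1,\theta_2)$ recalled at the start of this section already splits it into independent pieces, each of which has a clean scaling limit. First I would define the limiting diffusion. Fix $\gamma\in\cI_H$, let $\mathbf{Z}_U\sim\besq_{\mathrm{Leb}(U)}(0)$ (killed at $0$), $U\in\gamma$, be independent, and let $\mathbf{G}_i\sim\PRM(2\theta_i\,\mathrm{Leb}\otimes\Lambda^{(0)}_{\mathtt{BESQ}})$, $i=1,2$, with $\Lambda^{(0)}_{\mathtt{BESQ}}$ the $\besq(0)$ excursion measure of \eqref{eq:besq-exc}, everything jointly independent. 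Set $\beta_0(t)=\Concat_{U\in\gamma}\{(0,Z_U(t))\}$, $\beta_1(t)=\Concat_{(s,f)\text{ of }\mathbf{G}_1\downarrow\colon s\le t}\{(0,f(t-s))\}$ with the concatenation from larger $s$ to smaller, $\beta_2(t)=\Concat_{(s,f)\text{ of }\mathbf{G}_2\colon s\le t}\{(0,f(t-s))\}$, and $\beta(t)=\beta_1(t)\concat\beta_0(t)\concat\beta_2(t)$; call $(\beta(t),t\ge0)$ an $\mathrm{SSIP}^{(0)}(\theta_1,\theta_2)$-evolution starting from $\gamma$. Its total mass is a $\besq_{\|\gamma\|}(2\theta)$, $\theta=\theta_1+\theta_2$, being the sum of the independent total masses $\besq_{\|\gamma\|}(0)$, $\besq_0(2\theta_1)$ and $\besq_0(2\theta_2)$ of the three summands (Pitman--Yor; equivalently the $\emptyset$-started total-mass limits of $\mathrm{PCRP}^{(0)}(\theta_i,0)$, whose total masses are the chains $\pi^{(n)}(\theta_i)\to\besq(2\theta_i)$ by Lemma~\ref{lem:ud}). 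Path-continuity in $(\cI_H,d_H)$ and the Hunt property are read off this construction by the same, easier, arguments as for $\alpha\in(0,1)$, the contribution of the short excursions of $\mathbf{G}_1,\mathbf{G}_2$ being controlled by a $\besq$-total-mass as in the proof of Lemma~\ref{lem:cv-clade}; alternatively they follow a posteriori from the convergence below, since a limit of convergent Markov processes is Markov and the explicit limit has continuous paths.

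For the convergence I would reuse the three-step pattern of Lemma~\ref{lem:cv-clade} and Theorem~\ref{thm:crp-ip-0}. \emph{Single table}: if $C^{(n)}(0)=\{(0,b^{(n)})\}$ with $b^{(n)}/n\to b>0$, the rescaled table size has law $\pi^{(n)}_{b^{(n)}}(0)$, so by Lemma~\ref{lem:ud} (here $\theta=0\in(-1,0]$, so the first hitting time of $0$ converges too) $\frac{1}{n}\odotip C^{(n)}(2n\,\cdot\,)$ converges to $(\{(0,Z(t))\},t\ge0)$ with $Z\sim\besq_b(0)$, jointly with $\zeta^{(n)}/2n\to\zeta$; extending to a general $\gamma$ for the middle part is done as in the proof of Theorem~\ref{thm:crp-ip-0}. \emph{Immigration}: reparametrising time by $2n$ and rescaling space by $1/n$ turns the left-immigration intensity $\theta_1\,\mathrm{Leb}\otimes\pi_1(0)$ into $2n\theta_1\,\mathrm{Leb}\otimes\widetilde\pi^{(n)}_1(0)$, and Proposition~\ref{prop:vague} at $\theta=0$ gives $2n\theta_1\widetilde\pi^{(n)}_1(0)\to2\theta_1\Lambda^{(0)}_{\mathtt{BESQ}}$ vaguely, hence $\mathbf{G}_1^{(n)}\to\mathbf{G}_1$ in distribution in the vague topology by \cite[Theorem~4.11]{KallenbergRM}; likewise $\mathbf{G}_2^{(n)}\to\mathbf{G}_2$, jointly and independently of the initial-table evolutions. \emph{Assembly}: since $\frac{1}{n}\odotip C^{(n)}(2n\,\cdot\,)$ is literally $\beta_1^{(n)}\concat\beta_0^{(n)}\concat\beta_2^{(n)}$, I would run the truncation argument of Lemma~\ref{lem:cv-clade} and Theorem~\ref{thm:crp-ip-0}: retain only finitely many initial blocks and, for $\rho>0$, only the immigrant excursions of lifetime $>\rho$ (finitely many on any compact time window, by \eqref{eq:besq-exc}); the retained part converges uniformly, while the discarded part has total mass dominated by a convergent $\besq$-total-mass, uniformly small for $\rho$ small, which gives uniform — hence Skorokhod — convergence of the whole process, i.e.\ \eqref{mainthmeq}. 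The joint convergence $\zeta^{(n)}/2n\to\zeta$ for $\gamma\ne\emptyset$ follows because $\zeta$ is the first hitting time of $0$ by the total mass $\besq_{\|\gamma\|}(2\theta)$, the total-mass chains are $\pi^{(n)}(2\theta)$, and Lemmas~\ref{lem:ud}--\ref{lem:ud-bis} give joint hitting-time convergence for all $\theta\ge0$ (trivially when $\theta\ge1$, where both sides are $+\infty$). The case $\gamma=\emptyset$ is handled as in Lemma~\ref{lem:entrance}: $\beta_0\equiv\emptyset$, the two immigration parts alone give the limit, which is the constant $\emptyset$ precisely when $\theta_1=\theta_2=0$.

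The step needing care — rather than a genuine obstacle, which is why the $\alpha=0$ case is called straightforward — is the bookkeeping in the assembly step when $\gamma$ has infinitely many blocks and $\mathbf{G}_1,\mathbf{G}_2$ have infinitely many (necessarily short) atoms: one must show that the $d_H$-contribution of the omitted blocks and excursions is negligible uniformly in $n$ and in time. This is done exactly as in Lemma~\ref{lem:cv-clade} and the proof of Theorem~\ref{thm:crp-ip-0}, by bounding $d_H(\beta,\beta_{\mathrm{retained}})$ by the omitted total mass, which is a $\besq_r(2\theta')$-type process with $r,\theta'$ small and which converges, so no argument beyond the $\alpha\in(0,1)$ ones is required.
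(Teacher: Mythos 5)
Your proposal is correct and follows essentially the same route as the paper: the paper also uses the Poissonian description of $\mathrm{PCRP}^{(0)}(\theta_1,\theta_2)$, rescales the immigration point measures so that Proposition~\ref{prop:vague} (at $\theta=0$) and \cite[Theorem~4.11]{KallenbergRM} give convergence to $\mathtt{PRM}(2\theta_i\mathrm{Leb}\otimes\Lambda^{(0)}_{\mathtt{BESQ}})$, handles the initial table(s) via Lemma~\ref{lem:ud}, identifies the same limit \eqref{eq:ssip0}, and defers the assembly/truncation and the extension to general initial states to the arguments of Lemma~\ref{lem:cv-clade} and Theorem~\ref{thm:crp-ip-0}. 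Your write-up merely spells out a few points (general $\gamma$, hitting times) that the paper leaves to those same references.
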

\begin{proof}
	We only prove this for the case when the initial state is $C^{(n)}(0)= \{(0,b^{(n)})\}$ with $\lim_{n\to \infty} b^{(n)}/n = b> 0$. 
	Then we can extend to a general initial state in the same way as we passed from Lemma~\ref{lem:cv-clade} to Theorem~\ref{thm:crp-ip-0}. 
	
	For each $n\ge 1$, we may assume $C^{(n)}$ is associated with $\mathbf{F}_1\sim \mathtt{PRM}(\theta_1\mathrm{Leb}\otimes \pi_1(0))$, $\mathbf{F}_2\sim \mathtt{PRM}(\theta_1\mathrm{Leb}\otimes \pi_1(0))$ and $\mathbf{f}^{(n)}_0\sim \pi_{b^{(n)}}(0)$. 
	Replacing each atom $\delta(s,f)$ of $\mathbf{F}_1$ and $\mathbf{F}_2$ by $\delta(s/2n,  f(2n\cdot)/n)$, we obtain $\mathbf{F}^{(n)}_1\sim \mathtt{PRM}(2n\theta_1\mathrm{Leb}\otimes \pi^{(n)}_1(0))$ and 
	$\mathbf{F}^{(n)}_2\sim \mathtt{PRM}(2n\theta_2\mathrm{Leb}\otimes \pi^{(n)}_1(0))$. Note that 
	$(\frac{1}{n}C^{(n)}(2nt),t\ge 0)$ is associated with $(\mathbf{F}^{(n)}_1, \mathbf{F}^{(n)}_2, \mathbf{f}^{(n)}(2n\cdot)/n)$. 
	
	Since Proposition~\ref{prop:vague} shows that $n\pi^{(n)}_1(0)\to \Lambda_{\mathtt{BESQ}}^{(0)}$, 
	by \cite[Theorem 4.11]{KallenbergRM}, we deduce that $\mathbf{F}^{(n)}_1$ and $\mathbf{F}^{(n)}_2$ converge in distribution respectively to  $\mathbf{F}^{(\infty)}_1\sim  \mathtt{PRM}(2\theta_1\mathrm{Leb}\otimes \Lambda_{\mathtt{BESQ}}^{(0)})$ and $\mathbf{F}^{(\infty)}_2\!\sim\!  \mathtt{PRM}(2\theta_2\mathrm{Leb}\otimes \Lambda_{\mathtt{BESQ}}^{(0)})$. 
	By Lemma~\ref{lem:ud}, $\mathbf{f}^{(n)}(2n\cdot)/n \!\to\! \mathbf{f}^{(\infty)}\!\sim\! \besq_b(0)$ in distribution. 
	
	As a result, we can deduce that $(\frac{1}{n}C^{(n)}(2nt),t\ge 0)$ converges to an $\cI_H$-valued process $(\beta(t),t\ge 0)$ defined by
	\begin{multline}\label{eq:ssip0}
		\beta(t)= 
		\bigg(\Concat_{\text{atoms }(s,f)\text{ of }\mathbf{F}^{(\infty)}_1\downarrow: s\le t}\!\! \{(0, f(t\!-\!s))\}\bigg)\\ 
		\concat \{(0,\mathbf{f}^{(\infty)}(t) )\} \concat
		\bigg( \Concat_{\text{atoms }(s,f)\text{ of }\mathbf{F}^{(\infty)}_2: s\le t}\!\! \{(0, f(t\!-\!s))\}\bigg). 
	\end{multline}	
	A rigorous argument can be made as in the proof of Lemma~\ref{lem:cv-clade}.
\end{proof}

The limiting process in \eqref{eq:ssip0} can be viewed as an $\mathrm{SSIP}^{(0)}(\theta_1,\theta_2)$-evolution, which is closely related to the construction of measure-valued processes in \cite{Shiga1990}. See also \cite[Section 7.1]{FVAT}.

\section{Applications}\label{sec:appl}

\subsection{Measure-valued processes}\label{sec:FV}
In \cite{FVAT}, we introduced a two-parameter family of superprocesses taking values in the space $(\cM^a,d_{\cM})$ of all purely atomic finite measures on a space of allelic types, say $[0,1]$. Here $d_{\cM}$ is the Prokhorov distance.   
Our construction is closely related to that of SSIP-evolutions, here extracting from scaffolding and spindles via the following \emph{superskewer} mapping. See Figure~\ref{fig:scaf-marks} on page \pageref{fig:scaf-marks} for an illustration.

\begin{definition}[Superskewer] \label{def:superskewer}
	Let $V= \sum_{i\in I} \delta(t_i,f_i,x_i)$ be a point measure on $\bR\times \cE\times [0,1]$ and $X$ a c\`adl\`ag process such that 
	\[\sum_{\Delta X(t)> 0} \delta(t, \Delta X(t)) = \sum_{i\in I} \delta(t_i, \zeta(f_i)).\] 
	The \emph{superskewer} of the pair $(V,X)$ at level $y$ is the atomic measure
	\begin{equation}
		\sskewer(y,V,X) :=    \sum_{i\in I\colon X(t_i-)\le y <X(t_i)}  f_i\big( y- X(t_i-) \big) \delta(x_i).  
	\end{equation}
\end{definition}

For $\alpha\in (0,1)$, $\theta\ge 0$, recall the scaffolding-and-spindles construction of an $\mathrm{SSIP}^{(\alpha)}(\theta)$-evolution starting from $\gamma\in \cI_H$; in particular, for each $U\in \gamma$, there is an initial spindle $\mathbf{f}_U\sim \besq_{\mathrm{Leb}(U)} (-2\alpha)$. 
For any collection $x_U\in[0,1]$, $U\in \gamma$, we can construct a self-similar superprocess $\mathrm{SSSP}^{(\alpha)}(\theta)$ starting from $\pi= \sum_{U\in \gamma} \mathrm{Leb}(U) \delta(x_U)$ as follows. 
We mark each initial spindle $\mathbf{f}_U$ by the allelic type $x_U$ and all other spindles in the construction by i.i.d.\ uniform random variables on $[0,1]$. 
Then we obtain the desired superprocess by repeating the construction of an $\mathrm{SSIP}^{(\alpha)}(\theta)$-evolution in Definitions \ref{defn:ip0}--\ref{defn:alphatheta}, with skewer replaced by superskewer, 
and concatenation replaced by addition. We refer to \cite{FVAT} for more details. 

We often write $\pi\in \cM^a$ in \em canonical representation \em $\pi = \sum_{i\ge 1} b_i\delta(x_i)$ with  
$b_1\ge b_2\ge \cdots$ and $x_i <x_{i+1}$ if $b_i=b_{i+1}$. We write $\|\pi\|:=\pi([0,1])=\sum_{i\ge 1}b_i$ for the \em total mass \em of $\pi$. 
\begin{definition}\label{defn:sssp}
	Let $\alpha\in (0,1)$ and $\theta\in [-\alpha,0)$. 
	We define a process $(\pi(t),\,t\ge 0)$ starting from $\pi(0)\in \cM^a$ by the following construction. 
	\begin{itemize}
		\item Set $T_0=0$. For $\pi(0) = \sum_{i\ge 1} b_i\delta(x_i)$ in canonical representation, 
		consider $\mathbf{x}^{(0)}:= x_1$ and independent 
		$\ff^{(0)}\sim \besq_{b_1}(-2\alpha)$ and 
		$\boldsymbol{\lambda}^{(0)}\sim \mathrm{SSSP}^{(\alpha)}(\theta\!+\!\alpha)$ starting from $\sum_{i\ge 2} b_i\delta(x_i)$. 
		\item For $k\ge 1$, suppose by induction we have obtained $(\boldsymbol{\lambda}^{(i)},\ff^{(i)},\mathbf{x}^{(i)}, T_i)_{0\le i\le k-1}$. 
		Then we set $T_{k}= T_{k-1} +\zeta(\ff^{(k-1)}) $ and 
		\[
		\pi(t) = \lambda^{(k-1)}(t\!-\!T_{k-1})+\ff^{(k-1)}(t\!-\!T_{k-1}) \delta (\mathbf{x}^{(k-1)}),\qquad t\in  [T_{k-1}, T_{k}]. 
		\]
		Write $\pi(T_{k}) = \sum_{i\ge 1} b^{(k)}_i\delta(x^{(k)}_i)$, with $b^{(k)}_1\ge b^{(k)}_2\ge \cdots$, for its canonical representation. 
		Conditionally on the history, construct independent 	$\boldsymbol{\lambda}^{(k)}\sim \mathrm{SSSP}^{(\alpha)}(\theta\!+\!\alpha)$ starting from $\sum_{i\ge 2} b^{(k)}_i\delta(x^{(k)}_i)$ and $\ff^{(k)}\sim \besq_{b^{(k)}_1}(-2\alpha)$. Let $\mathbf{x}^{(k)}= x^{(k)}_1$. 
		\item Let $T_{\infty}= \lim_{k\to\infty} T_k$ and $\pi(t) = 0$ for $t\ge T_{\infty}$. 
	\end{itemize}
	The process $\boldsymbol{\pi}:=(\pi(t),t\ge 0)$  is called an \emph{$(\alpha,\theta)$ self-similar superprocess}, $\mathrm{SSSP}^{(\alpha)}(\theta)$. 
\end{definition}

For any $\pi(0)=\sum_{i\ge 1}b_i\delta(x_i)\in\cM^a$ consider $\beta(0)=\{(s(i-1),s(i)),\,i\ge 1\}\in\mathcal{I}_H$, where 
$s(i)=b_1+\cdots+b_i$, $i\ge 0$. Consider an $\mathrm{SSIP}^{(\alpha)}(\theta\!+\!\alpha, 0)$-evolution starting from $\beta(0)$, built 
in Definition~\ref{defn:ipe} and use notation therein. 
As illustrated in Figure~\ref{fig:scaf-marks}, we may assume that each interval partition evolution is obtained from the skewer of marked spindles. Therefore, we can couple each $\mathrm{SSIP}^{(\alpha)}(\theta\!+\!\alpha)$-evolution $\boldsymbol{\gamma}_1^{(k)}$ with 
an  $\boldsymbol{\lambda}_1^{(k)}\sim \mathrm{SSSP}^{(\alpha)}(\theta\!+\!\alpha)$, such that the atom sizes of the latter correspond to the interval lengths of the former. 
Similarly, each $\mathrm{SSIP}^{(\alpha)}(0)$-evolution $\boldsymbol{\gamma}_2^{(k)}$ corresponds to a $\boldsymbol{\lambda}_2^{(k)}\sim \mathrm{SSSP}^{(\alpha)}(0)$.   
Then $\boldsymbol{\lambda}^{(k)}= \boldsymbol{\lambda}_1^{(k)}\!+\! \boldsymbol{\lambda}_2^{(k)}$ is an $\mathrm{SSSP}^{(\alpha)}(\theta\!+\!\alpha)$ by definition.  
Let $\ff^{(k)}$ be the \emph{middle} (marked) spindle in Definition~\ref{defn:ipe}, which is  a $\besq(-2\alpha)$, and $\mathbf{x}^{(k)}$ be its type. 
In this way, we obtain a sequence $(\boldsymbol{\lambda}^{(k)},\ff^{(k)},\mathbf{x}^{(k)})_{k\ge 0}$ and thus $\boldsymbol{\pi}=(\pi(t),\,t\ge 0)\sim\mathrm{SSSP}^{(\alpha)}(\theta)$ as in Definition~\ref{defn:sssp}. It is coupled with $\boldsymbol{\beta}=(\beta(t),\,t\ge 0)\sim\mathrm{SSIP}^{(\alpha)}(\theta\!+\!\alpha, 0)$ as in Definition \ref{defn:ipe}, such that atom sizes and interval lengths are matched, and the renaissance level $(T_k)_{k\ge 0}$ are exactly the same.

The next theorem extends \cite[Theorem 1.2]{FVAT} to $\theta\in [-\alpha,0)$. 
\begin{theorem}
	Let $\alpha\!\in\! (0,1)$, $\theta\!\in\! [-\alpha,0)$. An $\mathrm{SSSP}^{(\alpha)}(\theta)$ 
	is a Hunt process with $\besq(2\theta)$ total mass, paths that are total-variation continuous, and its finite-dimensional marginals are continuous along sequences of initial states that converge in total variation. 
\end{theorem}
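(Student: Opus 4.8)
The plan is to transfer all four asserted properties of the $\mathrm{SSSP}^{(\alpha)}(\theta)$ for $\theta\in[-\alpha,0)$ from the already-established properties of the $\mathrm{SSIP}^{(\alpha)}(\theta+\alpha,0)$-evolution, using the explicit pathwise coupling constructed just before the theorem statement. Under that coupling, the renaissance levels $(T_k)_{k\ge 0}$ coincide, the middle spindles $\ff^{(k)}$ and types $\mathbf{x}^{(k)}$ are shared, and at every time $t\ge 0$ the atom sizes of $\pi(t)$ are exactly the interval lengths of $\beta(t)$, with the type of each atom recording which block it came from. Since here $\theta+\alpha\in[0,\alpha)\subset[0,1)$, we have $\theta:=(\theta+\alpha)+0-\alpha<1$, so by Theorem~\ref{thm:IvaluedMP}(iii) the degeneration level $T_\infty$ is a.s.\ finite and $\beta(t)\to\emptyset$ as $t\uparrow T_\infty$; correspondingly $\pi(t)\to 0$, and both processes are absorbed, which is consistent with Definition~\ref{defn:sssp}. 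So the coupling is genuinely global.

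First I would record the total-mass statement: under the coupling $\|\pi(t)\|=\|\beta(t)\|$ for all $t$, and by Proposition~\ref{prop:mass} (or Theorem~\ref{thm:IvaluedMP}(ii), since $\emptyset$ is absorbing in this regime) the latter is a $\besq(2\theta)$ killed at $0$; since $\theta<0$ this $\besq$ stays at $0$ forever once it hits it, matching our convention. Next, path-continuity in total variation: I would argue that the map sending a marked scaffolding-and-spindles configuration to the superskewer process is, on the relevant almost-sure event, a continuous perturbation of the skewer construction. More concretely, between consecutive renaissance levels the superprocess is $\boldsymbol{\lambda}^{(k)}+\ff^{(k)}\delta(\mathbf{x}^{(k)})$ with $\boldsymbol{\lambda}^{(k)}$ an $\mathrm{SSSP}^{(\alpha)}(\theta+\alpha)$ and $\ff^{(k)}$ a continuous $\besq(-2\alpha)$ excursion; total-variation continuity of $\mathrm{SSSP}^{(\alpha)}(\theta+\alpha)$ is \cite[Theorem 1.2]{FVAT} (valid since $\theta+\alpha\ge 0$), and adding a continuously-varying atom at a fixed location preserves total-variation continuity. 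At the renaissance times $T_k$ the left and right limits agree because the middle block shrinks continuously to zero mass before being removed and re-selected, exactly as in the interval-partition case; the Prokhorov/total-variation estimate $d_{\cM}(\pi,\pi')\le\|\pi-\pi'\|$ together with the continuity of the $\besq(2\theta)$ total mass handles the (at most countably many) exceptional times and the approach to $T_\infty$.

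For the Hunt property I would follow the template of Theorem~\ref{thm:IvaluedMP}(i) and of \cite{ShiWinkel-1}: the construction in Definition~\ref{defn:sssp} is an iteration of independent pieces governed by a regeneration structure at the renaissance levels, so the process is strong Markov (the branch point being a measurable function of the current state via the canonical representation, which is continuous at measures with a unique largest atom — and the largest atom is a.s.\ unique at the relevant stopping times because the middle spindle is an independent $\besq$ excursion). Combined with the càdlàg (indeed continuous) paths and quasi-left-continuity inherited from $\mathrm{SSSP}^{(\alpha)}(\theta+\alpha)$ and from $\besq(-2\alpha)$, this yields the Hunt property; alternatively, one can simply push the Hunt property of $\boldsymbol{\beta}$ through the coupling, noting that the coupling is a measurable, marginal-preserving correspondence and the filtration generated by $\boldsymbol{\pi}$ is contained in that generated by the marked scaffolding-and-spindles data. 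Finally, continuity of the finite-dimensional marginals along total-variation-convergent initial states: I would mimic the proof of Theorem~\ref{thm:continst}. Given $\pi_n(0)\to\pi(0)$ in total variation, I can couple the canonical representations so that the largest atom and its type converge and the remainders converge in total variation (using that $\pi(0)$ a.s.\ — or by a limiting argument, after a harmless perturbation — has a unique largest atom; if not unique one argues separately as in Lemma~\ref{lem:dH}'s proof); then invoke the continuity-in-initial-state of $\mathrm{SSSP}^{(\alpha)}(\theta+\alpha)$ from \cite{FVAT} together with the a.s.\ convergence of the $\besq(-2\alpha)$ middle spindles and their extinction times (scaling as in Theorem~\ref{thm:continst}), iterate over the renaissance levels, and extend past $T_\infty$ by absorption; the total mass convergence controls the tail, giving convergence of the finite-dimensional marginals.

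The main obstacle I expect is not any single step but the bookkeeping at the renaissance levels when the largest atom is not unique — both in the path-continuity argument (where $\phi$ / the canonical representation is discontinuous) and in the continuity-in-initial-state argument. In the interval-partition setting this was handled by the fact that the middle block, being an independent $\besq(-2\alpha)$ excursion, makes the longest block a.s.\ unique at each renaissance time; the same reasoning applies verbatim here because the coupling preserves the block/atom-size correspondence, so this is a routine adaptation rather than a new difficulty — but it does need to be stated carefully so that the measurable selection of $\mathbf{x}^{(k)}$ in Definition~\ref{defn:sssp} (which breaks ties by smallest type) is seen not to cause jumps in the Prokhorov metric at those times. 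Everything else is a direct transfer through the coupling of results already available in \cite{FVAT} and in the present paper.
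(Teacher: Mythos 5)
Your treatment of the total mass, the path-continuity (via $T_\infty<\infty$, $\|\pi(t)\|\to 0$, and \cite[Theorem 1.2]{FVAT} between renaissance levels) and the continuity in the initial state (coupling the largest atom and the remainder, a variant of \cite[Proposition 3.6]{FVAT}, induction over renaissance levels) matches the paper's argument. The genuine gap is in the Hunt/Markov property. Your two suggested routes both skip the real difficulty: the state $\pi(t)=\lambda(t)+\ff(t)\delta(\mathbf{x}(t))$ does \emph{not} determine, at a generic time $t\in(T_{k-1},T_k)$, which atom is the distinguished ${\tt BESQ}(-2\alpha)$ spindle whose death triggers the next renaissance --- it was the largest atom at time $T_{k-1}$, but need not be the largest at time $t$. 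Hence the future law, as constructed in Definition~\ref{defn:sssp}, is a priori a function of the decomposition $(\lambda(t),\ff(t),\mathbf{x}(t))$ and not of the measure $\pi(t)$ alone; a ``regeneration structure at the renaissance levels'' only gives the Markov property at those special times, and a measurable functional of a Hunt process (your alternative of pushing the Hunt property of $\boldsymbol{\beta}$, or of the marked scaffolding data, through the coupling) is in general not Markov --- that is exactly the issue, not a way around it.

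The paper resolves this by introducing a richer, triple-valued process $(\lambda(t),\ff(t),\mathbf{x}(t))$, which is a Borel right Markov process by \cite[Th\'eor\`eme II 3.18]{Bec07}, and then verifying Dynkin's criterion: for any two decompositions $\lambda_1(0)+\ff_1(0)\delta(\mathbf{x}_1(0))=\lambda_2(0)+\ff_2(0)\delta(\mathbf{x}_2(0))$ of the same measure, it couples the two triple-valued evolutions so that the induced $\cM^a$-valued processes coincide. This coupling is itself a nontrivial construction (a quintuple-valued process carrying \emph{two} marked atoms, whose common part evolves as an $\mathrm{SSSP}^{(\alpha)}(\theta+2\alpha)$ alongside two independent ${\tt BESQ}(-2\alpha)$ spindles, iterated at the successive spindle deaths until one marked atom becomes the largest, with the $\{N=\infty\}$ case controlled by total-mass extinction), in the spirit of \cite[Lemma 3.3]{ShiWinkel-1}. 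Without this step --- or an equivalent argument showing the semigroup depends on the enriched state only through $\pi(t)$ --- your proof does not establish that $\mathrm{SSSP}^{(\alpha)}(\theta)$ is even Markov, let alone Hunt; the remaining parts of your proposal are sound but this omission concerns the core of the theorem.
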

\begin{proof} For any $\pi(0)=\sum_{i\ge 1}b_i\delta(x_i)\in\cM^a$ consider 
	$(\pi(t),\,t\ge 0)\sim\mathrm{SSSP}^{(\alpha)}(\theta)$ and $(\beta(t),\,t\ge 0)\sim\mathrm{SSIP}^{(\alpha)}(\theta\!+\!\alpha, 0)$
	coupled, as above. By \cite[Theorem 1.4]{ShiWinkel-1}, their (identical) total mass processes are $\besq(2\theta)$. 
	Moreover, by this coupling and Theorem~\ref{thm:IvaluedMP}~(\ref{item:levels}),
	\begin{equation}\label{eq:sssp-T-infty}
		T_{\infty}<\infty, ~\text{and}~\lim_{t\to T_\infty} \|\pi(t)\|=0 \quad \text{a.s.,}
	\end{equation}
	which implies the path-continuity at $T_{\infty}$. Since an $\mathrm{SSSP}^{(\alpha)}(\theta\!+\!\alpha)$ has continuous paths in total variation \cite[Theorem 1.2 and Corollary~5.5]{FVAT}, we conclude the path-continuity of $\mathrm{SSSP}^{(\alpha)}(\theta)$ by the construction in Definition~\ref{defn:sssp}, both in the Prokhorov sense and in the stronger total variation sense.  
	
	To prove the Hunt property, we adapt the proof of \cite[Theorem 1.4]{ShiWinkel-1} and apply Dynkin's criterion to a richer Markov process that records more information from the construction. Specifically, in the setting of Definition~\ref{defn:sssp}, let 
	\[
	(\lambda(t) ,\mathbf{f}(t),\mathbf{x}(t) ):=  \left(\lambda^{(k-1)}(t\!-\!T_{k-1}), \mathbf{f}^{(k-1)}(t\!-\!T_{k-1}), \mathbf{x}^{(k-1)}\right),\quad t\in  [T_{k-1}, T_{k}), k\ge 1.  
	\]
	and $(\lambda(t),\mathbf{f}(t),\mathbf{x}(t)) :=(0,0,0)$ for $t\ge T_{\infty}$. We shall refer to this process as a \emph{triple-valued 
		$\mathrm{SSSP}^{(\alpha)}(\theta)$} with values in $\widetilde{\mathcal{J}}:=(\cM^a\times(0,\infty)\times[0,1])\cup\{(0,0,0)\}$. 
	This process induces the $\cM^a$-valued $\mathrm{SSSP}^{(\alpha)}(\theta)$ as $\pi(t)=\lambda(t)+\mathbf{f}(t)\delta(\mathbf{x}(t))$. 
	Since each $(\boldsymbol{\lambda}^{(k)},\ff^{(k)},\mathbf{x}^{(k)})$ is Hunt and is built conditionally on the previous ones according to a probability kernel, then $(\lambda(t),\mathbf{f}(t), \mathbf{x}(t))$, $t\ge 0$, is a Borel right Markov process by 
	\cite[Th\'eor\`eme II 3.18]{Bec07}. 
	
	To deduce that $(\pi(t),t\ge 0)$ is Borel right Markovian, and hence Hunt by path-continuity, we use Dynkin's criterion \cite[Theorem~2 and the remark below]{RogersPitman}. Specifically, consider any 
	$(\lambda_1(0),\mathbf{f}_1(0),\mathbf{x}_1(0)), (\lambda_2(0),\mathbf{f}_2(0),\mathbf{x}_2(0))\in\widetilde{\mathcal{J}}$ with
	\[
	\lambda_1(0)+\mathbf{f}_1(0)\delta(\mathbf{x}_1(0))=\lambda_2(0)+\mathbf{f}_2(0)\delta(\mathbf{x}_2(0)).
	\]
	It suffices to couple 
	triple-valued $\mathrm{SSSP}^{(\alpha)}(\theta)$ from these two initial states whose induced $\cM^a$-valued $\mathrm{SSSP}^{(\alpha)}(\theta)$ coincide. 
	
	First note that (unless they are equal) the initial states are such that for $t=0$ and $i=1,2$, 
	\begin{equation}\label{eq:mvcoupling}\lambda_1(t)=\mu(t)+\mathbf{f}_2(t)\delta(\mathbf{x}_2(t))\quad\mbox{and}\quad
		\lambda_2(t)=\mu(t)+\mathbf{f}_1(t)\delta(\mathbf{x}_1(t))
	\end{equation} 
	for some $\mu(t)\in\mathcal{M}^a$. We follow similar arguments as in the proof of \cite[Lemma 3.8]{ShiWinkel-1}, via a quintuple-valued 
	process $(\mu(t),\mathbf{f}_1(t),\mathbf{x}_1(t),\mathbf{f}_2(t),\mathbf{x}_2(t))$, $0\le t<S_N$, that captures two marked types. Let $S_0:=0$. For $j\ge 0$, suppose we have constructed the process on $[0,S_j]$. 
	\begin{itemize}
		\item Conditionally on the history, consider an ${\rm SSSP}^{(\alpha)}(\theta\!+\!2\alpha)$-evolution 
		$\boldsymbol{\mu}^{(j)}$ starting from $\mu(S_j)$, and $\ff^{(j)}_i\sim{\tt BESQ}_{\mathbf{f}_i(S_j)}(-2\alpha)$, $i=1,2$, independent of each other. Let 
		$\Delta_j:=\min\{\zeta(\ff_1^{(j)}),\zeta(\ff_2^{(j)})\}$ and $S_{j+1}:=S_j+\Delta_j$. For $t\in[S_j,S_{j+1})$, define
		\[
		\left(\mu(t),\mathbf{f}_1(t),\mathbf{x}_1(t),\mathbf{f}_2(t),\mathbf{x}_2(t)\right)
		:=\Big(\!\mu^{(j)}(t\!-\!S_j),\ff_1^{(j)}(t\!-\!S_j),\mathbf{x}_1(S_j),\ff_2^{(j)}(t\!-\!S_j),\mathbf{x}_2(S_j)\!\Big).
		\]
		\item Say $\Delta_j=\zeta(\ff_1^{(j)})$. If $\ff_{2}^{(j)}(\Delta_j)$ exceeds the size of the largest atom in $\mu^{(j)}(\Delta_j)$, let $N=j+1$. The construction is complete.
		Otherwise, let $(\mathbf{f}_2(S_{j+1}),\mathbf{x}_2(S_{j+1})):= (\ff_2^{(j)}(\Delta_j),\mathbf{x}_2(S_j))$ and 
		decompose $\mu^{(j)}(\Delta_j)= \mu(S_{j+1})+\mathbf{f}_1(S_{j+1})\delta(\mathbf{x}_1(S_{j+1}))$ by identifying its largest atom, giving rise to the five components.   
		Similar operations apply when $\Delta_j=\zeta(\ff_2^{(j)})$.
	\end{itemize}
	For $t\in[0,S_N)$, define $\lambda_i(t)$, $i=1,2$, by \eqref{eq:mvcoupling}. In general, we may have $N\in\mathbb{N}\cup\{\infty\}$. On the 
	event $\{N<\infty\}$, we further continue with the same triple-valued ${\rm SSSP}^{(\alpha)}(\theta)$ starting from the terminal value  
	$(\mu^{(N)}(\Delta_{N-1}),\mathbf{f}_i^{(N)}(\Delta_{N-1}),\mathbf{x}_i(\Delta_N))$, with $i\in \{1,2\}$ being the index such that $\mathbf{f}_i^{(N)}(\Delta_{N-1})>0$.
	
	By \cite[Corollary~5.11 and remark below]{FVAT} and the 
	strong Markov property of these processes applied at the stopping times $S_j$, we obtain two coupled triple-valued ${\rm SSSP}^{(\alpha)}(\theta)$, which induce the same $\cM^a$-valued ${\rm SSSP}^{(\alpha)}(\theta)$, as required. 
	Indeed, the construction of these two processes is clearly complete on $\{N<\infty\}$. 
	On $\{N=\infty\}$, by \eqref{eq:sssp-T-infty} one has $\{S_\infty<\infty\}$ and the total mass tends to zero as $t\uparrow S_\infty$, and hence the construction is also finished. 
	
	For the continuity in the initial state, suppose that 
	$\pi_n(0)=\mathbf{f}_n^{(0)}(0)\delta(x_1)+\lambda_n^{(0)}(0)\rightarrow\pi(0)=\mathbf{f}^{(0)}(0)\delta(x_1)+\lambda^{(0)}(0)$ in total variation. 
	First note that a slight variation of the proof of \cite[Proposition 3.6]{FVAT} allows to couple $\boldsymbol{\lambda}_n^{(0)}$ and 
	$\boldsymbol{\lambda}^{(0)}$ so that $\lambda_n^{(0)}(t_n)\rightarrow\lambda^{(0)}(t)$ in total variation a.s., for any fixed sequence $t_n\rightarrow t$.
	Also coupling $\mathbf{f}_n^{(0)}$ and $\mathbf{f}^{(0)}$, we can apply this on $\{\zeta(\mathbf{f}^{(0)})>t\}$ to obtain $\pi_n(t)\rightarrow\pi(t)$ for any fixed $t$, and on
	$\{\zeta(\mathbf{f}^{(0)})<t\}$ to obtain $\zeta(\mathbf{f}^{(0)}_n)\rightarrow\zeta(\mathbf{f}^{(0)})$ and $\pi_n(\zeta(\mathbf{f}^{(0)}_n))\rightarrow\pi(\zeta(\mathbf{f}^{(0)}))$ in total variation a.s.. By induction, this establishes the convergence of one-dimensional marginals on
	$\{T_\infty\!>\!t\}$, and trivially on $\{T_\infty\!<\!t\}=\{\pi(t)\!=\!0\}$. A further induction extends this to finite-dimensional marginals. 
\end{proof}

For $\alpha\in (0,1)$, $\theta\in [-\alpha,0)$, let $(B_1,B_2,\ldots)\sim \mathtt{PD}^{(\alpha)}( \theta)$ be a Poisson--Dirichlet sequence in the Kingman simplex and  $(X_i, i\ge 1)$ i.i.d.\ uniform random variables on $[0,1]$, further independent of $(B_1,B_2,\ldots)$. Define $\mathtt{PDRM}^{(\alpha)}( \theta)$ to be the distribution of the random probability measure $\overline{\pi}:= \sum_{i\ge 1} B_i \delta (X_i)$ on $\cM^a_1:= \{\mu\in \cM^a\colon \|\mu\|=1 \}$.  If $\theta=-\alpha$, then  $\overline{\pi}=\delta(X_1)$. 
\begin{proposition}\label{prop:ps-SSSP}
	Let $(Z(t),t\ge 0)$ be a $\besq (2 \theta)$ killed at zero with $Z(0)>0$, independent of $\overline\pi\sim \mathtt{PDRM}^{(\alpha)}( \theta)$. 
	Let $(\pi(t), t\ge 0)$ be an $\mathrm{SSSP}^{(\alpha)}(\theta)$  starting from $Z(0)   \overline\pi$. 
	Fix any $t\ge 0$, then $\pi(t)$ has the same distribution as $Z(t) \overline\pi$. 
\end{proposition}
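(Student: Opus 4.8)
The plan is to transfer the pseudo-stationarity from the interval-partition setting (Proposition~\ref{prop:ps-theta1theta2-nokill}, or rather its $\mathrm{SSIP}_{\!\dagger}$ version Proposition~\ref{prop:ps-theta1theta2}) to the measure-valued setting via the explicit coupling constructed just above the statement. Recall that for $\pi(0)=\sum_{i\ge 1}b_i\delta(x_i)\in\cM^a$ in canonical representation we built $\beta(0)=\{(s(i-1),s(i)),\,i\ge 1\}$ with $s(i)=b_1+\cdots+b_i$, and coupled $\boldsymbol{\pi}\sim\mathrm{SSSP}^{(\alpha)}(\theta)$ with $\boldsymbol{\beta}\sim\mathrm{SSIP}^{(\alpha)}(\theta+\alpha,0)$ (here $\theta\in[-\alpha,0)$ means $\theta_1=\theta+\alpha\in[0,\alpha)$, $\theta_2=0$, so $\emptyset$ is absorbing and $\mathrm{SSIP}^{(\alpha)}=\mathrm{SSIP}_{\!\dagger}^{(\alpha)}$), in such a way that at every time $t$ the \emph{multiset} of atom sizes of $\pi(t)$ equals the multiset of interval lengths of $\beta(t)$, the renaissance levels coincide, and the allelic types are carried along consistently.

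First I would record the key distributional input: by Proposition~\ref{prop:pdip-pd}, a $\mathtt{PDIP}^{(\alpha)}(\theta+\alpha,0)$ has ranked interval lengths distributed as $\mathtt{PD}^{(\alpha)}(\theta)$ (using $(\theta+\alpha)+0-\alpha=\theta$). Together with the fact that in the coupling the types attached to the blocks/atoms are (after the canonical ordering is broken) exchangeable i.i.d.\ uniforms on $[0,1]$, this identifies the law of the initial atomic measure: if $\beta(0)=Z(0)\odotip\bar\gamma$ with $\bar\gamma\sim\mathtt{PDIP}^{(\alpha)}(\theta+\alpha,0)$ and types chosen as in the $\mathrm{SSSP}$ construction, then the induced $\pi(0)$ has law $Z(0)\odotip\overline\pi$ with $\overline\pi\sim\mathtt{PDRM}^{(\alpha)}(\theta)$. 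I would need to check this matching of laws carefully, in particular that the way types are assigned to initial spindles in Definition~\ref{defn:sssp} (the largest atom gets split off and marked, the rest is fed into an $\mathrm{SSSP}^{(\alpha)}(\theta+\alpha)$) is consistent with simply sampling i.i.d.\ uniform marks on the blocks of a $\mathtt{PDIP}$; this should follow from the fact that the ranked-with-i.i.d.-uniform-labels representation of $\mathtt{PD}^{(\alpha)}(\theta)$ is invariant under the size-biased/canonical reshuffling used in the recursive construction.

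The main argument is then: start the coupled pair from $\beta(0)=Z(0)\odotip\bar\gamma$ and the corresponding $\pi(0)=Z(0)\odotip\overline\pi$. By Proposition~\ref{prop:ps-theta1theta2}, for fixed $t\ge 0$ we have $\beta(t)\overset{d}{=}Z(t)\odotip\bar\gamma$, i.e.\ $\beta(t)$ is $\|\beta(t)\|$ times a $\mathtt{PDIP}^{(\alpha)}(\theta+\alpha,0)$ with $\|\beta(t)\|\sim\besq_{Z(0)}(2\theta)$ at time $t$. The coupling gives $\|\pi(t)\|=\|\beta(t)\|$ and that the normalized atom sizes of $\pi(t)$ are exactly the normalized interval lengths of $\beta(t)$, so the ranked atom sizes of $\pi(t)/\|\pi(t)\|$ are $\mathtt{PD}^{(\alpha)}(\theta)$. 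The remaining point is that the types attached to these atoms at time $t$ are i.i.d.\ uniform on $[0,1]$ and independent of the sizes and of the total mass. This is the crux: all spindles other than the distinguished "middle" spindles carry i.i.d.\ uniform marks by construction, and the distinguished middle spindles also carry marks that — conditionally on everything — are uniform, because each new middle spindle's type is inherited as $\mathbf{x}^{(k)}=x_1^{(k)}$, the type of the largest atom of $\pi(T_k)$, and at a renaissance level the type of the largest atom of a $\mathtt{PDRM}$-type configuration is uniform independent of the configuration. Hence $\pi(t)$, being $\|\pi(t)\|$ times a $\mathtt{PD}^{(\alpha)}(\theta)$ sequence decorated with i.i.d.\ uniform types, has law $Z(t)\odotip\overline\pi$.

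The hard part will be the bookkeeping of types through the recursive/renaissance structure of Definition~\ref{defn:sssp}: one must verify that the "type of the largest block" extracted at each renaissance level is genuinely uniform and independent of the rest, and that re-entering the construction does not introduce bias. I expect this to follow from the exchangeability properties of the ordered paintbox representation (as in the Remark after Proposition~\ref{prop:ocrp-pdip}) combined with the strong Markov property at renaissance levels and the corresponding statement already proved for $\mathrm{SSIP}_{\!\dagger}$-evolutions; concretely one can phrase the whole argument at the level of the \emph{marked} scaffolding-and-spindles representation (Figure~\ref{fig:scaf-marks}), where the superskewer reads off sizes from the skewer and types from an independent i.i.d.\ uniform labelling, so that pseudo-stationarity of the skewer (Proposition~\ref{prop:ps-theta1theta2}) plus independence of the marks immediately yields pseudo-stationarity of the superskewer. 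I would present the proof in that marked-representation language to avoid a lengthy induction, citing \cite{FVAT} for the superskewer construction and the independence of marks, and \cite{ShiWinkel-1} for the $\mathrm{SSIP}_{\!\dagger}$ pseudo-stationarity wrapped into Proposition~\ref{prop:ps-theta1theta2}.
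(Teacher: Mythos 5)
Your proposal is correct and follows essentially the same route as the paper, whose proof is exactly the coupling of the $\mathrm{SSSP}^{(\alpha)}(\theta)$ with an $\mathrm{SSIP}^{(\alpha)}(\theta\!+\!\alpha,0)$-evolution combined with Proposition~\ref{prop:ps-theta1theta2} for the sizes and the definition of $\mathtt{PDRM}^{(\alpha)}(\theta)$ (i.i.d.\ uniform marks independent of the size system) for the types. The extra verification you worry about in the middle paragraph (uniformity of the largest atom's type at the random renaissance levels) is not needed once you adopt your final formulation: types are passive labels, so conditionally on the entire coupled size evolution each atom of $\pi(t)$ carries the mark of a distinct spindle and these marks are i.i.d.\ uniform, which is precisely how the paper concludes.
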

\begin{proof}
	By the coupling between an $\mathrm{SSIP}^{(\alpha)}(\theta\!+\!\alpha,0)$-evolution and an $\mathrm{SSSP}^{(\alpha)}(\theta)$ mentioned above, the claim follows from Proposition~\ref{prop:ps-theta1theta2} and the definition of $\mathtt{PDRM}^{(\alpha)}( \theta)$. 
\end{proof}

For $\theta\ge -\alpha$, 
let $\boldsymbol{\pi}:= (\pi(t),\, t\ge 0)$ be an $\mathrm{SSSP}^{(\alpha)} (\theta)$ starting from $\mu\in \cM^a_{1}$. Define an associated $\cM^a_{1}$-valued process via the de-Poissonisation as in Definition~\ref{defn:dePoi}:
\[
\overline{\pi}(u):= \big\| \pi(\tau_{\boldsymbol{\pi}}(u)) \big\|^{-1}  \pi(\tau_{\boldsymbol{\pi}}(u)),\qquad u\ge 0,
\]
where $\tau_{\boldsymbol{\pi}}(u):= \inf \big\{ t\ge 0\colon \int_0^t \|\pi(s)\|^{-1} d s>u \big\}$.  
The process $(\overline{\pi}(u),u\ge 0)$ on $\cM^a_{1}$  
is called a \emph{Fleming--Viot $(\alpha,\theta)$-process} starting from $\mu$, denoted by $\mathrm{FV}^{(\alpha)} (\theta)$. 

Using Proposition~\ref{prop:ps-SSSP}, we easily deduce the following statement by the same arguments as in the proof of \cite[Theorem~1.7]{FVAT}, 
extending \cite[Theorem~1.7]{FVAT} to the range $\theta\in [-\alpha,0)$.   
\begin{theorem}\label{thm:FV}
	Let $\alpha\in(0,1)$ and $\theta\ge -\alpha$. 
	A $\mathrm{FV}^{(\alpha)} (\theta)$-evolution is a total-variation path-continuous Hunt process on 
	$(\cM^a_{1},d_{\cM})$ and has a stationary distribution
	$\mathtt{PDRM}^{(\alpha)}( \theta)$. 
\end{theorem}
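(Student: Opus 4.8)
The plan is to obtain Theorem~\ref{thm:FV} as a de-Poissonised version of Proposition~\ref{prop:ps-SSSP}, mirroring exactly the argument that produces Theorem~\ref{thm:dP} from Proposition~\ref{prop:ps-theta1theta2-nokill} and that produces \cite[Theorem~1.7]{FVAT} from the pseudo-stationarity of $\mathrm{SSSP}^{(\alpha)}(\theta)$ in the range $\theta\ge 0$. First I would assemble the ingredients that the de-Poissonisation machinery of \cite[Section~5]{FVAT} requires for an $\mathrm{SSSP}^{(\alpha)}(\theta)$ with $\theta\in[-\alpha,0)$: (i) it is a Hunt process with total-variation continuous paths (the preceding theorem in this section); (ii) its total mass is a $\besq(2\theta)$, which for $\theta<0$ is absorbed at $0$ in finite time, so one de-Poissonises only up to the first hitting time of $0$; (iii) self-similarity of index $1$, which is immediate from the scaling of $\besq(2\theta)$ and the inductive construction in Definition~\ref{defn:sssp}; (iv) continuity in the initial state along total-variation-convergent sequences (again from the preceding theorem); and (v) the pseudo-stationarity identity of Proposition~\ref{prop:ps-SSSP}. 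These are precisely the hypotheses under which the argument of \cite[proof of Theorem~1.7]{FVAT} runs, so the bulk of the proof is a citation.

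The key steps, in order, are: verify that the time-change $\tau_{\boldsymbol{\pi}}(u)=\inf\{t\ge 0\colon\int_0^t\|\pi(s)\|^{-1}ds>u\}$ is a.s.\ well-defined and finite for all $u\ge 0$ before the total mass hits zero, using that $\int_0^{\overline\zeta}\|\pi(s)\|^{-1}ds=\infty$ a.s.\ for a $\besq(2\theta)$ excursion/path with $2\theta<2$ (this is the standard fact that makes the renormalised process run forever); check that $(\overline\pi(u),u\ge 0)$ is a time-homogeneous Markov process with the strong Markov and Hunt properties, inherited from $\boldsymbol\pi$ via the additive-functional time change combined with self-similarity, exactly as in \cite{FVAT}; establish total-variation path-continuity of $\overline\pi$ from that of $\pi$ together with the continuity and strict positivity of $\|\pi\|$ on $[0,\tau_{\boldsymbol\pi}(u)]$; and finally deduce the stationarity of $\mathtt{PDRM}^{(\alpha)}(\theta)$ from Proposition~\ref{prop:ps-SSSP} by the usual change-of-measure/Fubini computation that, for the pseudo-stationary $\mathrm{SSSP}$ started from $Z(0)\odotip\overline\pi$, the de-Poissonised marginal $\overline\pi(u)$ equals $\overline\pi$ in law for every $u$, and that this marginal does not depend on the law of $Z(0)$. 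The degenerate case $\theta=-\alpha$, where $\overline\pi=\delta(X_1)$ and the state space collapses to a single atom (up to uniform relabelling), should be noted separately but is trivial: the $\mathrm{SSSP}^{(\alpha)}(-\alpha)$ consists of a single $\besq(-2\alpha)$ atom, so $\overline\pi(u)\equiv\delta(X_1)$ and stationarity is immediate.

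The main obstacle, such as it is, is bookkeeping rather than a genuinely new difficulty: one must make sure that every step in the proof of \cite[Theorem~1.7]{FVAT} that was written for $\theta\ge 0$ still goes through when $\theta\in[-\alpha,0)$, where the total mass is now \emph{recurrently killed} rather than persistent. The two places that need care are the finiteness of the de-Poissonising time change (handled above via the divergence of $\int\|\pi\|^{-1}$ near a zero of $\besq(2\theta)$) and the fact that, in Definition~\ref{defn:sssp}, the $\mathrm{SSSP}^{(\alpha)}(\theta)$ is itself built as a recurrent-type concatenation with degeneration level $T_\infty<\infty$ a.s.; one must confirm that $\tau_{\boldsymbol\pi}$ maps $[0,\infty)$ onto $[0,T_\infty)$, so that $\overline\pi$ is defined for all $u\ge 0$. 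Both points are exactly parallel to the interval-partition case already treated in this paper (see the discussion of $\mathrm{IP}^{(\alpha)}(\theta_1,\theta_2)$-evolutions and Theorem~\ref{thm:dP}), so I would simply record the verification and then invoke \cite[proof of Theorem~1.7]{FVAT} verbatim, as the authors do in their proof of Theorem~\ref{thm:dP}.

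\begin{proof}
 This follows from Proposition~\ref{prop:ps-SSSP} by de-Poissonisation, in exactly the same way that Theorem~\ref{thm:dP} follows from Proposition~\ref{prop:ps-theta1theta2-nokill} and that \cite[Theorem~1.7]{FVAT} follows from its pseudo-stationarity statement in the range $\theta\ge 0$. We have established that an $\mathrm{SSSP}^{(\alpha)}(\theta)$ with $\theta\in[-\alpha,0)$ is a total-variation path-continuous Hunt process whose total mass is a $\besq(2\theta)$, which is self-similar of index $1$, which is continuous in the initial state along total-variation-convergent sequences, and which satisfies the pseudo-stationarity identity of Proposition~\ref{prop:ps-SSSP}; the case $\theta\ge 0$ is \cite[Theorem~1.2 and Theorem~1.7]{FVAT}. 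Since $2\theta<2$, a $\besq(2\theta)$-path started from a positive value has a.s.\ finite lifetime $\overline\zeta$ with $\int_0^{\overline\zeta}\|\pi(s)\|^{-1}ds=\infty$, so $\tau_{\boldsymbol{\pi}}$ maps $[0,\infty)$ onto $[0,\overline\zeta)$ (and, in the recurrent construction of Definition~\ref{defn:sssp}, onto $[0,T_\infty)$), and the de-Poissonised process $(\overline{\pi}(u),u\ge 0)$ is well-defined for all $u\ge 0$; it is strictly positive in total mass on $[0,\tau_{\boldsymbol{\pi}}(u)]$ for each $u$, so it inherits total-variation path-continuity from $\boldsymbol{\pi}$, and it is a time-homogeneous strong Markov (hence Hunt) process by the additive-functional time change together with self-similarity, as in \cite[Section~5]{FVAT}. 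Applying the change of time to the pseudo-stationary $\mathrm{SSSP}^{(\alpha)}(\theta)$ started from $Z(0)\odotip\overline\pi$ and using Proposition~\ref{prop:ps-SSSP} together with the independence of the resulting marginal from the law of $Z(0)$, we obtain $\overline\pi(u)\sim\mathtt{PDRM}^{(\alpha)}(\theta)$ for every $u\ge 0$, which is therefore a stationary distribution. In the degenerate case $\theta=-\alpha$ the process reduces to a single $\besq(-2\alpha)$ atom carrying a uniform type, so $\overline\pi(u)\equiv\delta(X_1)\sim\mathtt{PDRM}^{(\alpha)}(-\alpha)$ and all assertions are immediate. The remaining details are identical to \cite[proof of Theorem~1.7]{FVAT} and are left to the reader.
\end{proof}
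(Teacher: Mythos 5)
Your proposal is correct and follows essentially the same route as the paper: the paper's proof of Theorem~\ref{thm:FV} is a one-line citation deducing the result from Proposition~\ref{prop:ps-SSSP} "by the same arguments as in the proof of \cite[Theorem~1.7]{FVAT}", relying on the properties of $\mathrm{SSSP}^{(\alpha)}(\theta)$ for $\theta\in[-\alpha,0)$ established in the preceding theorem, exactly as you do. Your additional bookkeeping (divergence of $\int\|\pi(s)\|^{-1}ds$ at absorption so that $\tau_{\boldsymbol{\pi}}$ maps $[0,\infty)$ onto $[0,T_\infty)$, and the degenerate case $\theta=-\alpha$) is sound and simply makes explicit what the paper leaves to the reader.
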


\subsection{Fragmenting interval partitions}\label{sec:nested1}
We define a fragmentation operator for interval partitions, which is associated with the random interval partition $\mathtt{PDIP}^{(\alpha)}(\theta_1, \theta_2)$ defined in Section~\ref{sec:PDIP}. Fragmentation theory has been extensively studied in the literature; see e.g.\ \cite{Bertoin06}.
\begin{definition}[A fragmentation operator]
	Let $\alpha\in (0,1)$ and $\theta_1,\theta_2\ge 0$. 
	We define a Markov transition kernel $\mathrm{Frag}:= \mathrm{Frag}^{(\alpha)}(\theta_1,\theta_2)$ on $\cI_{H}$ as follows. 
	Let $(\gamma_i)_{i\in\bN}$ be i.i.d.\@  with distribution 
	$\mathtt{PDIP}^{(\alpha)}(\theta_1, \theta_2)$.  
	For 
	$\beta = \{ (a_i,b_i), i\in \bN\}\!\in\! \cI_H$, with $(a_i,b_i)_{i\in \bN}$ enumerated in decreasing order of length, we define $\mathrm{Frag}(\beta, \cdot)$ to be the law of the interval partition obtained from $\beta$ by splitting each $(a_i,b_i)$ according to the interval partition $\gamma_i$, i.e.\@ 
	\[\{  (a_i + (b_i-a_i)l , a_i+ (b_i-a_i) r) \colon  i\in \bN, (a_i,b_i) \in \beta, (l,r)\in \gamma_i \}. 
	\]
\end{definition}

\begin{lemma}\label{lem:cf-pdip}
	For $\alpha, \bar\alpha\in (0,1)$ and $\theta_1,\theta_2, \bar\theta_1, \bar\theta_2\ge 0$, 
	suppose that  
	\[
	\theta_1+\theta_2 +\bar\alpha=\alpha .
	\]
	Let $\beta_c\sim \pdip^{(\bar\alpha)} (\bar\theta_1, \bar\theta_2)$ and $\beta_f$ a random interval partition whose regular conditional distribution given $\beta_c$ is $\mathrm{Frag}^{(\alpha)}(\theta_1,\theta_2)$. 
	Then $\beta_f\sim \pdip^{(\alpha)} (\theta_1\!+\!\bar\theta_1, \theta_2\!+\!\bar\theta_2)$.  
\end{lemma}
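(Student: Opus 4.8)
The plan is to move the identity to the discrete level of ordered Chinese restaurant processes and then read it off from the paintbox characterisation of Proposition~\ref{prop:ocrp-pdip}. Since $\mathrm{Frag}^{(\alpha)}(\theta_1,\theta_2)$ preserves total mass, $\beta_f$ is a.s.\ an interval partition of $[0,1]$, so by the \emph{uniqueness} part of Proposition~\ref{prop:ocrp-pdip} it suffices to show that for every $n\in\bN$ and every $A\subseteq\cC_n$,
\[
\mathtt{oCRP}_n^{(\alpha)}(\theta_1\!+\!\bar\theta_1,\theta_2\!+\!\bar\theta_2)(A)=\int_{\cI_H}P_{n,\gamma}(A)\,\bP(\beta_f\in d\gamma).
\]
In other words, the composition structure of $[n]$ directed by $\beta_f$ via the ordered paintbox is $\mathrm{oCRP}^{(\alpha)}(\theta_1\!+\!\bar\theta_1,\theta_2\!+\!\bar\theta_2)$.

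First I would unfold the right-hand side. Running an ordered paintbox with i.i.d.\ uniform variables $Z_i$, $i\in[n]$, directed by $\beta_f$: each $Z_i$ first lands in a coarse block $U$ of $\beta_c$ and then, after affine rescaling of $U$ to $[0,1]$ (which keeps its position uniform and conditionally independent across the $Z_i$ falling in $U$), lands in a sub-block of the independent $\gamma_U\sim\pdip^{(\alpha)}(\theta_1,\theta_2)$. Hence the composition of $[n]$ read off from $\beta_f$ is obtained by: (i)~taking the coarse composition $P_{n,\beta_c}$, which after mixing over $\beta_c\sim\pdip^{(\bar\alpha)}(\bar\theta_1,\bar\theta_2)$ is $\mathtt{oCRP}_n^{(\bar\alpha)}(\bar\theta_1,\bar\theta_2)$ by Proposition~\ref{prop:ocrp-pdip}; (ii)~independently refining the $j$-th coarse block from the left, say of size $m_j$, by the paintbox $P_{m_j,\gamma_{U_j}}$, which after mixing is $\mathtt{oCRP}_{m_j}^{(\alpha)}(\theta_1,\theta_2)$, again by Proposition~\ref{prop:ocrp-pdip} (using that the $\gamma_U$ are i.i.d.\ and independent of $\beta_c$); (iii)~concatenating. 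So it remains to identify this ``coarse-then-refine'' composition structure with $\mathrm{oCRP}^{(\alpha)}(\theta_1\!+\!\bar\theta_1,\theta_2\!+\!\bar\theta_2)$.

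To do this I would build a \emph{two-level ordered CRP} customer by customer: the ``rooms'' evolve as an $\mathrm{oCRP}^{(\bar\alpha)}(\bar\theta_1,\bar\theta_2)$, and inside a room of current size $M$ the next entrant starts a new fine table at the far left, at the far right, or between two fine tables, or joins a fine table of size $m$, with weights $\theta_1$, $\theta_2$, $\alpha$, $m-\alpha$, normalised by $M-\bar\alpha$; recalling the hypothesis $\theta_1+\theta_2+\bar\alpha=\alpha$, this is exactly the $(\alpha;\theta_1,\theta_2)$-seating rule, whose normalising ``$\theta$'' equals $\theta_1+\theta_2-\alpha=-\bar\alpha=M$-weight offset. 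Since the within-room weights depend only on within-room data and a newly created room starts from a single fine table, conditionally on the final coarse composition the within-room structures are independent copies of $\mathtt{oCRP}^{(\alpha)}(\theta_1,\theta_2)$ of the appropriate sizes; hence the customer-by-customer construction has the same law as ``coarse-then-refine''. On the other hand, multiplying the room-selection probability (denominator $n+\bar\theta$, $\bar\theta=\bar\theta_1+\bar\theta_2-\bar\alpha$) by the within-room probability (denominator $M-\bar\alpha$) cancels the factor $M-\bar\alpha$, and a short case check — a new fine table on the far left has total weight $\bar\theta_1+\theta_1$ (new room on the left, or far-left table inside the leftmost room), on the far right $\bar\theta_2+\theta_2$, between two fine tables of the same room weight $\alpha$, between fine tables of adjacent rooms weight $\theta_2+\bar\alpha+\theta_1=\alpha$, joining a fine table of size $m$ weight $m-\alpha$, all over $n+\bar\theta=n+(\theta_1\!+\!\bar\theta_1)\!+\!(\theta_2\!+\!\bar\theta_2)\!-\!\alpha$ — shows this is precisely the $(\alpha;\theta_1\!+\!\bar\theta_1,\theta_2\!+\!\bar\theta_2)$-seating rule. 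This gives the displayed identity, and the lemma follows.

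The only genuine work is the bookkeeping in the last paragraph: enumerating the ways a new fine table can be inserted, keeping track of how the coarse creation of rooms and within-room insertions overlap at the interfaces between rooms, and checking that degenerate parameter choices (e.g.\ $\theta_1=\theta_2=0$, so $\mathrm{Frag}$ is trivial and $\bar\alpha=\alpha$, or $\bar\theta_1=\bar\theta_2=0$, so the coarse partition is $\{(0,1)\}$) are consistent with the conventions for $\mathtt{Dir}$, $\pdip$ and $\mathrm{oCRP}$ used throughout. There is no conceptual obstacle beyond this verification; everything else is immediate from Proposition~\ref{prop:ocrp-pdip} and the affine invariance of the uniform paintbox.
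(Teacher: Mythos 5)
Your proof is correct, and it is a genuinely (if mildly) different route from the paper's. The key combinatorial content is the same: the two-level ordered CRP in which rooms follow the $(\bar\alpha;\bar\theta_1,\bar\theta_2)$-rule and within-room seating the $(\alpha;\theta_1,\theta_2)$-rule, together with the weight bookkeeping (within-room normaliser $M-\bar\alpha$ matching the room's coarse weight, interface weight $\bar\alpha+\theta_1+\theta_2=\alpha$, boundary weights $\bar\theta_i+\theta_i$) showing the fine configuration is an $\mathrm{oCRP}^{(\alpha)}(\theta_1\!+\!\bar\theta_1,\theta_2\!+\!\bar\theta_2)$ — this is exactly the nested oCRP of Figure~\ref{fig:nested_CRP}, which the paper only asserts is ``easy to check'' and which you spell out correctly. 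Where you diverge is the passage to the continuum: the paper proves the stronger statement Lemma~\ref{lem:cv-oCRP}, taking a.s.\ scaling limits of the nested compositions (Lemma~\ref{lem:crp-pdip} for each component, plus the paintbox convergence \cite[Theorem~11]{Gnedin97} applied to compositions sampled from the nested pair $(\beta_c',\beta_f')$) and then reading off Lemma~\ref{lem:cf-pdip} as an immediate corollary; you avoid limits entirely, invoking instead the uniqueness half of Proposition~\ref{prop:ocrp-pdip} and checking that the ordered paintbox directed by $\beta_f$ reproduces $\mathtt{oCRP}^{(\alpha)}_n(\theta_1\!+\!\bar\theta_1,\theta_2\!+\!\bar\theta_2)$ via the coarse-then-refine decomposition of the paintbox (valid since rescaled uniforms within a coarse block are again i.i.d.\ uniform, conditionally independently across blocks). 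Your route is more economical for the lemma as stated and keeps everything at fixed $n$; the paper's route buys the joint limit statement — in particular that the conditional law of $\beta_f$ given $\beta_c$ is $\mathrm{Frag}^{(\alpha)}(\theta_1,\theta_2)$ in the limit — which is reused later (e.g.\ in Proposition~\ref{prop:cf-ps-theta1theta2} and Theorem~\ref{thm:cv-cfIP1}). Note that both arguments ultimately rest on the same Gnedin correspondence, since the uniqueness in Proposition~\ref{prop:ocrp-pdip} is \cite[Corollary~12]{Gnedin97}; the only step you state somewhat informally is the conditional independence of the within-room refinements given the terminal coarse composition, which is justified exactly as in the triple decomposition used for Proposition~\ref{prop:down}.
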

A similar result for the particular case with parameter $\bar{\alpha}=\bar{\theta}_2=0$, $\theta_1=0$ and $\theta_2=\alpha$ is included in \cite[Theorem~8.3]{GnedPitm05}. Lemma~\ref{lem:cf-pdip} is also an analogous result of \cite[Theorem~5.23]{CSP} for $\mathtt{PD}(\alpha,\theta)$ on the Kingman simplex. 
\begin{figure}[t]
	\centering
	\includegraphics[width=\linewidth]{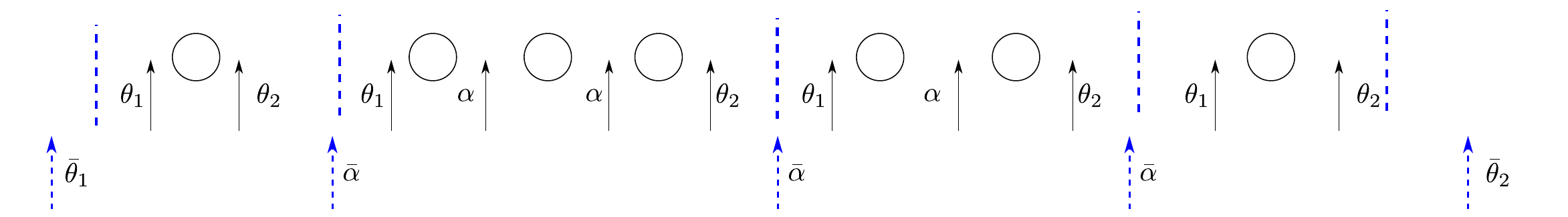}
	\caption{Clusters are divided by dashed lines. A new customer starts a new table in an existing cluster (solid arrow) or a new cluster (dashed arrow), with probability proportional to the indicated weights.
		In the continuous-time model studied in Section~\ref{sec:nestedPCRP}, the weights correspond to the rates ar which customers arrive. 
	}
	\label{fig:nested_CRP}
\end{figure}

To prove Lemma~\ref{lem:cf-pdip}, we now consider a pair of 
\emph{nested} ordered Chinese restaurant processes $(C_c(n), C_f(n))_{n\in \bN}$. 
The coarse one $C_c\sim \mathrm{oCRP}^{(\bar{\alpha})} (\bar{\theta}_1, \bar{\theta}_2)$ describes the arrangement of customers in a sequence of ordered \emph{clusters}. 
We next obtain a composition of each cluster of customers by further seating them at ordered \emph{tables}, according to the $(\alpha, \theta_1,\theta_2)$-seating rule. 
These compositions are concatenated according to the cluster order, forming the fine process $C_f$. 
Then, as illustrated in Figure~\ref{fig:nested_CRP}, we can easily to check that $C_f\sim \mathrm{oCRP}^{(\alpha)} (\theta_1\!+\!\bar{\theta}_1, \theta_2\!+\!\bar{\theta}_2)$, due to the identity $\theta_1\!+\!\theta_2\!+\!\bar\alpha = \alpha$. 
Nested (unordered) Chinese restaurant processes have been widely applied in nonparametric Bayesian analysis of the problem of learning topic hierarchies from data, see e.g.\ \cite{Blei10}.

Lemma~\ref{lem:cf-pdip} follows immediately from the following convergence result. 

\begin{lemma}[Convergence of nested oCRP]\label{lem:cv-oCRP}
	Let $(C_c(n),C_f(n))_{n\in \bN}$ be a pair of nested ordered Chinese restaurant processes constructed as above. 
	Then $(n^{-1} C_c(n), n^{-1}  C_f(n))$ converges a.s.\@ to a limit $(\beta_c, \beta_f)$ for the  metric $d_H$ as $n\to \infty$; furthermore, we have $\beta_c\sim \mathtt{PDIP}^{(\bar\alpha)}(\bar\theta_1, \bar\theta_2)$, $\beta_f\sim\pdip^{(\alpha)} (\theta_1\!+\!\bar\theta_1, \theta_2\!+\!\bar\theta_2)$, and the regular conditional distribution of $\beta_f$ given $\beta_c$ is  $\mathrm{Frag}^{(\alpha)}(\theta_1,\theta_2)$.
\end{lemma}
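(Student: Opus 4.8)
The plan is to prove Lemma~\ref{lem:cv-oCRP} directly, from which Lemma~\ref{lem:cf-pdip} follows immediately by taking the regular conditional distribution of $\beta_f$ given $\beta_c$ in the limiting pair. The two main ingredients are: (i) the almost sure convergence of the coarse and fine processes separately, using Lemma~\ref{lem:crp-pdip}; and (ii) the identification of the conditional law of the fine limit given the coarse limit as $\mathrm{Frag}^{(\alpha)}(\theta_1,\theta_2)$, which is the genuinely new content.

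First I would set up the construction carefully. Writing the coarse process as $C_c\sim\mathrm{oCRP}^{(\bar\alpha)}(\bar\theta_1,\bar\theta_2)$, we record, for each cluster $i$ present at stage $n$, the composition $C_f^{(i)}(n)$ of the customers in that cluster into tables. By the nested construction, given the \emph{sequence of arrivals into cluster $i$} (which, conditionally on the cluster sizes, is just a re-indexed run of customers), $C_f^{(i)}$ evolves exactly as an $\mathrm{oCRP}^{(\alpha)}(\theta_1,\theta_2)$ run for as many steps as there are customers in cluster $i$, and the different clusters' internal compositions are conditionally independent. The key consistency check, already sketched around Figure~\ref{fig:nested_CRP}, is that the superposition of the $(\bar\alpha,\bar\theta_1,\bar\theta_2)$-weights for new clusters with the $(\alpha,\theta_1,\theta_2)$-weights for new tables within clusters produces exactly the $(\alpha,\theta_1+\bar\theta_1,\theta_2+\bar\theta_2)$-seating rule, using $\theta_1+\theta_2+\bar\alpha=\alpha$; so $C_f\sim\mathrm{oCRP}^{(\alpha)}(\theta_1+\bar\theta_1,\theta_2+\bar\theta_2)$ marginally. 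I would state this as a short computation (each of the four weight types in Definition~\ref{defn:oCRP} for $C_f$ matching the corresponding sum of coarse/fine weights), referencing the figure.

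Next, the convergence. By Lemma~\ref{lem:crp-pdip} applied to $C_c$, we get $n^{-1}\odotip C_c(n)\to\beta_c\sim\mathtt{PDIP}^{(\bar\alpha)}(\bar\theta_1,\bar\theta_2)$ a.s.\ under $d_H$, and applied to $C_f$, we get $n^{-1}\odotip C_f(n)\to\beta_f\sim\mathtt{PDIP}^{(\alpha)}(\theta_1+\bar\theta_1,\theta_2+\bar\theta_2)$ a.s.; since both live on the same probability space these hold jointly. It remains to identify the \emph{joint} law. Here I would argue as follows: the clusters of $C_c(n)$, indexed say $U_1,U_2,\ldots$ in order of decreasing length, have sizes $N_i(n)$ with $N_i(n)/n\to\mathrm{Leb}(U_i)=:b_i$ a.s.\ (clusters of positive limiting length are eventually stable and non-empty, and any fixed block of $\beta_c$ is approximated in the sense of Lemma~\ref{lem:dH}). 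Conditionally on the sizes and arrival order, the internal composition of cluster $i$ is $\mathrm{oCRP}^{(\alpha)}_{N_i(n)}(\theta_1,\theta_2)$, hence by Lemma~\ref{lem:crp-pdip} again $N_i(n)^{-1}\odotip C_f^{(i)}(N_i(n))\to\gamma_i$ a.s.\ with $\gamma_i\sim\mathtt{PDIP}^{(\alpha)}(\theta_1,\theta_2)$, and the $\gamma_i$ are i.i.d.\ and independent of $\beta_c$ because the different clusters' internal dynamics are conditionally independent given the coarse process. Since $n^{-1}\odotip C_f(n)$ is the concatenation (in coarse order) of the rescaled internal compositions $\frac{N_i(n)}{n}\odotip\big(N_i(n)^{-1}\odotip C_f^{(i)}(N_i(n))\big)$, and since $d_H$-convergence is stable under concatenation with a tail mass tending to zero (the tail clusters contribute total mass $\sum_{i>k}N_i(n)/n\to\sum_{i>k}b_i\to 0$ as $k\to\infty$, uniformly-ish, by the same $d_H$-estimate used in the proof of Theorem~\ref{thm:crp-ip-0}), we conclude that $\beta_f$ is obtained from $\beta_c$ by splitting the $i$-th longest block of $\beta_c$ according to $\gamma_i$; that is, the conditional law of $\beta_f$ given $\beta_c$ is exactly $\mathrm{Frag}^{(\alpha)}(\theta_1,\theta_2)(\beta_c,\cdot)$. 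I would also note that $\beta_f$ so defined indeed has the marginal $\mathtt{PDIP}^{(\alpha)}(\theta_1+\bar\theta_1,\theta_2+\bar\theta_2)$, consistent with the marginal convergence above, which is a sanity check rather than a separate step.

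The main obstacle is the bookkeeping in the last step: making the concatenation/tail-truncation argument rigorous so that joint $d_H$-convergence of $(n^{-1}\odotip C_c(n),n^{-1}\odotip C_f(n))$ genuinely forces the block-by-block correspondence between $\beta_f$ and $\beta_c$, rather than merely giving the two marginals. The clean way is to fix $k$, use Lemma~\ref{lem:dH} to locate the $k$ longest clusters of $\beta_c$ and their stable discrete approximants $U_i^{(n)}$, show the within-cluster rescaled compositions converge jointly with $\beta_c$ (finite product of a.s.\ convergences, invoking conditional independence), bound the $d_H$-error from discarding clusters beyond the $k$-th by a total-mass term that is controlled by a ${\tt BESQ}$-type argument exactly as in the proof of Theorem~\ref{thm:crp-ip-0} (here even simpler, as it is just convergence of a P\'olya-urn tail, $\sum_{i>k}N_i(n)/n$), and then let $k\to\infty$. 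Everything else — the weight identity and the two applications of Lemma~\ref{lem:crp-pdip} — is routine given the machinery already in the paper.
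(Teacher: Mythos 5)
Your proposal is correct in substance, but it takes a genuinely different route from the paper. You identify the joint limit directly: you track each coarse cluster, note that (conditionally on the coarse allocation) its internal composition is an independent $\mathrm{oCRP}^{(\alpha)}(\theta_1,\theta_2)$, apply Lemma~\ref{lem:crp-pdip} within each cluster to get i.i.d.\ limits $\gamma_i\sim\mathtt{PDIP}^{(\alpha)}(\theta_1,\theta_2)$ independent of the coarse process, and then force the block-by-block correspondence between $\beta_f$ and $\beta_c$ by a truncation-and-concatenation estimate in $d_H$, in the spirit of the proof of Theorem~\ref{thm:crp-ip-0}. The paper instead avoids all of this $\varepsilon$-bookkeeping: it constructs the candidate limit $(\beta'_c,\beta'_f)$ (with $\beta'_f$ a $\mathrm{Frag}^{(\alpha)}(\theta_1,\theta_2)$-splitting of $\beta'_c$), samples a nested paintbox from this pair using one sequence of uniforms, uses Proposition~\ref{prop:ocrp-pdip} to see that the resulting nested integer compositions have the same law as $(C_c(n),C_f(n))$ for every $n$, and then invokes Gnedin's a.s.\ convergence theorem for composition structures to conclude that the limits agree in law. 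The paper's route buys brevity and reuses the composition-structure machinery already set up in Section~\ref{sec:PDIP}; your route is more constructive (it exhibits the coupling of $\gamma_i$ with the blocks of $\beta_c$ along the same realisation) but requires the extra care you yourself flag: a stable cluster-to-block correspondence (each discrete cluster's frequency and position converge a.s., which follows from the P\'olya-urn arguments behind Lemmas~\ref{lem:rcs} and~\ref{lem:crp-pdip} rather than from $d_H$-convergence alone), the re-indexing argument showing the $\gamma_i$ attached to the ranked blocks remain i.i.d.\ and independent of $\beta_c$, and the control of accumulated shifts in the $d_H$ concatenation bound. None of these is a gap in principle, so your plan would yield a complete, if somewhat longer, proof.
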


\begin{proof}
	By Proposition~\ref{prop:crp-pdip}, we immediately deduce that $(n^{-1} C_c(n), n^{-1}  C_f(n))$ converges a.s.\@ to a limit $(\beta_c, \beta_f)$. It remains to determine the joint distribution of the limit. 
	
	Consider a sequence of i.i.d.\@ $\gamma_i\sim \pdip^{(\alpha)}(\theta_1, \theta_2)$, $i\ge 1$ and an independent $\beta'_c:= \{(a_i,b_i), i\in \bN\} \sim \mathtt{PDIP}^{(\bar\alpha)}(\bar\theta_1, \bar\theta_2)$. Let $\beta'_f$ be obtained from $\beta'_c$ by splitting each $(a_i,b_i)$ according to  $\gamma_i$, then the regular conditional distribution of $\beta'_f$ given $\beta'_c$ is $\mathrm{Frag}^{(\alpha)}(\theta_1,\theta_2)$.
	We will show that $(\beta_c, \beta_f)\ed (\beta'_c, \beta'_f)$. 
	
	To this end, apply the paintbox construction described before Proposition~\ref{prop:ocrp-pdip} to the nested $\beta'_c$ and $\beta'_f$, by using the same sequence of i.i.d.\@ uniform random variables $(Z_j, j\in \bN)$ on $[0,1]$. 
	For each $n\in \bN$, let $C^*_c(n)$ and $C^*_f(n)$ be the compositions of the set $[n]$ obtained as in \eqref{eqn:C*}, associated with $\beta'_c$ and $\beta'_f$ respectively. 
	Write $(C'_c(n),C'_f(n))$ for the integer compositions associated with $(C^*_c(n),C^*_f(n))$, then $(n^{-1}C'_c(n),n^{-1}C'_f(n))$ converges a.s.\ to $(\beta'_c,\beta'_f)$ by \cite[Theorem 11]{Gnedin97}.

	Note that each $(a_i, b_i)\in \beta'_c$ corresponds to a cluster of customers in $C^*_c(n)$, which are further divided into ordered tables in $C^*_f(n)$. This procedure can be understood as a paintbox construction, independent of other clusters, by using $\gamma_i\sim \pdip^{(\alpha)}(\theta_1, \theta_2)$ and i.i.d.\@ uniform random variables $\{(Z_j - a_i)/(b_i-a_i)\colon  Z_j \in (a_i, b_i), j\in \bN\}$ on $[0,1]$. 
	By Proposition~\ref{prop:ocrp-pdip}, it has the same effect as an $\mathrm{oCRP}^{(\alpha)}(\theta_1,\theta_2)$. For each $n\in \bN$, as we readily have $C'_c(n)\ed C_c(n)$ by Proposition~\ref{prop:ocrp-pdip}, it follows that $(C'_c(n),C'_f(n))\ed (C_c(n),C_f(n))$. As a result, we deduce that the limits also have the same law, i.e.\ $(\beta_c, \beta_f)\ed (\beta'_c, \beta'_f)$.
\end{proof}

\subsection{Coarse-fine interval partition evolutions}\label{sec:nested2}
We consider the space 
\[
\cI^2_{\mathrm{nest}}:= 
\big\{(\gamma_c,\gamma_f) \in   \cI_{H}\times \cI_{H} \colon G(\gamma_c)\subseteq G(\gamma_f) 
,\| \gamma_c\| = \|\gamma_f \|  
\big\}. 
\]
In other words, for each element $(\gamma_c,\gamma_f)$ in this space, the interval partition $\gamma_f$ is a refinement of $\gamma_c$ such that each interval $U\in \gamma_c$ 
is further split into intervals in $\gamma_f$, forming an interval partition $\gamma_f\big|_U$ of $[\inf U, \sup U]$. We also define the shifted interval partition $\gamma_{f}\big|^{\leftarrow}_{U}:= \{ (a,b)\colon (a+\inf U, b+\inf U) \in \gamma_f\big|_U \}$ of  $[0,\mathrm{Leb}(U)]$ and note that $\gamma_{f}\big|^{\leftarrow}_U\in\cI_H$.  
We equip $\cI^2_{\mathrm{nest}}$ with the product metric
\[d_{H}^2 ((\gamma_c,\gamma_f), (\gamma'_c,\gamma'_f))= d_{H}(\gamma_c,\gamma'_c)+d_{H}(\gamma_f,\gamma'_f).\]

\begin{lemma}\label{lem:nested-cv}
	For each $n\ge 1$, let $(\beta_n, \gamma_n)\in \cI^2_{\mathrm{nest}}$. 
	Suppose that  $(\beta_n, \gamma_n)$ converges to $(\beta_{\infty},\gamma_{\infty})$ under the product metric $d^2_H$. Then $(\beta_{\infty}, \gamma_{\infty})\in \cI^2_{\mathrm{nest}}$. 
\end{lemma}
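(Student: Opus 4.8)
The plan is to verify directly the two conditions defining $\cI^2_{\mathrm{nest}}$ for the pair $(\beta_\infty,\gamma_\infty)$: the equal-total-mass condition $\|\beta_\infty\| = \|\gamma_\infty\|$, and the nesting condition $G(\beta_\infty)\subseteq G(\gamma_\infty)$. Convergence in $d^2_H$ unpacks to $d_H(\beta_n,\beta_\infty)\to 0$ and $d_H(\gamma_n,\gamma_\infty)\to 0$, and both conditions will follow from elementary properties of the Hausdorff metric applied to the sets of partition points.

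First I would record the observation that $\|\beta\| = \max G(\beta)$ for every $\beta\in\cI_H$, since the right endpoint of $[0,\|\beta\|]$ always lies in $G(\beta)$ while $G(\beta)\subseteq[0,\|\beta\|]$. Feeding the point $\max G(\beta)$ into the covering inclusion in the definition of $d_H$ (and symmetrically the point $\max G(\beta')$), one gets $|\,\|\beta\|-\|\beta'\|\,|\le d_H(\beta,\beta')$ for all $\beta,\beta'\in\cI_H$. Hence $\|\beta_n\|\to\|\beta_\infty\|$ and $\|\gamma_n\|\to\|\gamma_\infty\|$; since $(\beta_n,\gamma_n)\in\cI^2_{\mathrm{nest}}$ gives $\|\beta_n\|=\|\gamma_n\|$ for every $n$, passing to the limit yields $\|\beta_\infty\|=\|\gamma_\infty\|$.

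For the nesting condition, fix $x\in G(\beta_\infty)$. Using $d_H(\beta_n,\beta_\infty)\to 0$ and the covering inclusion, choose $x_n\in G(\beta_n)$ with $|x_n-x|\le d_H(\beta_n,\beta_\infty)+1/n$, so that $x_n\to x$. Since $(\beta_n,\gamma_n)\in\cI^2_{\mathrm{nest}}$, we have $x_n\in G(\beta_n)\subseteq G(\gamma_n)$. Now invoking $d_H(\gamma_n,\gamma_\infty)\to 0$, choose $y_n\in G(\gamma_\infty)$ with $|y_n-x_n|\le d_H(\gamma_n,\gamma_\infty)+1/n$; by the triangle inequality $y_n\to x$, and since $G(\gamma_\infty)$ is compact, hence closed, this forces $x\in G(\gamma_\infty)$. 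As $x\in G(\beta_\infty)$ was arbitrary, $G(\beta_\infty)\subseteq G(\gamma_\infty)$, which together with the mass identity gives $(\beta_\infty,\gamma_\infty)\in\cI^2_{\mathrm{nest}}$.

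There is no real obstacle here; the only point requiring a line of care is that $d_H$ is defined through an infimum, so the covering inclusions hold for every $r>d_H(\cdot,\cdot)$ rather than at $r=d_H(\cdot,\cdot)$ itself. This is absorbed by the $1/n$ slack in the selections of $x_n$ and $y_n$ above (or, equivalently, by using compactness of $G(\beta_n)$ and $G(\gamma_\infty)$ to realise the nearest point). Everything else is routine.
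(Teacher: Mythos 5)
Your proof is correct and takes essentially the same route as the paper's: both establish $G(\beta_\infty)\subseteq G(\gamma_\infty)$ by bounding the distance from a point $x\in G(\beta_\infty)$ to $G(\gamma_\infty)$ through nearby points of $G(\beta_n)\subseteq G(\gamma_n)$, using $d_H(\beta_n,\beta_\infty)+d_H(\gamma_n,\gamma_\infty)\to 0$ and the closedness of $G(\gamma_\infty)$. Your explicit verification of $\|\beta_\infty\|=\|\gamma_\infty\|$ via $\|\beta\|=\max G(\beta)$ is a small addition that the paper leaves implicit, and it is correct.
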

\begin{proof}
	This requires us to prove $G(\beta_{\infty})\subseteq G(\gamma_{\infty})$. 
	As $G(\gamma_{\infty})$ is closed, it is equivalent to show that, for any $x\in G(\beta_{\infty})$, 
	the distance $d(x, G(\gamma_{\infty}))$ from $x$ to the set $G(\gamma_{\infty})$ is zero. 
	For any $y_n\in G(\beta_n)\subseteq G(\gamma_{n})$, we have 
	$
	d(x, G(\gamma_{\infty}))\le d(x, y_n) 
	+ d_H(\gamma_{n} ,\gamma_{\infty}). 
	$
	It follows that                                                                                       
	$
	d(x, G(\gamma_{\infty})) \le \inf_{y_n\in G(\beta_n)} d(x, y_n) + d_H(\gamma_{n} , \gamma_{\infty})
	\le d_H(\beta_{\infty} , \beta_{n}) + d_H(\gamma_{n}, \gamma_{\infty}).
	$
	As $n\to \infty$, the right-hand side converges to zero. 
	So we conclude that $d(x, G(\gamma_{\infty}))=0$ for every $x\in G(\beta_{\infty})$, 
	completing the proof. 
\end{proof}

We shall now construct a coarse-fine interval partition evolution in the space $\cI^2_{\mathrm{nest}}$. 
To this end, let us first extend the scaffolding-and-spindles construction in Section~\ref{sec:pre} to the setting where each spindle is an interval-partition-valued excursion. 
Denote by $\cE_{\cI}$ the space of continuous $\cI_H$-valued excursions. 
Given a point measure $W$ on $\bR_+ \times \cE_{\cI}$ and a scafolding function 
$X \colon \bR_+ \to \bR$, we define the following variables, if they are well-defined. 
The \emph{coarse skewer} of $(W,X)$ at level $y\in \bR$ is the interval partition
\begin{equation*}
	\cskewer(y,W, X):= \left\{ \big(M_{W,X}^y (t-), M_{W,X}^y (t) \big) \colon
	t\ge 0, M_{W,X}^y (t-)< M_{W,X}^y (t)
	\right\},
\end{equation*}
where
$
M_{W,X}^y (t):= \int_{[0,t]\times \cE_{\cI}}  \big\|\gamma\big( y -X(s-) \big)\big\| W(ds,d\boldsymbol{\gamma})$,  $t\ge 0. 
$
Let $\cskewerbar(W,X):= (\cskewer(y,W,X), y\ge 0)$. 
The \emph{fine skewer} of $(W,X)$ at level $y\in \bR$ is the interval partition
\begin{equation*}
	\fskewer(y,W,X):= 
	\Concat_{\text{points }(t, \boldsymbol{\gamma}_t) \text{ of } W\colon M_{W,X}^y (t-)< M_{W,X}^y (t)} \gamma_t\big( y -X(t-) \big). 
\end{equation*}
Let $\fskewerbar(W,X):= (\fskewer(y,W, X), y\ge 0)$.

Let $\theta_1, \theta_2 \ge 0$. Suppose that $\theta=\theta_1+\theta_2-\alpha\in [-\alpha,0)$, then we have an  $\mathrm{SSIP}^{(\alpha)}(\theta_1, \theta_2)$-excursion measure $\Theta:= \Theta^{(\alpha)}(\theta_1, \theta_2)$ defined as in Section~\ref{sec:exc}. 
Write  $\bar\alpha:= -\theta \in (0,\alpha]$ and let $\fW$ be  a Poisson random measure on $\bR_+ \times \cE_{\cI}$ with intensity $c_{\bar\alpha}\Leb \otimes \Theta$, where 
$c_{\bar\alpha}:=2 \bar\alpha(1+\bar\alpha)/\Gamma(1-\bar\alpha)$. 
We pair $\fW$ with a (\emph{coarse}) scaffolding $(\xi^{(\bar\alpha)}_{\fW}(t), t\ge 0)$ defined by
\begin{equation}\label{eq:scaffoldingW}
	\xi^{(\bar\alpha)}_{\fW}(t):=  \lim_{z\downarrow 0} \left( 
	\int_{[0,t]\times \{\boldsymbol{\gamma}\in \cE_{\cI}\colon \zeta(\boldsymbol{\gamma})>z\}} \zeta(\boldsymbol{\gamma})  \fW(ds,d\boldsymbol{\gamma}) - \frac{(1+\bar\alpha)t}{(2z)^{\bar\alpha}\Gamma(1-\bar\alpha)\Gamma(1+\bar\alpha)}
	\right). 
\end{equation}
This is a spectrally positive stable L\'evy process of index $1+\bar\alpha$. 
Let  $\boldsymbol{\beta}$ be an $\mathrm{SSIP}_{\!\dagger}^{(\alpha)} (\theta_1,\theta_2)$-evolution starting from $\gamma_0\in \cI_{H}$ with first hitting time $\zeta(\boldsymbol{\beta})$ of $\emptyset$.  
We define by $\mathbf{Q}_{\gamma_0}^{(\alpha)}(\theta_1, \theta_2)$  the law of the following a random point measure on $[0,\infty)\times \cE_{\cI}$: 
\begin{equation}\label{eq:cladeW}
	\delta(0,\boldsymbol{\beta})  + \fW\big|_{(0, T_{-\zeta(\boldsymbol{\beta})}]\times \cE_{\cI}}, 
	\quad\text{where}~ T_{-y}:= \inf\big\{t\ge 0 \colon \xi^{(\bar\alpha)}_{\fW}(t)= -y\big\}. 
\end{equation}

\begin{definition}[Coarse-fine SSIP-evolutions]\label{defn:cfIP-0}
	$\!\!\!\!\!\!$ Let $\alpha\!\in\! (0,1)$, $\theta_1,\theta_2\!\ge\! 0$ with $\theta_1\!+\!\theta_2\!<\!\alpha$. Let $\bar\alpha:= \alpha -\theta_1-\theta_2 \in (0,\alpha]$. 
	For $(\gamma_c,\gamma_f) \in \cI^2_{\mathrm{nest}}$, let $\fW_U \sim \mathbf{Q}_{\gamma_{f}|^{\leftarrow}_{U}}^{(\alpha)}(\theta_1, \theta_2)$, $U\in \gamma_c$, be an independent family with scaffolding $\xi^{(\bar\alpha)}_{\fW_{U}}$ as in \eqref{eq:scaffoldingW}. 
	Then the pair-valued process 
	\[
	\Big(\Concat_{U\in \gamma_c}\cskewer\big(y,\fW_{U}, \xi^{(\bar\alpha)}_{\fW_{U}}\big),~
	\Concat_{U\in \gamma_c}\fskewer\big(y, \fW_{U}, \xi^{(\bar\alpha)}_{\fW_{U}}\big)\Big), \qquad y\ge 0,
	\]
	is called a \emph{coarse-fine $(\alpha,\theta_1,\theta_2,0)$-self-similar interval partition evolution}, starting from $(\gamma_c,\gamma_f)$, abbreviated as \emph{$\mathrm{cfSSIP}^{(\alpha,\theta_1,\theta_2)} (0)$-evolution}. 
\end{definition}

Roughly speaking, it is a random refinement of an $\mathrm{SSIP}^{(\bar\alpha)} (0)$-evolution according to $\mathrm{SSIP}^{(\alpha)} (\theta_1, \theta_2)$-excursions. 
To add immigration to this model,
let $\fW\sim\PRM(c_{\bar\alpha}\Leb\otimes \Theta)$ and consider its coarse scaffolding  $\xi^{(\bar\alpha)}_{\fW}$ as in \eqref{eq:scaffoldingW}. 
For $\bar\theta\ge 0$, as in \eqref{eq:X-alphatheta}, define the process 
\begin{equation} 
	\fX_{\bar\theta}(t) := \xi^{(\bar\alpha)}_{\fW}(t) + \left(1 - \bar\alpha/\bar\theta \right)\ell(t), \quad t\ge 0, \qquad \text{where} \quad \ell(t) := -\inf_{u\leq t}\xi^{(\bar\alpha)}_{\fW}(u).
\end{equation}
For each $j\in \bN$, set $T^{-j}_{\bar\theta} 
:=  \inf\{t\ge 0\colon \fX_{\bar\theta}(t)= - j \}$ and define nested processes 
\begin{align*}
	\cev{\beta}_{c,j}(y) &:= 
	\cskewer \big(y, \fW\big|_{[0,T_{\bar\theta}^{-j})}, j+\fX_{\bar\theta}\big|_{[0,T_{\bar\theta}^{-j})}\big), \qquad y\in [0,j], \\
	\cev{\beta}_{f,j}(y) &:= 
	\fskewer \big(y, \fW\big|_{[0,T_{\bar\theta}^{-j})}, j+\fX_{\bar\theta}\big|_{[0,T_{\bar\theta}^{-j})}\big), \qquad y\in [0,j].
\end{align*}
As in Section~\ref{sec:pre}, we find that 
$
\big(\!\big(\cev{\beta}_{c,j}(y),\cev{\beta}_{f,j}(y)\big), y\in[0,j] \big)  \stackrel{d}{=} \big(\!\big(\cev{\beta}_{c,k}(y),\cev{\beta}_{f,k}(y)\big), y\in[0,j] \big) 
$
for all $k\ge j$. 
Thus, by Kolmogorov's extension theorem, there exists a process  $\big(\cev{\boldsymbol{\beta}}_c,\cev{\boldsymbol{\beta}}_f\big)$ such that $\big(\!\big(\cev{\beta}_c(y),\cev{\beta}_f(y)\big), y\in[0,j]\big)  \stackrel{d}{=} \big(\!\big(\cev{\beta}_{c,j}(y),\cev{\beta}_{f,j}(y)\big), y\in[0,j] \big)$ for every $j\in \bN$. 

\begin{definition}[Coarse-fine SSIP-evolutions with immigration]\label{defn:cfIP-theta}
	Let $\bar\theta,\theta_1,\theta_2\!\ge\! 0$,  $\alpha\!\in \!(0,1)$, $\bar\alpha= \alpha \!-\!\theta_1\!-\!\theta_2 \in (0,\alpha]$ and $(\gamma_c,\gamma_f) \in \cI^2_{\mathrm{nest}}$. Let $(\cev{\boldsymbol{\beta}}_c,\cev{\boldsymbol{\beta}}_f)$ be defined as above and $(\vecc{\boldsymbol{\beta}}_c,\vecc{\boldsymbol{\beta}}_f)$ an independent $\mathrm{cfSSIP}^{(\alpha,\theta_1,\theta_2)} ( 0)$-evolution starting from $(\gamma_c,\gamma_f)$. 
	Then we call $(\cev{\beta}_c(y) \concat\vecc{\beta}_c(y),\cev{\beta}_f(y) \concat\vecc{\beta}_f(y))$, $y\ge 0$,
	a \emph{coarse-fine $(\alpha,\theta_1,\theta_2)$-self-similar interval partition evolution with immigration rate $\bar\theta$}, starting from $(\gamma_c,\gamma_f)$, or a \emph{$\mathrm{cfSSIP}^{(\alpha,\theta_1,\theta_2)} ( \bar\theta)$-evolution}. 
\end{definition}

By construction, the coarse process of a \emph{$\mathrm{cfSSIP}^{(\alpha,\theta_1,\theta_2)} (\bar\theta)$-evolution} is an $\mathrm{SSIP}^{(\bar\alpha)} (\bar\theta)$-evolution. 
For the special case $\theta_1=\theta_2 =0$, the fine process coincides with the coarse one. 

\begin{remark}
	By combining Definition~\ref{defn:cfIP-theta} and Definition~\ref{defn:ipe}, one can further construct a coarse-fine SSIP-evolution with the  coarse process being an $\mathrm{SSIP}^{(\bar\alpha)} (\bar{\theta}_1, \bar\theta_2)$-evolution.  
\end{remark}

\subsection{Convergence of nested PCRPs}\label{sec:nestedPCRP}

For $\bar\alpha\in (0, 1)$ and $\bar\theta\ge 0$, let $(C_c(t), t\ge 0)$ be a Poissonised Chinese restaurant process $\mathrm{PCRP}^{(\bar\alpha)} (\bar\theta,\bar\alpha)$ as in Section~\ref{sec:PCRP}.  
Recall that for each cluster of $C_c$, the mass evolves according to a Markov chain of law $\pi(-\bar\alpha)$ as in \eqref{eq:pi}. 
Let $\alpha\in (\bar\alpha, 1)$ and $\theta_1,\theta_2\ge 0$. Suppose that there is the identity
\[
\theta= \theta_1+\theta_2-\alpha = -\bar{\alpha}<0,  
\]
then the total mass evolution of a $\mathrm{PCRP}^{(\alpha)} (\theta_1,\theta_2)$ also has distribution $\pi(-\bar\alpha)$. 
Therefore, we can fragment each cluster of $C_c$ into $\mathrm{PCRP}^{(\alpha)} (\theta_1,\theta_2)$ as follows. 
In each \emph{cluster}, customers are further attributed into an ordered sequence of \emph{tables}: 
whenever a customer joins this cluster, they choose an existing table or add a new table according to the $(\alpha, \theta_1,\theta_2)$-seating rule; 
whenever the cluster size reduces by one, a customer is chosen uniformly to leave. 
As a result, we embed a $\mathrm{PCRP}^{(\alpha)} (\theta_1,\theta_2)$ into each cluster of $C_c$, independently of the others. 
The rates at which customers arrive are illustrated in Figure~\ref{fig:nested_CRP}. 
For each time $t\ge 0$, by concatenating the composition of ordered table size configuration of each cluster, from left to right according to the order of clusters, we obtain a composition $C_f(t)$, representing the numbers of customers at all tables. Then $C_f(t)$ is a refinement of $C_c(t)$. 
One can easily check that $(C_f(t),t\ge 0)$ is a $\mathrm{PCRP}^{(\alpha)} (\theta_1+\bar{\theta},\theta_2+ \bar{\alpha})$. We refer to the
pair $((C_c(t),C_f(t)),\,t\ge 0)$ as a \em pair of nested PRCPs\em.

\begin{theorem}[Convergence of nested PCRPs]\label{thm:cv-cfIP1}
	For each $n\in \bN$, let $(C^{(n)}_c,C^{(n)}_f)$ be a pair of nested $\mathrm{PCRP}$s as defined above, starting from $(\gamma^{(n)}_c, \gamma^{(n)}_{f})\in \cI^2_{\mathrm{nest}}$. Suppose that $\frac{1}{n}(\gamma^{(n)}_c, \gamma^{(n)}_{f})$ converges to $(\gamma_c, \gamma_{f})\in \cI^2_{\mathrm{nest}}$ under the product metric $d^2_H$. 
	Then the following convergence holds in distribution in the space of c\`adl\`ag functions on $\cI_{H}\times \cI_{H}$ endowed with the Skorokhod topology, 
	\[
	\left( \frac{1}{n}\Big(C^{(n)}_c(2nt),C^{(n)}_f(2nt)\Big), t\ge 0 \right) \underset{n\to \infty}{\longrightarrow} \left(\Big(\beta_c(t),\beta_f(t)\Big),\, t\ge 0\right),
	\]
	where the limit $(\boldsymbol{\beta}_c,\boldsymbol{\beta}_f)= \big(\!\big(\beta_c(t),\beta_f(t)\big),\, t\ge 0\big)$ 
	is a  $\mathrm{cfSSIP}^{(\alpha,\theta_1,\theta_2)}(\bar{\theta})$ starting from $(\gamma_c, \gamma_{f})$.  
\end{theorem}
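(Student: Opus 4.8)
The plan is to reduce Theorem~\ref{thm:cv-cfIP1} to the already-established convergence results by decomposing a pair of nested PCRPs cluster-by-cluster, exactly as the $\mathrm{cfSSIP}$-evolution is built cluster-by-cluster in Definitions~\ref{defn:cfIP-0} and~\ref{defn:cfIP-theta}. First I would treat the case $\bar\theta=0$ with a single initial cluster, i.e.\ $\gamma_c^{(n)}=\{(0,b^{(n)})\}$ with $b^{(n)}/n\to b>0$ and $\gamma_f^{(n)}$ an arbitrary refinement converging to some $\gamma_f\in\cI_H$. Here the coarse mass evolves as $\pi(-\bar\alpha)$ until extinction, and conditionally on that mass evolution the embedded fine process is a $\mathrm{PCRP}^{(\alpha)}(\theta_1,\theta_2)$ time-changed by the mass path. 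Rescaling, the coarse excursion converges (by Lemma~\ref{lem:ud} jointly with its hitting time, using $-\bar\alpha\in(-1,0)$), and by Theorem~\ref{thm:crp-ip-bis} the embedded rescaled fine $\mathrm{PCRP}^{(\alpha)}(\theta_1,\theta_2)$ converges to an $\mathrm{SSIP}_{\!\dagger}^{(\alpha)}(\theta_1,\theta_2)$-evolution; since $\theta_1+\theta_2-\alpha=-\bar\alpha<0$ this killed evolution has an excursion measure $\Theta^{(\alpha)}(\theta_1,\theta_2)$ and is exactly the object appearing in $\mathbf{Q}^{(\alpha)}_{\gamma_0}(\theta_1,\theta_2)$. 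This identifies the one-cluster limit with $\mathbf{Q}^{(\alpha)}_{\{(0,b)\}|^{\leftarrow}}$, applied through $\cskewer$ and $\fskewer$. The argument that the scaling limit of a single cluster of a $\mathrm{PCRP}^{(\bar\alpha)}(\bar\theta,\bar\alpha)$, refined in this way, converges is really the nested analogue of Lemma~\ref{lem:cv-clade}, and I would mimic that proof: represent both rescaled processes via (coarse) scaffolding and interval-partition-valued spindles, use Proposition~\ref{prop:cv-prm}-type convergence of the scaffolding together with Theorem~\ref{thm:Theta} for the convergence of the rescaled PCRP-excursion spindles to $\Theta$, control the contribution of spindles of lifetime below $\rho$ by a total-mass (Foster--Lyapunov / $\besq$) argument, and pass to the limit in the coarse and fine skewers.

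Next I would pass to a general initial state $(\gamma_c,\gamma_f)\in\cI^2_{\mathrm{nest}}$ with $\bar\theta=0$. Using Lemma~\ref{lem:dH} and Lemma~\ref{lem:nested-cv}, the convergence $\frac1n(\gamma_c^{(n)},\gamma_f^{(n)})\to(\gamma_c,\gamma_f)$ lets me match finitely many large coarse blocks $U_1,\dots,U_k\in\gamma_c$ with coarse blocks $U_i^{(n)}$ of $\gamma_c^{(n)}$, together with their internal refinements. Since clusters evolve independently, the rescaled nested PCRP on each $U_i^{(n)}$ converges (by the one-cluster case) to $\cskewer$/$\fskewer$ of an independent $\mathbf{Q}^{(\alpha)}_{\gamma_f|^{\leftarrow}_{U_i}}(\theta_1,\theta_2)$, which is precisely a summand in Definition~\ref{defn:cfIP-0}; the coarse total mass of the remaining blocks is a $\besq_{r_k^{(n)}}(2\bar\theta_c)$-type chain, $\bar\theta_c=\bar\theta-\bar\alpha=-\bar\alpha$ here, which by Lemma~\ref{lem:ud}--\ref{lem:ud-bis} is uniformly small if $r_k$ is small, giving the required uniform remainder estimate via a Hausdorff-distance bound $d_H^2\le 2(\text{coarse remainder})+\dots$. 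This is the verbatim nested analogue of the proof of Theorem~\ref{thm:crp-ip-0}, and it yields convergence to a $\mathrm{cfSSIP}^{(\alpha,\theta_1,\theta_2)}(0)$-evolution.

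Finally, to incorporate immigration $\bar\theta>0$ I would follow the pattern of Section~\ref{sec:cv-thetaalpha} and Theorem~\ref{thm:jccp}: a pair of nested $\mathrm{PCRP}$s with coarse process $\mathrm{PCRP}^{(\bar\alpha)}(\bar\theta,\bar\alpha)$ decomposes, via the Rogers--Winkel spinal representation of the coarse scaffolding, into an ``arrow-left'' piece built from the scaffolding reflected at its minimum and an independent ``arrow-right'' piece which is a nested $\mathrm{PCRP}$ with no left-immigration. On the arrow-left piece, the coarse scaffolding with its modification $\fX_{\bar\theta}^{(n)}=\xi^{(n)}+(1-\bar\alpha/\bar\theta)\ell^{(n)}$ converges (Proposition~\ref{prop:cv-prm}, Lemma~\ref{lem:T-}), the PCRP-excursion spindles converge to $\Theta$-spindles (Theorem~\ref{thm:Theta}), and the Kolmogorov-consistent truncation at levels $T^{-j}_{\bar\theta}$ passes to the limit $(\cev{\boldsymbol\beta}_c,\cev{\boldsymbol\beta}_f)$ of Definition~\ref{defn:cfIP-theta}; concatenating with the arrow-right limit gives the $\mathrm{cfSSIP}^{(\alpha,\theta_1,\theta_2)}(\bar\theta)$-evolution. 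The main obstacle I anticipate is the first step: establishing that the rescaled PCRP-excursions inside a cluster converge, \emph{jointly with the coarse scaffolding and its reflected local time}, to $\Theta$-spindles in a way strong enough to control the $\rho\downarrow0$ truncation uniformly in $n$ --- this is where Theorem~\ref{thm:Theta}'s vague convergence of rescaled excursion laws, the $\besq(2\theta)$ total-mass domination, and the continuity of the $\cskewer$/$\fskewer$ maps on the relevant spindle configurations all have to be combined carefully, exactly as in the proof of Lemma~\ref{lem:cv-clade} but now with interval-partition-valued spindles.
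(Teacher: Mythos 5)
Your proposal is correct and follows essentially the same route as the paper: the paper's proof likewise builds a scaffolding-and-spindles representation of the rescaled nested pair in which the interval-partition-valued spindles are rescaled $\mathrm{PCRP}^{(\alpha)}(\theta_1,\theta_2)$-excursions, uses Theorem~\ref{thm:Theta} in place of Proposition~\ref{prop:vague} to get a Proposition~\ref{prop:cv-prm}-type a.s.\ convergence of $(\fW^{(n)},\xi^{(n)},M^{(n)})$, handles $\bar\theta=0$ by the single-clade argument of Lemma~\ref{lem:cv-clade} followed by coupling finitely many large clades and controlling the remainder mass as in Theorem~\ref{thm:crp-ip-0}, and treats $\bar\theta>0$ via the modified scaffolding exactly as in Proposition~\ref{prop:crp-ip-theta}. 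Your additional use of Theorem~\ref{thm:crp-ip-bis} to identify the initial spindle of a clade with an $\mathrm{SSIP}_{\!\dagger}^{(\alpha)}(\theta_1,\theta_2)$-evolution is consistent with the clade law $\mathbf{Q}^{(\alpha)}_{\gamma_0}(\theta_1,\theta_2)$ and fits the paper's scheme.
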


\begin{proof}
	The arguments are very similar to those in the proof of Theorem~\ref{thm:crp-ip-0} and Proposition~\ref{prop:crp-ip-theta}, with an application of  Theorem~\ref{thm:Theta}, which replaces the role of Proposition~\ref{prop:vague}. 
	Let us sketch the main steps: 
	
	\begin{itemize}
		\item  Let $\fW^{(n)}$ be a Poisson random measure of rescaled excursions of $\mathrm{PCRP}^{(\alpha)} (\theta_1,\theta_2)$ with intensity $2\bar{\alpha}n^{1+\bar{\alpha}}\mathrm{P}^{(n)}$, where $\mathrm{P}^{(n)}$ is as in Theorem~\ref{thm:Theta}. Write $\xi^{(n)}$ for the associated scaffolding of $\fW^{(n)}$ defined as in \eqref{eq:scaffolding-D} and $M^{(n)}$ the total mass of the coarse skewer. 
		Since by Theorem~\ref{thm:Theta} the intensity measure converges vaguely to the $\mathrm{SSIP}^{(\alpha)} (\theta_1,\theta_2)$-excursion measure $c_{\bar{\alpha}}\Theta$, in analogy with Proposition~\ref{prop:cv-prm}, the sequence  
		$(\fW^{(n)}, \xi^{(n)},M^{(n)})$ can be constructed such that it converges a.s.\ to $(\fW, \xi, M)$, where $\xi$ and $M$ are the scaffolding defined as in \eqref{eq:scaffoldingW} and the coarse skewer total mass of $\fW\sim{\tt PRM}(c_{\bar{\alpha}}{\rm Leb}\otimes\Theta)$,  respectively. 
		\item Using similar methods as in Theorem~\ref{thm:crp-ip-0} proves the convergence when $\bar{\theta}=0$. More precisely, using the sequence $\fW^{(n)}$ obtained in the previous step, we give a scaffolding-and-spindles construction for each rescaled nested pair $(\frac{1}{n}C^{(n)}_c(2n\,\cdot\,), \frac{1}{n}C^{(n)}_f(2n\,\cdot\,))$, as in the description below Lemma~\ref{lem:cv-clade} and in  Section~\ref{sec:PCRP}. 
		We first study the case when the initial state of the coarse process is a single interval as in Lemma~\ref{lem:cv-clade},
		and then extend to any initial state by coupling the large clades and controlling the total mass of the remainder. 	
		\item When $\bar\theta>0$, we proceed as in the proof of Proposition~\ref{prop:crp-ip-theta}: we prove that the modified scaffolding converges and then the skewer process also converges. 
	\end{itemize}
	Summarising, we deduce the convergence of nested PCRPs to the coarse-fine skewer processes, as desired. 	
\end{proof}

Having Theorem~\ref{thm:cv-cfIP1}, we can now identify the fine process by Theorem~\ref{thm:crp-ip}. 
\begin{proposition}[Nested SSIP-evolutions]\label{prop:nested}
	Let $\alpha\in (0,1)$, $\theta_1,\theta_2,\bar{\theta}\ge 0$ and suppose that $\theta= \theta_1+\theta_2-\alpha<0$. 
	In a $\mathrm{cfSSIP}^{(\alpha,\theta_1,\theta_2)} ( \bar\theta)$-evolution, the coarse and fine processes 
	are $\mathrm{SSIP}^{(\bar\alpha)}(\bar\theta)$- and $\mathrm{SSIP}^{(\alpha)}(\theta_1\!+\! \bar{\theta} , \theta_2\!+\! \bar\alpha)$-evolutions respectively, where $\bar{\alpha} = -\theta$. 
\end{proposition}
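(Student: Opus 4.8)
The strategy is to read off the two marginal laws from the convergence in Theorem~\ref{thm:cv-cfIP1}. The coarse process statement is immediate: by construction (Section~\ref{sec:nestedPCRP}), the coarse component $C^{(n)}_c$ of a pair of nested PCRPs is a $\mathrm{PCRP}^{(\bar\alpha)}(\bar\theta,\bar\alpha)$, so Theorem~\ref{thm:crp-ip} (or rather its already-established two-parameter case, Proposition~\ref{prop:crp-ip-theta}, together with the rescaling) gives that $\frac1n\odotip C^{(n)}_c(2n\,\cdot\,)$ converges in distribution to an $\mathrm{SSIP}^{(\bar\alpha)}(\bar\theta,\bar\alpha)=\mathrm{SSIP}^{(\bar\alpha)}(\bar\theta)$-evolution. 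Since Theorem~\ref{thm:cv-cfIP1} identifies the joint limit as a $\mathrm{cfSSIP}^{(\alpha,\theta_1,\theta_2)}(\bar\theta)$-evolution $(\boldsymbol\beta_c,\boldsymbol\beta_f)$, and marginal laws are preserved under joint convergence, the coarse component $\boldsymbol\beta_c$ must be an $\mathrm{SSIP}^{(\bar\alpha)}(\bar\theta)$-evolution. (Alternatively, one notes directly from Definitions~\ref{defn:cfIP-0}--\ref{defn:cfIP-theta} that the coarse skewer of $\fW\sim\PRM(c_{\bar\alpha}\mathrm{Leb}\otimes\Theta)$, together with the $\bar\theta$-modification of the scaffolding, is by construction exactly the scaffolding-and-spindles representation of an $\mathrm{SSIP}^{(\bar\alpha)}(\bar\theta)$-evolution from Section~\ref{sec:pre}, because the total-mass pushforward of $\Theta$ is $\Lambda^{(2\theta)}_{\mathtt{BESQ}}=\Lambda^{(-2\bar\alpha)}_{\mathtt{BESQ}}$ by Proposition~\ref{prop:Theta-besq}. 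This gives the coarse statement without appeal to the discrete approximation.)

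For the fine process, the key observation is again structural at the discrete level: the fine component $C^{(n)}_f$ of the pair of nested PCRPs is itself a $\mathrm{PCRP}^{(\alpha)}(\theta_1+\bar\theta,\theta_2+\bar\alpha)$, as already stated and checked (using $\theta_1+\theta_2+\bar\alpha=\alpha$, equivalently $\theta=-\bar\alpha$) in Section~\ref{sec:nestedPCRP}. Applying Theorem~\ref{thm:crp-ip} to $C^{(n)}_f$ — noting that $\frac1n\odotip\gamma^{(n)}_f\to\gamma_f$ under $d_H$ by hypothesis — yields that $\frac1n\odotip C^{(n)}_f(2n\,\cdot\,)$ converges in distribution in $\bD(\bR_+,\cI_H)$ to an $\mathrm{SSIP}^{(\alpha)}(\theta_1+\bar\theta,\theta_2+\bar\alpha)$-evolution started from $\gamma_f$. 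On the other hand, Theorem~\ref{thm:cv-cfIP1} tells us this same limit is the fine component $\boldsymbol\beta_f$ of the $\mathrm{cfSSIP}^{(\alpha,\theta_1,\theta_2)}(\bar\theta)$-evolution. Since both describe the distributional limit of the same sequence of processes, the limits agree in law, and therefore $\boldsymbol\beta_f$ is an $\mathrm{SSIP}^{(\alpha)}(\theta_1+\bar\theta,\theta_2+\bar\alpha)$-evolution.

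The main (and essentially only) point requiring care is verifying that the applicability hypotheses of Theorem~\ref{thm:crp-ip} are genuinely met for both marginals: one needs the one-dimensional (rescaled) convergence of the \emph{marginal} initial states $\frac1n\odotip\gamma^{(n)}_c\to\gamma_c$ and $\frac1n\odotip\gamma^{(n)}_f\to\gamma_f$ under $d_H$, which follows from the assumed convergence $\frac1n(\gamma^{(n)}_c,\gamma^{(n)}_f)\to(\gamma_c,\gamma_f)$ in $d^2_H$ simply by projecting onto each coordinate. Everything else is a formal consequence of: (i) the discrete nesting identities of Section~\ref{sec:nestedPCRP} that realise each marginal PCRP as a genuine PCRP with the claimed parameters; (ii) Theorem~\ref{thm:crp-ip}, which identifies the scaling limit of such a PCRP as the corresponding $\mathrm{SSIP}$-evolution; and (iii) Theorem~\ref{thm:cv-cfIP1}, which identifies the joint scaling limit as the $\mathrm{cfSSIP}$-evolution. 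There is no delicate estimate here; the content of the proposition is the bookkeeping that the parameter shifts at the discrete level ($\theta_1\mapsto\theta_1+\bar\theta$, $\theta_2\mapsto\theta_2+\bar\alpha$ for the fine process; $\bar\theta_2=\bar\alpha$ for the coarse one) carry through the (already proven) limit theorems.
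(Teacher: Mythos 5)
Your argument is correct and is essentially the paper's own proof: realise the cfSSIP-evolution as the scaling limit of nested PCRPs via Theorem~\ref{thm:cv-cfIP1}, note that the coarse and fine discrete components are themselves $\mathrm{PCRP}^{(\bar\alpha)}(\bar\theta,\bar\alpha)$ and $\mathrm{PCRP}^{(\alpha)}(\theta_1+\bar\theta,\theta_2+\bar\alpha)$ respectively, and apply Theorem~\ref{thm:crp-ip} to each marginal to identify the limits. The extra remarks (projecting the initial-state convergence coordinatewise, and the alternative direct identification of the coarse process from the Poissonian construction) are sound but not needed beyond what the paper does.
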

\begin{proof}
	We may assume that this cfSSIP is the limit of a sequence of nested PCRPs, with coarse $\mathrm{PCRP}^{(\bar\alpha)}(\bar\theta)$ and fine $\mathrm{PCRP}^{(\alpha)}(\theta_1\!+\! \bar{\theta} , \theta_2\!+\! \bar\alpha)$. 
	Since the coarse sequence of $\mathrm{PCRP}^{(\bar\alpha)}(\bar\theta)$ converges in its own right, by Theorem~\ref{thm:crp-ip} the limit is an $\mathrm{SSIP}^{(\bar\alpha)}(\bar{\theta})$-evolution. 
	Similarly, the limit of  those  $\mathrm{PCRP}^{(\alpha)}(\theta_1\!+\! \bar{\theta} , \theta_2\!+\! \bar\alpha)$ is an $\mathrm{SSIP}^{(\alpha)}(\theta_1\!+\! \bar{\theta} , \theta_2\!+\! \bar\alpha)$-evolution. 
\end{proof}

\begin{proposition}[Pseudo-stationarity]\label{prop:cf-ps-theta1theta2}
	Let $\alpha\!\in\! (0,1)$, $\theta_1,\theta_2\!\ge\! 0$ with  
	$\bar\alpha:= \alpha\!-\!\theta_1\!-\!\theta_2$ $\in (0,\alpha]$, and $\bar\theta\ge 0$. 
	Let $Z\sim \besq (2 \bar\theta ) $ and $\bar\gamma_c\sim \pdip^{(\bar\alpha)} (\bar\theta,\bar\alpha)$ be independent and $\bar\gamma_f\sim \mathrm{Frag}^{(\alpha)}(\theta_1,\theta_2)(\gamma_c,\,\cdot\,)$. 
	Let $((\beta_c(t),\beta_f(t)),\, t\ge 0)$ be a $\mathrm{cfSSIP}^{(\alpha,\theta_1, \theta_2)}(\bar{\theta})$-evolution starting from $ (Z(0)\bar\gamma_c,Z(0)\bar\gamma_f)$. 
	Then $(\beta_c(t), \beta_f(t))\overset{d}{=}(Z(t)\bar\gamma_c,Z(t)\bar\gamma_f)$ for each $t\ge 0$. 
\end{proposition}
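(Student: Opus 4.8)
The strategy is to obtain this pseudo-stationarity result as a scaling limit of the corresponding statement for nested PCRPs, exactly as Proposition~\ref{prop:ps-theta1theta2} was obtained from Lemma~\ref{prop:crp-ps} via Theorem~\ref{thm:crp-ip-bis}. First I would establish the discrete analogue: if $(C_c,C_f)$ is a pair of nested PCRPs as in Section~\ref{sec:nestedPCRP}, started from $C_c(0)\sim\mathtt{oCRP}^{(\bar\alpha)}_m(\bar\theta,\bar\alpha)$ with, conditionally on $C_c(0)$, the fine composition $C_f(0)$ obtained by the nested seating rule (i.e.\ each cluster refined by an independent $\mathrm{oCRP}^{(\alpha)}(\theta_1,\theta_2)$), then at every time $t\ge 0$ the pair $(C_c(t),C_f(t))$ again has this nested-$\mathtt{oCRP}$ law, with the total mass $\|C_c(t)\|=\|C_f(t)\|$ evolving as $\pi_m(-\bar\alpha)$. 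This is proved by the same induction as Lemma~\ref{prop:crp-ps}: embed the pair in its total-mass chain $Z\sim\pi_m(-\bar\alpha)$; up-steps add a customer according to the coarse-then-fine seating rule (which by Figure~\ref{fig:nested_CRP} is consistent with the nested-$\mathtt{oCRP}$ description, using $\theta_1+\theta_2+\bar\alpha=\alpha$), and down-steps remove a uniform customer, for which the joint weak sampling consistency is the nested version of Proposition~\ref{prop:down}; the first jump time handles the induction step and the claim propagates.

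Next I would pass to the limit. Fix $t\ge 0$. For $n\in\bN$ take a nested PCRP pair $(C_c^{(n)},C_f^{(n)})$ started from $C_c^{(n)}(0)\sim\mathtt{oCRP}^{(\bar\alpha)}_{\lfloor nZ(0)\rfloor}(\bar\theta,\bar\alpha)$ with $C_f^{(n)}(0)$ its nested refinement. By the discrete pseudo-stationarity just proved, $(C_c^{(n)}(2nt),C_f^{(n)}(2nt))$ has the nested-$\mathtt{oCRP}$ mixture law directed by $N^{(n)}(2nt)$ where $N^{(n)}\sim\pi^{(n)}_{\lfloor nZ(0)\rfloor}(-\bar\alpha)$. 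Now apply three convergences: (i) by Lemma~\ref{lem:cv-oCRP}, $\frac1n\odotip(C_c^{(n)}(0),C_f^{(n)}(0))\to(Z(0)\bar\gamma_c,Z(0)\bar\gamma_f)$ in $(\cI^2_{\mathrm{nest}},d^2_H)$, where $\bar\gamma_c\sim\mathtt{PDIP}^{(\bar\alpha)}(\bar\theta,\bar\alpha)$ and $\bar\gamma_f\mid\bar\gamma_c\sim\mathrm{Frag}^{(\alpha)}(\theta_1,\theta_2)(\bar\gamma_c,\cdot)$ — here I need the conditional version of Lemma~\ref{lem:cv-oCRP} with $\theta_i$ playing the fine role and $\bar\theta,\bar\alpha$ the coarse role, which is exactly what that lemma gives after matching the parameter identity $\theta_1+\theta_2+\bar\alpha=\alpha$ (and noting $\bar\theta_1=\bar\theta$, $\bar\theta_2=\bar\alpha$); (ii) by Theorem~\ref{thm:cv-cfIP1}, the rescaled nested PCRP pair converges in distribution in the Skorokhod space to a $\mathrm{cfSSIP}^{(\alpha,\theta_1,\theta_2)}(\bar\theta)$-evolution started from $(Z(0)\bar\gamma_c,Z(0)\bar\gamma_f)$, so the time-$t$ marginal $\frac1n\odotip(C_c^{(n)}(2nt),C_f^{(n)}(2nt))$ converges to $(\beta_c(t),\beta_f(t))$; (iii) by Lemmas~\ref{lem:ud}--\ref{lem:ud-bis} together with Lemma~\ref{lem:cv-oCRP} applied at the random scale $N^{(n)}(2nt)/n$, the mixture law on the left converges to the nested-$\mathtt{PDIP}$ law directed by $Z(t)$, i.e.\ to the law of $(Z(t)\bar\gamma_c,Z(t)\bar\gamma_f)$. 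Matching the two limits gives $(\beta_c(t),\beta_f(t))\overset{d}{=}(Z(t)\bar\gamma_c,Z(t)\bar\gamma_f)$, as required.

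One subtlety to treat carefully is that Theorem~\ref{thm:cv-cfIP1} as stated assumes $\bar\theta,\theta_1,\theta_2\ge0$ and deterministic convergence of the (normalised) initial states under $d^2_H$; to handle the random initial mass $Z(0)$ and the mixture over $\mathtt{PDIP}$ initial states, I would condition on $Z(0)$ and on $(\bar\gamma_c,\bar\gamma_f)$, invoke the Skorokhod representation to make the initial-state convergence almost sure, and then integrate back, using boundedness and continuity of a test function on $\cI^2_{\mathrm{nest}}$; the degenerate case $Z(0)=0$ (and the case $\theta_1=\theta_2=0$, where $\beta_f\equiv\beta_c$) is immediate. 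A second point is the joint convergence in~(iii): I must check that the random time-change scale $N^{(n)}(2nt)/n$ converges jointly with the nested composition, which follows because the nested-$\mathtt{oCRP}$ is, conditionally on its total mass, a deterministic measurable functional of that mass together with independent randomness, so Lemma~\ref{lem:cv-oCRP} applies after conditioning.

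\textbf{Main obstacle.} The chief difficulty is proving the discrete pseudo-stationarity, Step~1 — specifically the joint weak sampling consistency of the nested pair under a uniform down-step, which is the nested analogue of the (already delicate) computation in Proposition~\ref{prop:down}. The coarse marginal is handled by Proposition~\ref{prop:down} applied to $\mathrm{oCRP}^{(\bar\alpha)}(\bar\theta,\bar\alpha)$, but one must verify that, after removing a uniform customer, the conditional law of the fine refinement of each cluster is still $\mathrm{oCRP}^{(\alpha)}(\theta_1,\theta_2)$ of the appropriate (possibly decremented) size; this requires the regenerative/sampling-consistency structure of $\mathrm{oCRP}^{(\alpha)}(\theta_1,\theta_2)$ from Proposition~\ref{prop:down} cluster-by-cluster, together with the matching of rates in Figure~\ref{fig:nested_CRP}, and some bookkeeping when a down-step empties a cluster or a table. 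Once this discrete invariance is in place, the limiting argument is a routine assembly of results already established in the paper.
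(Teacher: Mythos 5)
Your plan is essentially the paper's own proof: realise the $\mathrm{cfSSIP}^{(\alpha,\theta_1,\theta_2)}(\bar\theta)$-evolution as the scaling limit of nested PCRPs (Theorem~\ref{thm:cv-cfIP1}), prove a discrete pseudo-stationarity for the nested pair by the argument of Lemma~\ref{prop:crp-ps} (with the down-step handled by a nested version of the sampling consistency of Proposition~\ref{prop:down}, cluster by cluster, exactly as you outline), and then identify the limiting marginal via Lemma~\ref{lem:cv-oCRP}. The paper's proof is precisely this assembly and is just as terse about the discrete step, so your singling it out as the main work is consistent with the intended argument.

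One concrete slip needs fixing: the total mass of the nested pair is not a $\pi_m(-\bar\alpha)$-chain. The coarse component is a $\mathrm{PCRP}^{(\bar\alpha)}(\bar\theta,\bar\alpha)$, so its total mass (which equals that of the fine $\mathrm{PCRP}^{(\alpha)}(\theta_1\!+\!\bar\theta,\theta_2\!+\!\bar\alpha)$, since $\theta_1\!+\!\theta_2=\alpha\!-\!\bar\alpha$) evolves as $\pi_m(\bar\theta)$; the law $\pi(-\bar\alpha)$ is that of the size of a single cluster, which is the chain used in Section~\ref{sec:nestedPCRP} to couple cluster masses with $\mathrm{PCRP}^{(\alpha)}(\theta_1,\theta_2)$ totals, and you have conflated the two. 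Consequently, in your step (iii) the rescaled directing mass $N^{(n)}(2nt)/n$ converges by Lemma~\ref{lem:ud} to a $\besq(2\bar\theta)$, in agreement with the coarse limit being an $\mathrm{SSIP}^{(\bar\alpha)}(\bar\theta)$-evolution whose total mass is $\besq(2\bar\theta)$, and not to a limit of $\pi(-\bar\alpha)$-chains (which would be $\besq(-2\bar\alpha)$ absorbed at $0$). Indeed, taking total masses on both sides of the claimed identity forces the directing process $Z$ to be a $\besq(2\bar\theta)$, so the parameter in the displayed statement should be read accordingly; with this bookkeeping corrected, your matching of the two limits goes through as in the paper.
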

\begin{proof}
	We may assume this cfSSIP-evolution is the limit of a sequence of nested PCRPs $(C^{(n)}_c,C^{(n)}_f)$, with $(C^{(n)}_c,C^{(n)}_f)$ starting from nested compositions of $[n]$ with distribution as in Lemma~\ref{lem:cv-oCRP}. 
	By similar arguments as in Lemma~\ref{prop:crp-ps}, we deduce that, given the total number of customers $m:=\|C^{(n)}_c(t)\|=\|C^{(n)}_f(t)\|$ at time $t\ge 0$, the conditional distribution of $(C^{(n)}_c(t), C^{(n)}_f(t))$ is given by nested $\mathtt{oCRP}_m^{(\bar\alpha)}(\bar\theta, \bar\alpha)$ and $\mathtt{oCRP}_m^{(\alpha)}(\theta_1+\bar{\theta}, \theta_2+\bar{\alpha})$ described above  Lemma~\ref{lem:cv-oCRP}. 
	The claim then follows from Lemma~\ref{lem:cv-oCRP} and Theorem~\ref{thm:cv-cfIP1}. 
\end{proof}

\begin{proposition}[Markov property]\label{prop:nest-Markov}
	A $\mathrm{cfSSIP}^{(\alpha,\theta_1,\theta_2)} ( \bar\theta)$-evolution
	is a Markov process on $(\cI_{\mathrm{nest}}^2, d_H^2)$ with continuous paths. 
\end{proposition}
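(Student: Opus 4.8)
\textbf{Proof plan for Proposition~\ref{prop:nest-Markov}.}
The plan is to deduce the Markov property and path-continuity of a $\mathrm{cfSSIP}^{(\alpha,\theta_1,\theta_2)}(\bar\theta)$-evolution from the corresponding properties of the two ingredients already established: the coarse $\mathrm{SSIP}^{(\bar\alpha)}(\bar\theta)$-evolution, and the $\mathrm{SSIP}^{(\alpha)}(\theta_1,\theta_2)$-excursions governed by $\Theta$. Path-continuity is the easier half: the coarse process is path-continuous by Theorem~\ref{thm:crp-ip} (applied with parameters $(\bar\theta,\bar\alpha)$) together with Proposition~\ref{prop:nested}, the fine process is path-continuous by Theorem~\ref{thm:crp-ip} applied with parameters $(\theta_1+\bar\theta,\theta_2+\bar\alpha)$, and continuity in the product metric $d_H^2$ then follows from Lemma~\ref{lem:nested-cv}, which guarantees the limiting pair stays in $\cI^2_{\mathrm{nest}}$. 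One should also note, as in the proof of Proposition~\ref{prop:rec-extension}, that at times $t$ with coarse mass in an excursion interval the continuity is inherited from the excursion, while at other times both components are $\emptyset$ and continuity comes from the $\mathtt{BESQ}$ total mass.

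For the Markov property itself, I would argue directly from the scaffolding-and-spindles construction in Definitions~\ref{defn:cfIP-0}--\ref{defn:cfIP-theta}, mirroring the proofs of the Markov/Hunt property for $\mathrm{SSIP}$-evolutions in \cite{IPPAT,ShiWinkel-1}. The key structural fact is that the coarse scaffolding $\xi^{(\bar\alpha)}_{\fW}$ is a spectrally positive stable L\'evy process, $\fW$ is a Poisson random measure of $\mathrm{SSIP}^{(\alpha)}(\theta_1,\theta_2)$-excursions, and the pair $(\cskewerbar,\fskewerbar)$ at a given level $y$ is a measurable functional of $\fW$ restricted to the part of the scaffolding straddling level $y$. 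Using the strong Markov property of $\fX_{\bar\theta}$ (or $\xi^{(\bar\alpha)}_{\fW}$) at first passage times $T^{-y}_{\bar\theta}$ and the Poisson restriction/independence properties of $\fW$, one shows that the post-$y$ scaffolding-and-marks data, conditioned on $(\cev{\beta}_c(y),\cev{\beta}_f(y))$ together with the ``spinal'' excursion currently being skewered, is again distributed as a fresh $\mathrm{cfSSIP}^{(\alpha,\theta_1,\theta_2)}(\bar\theta)$-type construction started from that pair. This is precisely the mechanism behind the analogous statements cited above; the difference here is only that the ``spindles'' are themselves $\cI_H$-valued excursions, which is handled by the coarse/fine skewer maps.

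An alternative, and perhaps cleaner, route is to invoke the discrete approximation: by Theorem~\ref{thm:cv-cfIP1}, the $\mathrm{cfSSIP}^{(\alpha,\theta_1,\theta_2)}(\bar\theta)$-evolution is the weak limit of rescaled nested PCRPs $(C^{(n)}_c,C^{(n)}_f)$, which are continuous-time Markov chains on the countable state space $\cC\times\cC$ (intersected with the nested constraint). Combined with the continuity-in-initial-state that one can extract along the lines of Theorem~\ref{thm:continst} --- using the Feller-type continuity of the coarse $\mathrm{SSIP}$ and of the $\mathrm{SSIP}^{(\alpha)}(\theta_1,\theta_2)$-excursions --- a standard approximation argument (as at the end of the proof of Theorem~\ref{thm:hunt}) upgrades the finite-dimensional convergence to the Markov property of the limit. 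Concretely, one shows that for bounded continuous $F$ on $\cI^2_{\mathrm{nest}}$ the map $(\gamma_c,\gamma_f)\mapsto \bE_{(\gamma_c,\gamma_f)}[F(\beta_c(t),\beta_f(t))]$ is continuous, then passes to the limit in the Chapman--Kolmogorov relations for the prelimit chains.

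The main obstacle I anticipate is the bookkeeping around the ``spinal'' excursion in the direct scaffolding argument: unlike in the single-$\mathrm{SSIP}$ case, the object one must carry along to make the conditional law Markovian is a pair consisting of the current nested interval partition \emph{and} the remaining duration of the $\mathrm{SSIP}^{(\alpha)}(\theta_1,\theta_2)$-excursion straddling the current level (equivalently, the triple-valued enrichment used in the proof of Theorem~\ref{thm:hunt} and in Section~\ref{sec:FV}). Verifying that projecting this enriched Markov process back to $\cI^2_{\mathrm{nest}}$ still yields a Markov process requires a Dynkin-type criterion: one must couple two enriched processes started from enriched states with the same $\cI^2_{\mathrm{nest}}$-projection so that their projections agree for all later times. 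This is exactly the kind of argument carried out in \cite[Lemma~3.3]{ShiWinkel-1} and in the proof of Theorem~\ref{thm:FV} above, and I would adapt it; if the enrichment argument proves cumbersome, falling back on the discrete-approximation route sidesteps it entirely, at the cost of first proving the requisite continuity in the initial state.
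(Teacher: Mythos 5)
Your first route is, in outline, the one the paper follows for $\bar\theta=0$: adapt the cutoff/Markov-like property of \cite[Proposition 6.6]{Paper1-1} to a point measure whose ``spindles'' are $\cI_H$-valued excursions (this is Lemma~\ref{lem:nest-Markov-like} in the paper). However, the obstacle you single out --- carrying along the remaining duration of the excursion straddling the current level and then running a Dynkin-type projection for an enriched process --- is not the real difficulty, and pursuing it would miss the point. What makes the nested pair $(\beta_c(y),\beta_f(y))$ a sufficient statistic is a Markov property of the excursion measure itself: under $\Theta^{(\alpha)}(\theta_1,\theta_2)$, conditionally on the excursion up to the (random) level at which it is cut, the continuation is an $\mathrm{SSIP}^{(\alpha)}(\theta_1,\theta_2)$-evolution started from its current state. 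This is supplied by \eqref{eq:Theta:entrance} together with Lemma~\ref{lem:Theta-Ha} (first passage levels of the total mass, proved by dyadic approximation using the Hunt property of Theorem~\ref{thm:hunt}); it is the exact replacement for the mid-spindle Markov property of ${\tt BESQ}(-2\alpha)$ in \cite{Paper1-1}, and it is the key input your plan leaves unidentified. No enrichment by residual lifetimes is needed, nor would it help: the claim to be proved is precisely that the nested pair alone determines the conditional law of the future, and that is what the excursion-level Markov property delivers.

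The second genuine gap is the immigration case $\bar\theta>0$. The left-arrow part of Definition~\ref{defn:cfIP-theta} is only defined distributionally via Kolmogorov extension, and invoking the strong Markov property of $\fX_{\bar\theta}$ at passage times $T^{-y}_{\bar\theta}$ yields the consistency identities but not the Markov property at a fixed level $y$. The paper handles this by constructing a nested excursion measure $\Theta_{\mathrm{nest}}$ on $\cI^2_{\mathrm{nest}}$-valued excursions --- whose entrance law is pinned down by the pseudo-stationarity of Proposition~\ref{prop:cf-ps-theta1theta2} and whose post-$y$ behaviour uses the $\bar\theta=0$ Markov property just established --- and then realising the $\mathrm{cfSSIP}^{(\alpha,\theta_1,\theta_2)}(\bar\theta)$-evolution started from $(\emptyset,\emptyset)$ as a concatenation over a $\mathtt{PRM}(\bar\theta\,\mathrm{Leb}\otimes\Theta_{\mathrm{nest}})$, after which the Markov property follows as in \cite[Lemma 3.10]{IPPAT}; your plan is silent on this step. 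Your fallback route via nested PCRPs (Theorem~\ref{thm:cv-cfIP1}) is genuinely different from the paper and could in principle give the (simple) Markov property, but as stated it is also incomplete: you must apply the convergence theorem conditionally at a positive time, i.e.\ with random initial states converging to $\beta_c(s),\beta_f(s)$, and justify passing the Chapman--Kolmogorov relations to the limit; the requisite continuity in the initial state for the nested process is not available in the paper and would have to be proved, essentially redoing Theorem~\ref{thm:continst} in the nested setting. Path-continuity, as you say, is immediate from that of SSIP-evolutions.
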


The proof of Proposition~\ref{prop:nest-Markov} is postponed to Appendix~\ref{sec:nested-Markov}. 

\begin{theorem}\label{thm:nested-ssip}
	For any $\theta\ge 0$ and pairwise nested $\gamma_\alpha\in \cI_H$, $\alpha\in(0,1)$, there exists a nested family $(\boldsymbol{\beta}_{\alpha}, \alpha\in (0,1))$ 
	such that each $\boldsymbol{\beta}_{\alpha}$ is an $\mathrm{SSIP}^{(\alpha)}(\theta)$-evolution starting from $\gamma_\alpha$, and for any $0<\bar\alpha< \alpha<1$, $(\boldsymbol{\beta}_{\bar\alpha}, \boldsymbol{\beta}_{\alpha})$ almost surely takes values in $\cI^2_{\mathrm{nest}}$. 
\end{theorem}
\begin{proof}
	For $0<\bar\alpha<\alpha<1$, let $(\boldsymbol{\beta}_c,\boldsymbol{\beta}_f)\!=\!((\beta_c(y),\beta_f(y)),y\!\ge\! 0)$ be a  $\mathrm{cfSSIP}^{(\alpha,0,\alpha\!-\!\bar\alpha)} ( \theta)$-evolution starting from  $(\gamma_{\bar\alpha},\gamma_\alpha) \in \cI^2_{\mathrm{nest}}$. 
	Then by Proposition~\ref{prop:nested}, the coarse process $\boldsymbol{\beta}_c$ 
	is an  $\mathrm{SSIP}^{(\bar\alpha)}(\theta)$-evolution and  the fine process $\boldsymbol{\beta}_f$ 
	is an  $\mathrm{SSIP}^{(\alpha)}(\theta)$-evolution. 	
	This induces a kernel $\kappa_{\bar\alpha,\alpha}$ from the coarse process to the fine process. Arguing by approximation as in Theorem \ref{thm:cv-cfIP1}, we can prove that $\kappa_{\alpha_1,\alpha_2}\circ\kappa_{\alpha_2,\alpha_3}=\kappa_{\alpha_1,\alpha_3}$ for all $0<\alpha_1<\alpha_2<\alpha_3<1$. More generally, for any finitely many $0<\alpha_1 < \alpha_2 <\cdots < \alpha_n<1$, we can find nested $(\boldsymbol{\beta}_{\alpha_i}, 1\le i\le n)$ that are consistently related by these kernels. 
	We can thus construct the full family by using Kolmogorov's extension theorem.
\end{proof}

Let $\cI^2_{\mathrm{nest},1}:= 
\{ (\gamma_c, \gamma_{f})\!\in\!\cI^2_{\mathrm{nest}} \colon \|\gamma_c\|\!=\!\|\gamma_f\|\!=\!1\}$ be the space of nested partitions of $[0,1]$.
\begin{theorem}
	For any $\theta\ge 0$ and pairwise nested $\bar\gamma_\alpha\in \cI_{H,1}$, $\alpha\in(0,1)$, there exists a family of processes $(\overline{\boldsymbol{\beta}}_{\alpha}, \alpha\in (0,1))$ on $\cI_{H,1}$, such that each $\overline{\boldsymbol{\beta}}_{\alpha}$ is an $\mathrm{IP}^{(\alpha)}(\theta)$-evolution starting from $\bar\gamma_\alpha$, and for any $0<\bar\alpha< \alpha<1$, $(\overline{\boldsymbol{\beta}}_{\bar\alpha}, \overline{\boldsymbol{\beta}}_{\alpha})$ almost surely takes values in $\cI^2_{\mathrm{nest},1}$. 
\end{theorem}
\begin{proof}
	Build a family of SSIP-evolutions $(\boldsymbol{\beta}_{\alpha}, \alpha\in (0,1))$ as in Theorem~\ref{thm:nested-ssip} on the same probability space. 
	In particular, they have the same total mass process and thus the same de-Poissonisation. 
	So the de-Poissonised family $(\overline{\boldsymbol{\beta}}_{\alpha}, \alpha\in (0,1))$ is still nested. 
\end{proof}

\subsection{An application to alpha-gamma trees}\label{sec:trees}
For $n\ge 1$, let $\mathbb{T}_n$ be the space of all (non-planar) trees without degree-2 vertices, a root vertex of degree 1, and exactly $n$ further degree-1 vertices, \em leaves \em labelled by $[n] = \{1,\ldots, n\}$. 
For $\alpha\in (0,1)$ and $\gamma\in [0,\alpha]$, we construct random trees $T_n$ by using the following \emph{$(\alpha,\gamma)$-growth rule} \cite{CFW}: 
$T_1$ and $T_2$ are the unique elements in $\mathbb{T}_1$ and $\mathbb{T}_2$.   
Given $T_k$ with $k\ge 2$, assign weight $1\!-\!\alpha$ to each of the $k$ edges adjacent to a leaf, weight $\gamma$ to each of the other edges, and weight $(d\!-\!2)\alpha-\gamma$ to each branch point with degree $d\ge 3$.  
To create $T_{k+1}$ from $T_k$, choose an edge or a branch point proportional to the weight, and insert the leaf $k\!+\!1$ to the chosen edge or branch point. 
This generalises R\'emy's algorithm \cite{Remy85} of the uniform tree (when $\alpha=\gamma= 1/2$) and Marchal's recursive construction \cite{Mar08} of $\rho$-stable trees with $\rho\in (1,2]$ (when $\alpha =1/\rho$ and $\gamma=1-1/\rho$). 

For each $T_n$, consider its spinal decomposition as discussed in the introduction, the spine being the path connecting the leaf $1$ and the root. 
Let $C_c(n)$ be the sizes of bushes at the spinal branch points, ordered from left to right in decreasing order of their distances to the root. Then the $(\alpha,\gamma)$-growth rule implies that 
the $(C_c(n), n\in \bN)$ is an  $\mathrm{oCRP}^{(\gamma)}(1\!-\!\alpha,\gamma)$.  
Similar as the \emph{semi-planar $(\alpha,\gamma)$-growth trees} in \cite{Soerensen}, we further equip each spinal branch point with a left-to-right ordering of its subtrees, such that the sizes of the sub-trees in each bush follow the $(\alpha,0,\alpha\!-\!\gamma)$-seating rule. 
By concatenating the sub-tree-configurations of all bushes according to the order of bushes, we obtain the composition $C_f(n)$ of sizes of subtrees. 
Then $(C_f(n), n\in \bN)$ is an $\mathrm{oCRP}^{(\alpha)}(1\!-\!\alpha,\alpha)$ nested to $(C_c(n), n\in \bN)$, as in Figure~\ref{fig:nested_CRP}. 

Let us introduce a continuous-time Markov chain $(\mathbf{T}(s), s\ge 0)$ on $\mathbb{T}=\bigcup_{n\ge 1} \mathbb{T}_n$, the space of labelled rooted trees without degree-2 vertices. 
Given $\mathbf{T}(s)$, assign weights to its branch points and edges as in the $(\alpha,\gamma)$-growth model, such that for each branch point or edge, a new leaf arrives and is attaches to this position at the rate given by its weight. Moreover, fix the root and the leaf $1$, and delete any other leaf at rate one, together with the edge attached to it; in this operation, if a branching point degree is reduced to two, we also delete it and merge 
the two edges attached to it. 

For each $n\ge 1$, consider such a continuous-time up-down Markov chain  $(\mathbf{T}^{(n)}(s), s\!\ge\! 0)$ starting from a random tree $T_n$ built by the $(\alpha,\gamma)$-growth rule. 
At each time $s\ge 0$, with the spine being the path connecting the leaf $1$ and the root, we similarly obtain a nested pair $(C_c^{(n)}(s), C_f^{(n)}(s))$, representing the sizes of spinal bushes and subtrees respectively. Then it is clear that $\big(C_c^{(n)}(s), s\ge 0\big)$ is a $\mathrm{PCRP}^{(\gamma)}(1\!-\!\alpha)$ and that $\big( C_f^{(n)}(s), s\ge 0\big)$ is a  $\mathrm{PCRP}^{(\alpha)}(1\!-\!\alpha,\alpha)$ nested within $C^{(n)}_c$, such that the size evolution of the subtrees in each bush gives a  $\mathrm{PCRP}^{(\alpha)}(0, \alpha\!-\!\gamma)$.

\begin{proposition}
	For each $n\ge 1$, let $\big((C_c^{(n)}(t), C_f^{(n)}(t)), t\ge 0\big)$ be a pair of nested PCRPs defined as above, associated with a tree-valued process $(\mathbf{T}^{(n)}(s), s\ge 0)$ starting from  $T_n$. As $n\to \infty$,  $\big(\frac{1}{n}\big(C_c^{(n)}(2nt), C_f^{(n)}(2nt)\big), t\ge 0\big)$ converges in distribution to a $\mathrm{cfSSIP}^{(\alpha,0, \alpha-\gamma)}(1\!-\!\alpha)$-evolution $(\boldsymbol{\beta}_c, \boldsymbol{\beta}_f)$ starting from $ (\gamma_c,\gamma_f)$, where $\gamma_c\sim \pdip^{(\gamma)} (1\!-\!\alpha, \gamma)$ and $\gamma_f \sim \mathrm{Frag}^{(\alpha)}(0,\alpha\!-\!\gamma)(\gamma_c,\,\cdot\,)$.  
\end{proposition}
\begin{proof}
	We characterised the limiting initial distribution in Lemma~\ref{lem:cv-oCRP} and deduce the convergence of the rescaled process by Theorem~\ref{thm:cv-cfIP1}.
\end{proof}

For $\rho\in (1,2]$, with $\alpha= 1-1/\rho$ and $\gamma = 1-\alpha$, the nested evolution $(\boldsymbol{\beta}_c, \boldsymbol{\beta}_f)$, transformed by de-Poissonisation as in Definition~\ref{defn:dePoi}, would be stationary with the law of nested $\pdip^{(1/\rho)} (1/\rho, 1/\rho)$ and $\pdip^{(1-1/\rho)} (1/\rho, 1-1/\rho)$. 
This stationary distribution corresponds to the coarse and fine spinal decompositions in a $\rho$-stable L\'evy tree \cite[Corollary 10]{HPW}. 
%


\begin{appendix}
	\section{Birth-death Processes}\label{sec:ud-proof}
	
	\begin{proof}[Proof of Lemma~\ref{lem:ud-bis}] We adapt the proof of \cite[Theorem 3(i)]{BertKort16}, which establishes such convergence of hitting times in a general context of discrete-time Markov chains 
		converging to positive self-similar Markov processes. This relies on Lamperti's representation for $Z\sim \besq_a (2 \theta)$
		$$Z(t)=\exp(\xi(\sigma(t))),\qquad\mbox{where }\sigma(t)=\inf\left\{s\ge 0\colon\int_0^se^{\xi(r)}dr>t\right\},$$
		for a Brownian motion with drift $\xi(t)=\log(a)+2B(t)-2(1-\theta)t$, and corresponding representations 
		$$Z_n(t)=\exp(\xi_n(\sigma_n(t))),\qquad\mbox{where }\sigma_n(t)=\inf\left\{s\ge 0\colon\int_0^se^{\xi_n(r)}dr>t\right\},$$
		for continuous-time Markov chains $\xi_n$, $n\ge 1$, with increment kernels
		$$L^n(x,dy)=2ne^x\Big((ne^x+\theta)\delta_{\log(1+1/ne^x)}(dy)+ne^x\delta_{\log(1-1/ne^x)}(dy)\Big),\ \ x\ge -\log n.$$ 
		We easily check that for all $x\in\mathbb{R}$
		$$\int_{y\in\mathbb{R}}yL^n(x,dy)\rightarrow -2(1-\theta)\quad\mbox{and}\quad\int_{y\in\mathbb{R}}y^2L^n(x,dy)\rightarrow 4,$$
		as well as
		$$\sup_{\{x\colon|x|\le r\}}\int_{y\in\mathbb{R}}y^21_{\{|y|>\varepsilon\}}L^n(x,dy)=0\quad\mbox{for $n$ sufficiently large.}$$
		To apply \cite[Theorem IX.4.21]{JacodShiryaev}, we further note that all convergences are locally uniform, and we extend the increment kernel $\widetilde{L}^n(x,dy):=L^n(x,dy)$, $x\ge\log(2)-\log(n)$, by setting
		$$\widetilde{L}^n(x,dy):=L^n(\log(2)-\log(n),dy),\quad x<\log(2)-\log(n),$$
		to be definite. With this extension of the increment kernel, we obtain $\widetilde{\xi}_n\rightarrow\xi$ in distribution on 
		$\mathbb{D}([0,\infty),\mathbb{R})$. This implies $\xi_n\rightarrow\xi$ in distribution also for the process $\xi_n$ that jumps from $-\log(n)$ to 
		$-\infty$, but only if we stop the processes the first time they exceed any fixed negative level. 
		
		Turning to extinction times $\tau_n$ of $Z_n$, we use Skorokhod's representation $\xi_n\rightarrow\xi$ almost surely. Then we want to show 
		that also
		$$\tau_n=\int_0^\infty e^{\xi_n(s)}ds\rightarrow\tau=\int_0^\infty e^{\xi(s)}ds\qquad\mbox{in probability.}$$
		We first establish some uniform bounds on the extinction times when $Z_n$, $n\ge 1$, are started from 
		sufficiently small initial states. To achieve this, we consider the generator $\mathcal{L}^n$ of $Z_n$ and note that for $g(x)=x^\beta$, we have
		$$\mathcal{L}^ng(x)=2n((nx+\theta)g(x+1/n)+nx g(x-1/n)-(2nx+\theta)g(x))\le -C g(x)/x$$
		for all $n\ge 1$ and $x\ge K/n$ if and only if
		$$\frac{g(1+h)-2g(1)+g(1-h)}{h^2}+\theta\frac{g(1+h)-g(1)}{h}\le -C/2\quad\mbox{for all $h\le 1/K$.}$$
		But since $g^{\prime\prime}(1)+\theta g^\prime(1)=\beta(\beta-1+\theta)<0$ for $\beta\in(0,1-\theta)$, the function $g$ is a Foster-
		Lyapunov function, and \cite[Corollary 2.7]{MensPetr14}, applied with $q=p/2=\beta$ and $f(x)=x^{1/2}$, yields
		$$\exists C^\prime>0\ \forall n\ge 1,\ \forall x\ge K/n\quad\mathbb{E}_x((\tau_n^{(K)})^q)<C^\prime x^\beta,$$
		where $\tau_n^{(K)}=\inf\{t\ge 0\colon Z_n(t)\le K/n\}$. An application of Markov's inequality yields 
		$\mathbb{P}_x(\tau_n^{(K)}>t)\le C^\prime x^\beta t^{-\beta}$. In particular,
		$$\forall\varepsilon>0\ \forall t>0\ \exists\eta>0\ \forall n\ge 1\ \forall K+1\le i\le\eta n\quad \mathbb{P}_{i/n}(\tau_n^{(K)}>t/6)\le\varepsilon/8.$$
		Furthermore, there is $n_0$ such that for $n\ge n_0$, the probability that $Z_n$ starting from $K/n$ takes more than time $t/6$ to get 
		from $K/n$ to $0$ is smaller than $\varepsilon/8$. Now choose $R$ large enough so that 
		$$\mathbb{P}(\exp(\xi(R))<\eta/2)\ge 1-\varepsilon/8\quad\mbox{and}\quad\mathbb{P}\left(\int_R^\infty e^{\xi(s)}ds>t/3\right)\le\varepsilon/4.$$
		We can also take $n_1\ge n_0$ large enough so that 
		$$\mathbb{P}(|\exp(\xi_n(R))-\exp(\xi(R))|<\eta/2)\ge 1-\varepsilon/8\quad\mbox{for all $n\ge n_1$.}$$
		Then, considering $\exp(\xi_n(R))$ and applying the Markov property at time $R$, 
		$$\mathbb{P}(\exp(\xi_n(R))>\eta)<\varepsilon/4\quad\mbox{and}\quad\mathbb{P}\left(\int_R^\infty e^{-\xi_n(s)}ds>t/3\right)\le\varepsilon/2,\quad\mbox{for all $n\ge n_1$.}$$
		But since $\xi_n\rightarrow\xi$ almost surely, uniformly on compact sets, we already have 
		$$\int_0^R e^{\xi_n(s)}ds\rightarrow\int_0^Re^{\xi(s)}ds\quad\mbox{almost surely.}$$
		Hence, we can find $n_2\ge n_1$ so that for all $n\ge n_2$
		$$\mathbb{P}\left(\left|\int_0^R e^{\xi_n(s)}ds-\int_0^Re^{\xi(s)}ds\right|>t/3\right)<\varepsilon/4.$$
		We conclude that, for any given $t>0$ and any given $\varepsilon$, we found $n_2\ge 1$ such that for all $n\ge n_2$
		$$\mathbb{P}\left(\left|\int_0^\infty e^{\xi_n(s)}ds-\int_0^\infty e^{\xi(s)}ds\right|>t\right)<\varepsilon,
		$$
		as required. 
	\end{proof}
	
	\begin{proof}[Proof of Proposition~\ref{prop:vague}] Denote by $A(f)=\sup|f|$ the supremum of a c\`adl\`ag excursion $f$. In using the term ``vague convergence'' on spaces that 
		are not locally compact, but are bounded away from a point (here bounded on $\{A>a\}$ for all $a>0$), we follow 
		Kallenberg \cite[Section 4.1]{KallenbergRM}. Specifically, it follows from his Lemma 4.1 that it suffices to show for all $a>0$
		\begin{enumerate}\item[1.] $\Lambda^{(2\theta)}_{\mathtt{BESQ}}(A=a)=0$,
			\item[2.] $(\Gamma(1+\theta)/(1-\theta)) n^{1-\theta} \cdot \widetilde{\pi}_1^{(n)}(\theta)(A>a) \underset{n\to \infty}{\longrightarrow} \Lambda^{(2\theta)}_{\mathtt{BESQ}}(A>a)$,
			\item[3.] $\widetilde{\pi}_1^{(n)}(\theta)(\,\cdot\,|\,A>a) \underset{n\to \infty}{\longrightarrow} \Lambda^{(2\theta)}_{\mathtt{BESQ}}(\,\cdot\,|\,A>a)$ weakly.
		\end{enumerate}
		See also \cite[Proposition A2.6.II]{DaleyVereJones1}.
		
		1. is well-known. Indeed, we have chosen to normalise $\Lambda^{(2\theta)}_{\mathtt{BESQ}}$ so that 
		$\Lambda^{(2\theta)}_{\mathtt{BESQ}}(A>a)=a^{\theta-1}.$
		See e.g. \cite[Section~3]{PitmYor82}. Cf. \cite[Lemma 2.8]{Paper1-1}, where a different normalisation was chosen. 
		
		2. can be proved using scale functions. Let us compute a scale function $s$ for the birth-death chain with up-rates $i+\theta$ and down-rates
		$i$ from state $i\ge 1$. Set $s(0)=0$ and $s(1)=1$. For $s$ to be a scale function, we need 
		$$(k+\theta)(s(k+1)-s(k))+k(s(k-1)-s(k))=0\qquad\mbox{for all }k\ge 1.$$
		Let $d(k)=s(k)-s(k-1)$, $k\ge 1$. Then 
		$$d(k+1)=\frac{k}{k+\theta}d(k)=\frac{\Gamma(k+1)\Gamma(1+\theta)}{\Gamma(k+1+\theta)}\sim\Gamma(1+\theta)k^{-\theta}\qquad\mbox{as } k\rightarrow\infty,$$
		and therefore
		$$s(k)=\sum_{i=1}^kd(i)=\sum_{i=1}^k\frac{\Gamma(i+1)\Gamma(1+\theta)}{\Gamma(i+1+\theta)}\sim\frac{\Gamma(1+\theta)}{1-\theta}k^{1-\theta}.$$
		Then the scale function applied to the birth-death chain is a martingale. Now let $p(k)$ be the probability of hitting $k$ before absorption in 0, 
		when starting from 1. Applying the optional stopping theorem at the first hitting time of $\{0,k\}$, we find $p(k)s(k)=1$, and hence
		$$\frac{\Gamma(1\!+\!\theta)}{1\!-\!\theta} n^{1-\theta}p(\lceil na\rceil)=\frac{\Gamma(1\!+\!\theta) n^{1- \theta}}{(1-\theta)s(\lceil na\rceil)}\underset{n\to\infty}{\longrightarrow}a^{\theta-1},$$
		as required.
		
		3. can be proved by using the First Description of $\Lambda^{(2\theta)}_{\mathtt{BESQ}}$ given in \cite[(3.1)]{PitmYor82}, which states, in particular, that the
		excursion under $\Lambda^{(2\theta)}_{\mathtt{BESQ}}(\,\cdot\,|A>a)$ is a concatenation of two independent processes, an $\uparrow$-diffusion
		starting from 0 and stopped when reaching $a$ followed by a $0$-diffusion starting from $a$ and run until absorption in 0. In our case, the 
		$0$-diffusion is ${\tt BESQ}(2\theta)$, while \cite[(3.5)]{PitmYor82} identifies the $\uparrow$-diffusion as ${\tt BESQ}(4-2\theta)$. 
		Specifically, straightforward Skorokhod topology arguments adjusting the time-changes around the concatenation times, see 
		\cite[VI.1.15]{JacodShiryaev}, imply that it suffices to show: 
		\begin{enumerate}\item[(a)] The birth-death chain starting from 1 and conditioned to reach $\lceil na\rceil$ before 0, rescaled,   
			converges to a ${\tt BESQ}(4\!-\!2\theta)$ stopped at $a$, jointly with the hitting times of $\lceil na\rceil$.
			\item[(b)] The birth-death chain starting from $\lceil na\rceil$ run until first hitting 0, rescaled, converges to a ${\tt BESQ}(2\theta)$
			starting from $a$ stopped when first hitting $0$, jointly with the hitting times.
		\end{enumerate}
		(b) was shown in \cite[Theorem 1.3]{RogeWink20}. See also Lemmas \ref{lem:ud}--\ref{lem:ud-bis} here, completing the convergence of the hitting time. For (a), we adapt that proof. But first we need to identify the conditioned birth-death 
		process. Note that the holding rates are not affected by the conditioning. An elementary argument based purely on the jump chain shows that
		the conditioned jump chain is Markovian, and its transition probabilities are adjusted by factors $s(i\pm 1)/s(i)$ so that the conditioned birth-death
		process has up-rates $(i+\theta)s(i+1)/s(i)$ and down-rates $is(i-1)/s(i)$ from state $i\ge1$.  Rescaling, our processes are instances of $\mathbb{R}$-valued  
		pure-jump Markov process with jump intensity kernels $\widetilde{K}^n(x,dy)=0$ for $x\le 0$ and, for $x>0$, 
		\[\widetilde{K}^n(x,dy)=2n\!\left(\!\!\left(\lceil nx\rceil+\theta\right)\frac{s(\lceil nx\!+\!1\rceil)}{s(\lceil nx\rceil)}\delta_{1/n}(dy)
		+\lceil nx\rceil\frac{s(\lceil nx\!-\!1\rceil)}{s(\lceil nx\rceil)}\delta_{-1/n}(dy)\!\right)\!.
		\]
		We now check the drift, diffusion and jump criteria of \cite[Theorem IX.4.21]{JacodShiryaev}: for $x>0$
		\begin{align*}
			\int_{\mathbb{R}}y\widetilde{K}^n(x,dy)
			&=2\lceil nx\rceil\frac{s(\lceil nx\!+\!1\rceil)-s(\lceil nx\!-\!1\rceil)}{s(\lceil nx\rceil)}+2\theta\frac{s(\lceil nx\!+\!1\rceil)}{s(\lceil nx\rceil)}\\
			&\rightarrow 4-4\theta+2\theta=4-2\theta,\\
			\int_{\mathbb{R}}y^2\widetilde{K}^n(x,dy)
			&=\frac{2\lceil nx\rceil}{n}\frac{s(\lceil nx\!+\!1\rceil)+s(\lceil nx\!-\!1\rceil)}{s(\lceil nx\rceil)}+\frac{2\theta}{n}\frac{s(\lceil nx\!+\!1\rceil)}{s(\lceil nx\rceil)}\\
			&\rightarrow 4x+0=4x,\\
			\int_{\mathbb{R}}y^21_{\{|y|\ge\varepsilon\}}\widetilde{K}^n(x,dy)&=0\qquad\mbox{for $n$ sufficiently large,}
		\end{align*}
		all locally uniformly in $x\in(0,\infty)$, as required for the limiting $(0,\infty)$-valued ${\tt BESQ}(4-2\theta)$ diffusion with infinitesimal drift $4-2\theta$ and diffusion coefficient $4x$. The convergence of hitting times of $\lfloor na\rfloor$, which is the first passage time above level $\lfloor na\rfloor$, follows from the regularity of the limiting diffusion after the first passage time above level $a$. See e.g. 
		\cite[Lemma 3.3]{RogeWink20}. 
	\end{proof}
	
	\section{Proof of Theorem~\ref{thm:crp-ip-bis}}\label{sec:proof-thm:crp-ip-bis}
	
	\begin{proof}[Proof of Theorem~\ref{thm:crp-ip-bis}]
		We start with the case $\gamma = \emptyset$ and $\theta<1$. 
		For any $\varepsilon>0$, for all $n$ large enough, 
		the total mass process $\|C^{(n)}\|$ is stochastically dominated  by an up-down chain $Z^{(n)}\sim \pi(2\theta)$ starting from $\lfloor n\varepsilon \rfloor$, killed at  the hitting time of zero denoted by $\zeta_Z^{(n)}$. 
		Using Lemmas~\ref{lem:ud} and \ref{lem:ud-bis}, we deduce that, as $n\to \infty$, $(\frac{1}{n}Z^{(n)}(2nt), t\ge 0)$ converges in distribution to $\besq_{\varepsilon}(2\theta)$ killed at zero, and	$\zeta_Z^{(n)}/2n$  converges jointly in distribution to $\varepsilon/2G$ with $G$ a Gamma variable. 
		By the arbitrariness of $\varepsilon$, we conclude that 
		$(\frac{1}{n}\|C^{(n)}((2nt)\wedge \zeta^{(n)})\| ,t\ge 0) \to 0$ and $\zeta^{(n)}/2n\to 0$ in probability jointly.   
		Then the convergence of the PCRP follows. 
		
				We next deal with the case $\gamma\ne \emptyset$.  
		By assumption, $\frac{1}{n}  C^{(n)}(0)$ converges in distribution to 
		$\gamma\in \cI_H$ under the metric $d_H$. 
		As $\gamma\ne \emptyset$, 
		we may use Skorokhod representation and Lemma~\ref{lem:dH} to find $\big(C^{(n)}_1(0), m^{(n)}(0),C^{(n)}_2(0)\big)$ for all $n$ sufficiently large, with $m^{(n)}(0)\ge 1$ and $C^{(n)}_1(0),C^{(n)}_2(0)\in \cC$, such that $C^{(n)}_1(0)\concat \{(0, m^{(n)}(0))\}\concat C^{(n)}_2(0)= C^{(n)}(0)$, and that, as $n\to \infty$, 
		\begin{equation}\label{eq:intial}
			\Big(\frac{1}{n}  C_1^{(n)}(0), \frac{1}{n} m^{(n)}(0), \frac{1}{n}  C_2^{(n)}(0)\Big)\to (\gamma_1, m,\gamma_2):=\phi(\gamma), \quad \text{a.s.,}
		\end{equation} 
		where $\phi$ is the function defined by \eqref{eq:phi}.	
		
		For every $n\in \bN$, let $\ff^{(n,0)}\sim \pi_{m^{(n)}(0)}(-\alpha)$ be as in \eqref{eq:pi}, $\boldsymbol{\gamma}^{(n,0)}_1$ a $\mathrm{PCRP}^{(\alpha)} (\theta_1, \alpha)$ starting from $ C_1^{(n)}(0)$
		and $\boldsymbol{\gamma}^{(n,0)}_2$ a $\mathrm{PCRP}^{(\alpha)} (\alpha, \theta_2)$ starting from $ C_2^{(n)}(0)$;  
		the three processes $\boldsymbol{\gamma}_1^{(n,0)},\ff^{(n,0)}$ and $\boldsymbol{\gamma}_2^{(n,0)}$ are independent. 
		By Proposition~\ref{prop:crp-ip-theta}, Lemma~\ref{lem:ud} and Skorokhod representation, we may assume that a.s.\ 
		\begin{equation}\label{eq:cvT1}
			\Big(\frac{1}{n} \gamma_1^{(n,0)}(2n~\cdot), \frac{1}{n}\ff^{(n,0)}(2n~\cdot),\frac{\zeta(\ff^{(n,0)})}{2n}, \frac{1}{n} \gamma_2^{(n,0)}(2n~\cdot)\Big)\to 
			\Big(\boldsymbol{\gamma}_1^{(0)},\ff^{(0)}, \zeta(\ff^{(0)}),\boldsymbol{\gamma}_2^{(0)}\Big).
		\end{equation}	
		The limiting triple process $(\boldsymbol{\gamma}_1^{(0)},\ff^{(0)}, \boldsymbol{\gamma}_2^{(0)})$ starting from $(\gamma_1, m,\gamma_2)$ can serve as that in the construction of $\boldsymbol{\beta}$ in Definition~\ref{defn:ipe}. 
		Write $T_1=\zeta(\ff^{(0)})$ and $T_{n,1}= \zeta(\ff^{(n,0)})$, then 
		\[
		\frac{1}{n} \gamma_1^{(n,0)}( T_{n,1})\concat \frac{1}{n} \gamma_2^{(n,0)}( T_{n,1}) \to \gamma_1^{(0)}(T_{1})\concat \gamma_2^{(0)}(T_{1})=:\beta(T_1), \quad \text{a.s..}.
		\]
		With $\phi$ the function defined in \eqref{eq:phi}, set 
		\[
		( C_1^{(n,1)}, m^{(n,1)}, C_2^{(n,1)})
		:=\phi\left(\gamma^{(0)}_1(T_{n,1})\concat \gamma^{(0)}_2(T_{n,1})\right).	
		\]
		Since $T_1$ is independent of $(\boldsymbol{\gamma}_1^{(0)},\boldsymbol{\gamma}_2^{(0)})$, $\beta(T_1)$ a.s.\@ has a unique largest block. 
		By this observation and \eqref{eq:cvT1} we have
		$ \frac{1}{n}	( C_1^{(n,1)}, m^{(n,1)}, C_2^{(n,1)}) \to  \phi(\beta(T_{1}))$, since $\phi$ is continuous at any interval partition whose longest block is unique. 	
		
		For each $n\ge 1$, 	if $(  C_1^{(n,1)}, m^{(n,1)}, C_2^{(n,1)}) =(\emptyset, 0,\emptyset)$, then for every $i\ge 1$, we set $T_{n,i}:= T_{n,1}$  and 
		$
		\Big(  \boldsymbol{\gamma}^{(n,i)}_1 ,\ff^{(n,i)} ,\boldsymbol{\gamma}^{(n,i)}_2\Big):\equiv (\emptyset,0, \emptyset).$				
		If $( C_1^{(n,1)}, m^{(n,1)}, C_2^{(n,1)}) \ne (\emptyset, 0,\emptyset)$, then conditionally on the history, let $\ff^{(n,1)}\sim \pi_{m^{(n,1)}}(-\alpha)$, and consider $\boldsymbol{\boldsymbol{\gamma}}^{(n,1)}_1$, a $\mathrm{PCRP}^{(\alpha)} (\theta_1,\alpha)$ starting from $C_1^{(n,1)}$, and $\boldsymbol{\gamma}^{(n,1)}_2$, a $\mathrm{PCRP}^{(\alpha)} (\alpha,\theta_2)$ starting from $C_2^{(n,1)}$, independent of each other. Set $T_{n,2} = T_{n,1} +\zeta(\ff^{(n,1)})$. 
		Again, by Proposition~\ref{prop:crp-ip-theta}, Lemma~\ref{lem:ud} and Skorokhod representation, we may assume that a similar a.s.\ convergence as in \eqref{eq:cvT1} holds for $(\boldsymbol{\gamma}_1^{(n,1)},\ff^{(n,1)},\zeta(\ff^{(n,1)}),\boldsymbol{\gamma}_2^{(n,1)})$.

		By iterating arguments above, we finally obtain for every $n\ge 1$ a sequence of processes $(\boldsymbol{\gamma}_1^{(n,i)},\ff^{(n,i)}, \boldsymbol{\gamma}_2^{(n,i)})_{i\ge 0}$ with renaissance levels $(T_{n,i})_{i\ge 0}$, such that, inductively, for every $k\ge 0$, the following a.s.\@ convergence holds: 
		\begin{equation}\label{eq:cv-ik}
			\Big(\frac{1}{n} \gamma_1^{(n,k)}\!(2n~\cdot), \frac{1}{n}\ff^{(n,k)}\!(2n~\cdot), \frac{\zeta(\ff^{(n,k)})}{2n}, \frac{1}{n} \gamma_2^{(n,k)}\!(2n~\cdot)\Big) 
			\to \Big(\boldsymbol{\gamma}_1^{(k)},\ff^{(k)},\zeta(\ff^{(k)}), \boldsymbol{\gamma}_2^{(k)}\Big).  
		\end{equation}
		Using the limiting processes $\Big(\boldsymbol{\gamma}_1^{(k)},\ff^{(k)}, \boldsymbol{\gamma}_2^{(k)}\Big)_{k\ge 0}$, we build according to Definition~\ref{defn:ipe} an $\mathrm{SSIP}_{\!\dagger}^{(\alpha)}(\theta_1, \theta_2)$-evolution $\boldsymbol{\beta}=(\beta(t), t\ge 0)$, starting from $\gamma$, with renaissance levels $T_k = \sum_{i=0}^{k-1}\zeta(\ff^{(i)} )$ and $T_{\infty}=\lim_{k\to \infty} T_k$.  
		
		Then for every $t\ge 0$ and $k\in \bN$, on the event $\{T_{k} >t \}$ we have by \eqref{eq:cv-ik} the a.s.\@ convergence 
		of the process $(   \frac{1}{n}  C^{(n)} (2n s) , s\le t ) \to (    \beta (s) , s\le t )$. 
		When $\theta\ge 1$, since the event 
		$\{T_{\infty}=\infty\}=\bigcap_{t\in \bN} \bigcup_{k\in \bN} \{T_{k} >t \}$  has probability one by Theorem~\ref{thm:IvaluedMP}~(\ref{item:levels}), the convergence in Theorem~\ref{thm:crp-ip-bis} holds a.s.. 
		
		We now turn to the case $\theta<1$, where we have by Theorem~\ref{thm:IvaluedMP}~(\ref{item:levels}) that $T_{\infty}<\infty$ a.s.\ 
		and that, for any $\varepsilon >0$, there exists $K\in \bN$ such that 
		\begin{equation}\label{eq:cv-1}
			\bP \Big(\sup_{t\ge T_K} \|\beta(t)\|>\varepsilon \Big) <\varepsilon\quad\mbox{and}\quad\bP\big(T_\infty>T_K+\varepsilon\big)<\varepsilon.
		\end{equation}
		For each $n\in \bN$, consider the  concatenation
		\begin{equation*}
			C^{(n)}\!(t)\!=\! \begin{cases}
				\!\gamma^{\!(n,i)}_1\!(t\!-\! T_{n,i})\!\concat\! \big\{\!\big(0, \ff^{(n,i)}\!(t\!-\!T_{n,i})\big)\!\big\} \!\concat\!\gamma^{\!(n,i)}_2\!(t\!-\!T_{n,i}) ,\!  &t\!\in\! [T_{n,i}, T_{n,i+1}), i\!\le\! K\!\!-\!1,\\ 
				\!\widetilde{C}^{(n)}(t\!-\!T_{n,K}),& t\ge T_{n,K}, 
			\end{cases}
		\end{equation*}
		where  $\widetilde{C}^{(n)}$ is a $\mathrm{PCRP}^{(\alpha)} (\theta_1, \theta_2)$ starting from $ C^{(n)}(T_{n,K}-)$ and killed at $\emptyset$, independent of the history. 
		Then $C^{(n)}$ is a $\mathrm{PCRP}^{(\alpha)} (\theta_1, \theta_2)$ starting from $C^{(n)}(0)$  and killed at $\emptyset$. 
		We shall next prove that its rescaled process converges to $(\beta(t),t\ge 0)$ in probability, which completes the proof. 
		
		By the convergence \eqref{eq:cv-ik}, there exists $N\in \bN$ such that for every $n>N$, we have
		\begin{equation}\label{eq:cv-2}
			\bP\bigg(\sup_{s\in [0, T_K]} d_{H} \Big(\frac{1}{n}C^{(n)} (2ns),  \beta(s)\Big) >\varepsilon \bigg)<\varepsilon\quad\mbox{and}\quad
			\bP\bigg(\Big|\frac{1}{2n}T_{n,K}-T_K\Big|>\varepsilon\bigg)<\varepsilon.
		\end{equation}
		Furthermore, by Lemmas~\ref{lem:ud}--\ref{lem:ud-bis}, under the locally uniform topology
	\[
	\Big( \frac{1}{n}\|\widetilde{C}^{(n)} (2n\cdot)\|, \frac{1}{2n} \zeta (\widetilde{C}^{(n)}) \Big)
	\underset{n\to\infty}{\to} 
	\Big(\|\beta(\cdot+ T_K)\|, \zeta\big(\beta(\cdot+ T_K)\big) \Big) \quad \text{in distribution}.
	\]
		By the convergence of $\frac{1}{n}\|\widetilde{C}^{(n)}\|$, there exists $\widetilde{N}\in \bN$
		such that for every $n>\widetilde{N}$, 
		\begin{equation}\label{eq:cv-3}
			\bP\bigg( \sup_{s\ge 0}\frac{1}{n}\|\widetilde{C}^{(n)}(s)\|> \varepsilon\bigg)  <\varepsilon\quad\mbox{and}\quad
			\bP\bigg(\Big|\frac{1}{2n}\zeta(\widetilde{C}^{(n)})-\zeta\big(\beta(\cdot+T_K)\big)\Big|>\varepsilon\bigg)<\varepsilon.
		\end{equation}
		Summarising \eqref{eq:cv-1} and \eqref{eq:cv-3}, for every $n>\widetilde{N}$, we have
		\[
		\bP \bigg(\! \sup_{s\in [0, \infty)} d_{H} \Big(\frac{1}{n}\widetilde{C}^{(n)} (2ns),  \beta(s+T_K)\Big)>3\varepsilon\bigg)\le 3\varepsilon. 
		\]
		Together with \eqref{eq:cv-2}, this leads to the desired convergence in probability. 
	\end{proof}
	\section{The Markov property of nested SSIP-evolutions}\label{sec:nested-Markov}
	
	To prove Proposition~\ref{prop:nest-Markov}, we first give a property of the excursion measure $\Theta^{(\alpha)}(\theta_1,\theta_2)$. For any $\cI_{H}$-valued process $\boldsymbol{\gamma}\!=\!(\gamma(y),y\!\ge\!0)$ and $a\!>\!0$, let $H^a(\boldsymbol{\gamma}):= \inf\{y\!\ge\! 0\colon \|\gamma(y)\|\!>\!a \}$. 
	\begin{lemma}\label{lem:Theta-Ha}
		For $a>0$, let $\boldsymbol{\beta}=(\beta(y),\,y\ge 0)\sim\Theta^{(\alpha)}(\theta_1,\theta_2)(\,\cdot\,|\, H^a<\infty)$. Conditionally on $(\beta(r),\, r\le H^a(\boldsymbol{\beta}))$, the process $(\beta(H^a(\boldsymbol{\beta})+z),\, z\ge 0)$ is an $\mathrm{SSIP}^{(\alpha)}(\theta_1,\theta_2)$-evolution starting from $\beta(H^a(\boldsymbol{\beta}))$. 
	\end{lemma}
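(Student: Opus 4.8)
The plan is to use the Markov-like structure of the excursion measure $\Theta=\Theta^{(\alpha)}(\theta_1,\theta_2)$ that is already encoded in the entrance-law identity \eqref{eq:Theta:entrance}. First I would show that on the event $\{H^a<\infty\}$ the hitting time $H^a(\boldsymbol\beta)=\inf\{y\ge 0\colon\|\beta(y)\|>a\}$ is a.s.\ finite and, under $\Theta$, that $\Theta\{H^a<\infty\}=\Theta\{\sup_y\|\beta(y)\|>a\}<\infty$; this was observed in Proposition~\ref{prop:rec-extension} (third bullet of its proof) since the total mass image of $\Theta$ is $\Lambda^{(2\theta)}_{\tt BESQ}$, whose image under $\sup$ is finite on $\{A>a\}$. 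Thus conditioning on $\{H^a<\infty\}$ makes sense and produces a probability measure $\Theta(\,\cdot\,|\,H^a<\infty)$.

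The key step is a strong-Markov property of $\Theta$ at the stopping time $H^a$. I would argue as follows. Fix a small level $t>0$ and decompose using \eqref{eq:Theta:entrance}: under $\Theta$, the shifted path $(\beta(t+s),\,s\ge 0)$ is, conditionally on $\beta(t)$, an $\mathrm{SSIP}_{\!\dagger}^{(\alpha)}(\theta_1,\theta_2)$-evolution, with $\beta(t)$ distributed according to the $\sigma$-finite entrance law $N_t$. An $\mathrm{SSIP}_{\!\dagger}$-evolution is a Hunt process (Theorem~\ref{thm:IvaluedMP}), hence strongly Markov; applying its strong Markov property at the stopping time $H^a$ (which is a stopping time for the shifted filtration once $H^a>t$, and we can let $t\downarrow 0$ since $H^a>0$ $\Theta$-a.e.\ because $\|\beta(r)\|\to 0$ as $r\downarrow 0$ by \eqref{eq:Theta:tozero}), we obtain that, conditionally on $(\beta(r),\,r\le H^a)$, the process $(\beta(H^a+z),\,z\ge 0)$ is an $\mathrm{SSIP}_{\!\dagger}^{(\alpha)}(\theta_1,\theta_2)$-evolution started from $\beta(H^a)$. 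Since $\|\beta(H^a)\|\ge a>0$ at such a time, and since we are in the regime $\theta\in(-\alpha,0]$ where the $\mathrm{SSIP}_{\!\dagger}$- and $\mathrm{SSIP}$-evolutions coincide — or in the regime $\theta\in(0,1)$ where, by the recurrent-extension construction of Section~\ref{sec:rec}, the excursion $\boldsymbol\beta$ after it has reached mass $a$ evolves exactly as an $\mathrm{SSIP}_{\!\dagger}$-evolution up to its death, which is how the recurrent extension is glued together — the post-$H^a$ process is an $\mathrm{SSIP}^{(\alpha)}(\theta_1,\theta_2)$-evolution in the sense of Definition~\ref{defn:ssip}; here one uses that $\mathrm{SSIP}_{\!\dagger}$ and $\mathrm{SSIP}$ have the same law when started away from $\emptyset$ and killed/stopped before hitting $\emptyset$, and that $\Theta$-a.e.\ excursion does hit $\emptyset$ at its finite lifetime.

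To make the conditioning on $\{H^a<\infty\}$ rigorous I would note that $H^a$ depends only on the total mass coordinate, whose law under $\Theta$ is $\Lambda^{(2\theta)}_{\tt BESQ}$, and that conditioning a $\sigma$-finite excursion measure on a positive-and-finite-measure event and then applying the strong Markov property is standard (cf.\ the First Description of $\Lambda^{(2\theta)}_{\tt BESQ}$ used in the proof of Proposition~\ref{prop:vague}, and \cite[VI.48]{RogersWilliams}); the normalising constant cancels in the conditional law. The main obstacle is the careful justification of the strong Markov property of $\Theta$ at $H^a$ — i.e.\ transferring the Markov property of $\mathrm{SSIP}_{\!\dagger}$ through the $\sigma$-finite entrance-law representation to a \emph{stopping} time rather than a deterministic time, and checking that the conditioning event $\{H^a<\infty\}$ is compatible with this. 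I expect this to be handled by a routine $t\downarrow 0$ approximation combined with the entrance-law identity \eqref{eq:Theta:entrance} and the fact that $H^a>0$ $\Theta$-a.e.; the rest is bookkeeping of which of the three $\theta$-phases one is in, with the cleanest argument being the observation that in every phase the law of $\boldsymbol\beta$ after first reaching mass $a$ is, by construction of $\Theta$ and of the recurrent extension, precisely the law of the $\mathrm{SSIP}^{(\alpha)}(\theta_1,\theta_2)$-evolution started at $\beta(H^a)$ and run until absorption in $\emptyset$, which in turn agrees with the full $\mathrm{SSIP}$-evolution since $\emptyset$ is then never revisited by a single excursion.
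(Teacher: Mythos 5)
Your argument is correct in substance and rests on the same two ingredients as the paper's proof (the Markov structure of $\Theta$ encoded by \eqref{eq:Theta:entrance} and the Hunt property of the underlying interval partition evolutions), but it implements the key step differently. The paper discretises the stopping time, setting $H^a_k:=2^{-k}\lceil 2^kH^a\rceil\wedge 2^k$, applies the Markov property of $\Theta$ at these stopping times taking finitely many values, and then lets $k\to\infty$, using path-continuity and the Hunt property (continuity in the initial state) of $\mathrm{SSIP}^{(\alpha)}(\theta_1,\theta_2)$-evolutions from Theorem~\ref{thm:hunt} to pass to the limit; this avoids ever invoking a strong Markov property under the $\sigma$-finite measure itself. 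You instead shift by a fixed small time $t$, use the entrance-law representation to write the post-$t$ path as a mixture of $\mathrm{SSIP}_{\!\dagger}$-laws, apply the strong Markov property of that Hunt process at the (shifted) hitting time, and then let $t\downarrow 0$ using that $H^a>0$ holds $\Theta$-a.e.\ by \eqref{eq:Theta:tozero}. This is a legitimate alternative and arguably more direct, but note that it needs \eqref{eq:Theta:entrance} upgraded to the full Markov property of $\Theta$ at fixed times (conditioning on the pre-$H^a$ path includes the piece before $t$, so the marginal identity for the shifted path alone does not suffice); this upgrade is indeed what the construction via \cite[VI.48]{RogersWilliams} provides, and the paper's proof relies on the same reading when it applies \eqref{eq:Theta:entrance} at the stopping times $H^a_k$.

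One point in your discussion of the phases is off. What your argument (and the paper's) delivers after $H^a$ is an $\mathrm{SSIP}_{\!\dagger}^{(\alpha)}(\theta_1,\theta_2)$-evolution: under $\Theta$ the path is absorbed in $\emptyset$ at the end of its lifetime, so for $\theta\in(0,1)$ the post-$H^a$ law does \emph{not} coincide with the recurrent-extension $\mathrm{SSIP}$-evolution of Definition~\ref{defn:ssip}, which leaves $\emptyset$ again; your closing claim that the two agree ``since $\emptyset$ is never revisited by a single excursion'' conflates the excursion path with the extended process. The identification with an $\mathrm{SSIP}$-evolution is valid here because the lemma sits in Section~\ref{sec:nested2}, where $\theta=\theta_1+\theta_2-\alpha\in[-\alpha,0)$ and $\emptyset$ is absorbing, so $\mathrm{SSIP}$ and $\mathrm{SSIP}_{\!\dagger}$ coincide; with that restriction made explicit, your proof goes through.
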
 
	\begin{proof}
		For $k\in \bN$, let $H^a_k:= 2^{-k}\lceil2^k H^a\rceil\wedge 2^k$. Then $H^a_k$ is a stopping time that a.s.\ only takes a finite number of possible values and eventually decreases to $H^a$. 
		By \eqref{eq:Theta:entrance}, the desired property is satisfied by each $H^a_k$. Then we deduce the result for $H^a$ by approximation, using the path-continuity and Hunt property of $\mathrm{SSIP}^{(\alpha)}(\theta_1,\theta_2)$-evolutions of Theorem~\ref{thm:hunt}. 
	\end{proof}
	
	For $(\gamma_c, \gamma_f)\in \cI^2_{\mathrm{nest}}$, let $(\fW_U, U\in \gamma_c)$ be a family of independent clades, with each $\fW_U \sim \mathbf{Q}^{(\alpha)}_{\gamma_{f}|^{\leftarrow}_{U}} (\theta_1,\theta_2)$. Let $\xi_{U}^{(\bar{\alpha})}$ be the scaffolding associated with $\fW_U$ as in \eqref{eq:scaffoldingW} and write $\mathrm{len}(\fW_U):= \inf \{s\ge 0\colon \xi_{U}^{(\bar{\alpha})} (s)=0\}$ for its length, which is a.s.\@ finite. 
	Then we define the concatenation of $(\fW_U, U\in \gamma_c)$ by
	\begin{equation*}\label{eq:concatenation-clade}
		\Concat_{U\in \gamma_c} \fW_U
		:= \sum_{U\in\gamma_c} \int \delta (g(U)\!+\!t, \boldsymbol{\beta}) \fW_{U} (dt,d\boldsymbol{\beta}), 
		\, \text{where}~ g(U)=\!\! \sum_{V\in\gamma_c, \sup V \le  \inf U}\!\! \mathrm{len} (\fW_{V}).
	\end{equation*}
	Write $\mathbf{Q}^{(\alpha)}_{(\gamma_c,\gamma_{f})} (\theta_1,\theta_2)$ for the law of $\Concat_{U\in \gamma_c} \fW_U$. 
	We next present a Markov-like property for such point measures of interval partition excursions, analogous to \cite[Proposition 6.6]{Paper1-1}. 
	
	\begin{lemma}\label{lem:nest-Markov-like}
		For $(\gamma_c, \gamma_f)\!\in\! \cI^2_{\mathrm{nest}}$, let $\fW\sim \mathbf{Q}^{(\alpha)}_{(\gamma_c,\gamma_{f})} (\theta_1,\theta_2)$ and  $\mathbf{X}\!=\! \xi_{\fW}^{(\bar{\alpha})}$. 
		For $y\!\ge\! 0$, set 
		\[
		\mathrm{cutoff}^{\ge y}_{\fW}=\!\!\sum_{\text{points }(t,\boldsymbol{\gamma}_t) \text{ of } \mathbf{W}} \!\! \ind\{ \mathbf{X}(t-)\ge  y\}\delta(\sigma^y(t), \boldsymbol{\gamma}_t) + \ind\{y\in (\mathbf{X}(t-), \mathbf{X}(t)) \}\delta(\sigma^y(t), \widehat{\boldsymbol{\gamma}}^y_t), 
		\]
		where $\sigma^y(t)=\mathrm{Leb} \{ u\!\le\! t\colon\mathbf{X}(u)\!>\! y\}$ and 
		$\widehat{\boldsymbol{\gamma}}^y_t = (\gamma_t(y\!-\!\mathbf{X}(t-)\!+\!z), z\!\ge\! 0)$. Similarly define $\mathrm{cutoff}^{\le y}_{\fW}$. 
		Given 
		$(\beta_c(y),\beta_f(y))=(\cskewer(y,\fW, \mathbf{X}),\fskewer(y,\fW, \mathbf{X}))$, $\mathrm{cutoff}^{\ge y}_{\fW}$ is conditionally independent of $\mathrm{cutoff}^{\le y}_{\fW}$ and has conditional distribution $\mathbf{Q}^{(\alpha)}_{(\beta_c(y),\beta_f(y))} (\theta_1,\theta_2)$. 
	\end{lemma}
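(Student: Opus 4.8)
\textbf{Proof plan for Lemma~\ref{lem:nest-Markov-like}.} The strategy is to reduce the statement to the analogous Markov-like property of the scaffolding-and-spindles construction in the single-level setting, namely \cite[Proposition 6.6]{Paper1-1}, combined with the excursion-theoretic input of Lemma~\ref{lem:Theta-Ha}. First I would treat the case where $\gamma_c$ is a single interval, so $\fW\sim\mathbf{Q}^{(\alpha)}_{\gamma_f|^{\leftarrow}_U}(\theta_1,\theta_2)$ is a single ``clade'' built from an $\mathrm{SSIP}_{\!\dagger}^{(\alpha)}(\theta_1,\theta_2)$-evolution $\boldsymbol{\beta}$ at time $0$ together with a Poisson random measure $\fW$ of $\Theta^{(\alpha)}(\theta_1,\theta_2)$-excursions run up to the first passage time $T_{-\zeta(\boldsymbol{\beta})}$ of the coarse scaffolding $\xi^{(\bar\alpha)}_{\fW}$ below $-\zeta(\boldsymbol{\beta})$. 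The coarse scaffolding $\mathbf{X}=\xi^{(\bar\alpha)}_{\fW}$ is a spectrally positive stable L\'evy process of index $1+\bar\alpha$, so the cutoff operation $\mathrm{cutoff}^{\ge y}$ is exactly the ``cut at level $y$'' decomposition of the L\'evy process and its excursion marks that underlies \cite[Proposition 6.6]{Paper1-1}; the only new feature is that the marks (spindles) are themselves $\cI_H$-valued excursions rather than $\bESQ(-2\alpha)$-excursions, and the mark straddling level $y$ must be split at level $y$ into its pre-$y$ part (which stays in $\mathrm{cutoff}^{\le y}$) and its post-$y$ part $\widehat{\boldsymbol{\gamma}}^y_t$ (which goes into $\mathrm{cutoff}^{\ge y}$).

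The key steps, in order, would be: (i) record the path decomposition of $\mathbf{X}$ at the first passage over level $y$ in terms of $\sigma^y$, using \cite[Theorem~VII.1]{BertoinLevy} and the strong Markov property of the stable L\'evy process, to see that $\mathrm{cutoff}^{\ge y}_{\fW}$ restricted to its L\'evy-process skeleton is, conditionally on the occupation times below $y$, a fresh stable L\'evy process; (ii) handle the splitting of the single straddling spindle: conditionally on the part of the excursion $\boldsymbol{\gamma}_t$ seen strictly below level $y$ — equivalently, conditionally on $H^y(\boldsymbol{\gamma}_t)<\infty$ and on $(\gamma_t(r),\,r\le H^y(\boldsymbol{\gamma}_t))$ — Lemma~\ref{lem:Theta-Ha} identifies the continuation $\widehat{\boldsymbol{\gamma}}^y_t$ as an $\mathrm{SSIP}^{(\alpha)}(\theta_1,\theta_2)$-evolution started from $\gamma_t(H^y(\boldsymbol{\gamma}_t))$, which is precisely the single-interval clade $\mathbf{Q}^{(\alpha)}_{\gamma_t(H^y)|^{\leftarrow}}(\theta_1,\theta_2)$ up to the parametrisation by $\beta_f(y)$; (iii) assemble: conditionally on $(\cskewer(y,\fW,\mathbf{X}),\fskewer(y,\fW,\mathbf{X}))=(\beta_c(y),\beta_f(y))$, the straddling spindle's continuation and the above-$y$ spindles reglue, via the definition of $\Concat$ and the coarse/fine skewer maps, into a point measure with law $\mathbf{Q}^{(\alpha)}_{(\beta_c(y),\beta_f(y))}(\theta_1,\theta_2)$, independently of everything at or below level $y$. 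The last ingredient is that the fine-skewer total mass at level $y$ equals the coarse-skewer total mass (both are $\|\beta_c(y)\|=\|\beta_f(y)\|$), so there is no information leak between coarse and fine through the masses.

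For general $\gamma_c=\{U_i\}$, the point measure $\fW=\Concat_{i}\fW_{U_i}$ is a concatenation of independent single-interval clades shifted by the partial lengths $g(U)$, and the cutoff operation acts on the concatenation blockwise (the scaffolding $\mathbf{X}$ of the concatenation is the concatenation of the individual scaffoldings, each excursion of $\mathbf{X}$ above its running infimum lying within one block). Conditioning on $(\beta_c(y),\beta_f(y))$ then factorises over the blocks: each $\mathrm{cutoff}^{\ge y}$-block has conditional law $\mathbf{Q}^{(\alpha)}_{(\beta_c^{U_i}(y),\beta_f^{U_i}(y))}(\theta_1,\theta_2)$ by the single-interval case, these are conditionally independent, and their concatenation is $\mathbf{Q}^{(\alpha)}_{(\beta_c(y),\beta_f(y))}(\theta_1,\theta_2)$ by the very definition of $\mathbf{Q}^{(\alpha)}_{(\gamma_c,\gamma_f)}$ as a concatenation law. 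I would also invoke here the consistency (Kolmogorov extension) already used in Sections~\ref{sec:pre} and the definitions above to make precise that the cutoff of an infinite concatenation is well-defined as an a.s.\ limit of finite ones, using that $\Theta$-a.e.\ excursion has finite length and that only finitely many blocks contribute any mass above a fixed level $a>0$.

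\textbf{Main obstacle.} The delicate point is step (ii): the clean conditional statement of Lemma~\ref{lem:Theta-Ha} is about a single $\Theta$-excursion conditioned on $\{H^a<\infty\}$, whereas inside $\mathrm{cutoff}^{\ge y}$ the straddling excursion is selected by a size-biasing-type criterion ($y$ falls strictly between the pre-jump and post-jump values of $\mathbf{X}$), and one must check that this selection does not bias the conditional law of the continuation $\widehat{\boldsymbol{\gamma}}^y_t$ beyond what is already captured by $(\beta_c(y),\beta_f(y))$. Making this rigorous requires a Palm-calculus / compensation-formula argument for the Poisson random measure $\fW$ together with a level-$y$ version of the strong Markov property for the stable scaffolding — essentially the content that \cite[Proposition 6.6]{Paper1-1} established in the $\bESQ$-spindle case, which I would adapt rather than reprove, the adaptation being the substitution of Lemma~\ref{lem:Theta-Ha} for the corresponding $\bESQ(-2\alpha)$ mid-spindle Markov property.
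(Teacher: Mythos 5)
Your proposal is correct and takes essentially the same route as the paper: the paper's proof simply notes that the nested construction is the scaffolding-and-spindles construction with the $\Lambda_{\mathtt{BESQ}}^{(-2\bar\alpha)}$-spindles replaced by $\Theta$-excursions and then follows the argument of \cite[Proposition 6.6]{Paper1-1} with Lemma~\ref{lem:Theta-Ha} substituted for the ${\tt BESQ}(-2\alpha)$ mid-spindle Markov property — exactly your plan, including the blockwise handling of the concatenation over clades. One cosmetic caveat: in step (ii) the straddling spindle is split at height $y-\mathbf{X}(t-)$ in the excursion's own level parameter rather than at the mass-passage time $H^y(\boldsymbol{\gamma}_t)$ as written, but this is precisely the point where the paper, like you, just invokes Lemma~\ref{lem:Theta-Ha} as the $\Theta$-analogue of the mid-spindle property.
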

	\begin{proof}
		Recall that the construction of the nested processes is a modification of  the scaffolding-and-spindles construction of the coarse component, with the same scaffolding and the $\Lambda_{\mathtt{BESQ}}^{(-2\bar\alpha)}$-excursions being replaced by the interval-partition excursions under $\Theta$. 
		In view of this, we can follow the same arguments as in the proof of \cite[Proposition 6.6]{Paper1-1}, with an application of Lemma~\ref{lem:Theta-Ha}.  
	\end{proof}
	\begin{proof}[Proof of Proposition~\ref{prop:nest-Markov}]
		The path-continuity follows directly from that of an SSIP-evolution. 
		As in \cite[Corollary 6.7]{Paper1-1}, Lemma~\ref{lem:nest-Markov-like} can be translated to the skewer process under $\mathbf{Q}^{(\alpha)}_{(\gamma_c,\gamma_{f})} (\theta_1,\theta_2)$, thus giving the Markov property for  $\mathrm{cfSSIP}^{(\alpha,\theta_1,\theta_2)} (0)$-evolutions. 
		
		When  the immigration rate is $\bar{\theta}>0$, we introduce an excursion measure $\Theta_{\mathrm{nest}}$ on the space of continuous $\cI_{\mathrm{nest}}^2$-excursions, such that the coarse excursion is a $\Theta^{(\bar{\alpha})}(0,\bar{\alpha})$, and each of its $\besq(-2\bar{\alpha})$-excursions is split into a $\Theta^{(\alpha)}(\theta_1,\theta_2)$-excursion. 
		More precisely, for $y>0$, it has the following properties: 
		\begin{enumerate}
			\item  $\Theta_{\mathrm{nest}}(\zeta > y) = \Theta^{(\bar{\alpha})}(0,\bar{\alpha})(\zeta>y) = (2y)^{-1}$. 
			\item If $(\boldsymbol{\beta}_c,\boldsymbol{\beta}_f)\sim\Theta_{\mathrm{nest}}(\,\cdot\,|\,\zeta > y)$, then $(\beta_c(y),\beta_f(y))
			\ed \mathtt{Exponential} ( 1/2y) (\bar\gamma_c,\bar\gamma_f)$, where $\bar\gamma_c\sim \mathtt{PDIP}^{(\bar{\alpha})}(0)$ and the conditional distribution of $\bar\gamma_f$ given $\bar\gamma_c$ is $\mathrm{Frag}^{(\alpha)}(\theta_1,\theta_2)$. Moreover, conditionally on $(\beta_c(y),\beta_f(y))$, the process 
			$((\beta_c(y+z),\beta_f(y+z)),\, z\ge 0)$ is a 
			$\mathrm{cfSSIP}^{(\alpha,\theta_1, \theta_2)}(0)$-evolution. 
		\end{enumerate} 
		Having obtained the pseudo-stationarity (Proposition~\ref{prop:cf-ps-theta1theta2}) and the Markov property of $\mathrm{cfSSIP}^{(\alpha,\theta_1,\theta_2)} (0)$-evolutions, the construction of $\Theta_{\mathrm{nest}}$ can be made by a similar approach as in Section~\ref{sec:exc}. 
		
		Using $\mathbf{F}\sim \mathtt{PRM} (\bar{\theta}\mathrm{Leb}\otimes \Theta_{\mathrm{nest}})$, by the construction in \cite[Section 3]{IPPAT}, the following process has the same law as a $\mathrm{cfSSIP}^{(\alpha,\theta_1, \theta_2)}(\bar{\theta})$-evolution starting from $(\emptyset,\emptyset)$, for $y\ge 0$, 
		\[
		\beta_{c}(y)= \Concat_{\text{points }(s,\boldsymbol{\gamma}_c,\boldsymbol{\gamma_f})\text{ of }\mathbf{F}\colon s\in [0,y]\downarrow}\gamma_c(y-s), \quad 
		\beta_{f}(y)= \Concat_{\text{points }(s,\boldsymbol{\gamma}_c,\boldsymbol{\gamma_f})\text{ of }\mathbf{F}\colon s\in [0,y]\downarrow}\gamma_f(y-s).
		\] 
		The Markov property of $\mathrm{cfSSIP}^{(\alpha,\theta_1,\theta_2)} (\bar{\theta})$-evolutions is now a consequence of this Poissonian construction and the form of $\Theta_{\mathrm{nest}}$; see the proof of \cite[Lemma 3.10]{IPPAT} for details. 
	\end{proof}
	
\end{appendix}
%
%

 {\bf Acknowledgments. } The authors would like to thank the referees for their valuable feedback and constructive comments. 
 
\smallskip 
 
	    {\bf Funding. } This work was partially supported by the National Key R\&D Program of China (grant 2022YFA1006500), National Natural Science Foundation of China (grants 12288201 and 12301169) and SNSF grant P2ZHP2\_171955. 
\bibliographystyle{imsart-number} 
\bibliography{AldousDiffusion4}       


\end{document}